\newcommand{\band}{\mathfrak{B}_q}
\newcommand{\hpg}{\mathcal{H}}
\newcommand{\ii}{\mathbf i}
\newcommand{\RR}{\mathbb{R}}
\newcommand{\CC}{\mathbb{C}}
\newcommand{\ZZ}{\mathbb{Z}}
\newcommand{\NN}{\mathbb{N}}
\newcommand{\Tr}{\operatorname{Tr}}
\newcommand{\E}{\mathbb{E}}
\newcommand{\D}{\mathbb{D}}
\newcommand{\EE}{\mathbb{E}}
\newcommand{\eps}{\varepsilon}
\newcommand{\cR}{\mathcal{R}}
\newcommand{\cQ}{\mathcal{Q}}
\newcommand{\cC}{\mathcal{C}}
\newcommand{\wtilde}{\widetilde}
\newcommand{\cX}{\mathcal{X}}
\newcommand{\cG}{\mathcal{G}}
\newcommand{\cH}{\mathcal{H}}
\newcommand{\wg}{\wtilde{g}}
\newcommand{\cF}{\mathcal{F}}
\newcommand{\cV}{\mathcal{V}}
\DeclareMathOperator*{\Res}{Res}
\newcommand{\TT}{\mathbb{T}}
\newcommand{\HH}{\mathbb{H}}
\renewcommand{\Im}{\operatorname{Im}}
\renewcommand{\Re}{\operatorname{Re}}
\newcommand{\wG}{\wtilde{G}}
\newcommand{\wDelta}{\wtilde{\Delta}}
\newcommand{\cZ}{\mathcal{Z}}
\newcommand{\cA}{\mathcal{A}}
\newcommand{\wcA}{\wtilde{\cA}}
\newcommand{\lgg}{l_0}
\newcommand{\QQ}{\mathbb{Q}}
\newcommand{\cW}{\mathcal{W}}
\newcommand{\cT}{\mathcal{T}}
\newcommand{\cY}{\mathcal{Y}}
\newcommand{\tpsi}{\tilde{\psi}}
\newcommand{\yi}[1]{{#1}}
\newcommand{\guillaume}[1]{{#1}}
\newcommand{\xin}[1]{{#1}}
\newcommand{\revise}[1]{{#1}}
\def\defeq{:=}
\def\bbH{\mathbb{H}}
\newtheorem{theorem}{Theorem}[section]
\newtheorem{corr}[theorem]{Corollary}
\newtheorem{lemma}[theorem]{Lemma}
\newtheorem{prop}[theorem]{Proposition}
\newtheorem{assump}[theorem]{Assumption}
\numberwithin{equation}{section}
\theoremstyle{definition}
\newtheorem{define}[theorem]{Definition}
\newtheorem{remark}[theorem]{Remark}
\def\beq{\begin{equation}}
\def\eeq{\end{equation}}
\def\ba{\begin{enumerate}[(a)]}
\def\bei{\begin{enumerate}[(i)]}
\def\be{\begin{enumerate}[(1)]}
\def\ee{\end{enumerate}}
\def\bi{\begin{itemize}}
\def\ei{\end{itemize}}
\def\beg{\begin{eg}}
\def\eeg{\end{eg}}
\def\bd{\begin{defn}}
\def\ed{\end{defn}}
\def\bt{\begin{theorem}}
\def\et{\end{theorem}}
\def\bl{\begin{lemma}}
\def\el{\end{lemma}}
\def\bfac{\begin{fact}}
\def\efac{\end{fact}}
\def\bc{\begin{corr}}
\def\ec{\end{corr}}
\def\bp{\begin{prop}}
\def\ep{\end{prop}}
\def\bo{\begin{observe}}
\def\eo{\end{observe}}
\def\bas{\begin{assump}}
\def\eas{\end{assump}}
\def\RR{\mathbb{R}}
\def\CC{\mathbb{C}}
\def\EE{\mathbb{E}}
\def\ZZ{\mathbb{Z}}
\def\NN{\mathbb{N}}
\def\QQ{\mathbb{Q}}
\def\beg{\begin{eg}}
\def\eeg{\end{eg}}
\newcommand{\wwp}{\widetilde{\wp}}
\newcommand{\wV}{\widetilde{V}}
\newcommand{\wphi}{\wtilde{\phi}}
\newcommand{\notion}[1]{{\textit{#1}}}
\newcommand{\T}{\mathsf{T}}
\begin{document}
\title{Probabilistic conformal blocks for Liouville CFT on the torus}
\date{\today}

\author{Promit Ghosal}\email{promit@mit.edu}\address{Department of Mathematics, Massachusetts Institute of Technology}
\author{Guillaume Remy}\email{remy@math.columbia.edu}\address{Institute for Advanced Study}
\author{Xin Sun} \email{xinsun@sas.upenn.edu}\address{Department of Mathematics, University of Pennsylvania}
\author{Yi Sun}\email{yisun@statistics.uchicago.edu}\address{Department of Statistics, University of Chicago}

\begin{abstract}
Virasoro conformal blocks are a family of important functions defined as power series via
the Virasoro algebra. They are a fundamental input to the conformal bootstrap program for 2D
conformal field theory (CFT) and are closely related to four dimensional supersymmetric gauge
theory through the Alday-Gaiotto-Tachikawa correspondence. The present work provides a
probabilistic construction of the $1$-point toric Virasoro conformal block for central change
greater than $25$. More precisely, we construct an analytic function using a probabilistic
tool called Gaussian multiplicative chaos (GMC) and prove that its power series expansion
coincides with the $1$-point toric Virasoro conformal block. The range $(25,\infty)$ of central charges
corresponds to Liouville CFT, an important CFT originating from 2D quantum gravity and bosonic
string theory. Our work reveals a new integrable structure underlying GMC and opens the door to
the study of non-perturbative properties of Virasoro conformal blocks such as their analytic
continuation and modular symmetry. Our proof combines an analysis of GMC with tools from CFT
such as Belavin-Polyakov-Zamolodchikov differential equations, operator product expansions, and
Dotsenko-Fateev type integrals.
\end{abstract}

\maketitle


\section{Introduction}
A conformal field theory (CFT) is a way to construct random functions on Riemannian manifolds
that transform covariantly under conformal (i.e. angle preserving) mappings.
Since the seminal work of Belavin-Polyakov-Zamolodchikov \cite{BPZ84}, two dimensional (2D)
CFT has grown into one of the most prominent branches of theoretical physics,  with applications to 2D statistical
physics and string theory, as well as far reaching consequences in mathematics; see e.g.\ \cite{DMS97}.
The paper~\cite{BPZ84} introduced a schematic program called the \emph{conformal bootstrap} to exactly solve 
correlation functions of a given 2D CFT in terms of its $3$-point sphere correlation functions and certain
power series called {\bf conformal blocks}. These conformal blocks are completely specified by the Virasoro algebra
that encodes the infinitesimal local conformal symmetries, and they only depend on the specific CFT
through a single parameter called the \emph{central charge}. 
Outside of CFT, conformal blocks are related to Nekrasov partition functions in gauge theory via the
Alday-Gaiotto-Tachikawa correspondence \cite{AGT10}, solutions to Painlev\'e-type equations \cite{GIL12},
and quantum Teichm\"uller theory and representation of quantum groups \cite{PT99, PT01, TV13},  
among other things.   

In this paper, we initiate a probabilistic approach to study the conformal blocks appearing in the conformal
bootstrap for an important 2D CFT called {\bf Liouville conformal field theory} (LCFT).  LCFT arose from
Polyakov's work on 2D quantum gravity and bosonic string theory  \cite{Pol81}; it was rigorously constructed
from the path integral formalism of quantum field theory on the sphere in \cite{DKRV16} and on other surfaces in
\cite{DRV16,HRV18,GRV19}. The construction is via {\bf Gaussian multiplicative chaos} (GMC), a random
measure defined by exponentiating the Gaussian free field (see e.g.\ \cite{RV-GMC,NB-GMC}).
LCFT depends on a coupling constant $\gamma \in (0,2)$ which is in bijection with the central charge $c$ via
\begin{equation}\label{eq:cQ}
c = 1 + 6Q^2 \in (25, \infty), \qquad  \qquad \textrm{where }Q = \frac{\gamma}{2} + \frac{2}{\gamma}.
\end{equation}
The present work gives a GMC representation of the conformal blocks with central charge $c \in (25, \infty)$
for a torus with one marked point. Given $\tau$ in the upper half plane, let $\mathbb{T}_\tau$ be the flat torus
with modular parameter $\tau$.  The $1$-point toric correlation function of LCFT, rigorously constructed in \cite{DRV16},
has the form $\langle e^{\alpha \phi(0)} \rangle_{\tau}$, where $\langle\cdots \rangle_{\tau}$ is the average over the
random field $\phi$ for LCFT on $\TT_\tau$ and $\alpha$ is called the vertex insertion weight.

The conformal bootstrap gives rise to a conjectural \emph{modular bootstrap equation} expressing $\langle e^{\alpha \phi(0)} \rangle_{\tau}$ via the \emph{$1$-point toric conformal block} $\cF^\alpha_{\gamma, P}(q)$:
\begin{equation} \label{eq:intro-bootstrap}
\langle e^{\alpha \phi(0)}\rangle_{\tau}
= \frac{1}{|\eta(q)|^2} \int_{-\infty}^\infty C_\gamma(\alpha, Q - \ii P, Q + \ii P)
|q|^{P^2} \cF^\alpha_{\gamma, P}(q) \cF^{\alpha}_{\gamma, P}(\bar{q}) dP.
\end{equation}
Here, $q = e^{\ii \pi \tau} $, $\eta(q)$ is the Dedekind eta function, and $C_\gamma(\alpha_1, \alpha_2, \alpha_3)$ is the Liouville $3$-point sphere correlation function. It has an exact expression called the DOZZ formula which was  first proposed in
\cite{DO94, ZZ96} and proved in~\cite{KRV19b}. The conformal block $\cF^\alpha_{\gamma, P}(q)$ is a $q$-power series
defined via the Virasoro algebra in~\cite{BPZ84}. Recently, the conformal bootstrap for LCFT was rigorously carried out for the sphere in~\cite{GKRV20} and \xin{for general closed surfaces (including the torus) in~\cite{GKRV21}.}

In this paper, we
use GMC to construct a function of $q$ analytic around $0$ whose series expansion is given by $\cF^\alpha_{\gamma, P}(q)$.

\subsection{Summary of results}
To state our results, we first review two ways to characterize the 1-point toric conformal block
$\cF^\alpha_{\gamma, P}(q)$ as a formal $q$-series with parameters $\gamma, P,\alpha$: Zamolodchikov's recursion
and the Alday-Gaiotto-Tachikawa (AGT) correspondence. 
From its original definition via the Virasoro algebra~\cite{BPZ84},
it was shown in \cite{Zam84,Zam87,HJS09} that $\cF^\alpha_{\gamma, P}(q)$, as a formal $q$-series,
is the unique solution to Zamolodchikov's recursion
\begin{equation} \label{eq:intro-recursion}
\cF^\alpha_{\gamma, P}(q) = \sum_{n, m = 1}^\infty
q^{2mn} \frac{R_{\gamma, m, n}(\alpha)}{P^2 - P_{m, n}^2} \cF^\alpha_{\gamma, P_{-m, n}}(q)
+ q^{\frac{1}{12}} \eta(q)^{-1},
\end{equation}
where $R_{\gamma, m, n}(\alpha)$ and $P_{m, n}$ are explicit constants defined in (\ref{eq:rmn-def}) and
(\ref{eq:pmn-def}). 
See \cite{Pog09,FL10, HJS09,CCY19} for more background on this recursion.

\revise{The AGT correspondence  \cite{AGT10}  specialized to $\cF^\alpha_{\gamma, P}(q)$, which is proven in
	\cite{FL10}, 
	asserts that as two formal power series we have}
\begin{equation} \label{eq:intro-block-nek}
\cF^\alpha_{\gamma, P}(q) = 
\left(q^{-\frac{1}{12}}\eta(q)\right)^{1 - \alpha(Q - \frac{\alpha}{2})} \cZ^\alpha_{\gamma, P}(q).
\end{equation}
Here, \revise{$\cZ^\alpha_{\gamma, P}(q)$ is  given by}
\begin{multline} \label{eq:intro-nek-def}
\cZ^\alpha_{\gamma, P}(q) := 1\\ + \sum_{k = 1}^\infty q^{2k} \!\!\!\!\!\! \sum_{\substack{Y_1, Y_2 \text{ Young diagrams} \\ |Y_1| + |Y_2| = k}}
\prod_{i, j = 1}^2 \prod_{s \in Y_i} \frac{(E_{ij}(s, P) - \alpha)(Q - E_{ij}(s, P) - \alpha)}{E_{ij}(s, P) (Q - E_{ij}(s, P))},
\end{multline}
where $E_{ij}(s, P)$ is an explicit product given by (\ref{eq:nek-eqn}). We also note that 
$q^{-\frac{1}{12}}\eta(q)=\prod_{n=1}^{\infty}(1-q^{2n})$ has an explicit $q$-series expansion. \revise{The function $\cZ^\alpha_{\gamma, P}(q)$  is the instanton part of the Nekrasov partition function of the four-dimensional $SU(2)$ supersymmetric gauge theory.}  
See e.g.\ \cite{Nek03,NekOk06,N16,CO12} for more background on the Nekrasov partition function and the AGT correspondence \revise{in general}.

We may now state our main result.
Here we only give a concise foretaste of the relevant constructions; a fully rigorous version of all definitions
will be given in Section~\ref{sec:block-def}.
 For $\gamma \in (0, 2)$ and for purely imaginary $\tau \in \ii \RR_{>0}$, Let $Y_\tau(x)$ be the Gaussian field on $[0, 1]$ with covariance
\[
\EE[Y_\tau(x)Y_\tau(y)] = - 2 \log |\Theta_\tau(x - y)| + 2 \log|q^{\frac{1}{6}} \eta(q)|,
\]
where $\Theta_\tau(x)$ is the Jacobi theta function (Appendix~\ref{sec:theta}).
Consider the GMC measure $e^{\frac{\gamma}{2} Y_\tau(x) } dx$, which is a random measure on $[0, 1]$  defined through regularization (Definition~\ref{def:gmc}). 
For $\alpha \in (-\frac{4}{\gamma}, Q)$, $q =e^{i\tau}\in (0, 1)$, and $P \in \mathbb{R}$, we define the
\emph{probabilistic $1$-point toric conformal block} by
\begin{equation} \label{eq:def-block}
\cG_{\gamma, P}^{\alpha}(q) \defeq \frac{1} {Z_{\gamma, P}^{\alpha}(q)}
\EE\left[\left(\int_0^1  |\Theta_\tau(x)|^{-\frac{\alpha\gamma}{2}} e^{\pi \gamma P x} {e^{\frac{\gamma}{2} Y_\tau(x)}dx} \right)^{-\frac{\alpha}{\gamma}} \right],
\end{equation}
where $Z_{\gamma, P}^{\alpha}(q)$ is an explicit function of $\alpha,\gamma, P, q$  from Definition \ref{def:conf-block} and Remark~\ref{rmk:Z}. 
Our main result Theorem~\ref{thm:main} below
shows that (\ref{eq:def-block}) gives a probabilistic construction of $\cF^\alpha_{\gamma, P}(q)$ which is non-perturbative, in contrast to Zamolodchikov's recursion, the AGT correspondence, and its original definition via the Virasoro algebra.

\newcommand{\radius}{\frac{1}{2}}

\begin{theorem}\label{thm:main}
	For $\gamma \in (0, 2)$, $\alpha \in (-\frac{4}{\gamma}, Q)$, 
	and $P \in \RR$, the probabilistic conformal block  $\cG^\alpha_{\gamma, P}(q)$ admits an analytic extension on  a complex neighborhood of $q=0$, whose $q$-series expansion around $q=0$  agrees with  $\cF^\alpha_{\gamma, P}(q)$ defined in~\eqref{eq:intro-block-nek}. In particular, the conformal block $\cF^\alpha_{\gamma, P}(q)$  has a  positive  radius of convergence.
	Moreover,  when $\alpha \in [0,Q)$, the analytic extension of $\cG^\alpha_{\gamma, P}(q)$ exists on an open set containing $\{ z \in \mathbb{C} : |z| < \frac{1}{2} \} \cup [0,1)$, which implies the radius of convergence of
	$\cF^\alpha_{\gamma, P}(q)$ is at least $\radius$.
\end{theorem}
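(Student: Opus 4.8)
The plan is to match the two characterizations of $\cF^\alpha_{\gamma,P}(q)$ (Zamolodchikov's recursion and the AGT/Nekrasov formula) against analytic properties of the GMC expectation $\cG^\alpha_{\gamma,P}(q)$. The strategy splits into three parts: (i) establish that $\cG^\alpha_{\gamma,P}(q)$ really is analytic in $q$ near $0$ (and near $[0,1)$ when $\alpha\ge 0$), with an explicit $q$-series, so that comparing it with $\cF^\alpha_{\gamma,P}(q)$ is a statement about equality of power series; (ii) prove $\cG$ satisfies a functional equation in $P$ forcing it to obey Zamolodchikov's recursion~\eqref{eq:intro-recursion}; (iii) deduce the quantitative radius bound $\tfrac12$ from the analyticity region of $\cG$ together with the fact that the recursion's poles $P_{m,n}$ accumulate in a way controlled by $|q|^{2mn}$.

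For part (i), I would expand $\Theta_\tau(x)$ and $Y_\tau(x)$ in powers of $q$ (using $q=e^{\ii\pi\tau}$ and the product formula for the Jacobi theta function), so that the integrand inside the expectation becomes a convergent $q$-series with random coefficients built from a $q$-independent Gaussian field on $[0,1]$. One then needs uniform integrability of the negative moment $\big(\int_0^1\cdots\big)^{-\alpha/\gamma}$ to justify interchanging $\E[\cdot]$ with the $q$-expansion; this is where the constraint $\alpha\in(-\tfrac{4}{\gamma},Q)$ enters, via standard GMC moment bounds (the lower bound $-\tfrac4\gamma$ is exactly the threshold guaranteeing existence of the relevant negative moment of a GMC integral, and the upper bound $Q$ is the Seiberg-type bound). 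For the stronger statement when $\alpha\in[0,Q)$, the sign of the exponent $-\alpha/\gamma\le 0$ makes the functional concave/bounded and lets one push analyticity out to a neighborhood of the whole interval $[0,1)$ by a dominated-convergence plus Cauchy-estimate argument uniform on compact subsets of the disc of radius $1$.

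For part (ii), the heart of the matter, I would use the operator product expansion / BPZ differential equation input advertised in the abstract. Inserting a degenerate field (weight $-\tfrac{\gamma}{2}$ or $-\tfrac{2}{\gamma}$) into the torus one-point function produces, via the Girsanov/Cameron-Martin shift on $Y_\tau$, a second-order ODE in the position of the degenerate insertion whose indicial analysis at the marked point yields a relation between $\cG^\alpha_{\gamma,P}$ at shifted momenta $P\mapsto P\pm\tfrac{\gamma}{2}$ (or $\pm\tfrac{2}{\gamma}$); combined with reflection in $P$ and the explicit $q\to 0$ and $P\to\infty$ normalizations built into $Z$, this pins down the residues at $P=\pm P_{m,n}$ to be exactly $R_{\gamma,m,n}(\alpha)$ as in~\eqref{eq:intro-recursion}. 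One then invokes the uniqueness of the solution to Zamolodchikov's recursion among formal $q$-series (from \cite{Zam84,Zam87,HJS09}) to conclude $\cG=\cF$ as $q$-series, hence as analytic functions on the common domain established in part (i). Since $\cF$ is then shown to equal an honest analytic function, it has positive radius of convergence.

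The main obstacle is part (ii): extracting the precise pole structure in $P$ from the GMC side. The poles at $P=P_{m,n}$ should arise from divergences of the GMC integral moments as $P$ varies, i.e.\ from the region where the integrand develops a non-integrable singularity — effectively a Dotsenko-Fateev integral computation — and getting the residues to match the explicit constants $R_{\gamma,m,n}(\alpha)$ requires a careful OPE analysis together with exact evaluation of the resulting Selberg-type integrals. A secondary technical point is making the BPZ equation rigorous for the probabilistic object on the torus (as opposed to the sphere, where it is known), which needs the Gaussian integration-by-parts and a justification that the degenerate-insertion correlation function is smooth enough in the insertion point; I would expect this to consume a substantial portion of the paper. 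The radius bound $\tfrac12$ in part (iii) is then comparatively soft: it follows because the first shifted momentum $P_{-1,1}$ and the prefactor $q^{2}$ in the leading recursion term control convergence precisely up to $|q|<\tfrac12$ once analyticity of $\cG$ on a neighborhood of $[0,1)$ is in hand.
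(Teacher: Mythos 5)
Your part (i) matches the paper's Proposition~\ref{prop:analytic-G} in spirit (Girsanov to isolate the $q$-dependence, H\"older plus GMC moment bounds to justify interchanging $\EE$ with the $q$-expansion), and your part (iii) observation is essentially right, though note the radius $\radius$ comes from proving analyticity of $\cG$ on the full disc $\{|q|<\radius\}$ directly (the bound $T_n<\tfrac12$ in the H\"older estimate), not from the recursion.

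Part (ii), however, has a genuine gap. You propose to establish Zamolodchikov's recursion for $\cG^\alpha_{\gamma,P}(q)$ for \emph{general} $\alpha$ by locating poles of the GMC expression at $P=\pm P_{m,n}$ and computing their residues. But the probabilistic definition of $\cG^\alpha_{\gamma,P}(q)$ only makes sense (and only extends analytically) for $P$ in the strip $|\Im(P)|<\tfrac{1}{2\gamma}$, whereas the poles $P_{m,n}=\tfrac{2\ii n}{\gamma}+\tfrac{\ii\gamma m}{2}$ satisfy $|\Im P_{m,n}|\ge Q>\tfrac{1}{2\gamma}$; the meromorphic continuation in $P$ needed to even see these poles is precisely what is \emph{not} available for general $\alpha$ (it is tied to the still-open modular transformation \eqref{eq:mod-inv}). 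Relatedly, the BPZ/OPE machinery with a degenerate insertion does not produce shift relations in $P$: the degenerate field fuses with the $\alpha$-insertion and yields difference equations in $\alpha$ with step $2\chi$, $\chi\in\{\tfrac\gamma2,\tfrac2\gamma\}$. The paper's actual route is: (a) derive these $\alpha$-shift equations for the coefficients of $\cG$ (BPZ equation, separation of variables into inhomogeneous hypergeometric equations, OPE with reflection); (b) prove Zamolodchikov's recursion \emph{only} in the Dotsenko--Fateev case $-\tfrac{\alpha}{\gamma}\in\NN$, where $\cG$ becomes an $N$-fold integral that is entire in $P$ and a contour-rotation argument gives the quasi-periodicity $\cA^q_{\gamma,P_{m,n}}=q^{2mn}e^{-\ii\pi\alpha\gamma m/2}\cA^q_{\gamma,P_{-m,n}}$, hence $\cF=\cG$ there; (c) transfer the shift equations to the coefficients of $\cF$ by meromorphic continuation in $\gamma$ (using rationality of the Nekrasov coefficients); and (d) conclude by a uniqueness argument for the shift equations based on density of $\alpha_0+\ZZ\gamma+\ZZ\tfrac4\gamma$ when $\gamma^2\notin\QQ$. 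Without the reduction to integer $N$ and the $\alpha$-shift/uniqueness mechanism, your plan cannot close.
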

The range $(-\frac{4}{\gamma}, Q)$ for $\alpha$  is the range in which the  1-point correlation
function $\langle e^{\alpha \phi(0)}\rangle_{\tau}$ in~\eqref{eq:intro-bootstrap} has a GMC expression
from the path integral formalism of LCFT~\cite{DRV16}.  Although our probabilistic construction of conformal blocks
also relies on GMC, we are not aware of a natural path integral interpretation.  From this perspective,
our Theorem~\ref{thm:main} reveals a new integrable structure underlying GMC.    

In light of the AGT correspondence, Theorem~\ref{thm:main} proves that the
Nekrasov partition function \eqref{eq:intro-nek-def} is analytic in $q$, resolving
a \revise{special case of a} conjecture in\cite{FM18}\footnote{More
	precisely, their conjecture was stated for  the  $4$-point spherical conformal block.
	In light of \cite{FLNO09, Pog09, HJS09}, the $1$-point toric conformal block is a special case of the $4$-point
	spherical conformal block under a parameter change.}. \xin{As a corollary of the conformal bootstrap, it was shown  in~\cite{GKRV20,GKRV21}  that $\cF^\alpha_{\gamma, P}(q)$ has convergence radius 1 for almost every real $P$.}  
Our work gives a complementary understanding
of conformal blocks by making tractable important analytic properties such
as the modular transformations and complex analyticity in the parameter $P \in \mathbb{C}$,
which seem out of reach from the methods of \cite{GKRV20,GKRV21}. See
Section \ref{sec:intro-outlook} for more discussion. 

\subsection{Summary of method} \label{sec:method-sum}
The key steps for proving Theorem~\ref{thm:main} are:
\begin{enumerate}
	\item[1.] 
	We  show by Cameron-Martin's theorem that  the GMC expression for $\cG^\alpha_{\gamma, P}(q)$ has the desired analytic properties in $q$ prescribed by Theorem~\ref{thm:main}. 
	
	\item[2.] We then characterize the series coefficients of $\cG^\alpha_{\gamma, P}(q)$ in terms of shift equations,
	which are difference equations in $\alpha$ with step size $2\chi$
	for $\chi\in \{\frac{\gamma}{2}, \frac{2}{\gamma}\}$ inspired by shift equations in \cite{Tes95} proposed
	for the DOZZ formula.  To prove series coefficients of $\cG^\alpha_{\gamma, P}(q)$ satisfy
	shift equations, we generalize the strategy of~\cite{KRV19b} based on the \emph{BPZ equation}
	and operator product expansion (OPE).
	Our setting differs from~\cite{KRV19b} in two important
	ways. First, instead of reducing to a hypergeometric equation, our BPZ equation is the PDE
	\begin{equation} \label{eq:intro-bpz}
	\Big(\partial_{uu} - l_\chi (l_\chi + 1) \wp(u) + 2 \ii \pi \chi^2 \partial_\tau\Big) \psi^\alpha_{\chi}(u, q) = 0
	\end{equation}
	where \revise{$\psi^\alpha_{\chi}(u, q)$ is a deformation of $\cG^\alpha_{\gamma, P}(q)$}, $l_\chi = \frac{\chi^2}{2} - \frac{\alpha \chi}{2}$ and $\wp$ is the Weierstrass's elliptic function.
	As a result, we must apply separation of variables to~\eqref{eq:intro-bpz} to obtain a  
	system of \emph{inhomogeneous} hypergeometric equations from which we show the $q$-series coefficients of
	$\cG^\alpha_{\gamma, P}(q)$ solve the shift equations~\eqref{eq:shift-n}. These equations are analogous
        to shift equations in \cite{Tes95,KRV19b} that uniquely specify the DOZZ formula, but they are coupled and inhomogeneous.
	\revise{Second, the OPE for $\chi = \frac{2}{\gamma}$ requires the
		reflection argument first performed in \cite{KRV19b} but  adapted to the intricate boundary setup in \cite{RZ20}.}

	\item[3.]To complete the proof we need to verify  that the $q$-series coefficients of $\cF^\alpha_{\gamma, P}(q)$ satisfy
	the same shift equations~\eqref{eq:shift-n}.  This challenge
	is new to our setting, as in~\cite{KRV19b} it follows easily from the explicit form of the DOZZ formula. \revise{We first prove that $\cF^\alpha_{\gamma, P}(q)=\cG^\alpha_{\gamma, P}(q)$  as formal $q$-series when $N:=-\frac{\alpha}{\gamma}\in \NN$. In this case} the GMC expression
	$\cG^\alpha_{\gamma, P}(q)$ equals the integral
	\begin{equation}\label{eq:intro-DF}
	\left(\int_0^1\right)^N \prod_{1 \leq i < j \leq N} |\Theta_\tau(x_i - x_j)|^{- \frac{\gamma^2}{4}}
	\prod_{i = 1}^N \Theta_\tau(x_i)^{-\frac{\alpha \gamma}{2}} e^{\pi \gamma P x_i} \prod_{i = 1}^N dx_i.
	\end{equation}
	We prove that this integral as a formal $q$-series satisfies Zamolodchikov's recursion (\ref{eq:intro-recursion})  thus  agrees with 
	the conformal block $\cF^\alpha_{\gamma, P}(q)$.  
	The integral~\eqref{eq:intro-DF} is a Dotsenko-Fateev integral \cite{DF84, DF85} and was proposed as a representation of the 1-point toric conformal block in \cite{FLNO09}. To our best knowledge, prior to our work this representation was not established mathematically.
	
	\item[4. ] \revise{We finally remove the constraint $N:=-\frac{\alpha}{\gamma}\in \NN$ 
		using the analyticity in $\gamma$.} More precisely, fixing
	$\alpha<0$ and viewing \eqref{eq:shift-n} as equations in $\gamma$, for $N:=-\frac{\alpha}{\gamma}\in \NN$
	we have $\cF^\alpha_{\gamma, P}(q)=\cG^\alpha_{\gamma, P}(q)$. Thus \eqref{eq:shift-n} for $\chi = \frac{\gamma}{2}$
	holds for series coefficients of $\cF^\alpha_{\gamma, P}(q)$. The explicit form for series coefficients of $\cF^\alpha_{\gamma, P}(q)$
	from the AGT correspondence shows they are rational in $\gamma$ and invariant under
	the exchange $\frac{\gamma}{2} \leftrightarrow \frac{2}{\gamma}$. This allows us to analytically 
	extend in $\gamma$ and prove that the series coefficients of $\cF^\alpha_{\gamma, P}(q)$ satisfy~\eqref{eq:shift-n}	for all $\gamma$ and both values of $\chi$. Now the uniqueness of the solution to~\eqref{eq:shift-n} gives $\cF^\alpha_{\gamma, P}(q)=\cG^\alpha_{\gamma, P}(q)$.
\end{enumerate}

We now situate these steps in the paper.  In Section \ref{sec:block-def}, we prove the analytic continuation property
of the probabilistic conformal block $\cG^\alpha_{\gamma, P}(q)$ prescribed by Theorem~\ref{thm:main}.  In Section \ref{sec:bpz},
we define deformed versions of $\cG^\alpha_{\gamma, P}(q)$, characterize their analytic properties,
and prove the BPZ equations~\eqref{eq:intro-bpz}. In Section \ref{sec:ode}, we perform separation of variables
for deformed probabilistic conformal blocks and derive from the BPZ equations a system of coupled inhomogenous hypergeometric
equations. In Section \ref{sec:opes}, we state operator product expansions for deformed conformal blocks
and perform analytic continuation in $\alpha$.  In Section \ref{sec:block-proof}, we prove  two shift equations on series coefficients of $\cG^\alpha_{\gamma, P}(q)$
and prove Theorem~\ref{thm:main} as outlined above.

\subsection{Outlook}\label{sec:intro-outlook} We now outline  two directions of  future work.

\noindent \textbf{Modular transformations for conformal blocks.}  
Set $q=e^{\ii \pi \tau}$ and $\tilde q = e^{-\ii \pi \tau^{-1}}$.  Verlinde~\cite{verlinde-fusion} and Moore-Seiberg~\cite{ms-groupoid}
conjectured that
\begin{equation}\label{eq:mod-inv}
{\tilde q}^{\frac{P^2}{2}} \cF^{\alpha}_{\gamma, P}(\tilde q) = \int_{\mathbb{R}} \mathcal{M}_{\gamma,\alpha}(P, P')q^{\frac{P'^2}{2}} \cF^\alpha_{\gamma, P'}(q) dP'
\end{equation}
for a certain explicit \emph{modular kernel} $\mathcal{M}_{\gamma,\alpha}(P, P')$.  
Namely, the modular group $\mathrm{PSL}_2(\ZZ)$ acts linearly on the span of $\{q^{P^2/2} \cF^{\alpha}_{\gamma, P}(q): P \in \RR\}$.
The explicit formula for $\mathcal{M}_{\gamma,\alpha}(P, P')$ was derived by Ponsot  and Teschner~\cite{PT99} under the assumption that 
there exists a kernel $\mathcal{M}_{\gamma,\alpha}(P, P')$ satisfying~\eqref{eq:mod-inv}.
However, the equation~\eqref{eq:mod-inv} itself is still open as a mathematical question. 

In work in progress, we plan to prove \eqref{eq:mod-inv}  for $\alpha\in [0,Q)$ based on our probabilistic construction of  $\cF^\alpha_{\gamma, P}(q)$.
More precisely, we will use the BPZ equation, GMC techniques, and the explicit form of the modular kernel to show   
${\tilde q}^{\frac{P^2}{2}} \cG^{\alpha}_{\gamma, P}(\tilde{q}) = \int_{\mathbb{R}} \mathcal{M}_{\gamma,\alpha}(P, P')q^{\frac{P'^2}{2}} \cG^\alpha_{\gamma, P'}(q) dP'$
for $q\in (0,1)$, where  $\cG^{\alpha}_{\gamma, P}(q)$ is the GMC in Theorem~\ref{thm:main}.
We can then use the $\mathrm{PSL}_2(\ZZ)$ action  to analytically continue  $\cG^{\alpha}_{\gamma, P}(q)$ to the unit disk.
This means $\cF^{\alpha}_{\gamma, P}(q)$ has convergence radius $1$ in this range of $\alpha$
and~\eqref{eq:mod-inv} holds. Once proved, \eqref{eq:mod-inv} also allows us to
view the conformal block as a meromorphic function in $P \in \mathbb{C}$, as the right hand side
of \eqref{eq:mod-inv} is meromorphic in $P$ with explicit poles provided by the meromorphic
function $\mathcal{M}_{\gamma,\alpha}(P, P')$.

\smallskip

\noindent \textbf{4-point spherical conformal blocks.} 
As discussed in Section~\ref{sec:method-sum},  the GMC expression of $\cF^{\alpha}_{\gamma, P}(q)$ 
is a Dotsenko-Fateev type integral  when $-\frac{\alpha}{\gamma}\in \NN$. Such an  integral representation is available under certain specializations of parameters for more general conformal blocks,  including the 4-point spherical case; see \cite{mms-block,dv-block}.
This allows us to propose a GMC expression for 4-point spherical conformal blocks and hence an analog of Theorem~\ref{thm:main},
which we hope to prove in future work. Moreover, similar to~\eqref{eq:mod-inv}, there is a linear transformation on the span of 4-point spherical conformal blocks called the \emph{fusion transformation} which is responsible for the \emph{crossing symmetry} of the conformal bootstrap for the 4-point sphere; see~\cite[Eq (1.16)]{GKRV20}. 
We also hope to establish the fusion transformation and use it to study the analytic continuation of conformal blocks. 
As a long term goal, we hope to extend our GMC framework to conformal blocks on a genus-$g$ surface with $n$ points and to explore their symmetries predicted in~\cite{verlinde-fusion, ms-groupoid, PT99}.

\subsection*{Acknowledgments}
We thank K. Aleshkin, G. Baverez, J. Dub\'edat, A. Litvinov, R. Rhodes, and V. Vargas
for helpful discussions. We also thank C. Garban, R. Rhodes, and V. Vargas for organizing a conference on
probability and QFT on the beautiful island of Porquerolles where much of this work was discussed. GR
was supported by an NSF postdoctoral fellowship DMS-1902804. XS was
supported by a Junior Fellow award from the Simons Foundation and by NSF DMS-1811092, DMS-2027986, and Career Award DMS-2046514. YS was supported by a Junior Fellow award from the Simons Foundation and NSF
DMS-1701654, DMS-2039183, and DMS-2054838.

\section{Probabilistic construction of the conformal block} \label{sec:block-def}

The main purpose of this section is to give the precise definition of the probabilistic conformal block $\cG^\alpha_{\gamma, P}(q)$ based on GMC, and prove its analytic continuation properties  prescribed by Theorem~\ref{thm:main}. We also reduce Theorem~\ref{thm:main} to a variant Theorem~\ref{thm:nek-block} whose proof occupies the rest of the paper.  
We will use the following notations. 
Let $\CC$ be the complex plane. If $K\subset U\subset \CC$ and $U$ is open, we say that  $U$ is a complex neighborhood of $K$.
Let $\NN$ be the set of positive integers and $\NN_0=\NN\cup \{0\}$. 
Let $\bbH$ be the upper half plane and $\D$ be the unit disk. For $\tau\in \bbH$, let $q=q(\tau)=e^{\ii \pi \tau}\in \D$ so that $\tau\in \ii\RR_{>0}$ implies $q\in(0,1)$. 
We recall the Jacobi theta function $\Theta_\tau$ and the Dedekind  eta function $\eta$ from Appendix~\ref{sec:theta}.
Throughout Sections~\ref{sec:block-def}---\ref{sec:shift}, we view $\gamma\in (0,2)$ as a fixed parameter and set  $Q = \frac{\gamma}{2} + \frac{2}{\gamma}$ as in~\eqref{eq:cQ}.

\subsection{Definition of Gaussian multiplicative chaos} \label{sec:gmc-def}
We begin by introducing Gaussian multiplicative chaos (GMC), the probabilistic object which enables our construction. Let  $\{a_n\}_{n \geq 1}$, $\{b_n\}_{n \geq 1}$, $\{a_{n, m}\}_{n, m \geq 1}$, $\{b_{n, m}\}_{n, m \geq 1}$  be sequences of i.i.d. standard real Gaussians. For  $\tau \in \bbH$,
we define the Gaussian fields $Y_\tau$, $Y_\infty$ and $F_{\tau}$ on $[0, 1]$ as follows: 
\begin{align} 
Y_{\tau}(x) &:= Y_{\infty}(x) + F_{\tau}(x); \label{eq:y-tau-def}  \\ \label{eq:y-inf-def}
Y_{\infty}(x) &:= \sum_{n \geq 1} \sqrt{\frac{2}{n}} \Big( a_n \cos(2 \pi n x) + b_n \sin(2 \pi n x) \Big); \\ \label{eq:f-tau-def}
F_{\tau}(x) &:=  2 \sum_{n,m \geq 1} \frac{q^{nm}}{\sqrt{n}} \Big( a_{n,m} \cos(2 \pi n x) + b_{n,m} \sin(2 \pi n x) \Big). 
\end{align}
The series~\eqref{eq:y-inf-def}  converges almost surely in the  Sobolev space $H^{-s}([0,1])$ with $s>0$; see  \cite[Section 4.2]{Dub-JAMS}.
The series~\eqref{eq:f-tau-def}  converges almost surely in the uniform topology  thanks to  the term $q^{nm}$, hence $F_{\tau}$ is continuous on $[0,1]$. Note that by construction, $F_{\tau}$ and  $Y_{\infty}$ are independent.


Both $Y_\infty$ and $Y_\tau$ are examples of log-correlated fields, whose covariance kernels have a logarithmic singularity along the diagonal.
Although $Y_\infty$ is not pointwise defined, we use the intuitive notation $\EE[Y_{\infty}(x) Y_{\infty}(y)] $ to represent its covariance kernel. Namely, for appropriate  test functions $f_1, f_2$  
\begin{multline*}
\int_0^1 \int_0^1 dx dy f_1(x) f_2(y)  \EE[Y_{\infty}(x) Y_{\infty}(y)]=\\
\EE\left[ \left(  \int_0^1 dx  f_1(x) Y_{\infty}(x) \right) \left(\int_0^1 dy f_2(y) Y_{\infty}(y) \right) \right].
\end{multline*} 
See Appendix \ref{sec:gmc-app} for more background on log-correlated fields.

\begin{lemma} \label{lem:field-covar}
Let $\tau\in \ii\RR_{>0}$. The covariance kernels of $Y_\infty$ and $Y_\tau$ are:
\begin{align}
\EE[Y_{\infty}(x) Y_{\infty}(y)] &=  -2 \log |2 \sin(\pi (x - y))|, \label{eq:cov1}\\
\EE[Y_{\tau}(x) Y_{\tau}(y)] &= - 2 \log |\Theta_\tau(x - y)| + 2 \log|q^{1/6} \eta(q)| \label{eq:cov2}.
\end{align}
Furthermore, \xin{with probability 1 for each $x\in[0,1]$ the $q$-power series in~\eqref{eq:f-tau-def} defining $F_\tau(x)$ is convergent for $|q|<1$.}
Finally for $\tau\in \ii \RR_{>0}$,
\begin{equation}\label{eq:cov3}
\EE[F_{\tau}(x)^2] = 4 \sum_{n,m \geq 1} \frac{q^{2nm}}{n} = - 4 \log | q^{-1/12} \eta(q)|  \qquad \textrm{for each }  x\in[0,1].
\end{equation}
\end{lemma}
 
\begin{proof}
For~\eqref{eq:cov1}, notice that $\EE[Y_\infty(x) Y_\infty(y)]$ equals
\begin{multline*}
\EE[Y_\tau(x) Y_\infty(y)]= \sum_{n \geq 1} \frac{2}{n} \cos(2\pi n (x - y)) = - 2 \log |2 \sin (\pi (x - y))|.
\end{multline*}
For~\eqref{eq:cov2}, notice that
\begin{align*}
\EE&[Y_\tau(x) Y_\tau(y)] = \EE[Y_\infty(x) Y_\infty(y)] + \sum_{n, m \geq 1} \frac{4 q^{2nm}}{n} \cos(2\pi n (x - y))\\
&= - 2 \log |\Theta_\tau(x - y)| + 2 \log|q^{1/6} \eta(q)|. 
\end{align*}
\guillaume{In the last equality we have used the formulas for $\Theta_{\tau}$ and $\eta$ recalled in Appendix \ref{subsec:special}. Finally}
\xin{
the last two assertions on $F_{\tau}$ are immediate from the definition \eqref{eq:f-tau-def}.}
\end{proof}

\begin{remark}
The identity \eqref{eq:cov2} does not hold if $\tau\notin \ii\RR_{>0}$. 
Indeed,  $\EE[Y_{\tau}(x) Y_{\tau}(y)]$ is analytic in $\tau\in \bbH $ while the right hand side of~\eqref{eq:cov2} is not.
\end{remark}

We now introduce the Gaussian Multiplicative Chaos (GMC) measures $e^{\frac{\gamma}{2} Y_\infty(x)} dx$ and
$e^{\frac{\gamma}{2}Y_\tau(x)} dx$ on $[0,1]$ for $\gamma \in (0,2)$ and $\tau \in \ii\RR_{>0}$. Because the fields $Y_\infty(x)$
and $Y_\tau(x)$ are generalized functions that are not pointwise defined,  we use the following regularization
procedure. For $N \in \NN$, define 
\begin{align*}
Y_{\infty, N}(x) &= \sum_{n = 1}^N \sqrt{\frac{2}{n}} \Big( a_n \cos(2 \pi n x) + b_n \sin(2 \pi n x) \Big),\\
Y_{\tau, N}(x) &= Y_{\infty, N}(x) + 2\sum_{n, m = 1}^{\infty} \frac{q^{nm}}{\sqrt{n}} \Big(a_{n, m} \cos(2\pi n x) + b_{n,m}\sin(2\pi nx)\Big).
\end{align*}
\begin{define}[GMC] \label{def:gmc}
For $\gamma \in (0,2)$ and $\tau \in \ii\RR_{>0}$, we define 
\begin{align*}
e^{\frac{\gamma}{2} Y_\infty(x)} dx &:= \lim_{N \to \infty} e^{\frac{\gamma}{2} Y_{\infty, N}(x) - \frac{\gamma^2}{8} \EE[Y_{\infty, N}(x)^2]} dx;\\
e^{\frac{\gamma}{2} Y_\tau(x)} dx &:= \lim_{N \to \infty} e^{\frac{\gamma}{2} Y_{\tau, N}(x) - \frac{\gamma^2}{8} \EE[Y_{\tau, N}(x)^2]} dx.
\end{align*}
Here the convergence is in probability under the weak topology of measures on $[0,1]$.
We use $e^{\frac{\gamma}{2} Y_\infty(x)}dx$ and $e^{\frac{\gamma}{2} Y_\tau(x)}dx$ as convenient abuse of notation for the random measures obtained by this limiting procedure.
\end{define}

 In Definition~\ref{def:gmc}  $\gamma \in (0,2)$ is required for the regularization procedure to give non-trivial limits. Note that due to our normalization, the measures $e^{\frac{\gamma}{2} Y_\tau(x)} dx$ and $e^{ \frac{\gamma}{2} F_\tau(x) } e^{\frac{\gamma}{2} Y_{\infty}(x) } dx$
do not coincide.  Instead, by Definition~\ref{def:gmc} and \guillaume{Lemma~\ref{lem:field-covar}}, we have
\begin{equation}\label{eq:normalization}
e^{\frac{\gamma}{2} Y_\tau(x)} dx =e^{- \frac{\gamma^2}{8}\EE[F_\tau(0)^2]} e^{ \frac{\gamma}{2} F_\tau(x) } e^{\frac{\gamma}{2} Y_{\infty}(x) } dx \quad \textrm{for }\tau\in \ii\RR_{>0}.
\end{equation}
\xin{Although $e^{\frac{\gamma}{2} Y_\tau(x)}dx$ makes sense for all $\tau \in \mathbb{H}$, we restricted to $\tau\in \ii\RR_{>0}$  because this is the range relevant to the definition of the probabilistic conformal block below.  For more background on GMC see \cite{RV-GMC,NB-GMC} and our Appendix~\ref{sec:gmc-app}.}

\subsection{Definition and analyticity of $\cG^\alpha_{\gamma, P}(q)$}\label{subsec:GMC-block}
We are ready to  define the probabilistic conformal block $\cG^\alpha_{\gamma, P}(q)$.   
\begin{define}\label{def:conf-block}
For $ \alpha \in (-\frac{4}{\gamma},Q)$, $\tau \in \ii\RR_{>0}$, $q=e^{\ii \pi \tau}\in(0,1)$, and $P \in \RR$, we define  the \notion{probabilistic $1$-point toric conformal block} $ \cG_{\gamma, P}^{\alpha}(q)$ by
\xin{\begin{equation}\label{eq:def-G-block}
\cG_{\gamma, P}^{\alpha}(q) \defeq \frac{1} {Z_{\gamma, P}^{\alpha}(q)}
\EE\left[\left(\int_0^1  |\Theta_\tau(x)|^{-\frac{\alpha\gamma}{2}} e^{\pi \gamma P x} {e^{\frac{\gamma}{2} Y_\tau(x)}dx} \right)^{-\frac{\alpha}{\gamma}} \right],
\end{equation}
where the normalization $Z_{\gamma, P}^{\alpha}(q)$ is given by
\begin{multline}\label{eq:Z-normalizatoin}
Z_{\gamma, P}^{\alpha}(q) := q^{\frac{1}{12}(\frac{\alpha \gamma}{2} + \frac{\alpha^2}{2} - 1)} \times 
\eta(q)^{ \alpha^2 + 1 - \frac{\alpha \gamma}{2}} \\
\!\!\!\! \times  \EE\left[\left(\int_0^1 (2\sin(\pi x))^{-\frac{\alpha \gamma}{2}} e^{\pi \gamma P x} {e^{\frac{\gamma}{2} Y_\infty(x)}dx} \right)^{-\frac{\alpha}{\gamma}}\right].
\end{multline}}
\end{define}
Note that by Lemma~\ref{lem:GMC-moment}, for  $\alpha$, $q$, $P$  as in Definition~\ref{def:conf-block} we have
\begin{equation}\label{eq:moment1}
\EE\left[\left(\int_0^1  |\Theta_\tau(x)|^{-\frac{\alpha\gamma}{2}} e^{\pi \gamma P x} e^{\frac{\gamma}{2} Y_\tau(x)}dx \right)^{-\frac{\alpha}{\gamma}} \right]<\infty.
\end{equation}

The following proposition shows that $\cG_{\gamma, P}^{\alpha}(q)$ has the desired analytic continuation property  prescribed by Theorem~\ref{thm:main}.
\begin{prop}\label{prop:analytic-G}
For $\gamma \in (0, 2)$, $\alpha \in (-\frac{4}{\gamma}, Q)$,  and $P \in \RR$, the probabilistic conformal block  $\cG^\alpha_{\gamma, P}(q)$ admits an analytic extension on  a complex neighborhood of $q=0$. 	Moreover,  when $\alpha \in [0,Q)$, the analytic extension of $\cG^\alpha_{\gamma, P}(q)$ exists on an open set containing $ \{ z \in \mathbb{C} : |z| < \frac{1}{2} \} \cup [0,1)$.
\end{prop}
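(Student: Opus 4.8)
The plan is to establish analyticity in $q$ by exhibiting $\cG^\alpha_{\gamma,P}(q)$ as an expectation of a random function that is, almost surely, analytic in $q$, and then using a uniform integrability bound to pull analyticity through the expectation via Morera's theorem. The starting point is the decomposition~\eqref{eq:normalization} of the field $Y_\tau$ as $Y_\tau = Y_\infty + F_\tau$ together with the normalization subtraction, which lets us write
\begin{equation*}
\int_0^1 \Theta_\tau(x)^{-\frac{\alpha\gamma}{2}} e^{\pi\gamma P x} e^{\frac{\gamma}{2} Y_\tau(x)}\,dx
= e^{-\frac{\gamma^2}{8}\EE[F_\tau(0)^2]} \int_0^1 \Theta_\tau(x)^{-\frac{\alpha\gamma}{2}} e^{\pi\gamma P x} e^{\frac{\gamma}{2} F_\tau(x)} e^{\frac{\gamma}{2} Y_\infty(x)}\,dx.
\end{equation*}
By Lemma~\ref{lem:F}, for each fixed $x$ the field $F_\tau(x)$ is a.s.\ analytic in $q\in\D$ (it is a power series in $q$ whose coefficients are the Gaussian sums, with a.s.\ radius of convergence at least $1$), and $F_\tau$ is independent of $Y_\infty$. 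Similarly $\Theta_\tau(x)$ and $e^{-\frac{\gamma^2}{8}\EE[F_\tau(0)^2]}= |q^{-1/12}\eta(q)|^{\gamma^2/2}$ (after restoring the holomorphic version $q^{-1/12}\eta(q)=\prod(1-q^{2n})$ in place of the real absolute value, as indicated in the remark following Lemma~\ref{lem:field-covar}) extend holomorphically to a neighborhood of $q=0$; in fact $\Theta_\tau(x)$ and $\eta(q)$ are holomorphic on all of $\D$. The prefactor $q^{\frac{1}{12}(\cdots)}\eta(q)^{(\cdots)}$ in $Z$ is likewise holomorphic and nonvanishing on $\D$.

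Next I would show that, conditionally on $Y_\infty$, the integral $I(q):=\int_0^1 \Theta_\tau(x)^{-\frac{\alpha\gamma}{2}} e^{\pi\gamma P x} e^{\frac{\gamma}{2} F_\tau(x)} e^{\frac{\gamma}{2} Y_\infty(x)}\,dx$ is a holomorphic function of $q$ on a (random) neighborhood of $0$, because $q\mapsto e^{\frac{\gamma}{2}F_\tau(x)}$ is holomorphic, bounded locally uniformly in $x$ and $q$ in terms of $\sup_x |F_\tau(x)|$, and the integral against the fixed finite measure $e^{\frac{\gamma}{2}Y_\infty(x)}dx$ (a.s.\ finite by GMC theory) preserves holomorphy — this is a routine dominated-convergence/Morera argument. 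One then raises $I(q)$ to the power $-\frac{\alpha}{\gamma}$; since $I(0)=\int_0^1 [-2\sin(\pi x)]^{-\frac{\alpha\gamma}{2}} e^{\pi\gamma Px} e^{\frac{\gamma}{2}Y_\infty(x)}dx > 0$ a.s., the branch of $I(q)^{-\alpha/\gamma}$ is well-defined and holomorphic near $q=0$ with the phase convention~\eqref{eq:phase1}. Thus a.s.\ the integrand of $\cG^\alpha_{\gamma,P}(q)$ — that is $\frac{1}{Z}$ times this power — is holomorphic near $0$. To exchange $\EE$ and the contour integral in Morera, I need a dominating bound: on a fixed small disk $|q|\le r_0$, I would bound $|I(q)|^{-\alpha/\gamma}$ in terms of the $Y_\infty$-integral at $q=0$ times $\exp(C\sup_x|F_\tau(x)|)$, and invoke Lemma~\ref{lem:GMC-moment} (which gives~\eqref{eq:moment1}) together with Gaussian concentration for $\sup_x|F_\tau(x)|$ to get an integrable majorant. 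This yields analyticity on a complex neighborhood of $q=0$.

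For the second claim — analyticity on $\{|q|<\frac12\}$ and on a complex neighborhood of $[0,1)$ when $\alpha\in[0,Q)$ — the same scheme applies but the control of $I(q)$ away from $0$ is more delicate: as $|q|\to 1$ the field $F_\tau$ and the function $\Theta_\tau(x)^{-\alpha\gamma/2}$ can blow up, and crucially one must ensure $I(q)\ne 0$ so that the power $I(q)^{-\alpha/\gamma}$ stays holomorphic. This is where the restriction $\alpha\ge 0$ enters: when $\alpha\in[0,Q)$ the exponent $-\frac{\alpha\gamma}{2}\le 0$, so $|\Theta_\tau(x)|^{-\alpha\gamma/2}$ is bounded above, and $|\Theta_\tau(x)|$ only vanishes at $x\in\{0,1\}$ where the factor is integrable; more importantly, for real $q\in(0,1)$ the quantity $I(q)$ is manifestly a positive real number (all factors in the integrand are positive), so $I$ is holomorphic and nonvanishing on a neighborhood of $[0,1)$ obtained by a compactness argument over $[0,1-\eps]$ for each $\eps$, with uniform moment bounds from Lemma~\ref{lem:GMC-moment} valid on all of $q\in(0,1)$. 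Writing $I(q)$ as a power series in $q$ with coefficients that are $Y_\infty$-integrals, one checks the series converges for $|q|<1$ a.s.; the threshold $\frac12$ in the statement presumably comes from requiring a uniform-in-$\omega$ lower bound on the convergence radius via a crude estimate $\sum |q|^{2nm} \le \sum m |q|^{2m}$, controlled when $|q|^2<\tfrac14$, i.e.\ $|q|<\tfrac12$, after which Gaussian tail bounds on the coefficients give integrable domination on $\{|q|\le r\}$ for every $r<\frac12$. \textbf{The main obstacle} I anticipate is precisely this last point: producing a dominating function for the Morera exchange that is uniform over a complex disk of radius approaching $\frac12$ (not merely over a random neighborhood of $0$), which requires carefully combining the a.s.\ holomorphy of $F_\tau$, Gaussian concentration for $\sup_x |F_\tau(x)|$ on the relevant $q$-range, the non-degeneracy $I(q)\ne 0$ for $\alpha\ge 0$, and the finite-moment input of Lemma~\ref{lem:GMC-moment} — and then patching the neighborhoods of $[0,1-\eps]$ for all $\eps$ into a single complex neighborhood of $[0,1)$.
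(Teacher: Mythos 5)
Your opening move---splitting $Y_\tau = Y_\infty + F_\tau$ via~\eqref{eq:normalization}, observing that the $q$-dependence enters through the a.s.\ holomorphic field $F_\tau$ and explicit theta/eta prefactors, and then pushing analyticity through the expectation with an integrable majorant---matches the paper's starting point (its Lemma~\ref{lem:analytic-basic} is exactly your ``Morera plus domination'' step). The gap is in the majorant. You keep $e^{\frac{\gamma}{2}F_\tau(x)}$ inside the GMC integral and propose to dominate $|I(q)|^{-\alpha/\gamma}$ by $I(0)^{-\alpha/\gamma}\exp\bigl(C\sup_x|F_\tau(x)|\bigr)$. For $\alpha\in(0,Q)$---precisely the range where the proposition claims the most---this requires a \emph{lower} bound on $|I(q)|$, and no such bound follows from controlling $\sup_x|F_\tau(x)|$: for complex $q$ the integrand $\Theta_\tau(x)^{-\alpha\gamma/2}e^{\frac{\gamma}{2}F_\tau(x)}$ carries $x$-dependent phases, so the modulus of the integral can be far smaller than the integral of moduli, and on any fixed disk $\{|q|\le r_0\}$ the random analytic function $I$ may vanish with positive probability. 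In that case $I(q)^{-\alpha/\gamma}$ is not even a.s.\ analytic on the fixed disk (no branch), so Lemma~\ref{lem:analytic-basic} cannot be applied, let alone with a uniform moment bound. You flag the nonvanishing of $I(q)$ as an issue only ``away from $0$,'' but it already obstructs the argument on every deterministic neighborhood of $0$.

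The paper's resolution is a second application of Girsanov's theorem (in the proof of Lemma~\ref{lem:hatA}): writing $F_\tau$ as in~\eqref{eq:Ftau} and shifting the law of $Y_\infty$ conditionally on $(\alpha_{n,m},\beta_{n,m})$ removes $e^{\frac{\gamma}{2}F_\tau(x)}$ from the integral altogether, at the price of the explicit Radon--Nikodym factor $\cQ(q)$ of~\eqref{eq:Qq}. After this step the GMC integral is $q$-independent and real positive, so its negative moment is handled once and for all by Lemma~\ref{lem:GMC-moment}, and the only $q$-dependent object is $e^{\frac{\alpha}{2}F_\tau(0)}\cQ(q)$, which enters to the \emph{first} power; H\"older's inequality then reduces everything to showing $\EE[|\cQ(q)|^{p_2}]<\infty$, a concrete Gaussian computation. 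The thresholds in the statement come from that computation (the condition $\Tr(M_n)<\frac12$, i.e.\ $\sum_m|q|^{2nm}<\frac12$ for $|q|<\frac12$, and $\Im(q^{nm})=0$ for $q\in[0,1)$), not from the radius of convergence of $F_\tau$ in $q$, which is a.s.\ all of $\D$. Without this Girsanov step, or some substitute that pins down the phase of the integrand so that, say, $\Re\,I(q)>0$ throughout the claimed domain, your plan does not close.
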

Before proving Proposition~\ref{prop:analytic-G}, we introduce a variant of $\cG^\alpha_{\gamma, P}(q)$ which is more convenient. 
For $\alpha \in (-\frac{4}{\gamma}, Q)$, $q \in (0, 1)$, and $P\in\RR$, define	
\xin{\begin{multline} \label{eq:ca-def}
\cA^q_{\gamma, P}(\alpha) := q^{\frac{1}{12}(- \alpha \gamma - \frac{2\alpha}{\gamma} + 2)} \times 
\eta(q)^{\alpha \gamma + \frac{2\alpha}{\gamma} - \frac{3}{2}\alpha^2 - 2} \times e^{\ii\alpha^2\pi/2} \\
\!\!\!\! \times  \EE\left[\Big(\int_0^1 |\Theta_\tau(x)|^{-\frac{\alpha\gamma}{2}} e^{\pi \gamma P x} e^{\frac{\gamma}{2} Y_\tau(x)} dx \Big)^{-\frac{\alpha}{\gamma}} \right].
\end{multline}}%
Here we use the notation $\cA^q_{\gamma, P}(\alpha)$ instead of $\cA^\alpha_{\gamma, P}(q)$ because in later sections we mostly view $\cA^q_{\gamma, P}(\alpha)$
as a function of $\alpha$ with $q,\gamma,P$ being parameters. 
\begin{lemma} \label{lem:a-props}
Fix $\gamma\in(0,2)$ and $P \in \mathbb{R}$. The following hold:
\begin{itemize}
\item[(a)]  For $\alpha \in (-\frac{4}{\gamma}, Q)$, the function $q\mapsto \cA^q_{\gamma, P}(\alpha)$ 
admits an analytic extension on a complex neighborhood of $q=0$. Moreover, when $\alpha \in [0,Q)$,
the analytic extension exists  on an open set containing $ \{ z \in \mathbb{C} : |z| < \frac{1}{2} \} \cup [0,1)$.

\item[(b)] There exists an open set in $\CC^2$ containing
$\{(\alpha, q) : \alpha \in (-\frac{4}{\gamma}, Q) \textrm{ and } q = 0\}$ on which
$ (\alpha,q)  \mapsto \cA^q_{\gamma, P}(\alpha)$ admits an analytic extension. As a consequence, for $\alpha\in (-\frac{4}{\gamma}, Q)$, consider $\{\cA_{\gamma, P, n}(\alpha)\}_{n\ge 0}$ defined by
\begin{equation} \label{eq:a-exp}
\cA^q_{\gamma, P}(\alpha) = \sum_{n = 0}^\infty \cA_{\gamma, P, n}(\alpha) q^n,\qquad \textrm{for } |q| \textrm{ sufficiently small}.
\end{equation}
Then for $n\in\NN_0$,  $\alpha\mapsto \cA_{\gamma, P,n}(\alpha)$ can be analytically extended
to a complex neighborhood of $(-\frac{4}{\gamma}, Q)$.

\end{itemize}
\end{lemma}
We postpone the proof of Lemma~\ref{lem:a-props} to Section~\ref{subsec:analytic-bl} and proceed to
derive Proposition~\ref{prop:analytic-G} from it. Define normalized versions of $\cA^q_{\gamma, P}$
and $\cA_{\gamma, P, n}$ by
\begin{equation}\label{eq:Atitlde}
\wcA^q_{\gamma, P}(\alpha) := \frac{\cA^q_{\gamma, P}(\alpha)}{\cA_{\gamma, P, 0}(\alpha)} \qquad \text{ and }
\qquad \wcA_{\gamma, P, n}(\alpha) := \frac{\cA_{\gamma, P, n}(\alpha)}{\cA_{\gamma, P, 0}(\alpha)}.
\end{equation}
\begin{proof}[Proof of Proposition~\ref{prop:analytic-G} given Lemma~\ref{lem:a-props}]

Eq~\eqref{eq:ca-def}, \eqref{eq:a-exp}, and~\eqref{eq:Atitlde} give
\revise{\begin{align}
Z_{\gamma, P}^{\alpha}(q)&= q^{\frac{1}{12}(\frac{\alpha \gamma}{2} + \frac{\alpha^2}{2} - 1)} \eta(q)^{ \alpha^2 + 1 - \frac{\alpha \gamma}{2}} \guillaume{e^{- \ii \alpha ^2 \pi/2}} \mathcal{A}_{\gamma,P,0}(\alpha),\label{eq:link-Z-A}\\
\cG_{\gamma, P}^{\alpha}(q) &=  \left(q^{-\frac{1}{12}}\eta(q)\right)^{1 - \alpha(Q - \frac{\alpha}{2})} \wcA^q_{\gamma, P}(\alpha). \label{eq:block-def}
\end{align}}%
Lemma~\ref{lem:eta} yields that $\left(q^{-\frac{1}{12}}\eta(q)\right)^{1 - \alpha(Q - \frac{\alpha}{2})}$ is a convergent power series for $|q|<1$. Using~\eqref{eq:block-def} and Lemma~\ref{lem:a-props} (a), we get Proposition~\ref{prop:analytic-G}.
\end{proof}

\begin{remark}\label{rmk:Z}
\xin{In Proposition \ref{prop:DenominatorBlocks}
we will give an explicit formula for $\mathcal{A}_{\gamma, P, 0}(\alpha)$, which by~\eqref{eq:link-Z-A} yields an explicit formula  for $Z_{\gamma, P}^{\alpha}(q)$ in Definition \ref{def:conf-block}.}
\end{remark}

\subsection{Nekrasov partition function and Zamolodchikov's recursion}
We now define the $1$-point toric conformal block in physics  via the AGT correspondence. The corresponding Nekrasov partition function is the formal $q$-series
\begin{equation} \label{eq:nek-def}
\cZ^\alpha_{\gamma, P}(q) := 1 + \sum_{n = 1}^\infty \cZ_{\gamma, P, n}(\alpha) q^{2n},
\end{equation}
where
\begin{equation} \label{eq:nek-exp-def}
\cZ_{\gamma, P, n}(\alpha) := \sum_{\substack{(Y_1, Y_2) \text{ Young diagrams} \\ |Y_1| + |Y_2| = n}}
\prod_{i, j = 1}^2 \prod_{s \in Y_i} \frac{(E_{ij}(s, P) - \alpha)(Q - E_{ij}(s, P) - \alpha)}{E_{ij}(s, P) (Q - E_{ij}(s, P))}
\end{equation}
for
\begin{equation} \label{eq:nek-eqn}
E_{ij}(s, P) := \begin{cases} \ii P - \frac{\gamma}{2} H_{Y_j}(s) + \frac{2}{\gamma} (V_{Y_i}(s) + 1) & i = 1, j = 2 \\
- \frac{\gamma}{2} H_{Y_j}(s) + \frac{2}{\gamma} (V_{Y_i}(s) + 1) & i = j \\
-\ii P - \frac{\gamma}{2} H_{Y_j}(s) + \frac{2}{\gamma} (V_{Y_i}(s) + 1) & i = 2, j = 1. \end{cases}
\end{equation}
Here, we draw a Young diagram $Y$ for a partition $\lambda$ in the first quadrant
with unit squares so the top right corner of each square has positive coordinates. In (\ref{eq:nek-eqn}),
for a unit square $s$ with top right corner $(i, j)$, we define $H_Y(s) = \lambda_j' - i$ and
$V_Y(s) = \lambda_i - j$, where $\lambda'$ is the transposed partition to $\lambda$. 
\begin{define}\label{def:block-series}
The \notion{$1$-point toric conformal block} is the formal $q$-series 
\begin{equation} \label{eq:block-def-nek}
\cF^\alpha_{\gamma, P}(q) := [q^{-\frac{1}{12}}\eta(q)]^{1 - \alpha(Q - \frac{\alpha}{2})} \cZ^\alpha_{\gamma, P}(q).
\end{equation}
\end{define}%
Recall from Lemma~\ref{lem:eta} that for each $\beta\in\RR$,
$[q^{-\frac{1}{12}} \eta(q))]^\beta$ is a power series in $q$ convergent for $|q|<1$. So the right side of~\eqref{eq:block-def-nek} indeed is a power series.  We will not use the precise expression~\eqref{eq:nek-exp-def} for $\cZ^\alpha_{\gamma, P}(q)$
beyond the fact that $\cZ_{\gamma, P, k}(\alpha)$ is a rational function in $P,Q,\alpha$.

We now review the Zamolodchikov's
recursion characterizing  $\cF^\alpha_{\gamma, P}(q)$.  Let
\begin{equation} \label{eq:rmn-def}
R_{\gamma, m, n}(\alpha) := \frac{2 \prod\limits_{j = -m}^{m - 1} \prod\limits_{l = -n}^{n - 1} (Q - \frac{\alpha}{2} + \frac{j \gamma}{2} + \frac{2l}{\gamma})}{\prod\limits_{(j, l) \in S_{m, n}} (\frac{j\gamma}{2} + \frac{2l}{\gamma})}
\end{equation}
for $S_{m, n} := \{(j, l) \in \ZZ^2 \mid 1 - m \leq j \leq m, 1 - n \leq l \leq n, (j, l) \notin \{(0, 0), (m, n)\}\}$ and
\begin{equation} \label{eq:pmn-def}
P_{m, n} := \frac{2 \ii n}{\gamma} + \frac{\ii \gamma m}{2}.
\end{equation}
\begin{prop}[Zamolodchikov's recursion] \label{prop:torus-z-recursion}
The formal $q$-series  $\cF^\alpha_{\gamma, P}(q)$ defined by~\eqref{eq:block-def-nek} satisfies
\begin{equation} \label{eq:z-recur}
\cF^\alpha_{\gamma, P}(q) = \sum_{n, m = 1}^\infty
q^{2mn} \frac{R_{\gamma, m, n}(\alpha)}{P^2 - P_{m, n}^2} \cF^\alpha_{\gamma, P_{-m, n}}(q)
+ q^{\frac{1}{12}} \eta(q)^{-1}.
\end{equation}
\end{prop}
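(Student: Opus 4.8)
The plan is to recover $\cF^\alpha_{\gamma,P}(q)$, working order by order in $q$, from its poles and its value at $P^2=\infty$ viewed as a function of $P^2$: this is Zamolodchikov's classical strategy \cite{Zam84}, adapted to the torus in \cite{Pog09,HJS09,FL10}, and for the key step I would ultimately invoke those references rather than reprove it. First I would observe that each $q$-coefficient of $\cF^\alpha_{\gamma,P}(q)$ is a rational function of $P^2$: in \eqref{eq:block-def-nek} the prefactor $(q^{-1/12}\eta(q))^{1-\alpha(Q-\frac\alpha2)}$ is a $P$-independent power series in $q$, and in the Nekrasov sum \eqref{eq:nek-exp-def} the variable $P$ enters only through the factors $E_{12}(s,P)$ and $E_{21}(s,P)$, which are linear in $\ii P$ and occur symmetrically under $P\mapsto -P$. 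Hence at each order in $q$ the claimed identity \eqref{eq:z-recur} is an equality of rational functions of $P^2$, and it suffices to match (i) the location of the poles, (ii) the residues, and (iii) the limit as $P^2\to\infty$.

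The core of the argument is the residue computation. A denominator factor in \eqref{eq:nek-exp-def} vanishes only when $E_{ij}(s,P)\in\{0,Q\}$ for some box $s$ and some $i\ne j$; solving these linear-in-$P$ equations one finds that the poles of the $q^{2k}$-coefficient are simple and located at $P^2=P_{m,n}^2$ with $mn\le k$, which also explains why the $(m,n)$ term in \eqref{eq:z-recur} opens at order $q^{2mn}$. Since the $\eta$-prefactor in \eqref{eq:block-def-nek} is common to $\cF^\alpha_{\gamma,P}$ and to $\cF^\alpha_{\gamma,P_{-m,n}}$ (only $P$, not $\alpha$, is shifted in \eqref{eq:z-recur}), matching residues at $P^2=P_{m,n}^2$ is equivalent to the identity $\Res_{P^2=P_{m,n}^2}\cZ^\alpha_{\gamma,P}(q) = q^{2mn}\,R_{\gamma,m,n}(\alpha)\,\cZ^\alpha_{\gamma,P_{-m,n}}(q)$. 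To prove this one isolates, inside the double sum over pairs of Young diagrams $(Y_1,Y_2)$, the minimal box configuration responsible for a given pole: after cancellation of the accompanying numerator factors this piece contributes exactly the constant $R_{\gamma,m,n}(\alpha)$ of \eqref{eq:rmn-def} together with the size shift accounting for $q^{2mn}$, while the sum over the complementary boxes --- under the substitution $P\mapsto P_{-m,n}$, which translates every $E_{ij}$ uniformly --- reassembles verbatim into $\cZ^\alpha_{\gamma,P_{-m,n}}(q)$. In the Virasoro language this is the decoupling of the singular vector at level $mn$ in the degenerate Verma module, with $R_{\gamma,m,n}(\alpha)$ its Shapovalov-type norm and the quotient block the block at the shifted momentum $P_{-m,n}$. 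I expect this combinatorial/representation-theoretic identity to be the main obstacle; it is precisely what is carried out in \cite{FL10} (and, on the Virasoro side, in \cite{HJS09}).

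Finally I would treat the limit $P^2\to\infty$: in \eqref{eq:nek-exp-def} every factor with $i\ne j$ tends to $1$, so $\lim_{P\to\infty}\cF^\alpha_{\gamma,P}(q)$ is the product of the $\eta$-prefactor with the $q$-series built from the $i=j$ factors alone, and a direct evaluation of that series gives $q^{1/12}\eta(q)^{-1}$; on the right-hand side of \eqref{eq:z-recur} the sum vanishes as $P^2\to\infty$ because each $P_{-m,n}$ is a fixed number, leaving the same constant $q^{1/12}\eta(q)^{-1}$. With (i)--(iii) established, the difference of the two sides of \eqref{eq:z-recur} is, at each order in $q$, a rational function of $P^2$ with no poles and vanishing limit at infinity, hence identically zero. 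Equivalently, one may phrase the conclusion through the uniqueness statement of \cite{Zam84,Zam87,HJS09}: \eqref{eq:z-recur} admits a unique formal-$q$-series solution with coefficients rational in $P^2$, and $\cF^\alpha_{\gamma,P}(q)$ is that solution.
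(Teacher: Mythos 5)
Your proposal is correct and follows essentially the same route as the paper: the paper proves Proposition~\ref{prop:torus-z-recursion} by citing the elementary pole/residue analysis of the Nekrasov sum in \cite[Section~2]{FL10} (offering only a sketch of the alternative Virasoro-algebra derivation in Appendix~\ref{sec:conf-block}), and your outline --- rationality in $P^2$, pole locations and residues from the vanishing of $E_{ij}(s,P)$, the large-$P$ limit, and the Liouville-type conclusion --- is exactly the \cite{FL10} strategy, with the key combinatorial residue identity deferred to the same references the paper relies on.
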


Proposition \ref{prop:torus-z-recursion} is a concrete identity in terms of the rational functions $\cZ_{\gamma, P, k}(\alpha)$
defined in~\eqref{eq:nek-exp-def}. 
This is proven  rigorously in an elementary way in \cite[Section~2]{FL10}, although overall  \cite{FL10} is a  theoretical physics paper.    
The fact that $\cF^\alpha_{\gamma, P}(q)$ from  Definition~\ref{def:block-series} agrees with
the original definition  in \cite{BPZ84} 
via the Virasoro algebra was also proven in~\cite{FL10,N16}.  


\begin{remark}
We parameterize the conformal block using $P$ and $\alpha$ because these
are convenient coordinates for our GMC expressions.  In mathematical physics,
it is more common to represent it as a function of \emph{conformal dimension}
$\Delta_\alpha = \frac{\alpha}{2}(Q - \frac{\alpha}{2})$ corresponding to
\emph{momentum} $\alpha$ and the \emph{intermediate dimension} $\Delta = \frac{1}{4}(Q^2 + P^2)$
corresponding to momentum $Q + \ii P$. 
\end{remark}

\subsection{A one-step reduction}Recall $\wcA^q_{\gamma, P}(\alpha)$ in~\eqref{eq:Atitlde} which by~\eqref{eq:a-exp} in Proposition~\ref{prop:analytic-G} can be viewed as a $q$-series with coefficients  $\wcA_{\gamma, P, n}(\alpha)$.  
We now  reduce Theorem~\ref{thm:main} to a statement on 
$\wcA^q_{\gamma, P}(\alpha)$ whose proof occupies the remainder of this paper. 
\begin{theorem} \label{thm:nek-block}
For $\gamma \in (0, 2)$, $\alpha \in (-\frac{4}{\gamma}, Q)$, and $P \in \RR$,  we have
\begin{equation} \label{eq:nek-block} 
\cZ^\alpha_{\gamma, P}(q) = \wcA^q_{\gamma, P}(\alpha)
\end{equation}
as formal $q$-series.
Namely, $\cZ_{\gamma, P,\revise{n}}(\alpha)= \wcA_{\gamma, P, n}(\alpha) $ for all $n\ge 1$.
\end{theorem}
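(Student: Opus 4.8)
The plan is to show that both formal $q$-series in~\eqref{eq:nek-block} satisfy the same system of shift equations in the variable $\alpha$ and then invoke a uniqueness argument. Concretely, the strategy proceeds as follows. First I would establish a system of linear difference equations (the shift equations~\eqref{eq:shift-n} referenced in the introduction) with shift size $2\chi$ for $\chi \in \{\frac{\gamma}{2}, \frac{2}{\gamma}\}$, relating $\wcA_{\gamma, P, n}(\alpha)$, $\wcA_{\gamma, P, n}(\alpha + 2\chi)$, and lower-order coefficients. These are derived by introducing the deformed probabilistic conformal blocks $\psi^\alpha_\chi(u, q)$ (the objects satisfying the BPZ equation~\eqref{eq:intro-bpz}), performing separation of variables to turn~\eqref{eq:intro-bpz} into a coupled system of inhomogeneous hypergeometric ODEs, and then extracting the shift relations from the connection coefficients of the hypergeometric equation together with the operator product expansions (OPEs) as the insertion point $u$ degenerates. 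The reflection principle of~\cite{RZ20} is needed for the $\chi = \frac{2}{\gamma}$ OPE. All of this is carried out in Sections~\ref{sec:bpz}--\ref{sec:block-proof} and culminates in Theorem~\ref{thm:shift}, which I would simply cite at this stage.

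Next I would transfer these shift equations from the GMC side to the Nekrasov side. The key observation is that when $N := -\frac{\alpha}{\gamma} \in \NN$, the GMC expectation in~\eqref{eq:ca-def} reduces (by an explicit Gaussian computation) to the Dotsenko-Fateev integral~\eqref{eq:intro-DF}, and this integral can be shown to satisfy Zamolodchikov's recursion~\eqref{eq:z-recur}; this is Theorem~\ref{thm:int-RecursionThm}. Since Proposition~\ref{prop:torus-z-recursion} says $\cF^\alpha_{\gamma, P}(q)$ is the unique formal $q$-series solving~\eqref{eq:z-recur} with the stated leading behavior, we get $\cZ^\alpha_{\gamma, P}(q) = \wcA^q_{\gamma, P}(\alpha)$ as formal $q$-series whenever $-\frac{\alpha}{\gamma} \in \NN$. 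In particular, fixing $\alpha < 0$ and varying $\gamma$ so that $-\frac{\alpha}{\gamma}$ runs over positive integers, the shift equation~\eqref{eq:shift-n} for $\chi = \frac{\gamma}{2}$ holds for the coefficients $\cZ_{\gamma, P, k}(\alpha)$ of the Nekrasov series at this infinite discrete set of $\gamma$-values.

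Then I would upgrade this to all $\gamma \in (0,2)$ and both values of $\chi$ by analyticity in $\gamma$. The coefficients $\cZ_{\gamma, P, k}(\alpha)$ are rational functions of $P, Q, \alpha$ by~\eqref{eq:nek-exp-def}, hence rational in $\gamma$ through $Q = \frac{\gamma}{2} + \frac{2}{\gamma}$; moreover they are manifestly invariant under $\frac{\gamma}{2} \leftrightarrow \frac{2}{\gamma}$. A rational function vanishing on an infinite set vanishes identically, so the $\chi = \frac{\gamma}{2}$ shift equation holds for $\cZ_{\gamma, P, k}(\alpha)$ for all $\gamma \in (0,2)$; applying the $\frac{\gamma}{2} \leftrightarrow \frac{2}{\gamma}$ symmetry then yields the $\chi = \frac{2}{\gamma}$ shift equation as well. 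At this point both $\{\cZ_{\gamma, P, k}(\alpha)\}$ and $\{\wcA_{\gamma, P, n}(\alpha)\}$ satisfy the same coupled system~\eqref{eq:shift-n} for both $\chi$, and they agree in the limit $P \to +\infty$ and at $q = 0$ (the normalizations were chosen precisely so that both equal $1$ at $q=0$). A standard argument — using that for generic $\alpha$ the two shifts $2\cdot\frac{\gamma}{2}$ and $2\cdot\frac{2}{\gamma}$ are incommensurable, so the shift equations pin down each coefficient function $\alpha \mapsto (\cdot)_n(\alpha)$ uniquely given appropriate analyticity and growth control established in Lemma~\ref{lem:a-props} and the analogous rationality of $\cZ_{\gamma, P, n}$ — then forces $\cZ_{\gamma, P, k}(\alpha) = \wcA_{\gamma, P, n}(\alpha)$ for all $n \geq 1$, completing the proof.

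The main obstacle I anticipate is the transfer step: proving that the Dotsenko-Fateev integral~\eqref{eq:intro-DF} satisfies Zamolodchikov's recursion~\eqref{eq:z-recur} (Theorem~\ref{thm:int-RecursionThm}), since this requires identifying the residues of the integral at the poles $P = P_{m,n}$ and matching them with $R_{\gamma, m, n}(\alpha)$ — an analysis of pinching singularities in the multiple contour integral that must be done delicately and for which, to the authors' knowledge, no prior rigorous treatment exists. A secondary obstacle is ensuring the uniqueness argument for the shift equations is airtight: one must verify that the analytic class in which the coefficient functions live (a complex neighborhood of $(-\frac{4}{\gamma}, Q)$, with controlled behavior) is narrow enough that a solution of the coupled inhomogeneous difference system with the prescribed $P \to +\infty$ asymptotics is genuinely unique, rather than merely determined up to a space of "periodic" ambiguities.
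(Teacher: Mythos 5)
Your proposal is correct and follows essentially the same route as the paper: shift equations from the BPZ equation and OPEs (Theorem \ref{thm:shift}), Zamolodchikov's recursion for the Dotsenko--Fateev integral to settle the case $-\frac{\alpha}{\gamma}\in\NN$ (Theorems \ref{thm:int-RecursionThm} and \ref{thm:int-nek-block}), meromorphic continuation in $\gamma$ (equivalently in $\chi$) to propagate the shift equations to the Nekrasov coefficients for all $\gamma$ and both values of $\chi$, and then uniqueness of the shift system (Proposition \ref{prop:shift-unique}). The only points where the paper is more careful than your sketch are that the inhomogeneous term $Z_n(\chi,\alpha)$ is not manifestly rational in $\gamma$ --- so one must establish by induction on $n$ that the hypergeometric data $\wV^{\alpha,j}_{\chi,n}$ extend meromorphically in $\chi$ before applying the identity theorem --- and that the uniqueness argument is anchored by agreement at a single $\alpha_0$ where the integer case applies, rather than by the $P\to+\infty$ normalization.
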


\begin{proof}[Proof of Theorem~\ref{thm:main} given Theorem~\ref{thm:nek-block}]
For $\gamma \in (0, 2)$, $\alpha \in (-\frac{4}{\gamma}, Q)$, and $P \in \RR$, by
Theorem \ref{thm:nek-block}, \eqref{eq:block-def} and (\ref{eq:block-def-nek}),  we
have  $\cF^\alpha_{\gamma, P}(q) = \cG^\alpha_{\gamma, P}(q)$ as  formal $q$-series.
Combined with Proposition~\ref{prop:analytic-G}, we obtain Theorem~\ref{thm:main}.
\end{proof}
%

\subsection{Analyticity: proof of Lemma~\ref{lem:a-props}} \label{subsec:analytic-bl}
The \xin{following lemma based on Cameron-Martin's theorem is the starting point of our proof.}
\begin{lemma}\label{lem:reduce} \xin{We have 
 $\cA^q_{\gamma, P}(\alpha)=[q^{-\frac1{12}}\eta(q)]^{\alpha (Q-\frac{\alpha}{2}) -2}e^{\ii\alpha^2\pi/2} \hat\cA^q_{\gamma, P}(\alpha)$ where 
\begin{equation*} 
\hat\cA^q_{\gamma, P}(\alpha)= \mathbb{E}    \left[e^{\frac{\alpha}{2}F_\tau(0)} \left( \int_0^1 e^{\frac{\gamma}{2}F_\tau(x)  } (2\sin(\pi x))^{-\alpha\gamma/2}  e^{ \pi \gamma P x } e^{\frac{\gamma}{2} Y_{\infty}(x)}dx  \right)^{- \frac{\alpha}{\gamma}} \right]
\end{equation*}
and where $Y_\infty$ is as in~\eqref{eq:y-inf-def} and $F_\tau$ is as in~\eqref{eq:f-tau-def}.}
\end{lemma}
\guillaume{
\begin{proof}
First note that since $Y_{\tau} = Y_{\infty} + F_{\tau}$ with $Y_{\infty}$ and  $F_{\tau}$ independent, by combining equations \eqref{eq:cov1} and \eqref{eq:cov2} one can write:
\begin{align*}
|\Theta_{\tau}(x)|^{-\frac{\alpha \gamma}{2}} = (q^{1/6} \eta(q))^{- \frac{\alpha \gamma}{2}} (2\sin(\pi x))^{-\alpha\gamma/2}  e^{ \frac{\gamma}{2} \EE[Y_{\tau}(x) \cdot \frac{\alpha}{2}F_{\tau}(0)] }.
\end{align*}
	By using this formula and Cameron-Martin's Theorem~\ref{thm:Girsanov} one obtains:
	\begin{align*}
	& \mathbb{E} \left[ \left( \int_0^1 |\Theta_{\tau}(x)|^{-\frac{\alpha \gamma}{2}}  e^{ \pi \gamma P x }  e^{\frac{\gamma}{2} Y_{\tau}(x) }dx  \right)^{- \frac{\alpha}{\gamma}} \right] \nonumber\\
		& = \left( q^{1/6} \eta(q)\right)^{\frac{\alpha^2}{2}} e^{- \frac{\alpha^2}{8}\EE[F_{\tau}(0)^2] }
		\mathbb{E} \left[e^{\frac{\alpha}{2}F_\tau(0)} \left( \int_0^1 (2\sin(\pi x))^{-\alpha\gamma/2}  e^{ \pi \gamma P x } e^{\frac{\gamma}{2} Y_{\tau}(x)   } dx  \right)^{- \frac{\alpha}{\gamma}} \right].\nonumber
	\end{align*}
	Next by using equation~\eqref{eq:normalization} and then \eqref{eq:cov3} the previous line becomes:
	\begin{align*}
		&\left( q^{1/6} \eta(q)\right)^{\frac{\alpha^2}{2}} e^{{(\frac{\alpha\gamma}{8} - \frac{\alpha^2}{8})}\EE[F_{\tau}(0)^2] }
		\hat \cA^q_{\gamma,P}(\alpha)= \left( q^{1/6} \eta(q)\right)^{\frac{\alpha^2}{2}} (q^{-1/12} \eta(q))^{\frac{\alpha^2-\alpha\gamma}{2}}\hat \cA^q_{\gamma,P}(\alpha).
	\end{align*}
	Then by the following simplification
\begin{align*}
&q^{\frac{1}{12}(- \alpha \gamma - \frac{2\alpha}{\gamma} + 2)} \times 
\eta(q)^{\alpha \gamma + \frac{2\alpha}{\gamma} - \frac{3}{2}\alpha^2 - 2} \times e^{\ii\alpha^2\pi/2} \times \left( q^{1/6} \eta(q)\right)^{\frac{\alpha^2}{2}} (q^{-1/12} \eta(q))^{\frac{\alpha^2-\alpha\gamma}{2}}\\
&= e^{\ii\alpha^2\pi/2} (q^{-\frac1{12}}\eta(q))^{\alpha (Q-\frac{\alpha}{2}) -2},
\end{align*}
we deduce the claim of the lemma.
\end{proof}
}
\xin{ By Lemma~\ref{lem:eta} we can analytically extend $[q^{-\frac1{12}}\eta(q)]^{\alpha (Q-\frac{\alpha}{2}) -2}$   to $q\in \D$. Thus by Lemma~\ref{lem:reduce} we can reduce Lemma~\ref{lem:a-props} (a) to the following.}
\begin{lemma}\label{lem:hatA}
Lemma~\ref{lem:a-props} (a) holds  with $\hat\cA^q_{\gamma, P}(\alpha)$ in place of $\cA^q_{\gamma, P}(\alpha)$.
\end{lemma}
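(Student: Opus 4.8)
\textbf{Proof proposal for Lemma~\ref{lem:hatA}.}

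The plan is to show directly from the definition
\[
\hat\cA^q_{\gamma, P}(\alpha) = \mathbb{E}\left[e^{\frac{\alpha}{2}F_\tau(0)}\left(\int_0^1 e^{\frac{\gamma}{2}F_\tau(x)}(2\sin(\pi x))^{-\alpha\gamma/2} e^{\pi\gamma P x} e^{\frac{\gamma}{2}Y_\infty(x)}dx\right)^{-\frac{\alpha}{\gamma}}\right]
\]
that the integrand defines a random analytic function of $q$ on the relevant domain and then invoke Lemma~\ref{lem:analytic-basic}. The key point is that $F_\tau(x)$ is, by Lemma~\ref{lem:F}, almost surely analytic in $q\in\D$ for each fixed $x$, jointly in $x$ through the uniformly-convergent series~\eqref{eq:def-F}; it is also real when $q\in(0,1)$, but for complex $q$ it becomes complex-valued. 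Crucially, $Y_\infty$ is \emph{independent of $q$}, so the only $q$-dependence sits in the two exponential factors $e^{\frac{\alpha}{2}F_\tau(0)}$ and $e^{\frac{\gamma}{2}F_\tau(x)}$. First I would fix a small disk $|q|\le r_0$ and bound $\sup_{x\in[0,1]}|F_\tau(x)|$ and $|F_\tau(0)|$ by a random constant $C(q)$ depending continuously (in fact analytically) on $q$, using that the defining series of $F_\tau$ converges uniformly on $[0,1]\times\{|q|\le r_0\}$ with Gaussian coefficients. This gives $\mathrm{Re}(\tfrac{\gamma}{2}F_\tau(x))\le \tfrac{\gamma}{2}C(q)$, so the inner integral stays in a half-plane away from the negative reals (after combining with the positivity of $(2\sin(\pi x))^{-\alpha\gamma/2}e^{\pi\gamma P x}$ when $P\in\RR$), and the $-\alpha/\gamma$ power is well-defined and analytic in $q$.

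Next I would establish the moment bound required by Lemma~\ref{lem:analytic-basic}: for each compact $K$ in the appropriate $q$-domain, $\sup_{q\in K}\mathbb{E}[|\text{integrand}|]<\infty$. Here one splits into the sign of $-\alpha/\gamma$. When $-\alpha/\gamma>0$ (i.e. $\alpha<0$) one needs a positive-moment bound on the GMC-type integral $\int_0^1 e^{\frac{\gamma}{2}F_\tau(x)}(2\sin(\pi x))^{-\alpha\gamma/2}e^{\pi\gamma P x}e^{\frac{\gamma}{2}Y_\infty(x)}dx$, which follows from the finiteness in~\eqref{eq:moment1} (Lemma~\ref{lem:GMC-moment}) together with the deterministic bound on $F_\tau$; the weight $(2\sin(\pi x))^{-\alpha\gamma/2}$ is integrable precisely in the stated range of $\alpha$. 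When $-\alpha/\gamma<0$ (i.e. $\alpha>0$) one needs a \emph{negative} moment, which is controlled as usual by small-deviation estimates for GMC — again transferring from the $\tau$-picture via~\eqref{eq:normalization} and the uniform control on $F_\tau$; the factor $e^{\frac{\alpha}{2}F_\tau(0)}$ contributes a bounded multiplicative term on $K$. The separate claims about $\alpha\in[0,Q)$ (analyticity on a complex neighborhood of $[0,1)$ and on $\{|q|<\tfrac12\}$) require extending these estimates up to $q$ near $1$: near $q=1$ one uses that $F_\tau$ still converges (now $\tau$ approaches a point in $\bbH$, so $q$ stays in $\D$), and the GMC moment bounds from Lemma~\ref{lem:GMC-moment} persist uniformly on compacts of $\{|q|<1\}$, while the $\{|q|<\tfrac12\}$ claim uses in addition that for $|q|<\tfrac12$ one can take $r_0$ up to $\tfrac12$ in the uniform-convergence estimate.

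The main obstacle I anticipate is the negative-moment control when $\alpha\in(0,Q)$: one must show $\mathbb{E}[(\int_0^1 \cdots dx)^{-\alpha/\gamma}]$ is finite and locally bounded in $q$ even though $F_\tau$ is now complex-valued, so the inner integral is a complex number whose modulus could in principle be small. The resolution is that $\mathrm{Re}$ of the inner integral is bounded below by $\cos(\arg)$-type considerations: since $|\arg(e^{\frac{\gamma}{2}F_\tau(x)})|\le \frac{\gamma}{2}\sup_x|\mathrm{Im}\,F_\tau(x)|$ which is small for $q$ small, the inner integral lies in a cone of half-angle $<\pi/2$ around the positive reals, so its modulus is comparable to the modulus of the corresponding real GMC integral (with $\mathrm{Re}\,F_\tau$ in the exponent), whose negative moments are finite by standard GMC theory. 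This is exactly the kind of estimate that forces the analyticity domain in $q$ to be a genuine neighborhood of $0$ (resp. of $[0,1)$ once the cone condition is re-derived there) rather than all of $\D$. With the two moment bounds and the a.s. analyticity of the integrand in hand, Lemma~\ref{lem:analytic-basic} gives analyticity of $\hat\cA^q_{\gamma,P}(\alpha)$ in $q$ on each such domain, which is precisely assertion~(a) for $\hat\cA^q_{\gamma,P}(\alpha)$, and pulling this back through~\eqref{eq:hatA2} (noting $(q^{1/6}\eta(q))^{\alpha^2/2}(q^{-1/12}\eta(q))^{(\alpha^2-\alpha\gamma)/2}$ is analytic and nonvanishing on $\D$) yields assertion~(a) for $\cA^q_{\gamma,P}(\alpha)$ itself.
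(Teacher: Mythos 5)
There is a genuine gap, and it sits at the heart of your argument: you keep the $q$-dependent factor $e^{\frac{\gamma}{2}F_\tau(x)}$ \emph{inside} the GMC integral and then take a fractional power of the resulting complex-valued random variable. For this to feed into Lemma~\ref{lem:analytic-basic} you need the integrand to be a.s.\ analytic in $q$ and you need $\EE\big[\big|\int_0^1 e^{\frac{\gamma}{2}F_\tau(x)}\cdots\,dx\big|^{-\alpha/\gamma}\big]$ to be finite when $\alpha>0$. Your cone argument is supposed to deliver both, but it fails almost surely: for any fixed non-real $q$, $\sup_{x\in[0,1]}|\Im F_\tau(x)|$ is the supremum of a nondegenerate Gaussian process and is therefore an \emph{unbounded} random variable, no matter how small $|q|$ or $|\Im q|$ is. On the (positive-probability) event where it is large, the phases of the integrand spread over more than $\pi$, the inner integral can cross the branch cut or be driven arbitrarily close to $0$ by cancellation, and neither the a.s.\ analyticity of the fractional power nor the negative-moment bound follows. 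No pathwise bound on $F_\tau$ can repair this, because the relevant finiteness is a distributional fact (of the type $\EE[e^{\lambda Z^2}]<\infty$ iff $\lambda<\tfrac12$), not a pathwise one. Your approach also gives no mechanism for producing the specific domains $\{|q|<\tfrac12\}$ and a complex neighborhood of $[0,1)$ in the case $\alpha\in[0,Q)$.

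The missing idea is a Girsanov step (Theorem~\ref{thm:Girsanov}) applied to $Y_\infty$, conditionally on $(\alpha_{n,m},\beta_{n,m})$, using that $F_\tau(x)$ is a linear combination of $\EE[\alpha_nY_\infty(x)]$ and $\EE[\beta_nY_\infty(x)]$ as in \eqref{eq:Ftau}. This moves $e^{\frac{\gamma}{2}F_\tau(x)}$ \emph{out} of the GMC integral at the price of the explicit Gaussian functional $\cQ(q)$ of \eqref{eq:Qq}, leaving behind the $q$-independent, strictly positive real integral $\int_0^1(\sin\pi x)^{-\alpha\gamma/2}e^{\pi\gamma Px}e^{\frac{\gamma}{2}Y_\infty(x)}dx$. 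After this, the fractional power is unambiguous, its negative moments are finite by Lemma~\ref{lem:GMC-moment}, the $q$-dependence is concentrated in the manifestly analytic factor $e^{\frac{\alpha}{2}F_\tau(0)}\cQ(q)$, and the entire difficulty reduces to showing $\EE[|\cQ(q)|^{p_2}]<\infty$ for suitable H\"older exponents. That last computation (diagonalizing the quadratic form $M_n$ and requiring $\Tr(M_n)<\tfrac12$) is precisely what dictates the domains in the statement and why they differ between $\alpha\in[0,Q)$ and $\alpha\in(-\frac{4}{\gamma},0)$.
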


\guillaume{To prove Lemma~\ref{lem:hatA}, first notice that by equation \eqref{eq:y-inf-def} one has:

\begin{equation}\label{eq:sin-cos}
		\mathbb{E}[\alpha_n Y_{\infty}(x) ] = \sqrt{\frac{2}{n}} \cos(2 \pi n x) \quad \textrm{and}\quad
		\mathbb{E}[\beta_n Y_{\infty}(x) ] = \sqrt{\frac{2}{n}} \sin(2 \pi n x).
	\end{equation}
}
\xin{Recall~\eqref{eq:f-tau-def} that $F_{\tau}(x)=  2 \sum_{n,m \geq 1} \frac{q^{nm}}{\sqrt{n}} \left( a_{n,m} \cos(2 \pi n x) + b_{n,m} \sin(2 \pi n x) \right)$, where $a_{n,m},b_{n,m}$ are i.i.d standard Gaussians.} 
By~\eqref{eq:sin-cos},  
\begin{equation}\label{eq:Ftau}
\revise{F_\tau(x)} = \sqrt{2} \sum_{m,n = 1}^\infty q^{nm} ( a_{n,m} \mathbb{E}[a_n Y_{\infty}(x) ]  + b_{n,m} \mathbb{E}[b_n Y_{\infty}(x) ] ).
\end{equation}
Applying Cameron-Martin's theorem (Theorem \ref{thm:Girsanov}) to $Y_\infty$ while conditioning on $\{a_{m,n}, b_{m,n} \}$, we obtain
\begin{align} \nonumber
\hat A^q_{\gamma,P}(\alpha) &=
\EE\Big[ e^{\frac{\alpha}{2} F_{\tau}(0)} \left( \int_0^1 \sin(\pi x)^{-\frac{\alpha\gamma}{2}} e^{ \pi \gamma P x }\right.\\ 
&\phantom{===} \left.\revise{e^{\frac{\gamma}{2} Y_{\infty}(x)}} e^{ \frac{\gamma}{\sqrt{2}}
\sum_{m,n = 1}^\infty q^{nm} ( a_{n,m} \mathbb{E}[a_n Y_{\infty}(x) ]  + b_{n,m} \mathbb{E}[b_n Y_{\infty}(x) ] )  } dx  \right)^{- \frac{\alpha}{\gamma}} \Big] \nonumber\\
&= \EE\Big[ e^{\frac{\alpha}{2} F_{\tau}(0)}\cQ(q)
\big( \int_0^1 \sin(\pi x)^{-\alpha\gamma/2}  e^{ \pi \gamma P x } e^{\frac{\gamma}{2} Y_{\infty}(x) }dx   \big)^{- \frac{\alpha}{\gamma}} \Big], \label{eq:hatA-expression}
\end{align}
where
\begin{multline}\label{eq:Qq}
\cQ(q)\defeq \exp\bigg(\sqrt{2}  \sum_{m,n = 1}^\infty q^{nm} ( a_{n,m} a_n  + b_{n,m} b_n)\\ - \sum_{n = 1}^\infty \Big(\sum_{m = 1}^\infty q^{ nm} a_{m,n} \Big)^2 - \sum_{n = 1}^\infty\Big( \sum_{m = 1}^\infty q^{ nm} b_{m,n} \Big)^2 \bigg).
\end{multline}

Although $\hat A^q_{\gamma,P}(\alpha)$ is originally only defined for $q\in (0,1)$, \xin{the function
$e^{\frac{\alpha}{2} F_{\tau}(0)} \cQ(q)$ contains all the $q$ dependence and is almost surely an analytic
function defined for $|q|<1$. 
We now record a basic fact on the analyticity of expectations.}
\begin{lemma}\label{lem:analytic-basic}
Let $f(\cdot)$ be a random analytic function on a planar domain $D$. Suppose for each compact   $K\subset D$ we have $\max_{z\in K}\E[|f(z)|]<\infty$.
Then $\EE[f(\cdot)] $  is analytic on $D$. Moreover,  $\EE[\left|\frac{d^n}{d\revise{z^n}}f(z)\right|]<\infty$  and $\frac{d^n}{dz^n}\EE[f(z)]=\EE[\frac{d^n}{d\revise{z^n}}f(z)]$ for each $z\in D$ and $n\in \NN$.
\end{lemma}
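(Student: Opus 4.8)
The plan is to run the standard Cauchy-integral argument, with the compact-set integrability hypothesis supplying exactly what is needed to apply Fubini's theorem. Fix $z_0 \in D$ and choose $r>0$ so that the closed disk $\overline{B}(z_0, 2r)$ is contained in $D$. Write $C = \{\zeta \in \CC : |\zeta - z_0| = 2r\}$ for the boundary circle, a compact subset of $D$, and set $M := \sup_{\zeta \in C} \EE[|f(\zeta)|] < \infty$ by hypothesis. Since $f$ is a.s.\ analytic on $D$, Cauchy's integral formula gives, almost surely and simultaneously for all $z \in B(z_0, r)$,
\[
f(z) = \frac{1}{2\pi \ii} \oint_C \frac{f(\zeta)}{\zeta - z}\, d\zeta,
\qquad
f^{(n)}(z) = \frac{n!}{2\pi \ii} \oint_C \frac{f(\zeta)}{(\zeta - z)^{n+1}}\, d\zeta.
\]
For $z \in B(z_0, r)$ we have $|\zeta - z| \geq r$ on $C$, so
\[
\EE\left[ \oint_C \frac{|f(\zeta)|}{|\zeta - z|^{n+1}}\, |d\zeta| \right]
\leq \frac{1}{r^{n+1}} \oint_C \EE[|f(\zeta)|]\, |d\zeta|
\leq \frac{4\pi r M}{r^{n+1}} < \infty .
\]

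This finite bound licenses Fubini's theorem, which is applicable because $f$, being a.s.\ analytic hence a.s.\ continuous, is jointly measurable in $(\omega, \zeta)$. Interchanging $\EE$ with the contour integral yields $\EE[|f^{(n)}(z)|] < \infty$ for all $z \in B(z_0,r)$ and $n \in \NN_0$, and
\[
\EE[f(z)] = \frac{1}{2\pi \ii} \oint_C \frac{\EE[f(\zeta)]}{\zeta - z}\, d\zeta,
\qquad
\EE\big[f^{(n)}(z)\big] = \frac{n!}{2\pi \ii} \oint_C \frac{\EE[f(\zeta)]}{(\zeta - z)^{n+1}}\, d\zeta .
\]
The first identity exhibits $\EE[f(\cdot)]$ on $B(z_0,r)$ as a Cauchy-type integral of the fixed bounded function $\zeta \mapsto \EE[f(\zeta)]$ against the kernel $(\zeta-z)^{-1}$; such an integral is analytic in $z$ and may be differentiated under the integral sign arbitrarily many times (a routine application of dominated convergence to difference quotients, using $|\zeta - z| \geq r$), giving $\frac{d^n}{dz^n}\EE[f(z)] = \frac{n!}{2\pi \ii}\oint_C \frac{\EE[f(\zeta)]}{(\zeta-z)^{n+1}}\,d\zeta$. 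Comparing with the second displayed identity gives $\frac{d^n}{dz^n}\EE[f(z)] = \EE[f^{(n)}(z)]$ on $B(z_0,r)$.

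Since $z_0 \in D$ was arbitrary and analyticity is a local property, $\EE[f(\cdot)]$ is analytic on all of $D$, and the asserted finiteness and interchange of derivatives hold at every $z \in D$ for all $n \in \NN$. There is no serious obstacle here; the only point requiring care is the justification of the Fubini interchange, which is precisely where the hypothesis $\max_{z \in K}\EE[|f(z)|] < \infty$ for compact $K$ is used — it is invoked with $K = C$, a circle slightly inside $D$.
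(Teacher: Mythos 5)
Your proof is correct and follows essentially the same route as the paper: both rest on Cauchy's integral formula over a small circle together with the compact-set moment bound to justify a Fubini interchange. The only cosmetic difference is that you interchange $\EE$ with the contour integral and read off analyticity from the resulting Cauchy-type integral of $\EE[f(\zeta)]$, whereas the paper interchanges $\EE$ with the Taylor series at $z_0$; the two are interchangeable and equally valid.
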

\begin{proof}
Consider $K_0=\{z: |z-z_0|\le r \}\subset D$ for some $z_0\in D$ and $r>0$. Let $M_0=\max_{z\in K_0}\E[|f(z)|]<\infty$.
Since $\frac{d^n}{d\revise{z^n}}f(z_0)= \frac{n!}{2\pi \ii}\oint_{\partial K_0} f(w)(w-z_0)^{-n-1}dw$,  we have $\EE[\left|\frac{d^n}{d\revise{z^n}}f(z_0)\right|]\le n! M_0r^{-n}$. Therefore, by Fubini's Theorem, if $|z-z_0|<r$ then
$\EE[f(z)]=\sum_{n=0}^{\infty}\frac{1}{n!}\EE[\frac{d^n}{d\revise{z^n}}f(z_0)](z-z_0)^n$. Varying $z_0$ and $r$, we conclude.
\end{proof}

In light of Lemma~\ref{lem:analytic-basic}, for each $\alpha\in(-\frac{4}{\gamma},Q)$
and open set $U\subset \D$, the function $q\mapsto \hat A^q_{\gamma,P}(\alpha)$ admits an analytic extension
on $U$ if we have, for every compact $K \subset U $: 
\begin{equation}\label{eq:upper-bound}
\underset{q \in K}{\sup} \; \EE\Big[\left| e^{\frac{\alpha}{2} F_{\tau}(0)}\cQ(q) \right|
\Big( \int_0^1 \sin(\pi x)^{-\alpha\gamma/2}  e^{ \pi \gamma P x } e^{\frac{\gamma}{2} Y_{\infty}(x) }dx \Big)^{- \frac{\alpha}{\gamma}}\Big]<\infty.
\end{equation}
\xin{We will use H\"{o}lder's inequality to prove~\eqref{eq:upper-bound}. The key is the moment bounds for $\cQ(q)$ summarized in the following lemma.}

\begin{lemma} \label{lem:q-bound}
\xin{Firstly, we have $\max_{|q|<\frac12,\;  p\in [1,1.2]}  \EE [|\cQ(q)|^{p}] <\infty$.
Moreover, there exists an open neighborhood $U$ of $[0,1)$ such that 
\begin{equation}\label{eq:Q2bound}
         \limsup_{p \to 1^+}  \max_{q\in K}\EE[|\cQ(q)|^{p}]<\infty \quad \textrm{for each compact } K\subset U.
\end{equation}
Finally, for each $M>1$ there exists $\eps_M>0$, such that 
\begin{equation}\label{eq:Q3bound}
 \max_{|q|<\eps_M,\;  p\in [1,M]}  \EE [|\cQ(q)|^{p}] <\infty.       
\end{equation}}
\end{lemma}
\begin{proof}
Using  the i.i.d Gaussians $(a_n)_{n \geq 1}$, $(a_{n,m})_{n,m \geq 1}$ in~\eqref{eq:Qq}, we define:
\[
A_n(q,p) := \EE \left[ \left|e^{ \sqrt{2}  \sum_{m = 1}^\infty q^{nm}   {a_{n,m} a_n } } e^{-  \sum_{m_1,m_2 = 1}^\infty q^{ nm_1 +n m_2}  {a_{m_1,n} a_{m_2,n} }   } \right|^{p} \right].
\]
Then we can write $\EE[|\cQ(q)|^{p}]$ as:
\begin{align*}
&\EE\left[ |\cQ(q)|^{p} \right]\\
&= \EE\left[ \left|e^{ \sqrt{2}  \sum_{m,n = 1}^\infty q^{nm}  {( a_{n,m} a_n  + b_{n,m} b_n ) } - \sum_{n = 1}^\infty \left( \sum_{m = 1}^\infty q^{ nm}  {a_{m,n}} \right)^2 - \sum_{n = 1}^\infty \left( \sum_{m = 1}^\infty q^{ nm} {b_{m,n}} \right)^2 } \right|^{p} \right]\\
&=  \EE \left[ \left|e^{ \sqrt{2}  \sum_{m,n = 1}^\infty q^{nm}  {( a_{n,m} a_n  + b_{n,m} b_n )} - \sum_{n = 1}^\infty \sum_{m_1,m_2 = 1}^\infty q^{ nm_1 +n m_2} {(a_{m_1,n} a_{m_2,n}  + b_{m_1,n} b_{m_2,n})}  } \right|^{p} \right]\\
&= \prod_{n = 1}^\infty A_n(q, p)^2.
\end{align*}
Here we uses that { $(a_n)_{n \geq 1}$, $(a_{n,m})_{n,m \geq 1}$, $(b_n)_{n \geq 1}$, and $(b_{n,m})_{n,m \geq 1}$,} are independent and identically distributed. We now compute 
\begin{align*}
A_n(q, p)  &= \EE \left[ e^{ \sqrt{2}  \sum_{m = 1}^\infty p \mathrm{Re}(q^{nm}) { a_{n,m} a_n}  } e^{-  \sum_{m_1,m_2 = 1}^\infty p \mathrm{Re}(q^{ nm_1 +n m_2}) {a_{m_1,n} a_{m_2,n} }   }  \right]\\
&= \EE \left[ e^{\sum_{m_1,m_2 = 1}^\infty \left( p^2 \mathrm{Re}(q^{nm_1}) \mathrm{Re}(q^{nm_2}) - p \mathrm{Re}(q^{ nm_1 +n m_2}) \right)  {a_{m_1,n} a_{m_2,n}}} \right]\\
&= \EE \left[ e^{\sum_{m_1,m_2 = 1}^\infty \left((p^2 - p) \mathrm{Re}(q^{nm_1}) \mathrm{Re}(q^{nm_2}) + p \Im(q^{nm_1}) \Im(q^{n m_2}) \right)  {a_{m_1,n} a_{m_2,n}}} \right],
\end{align*}
where in the second line we computed the expectation over {$a_n$.}  Define now the Gaussian random variables
\[
X_n := \sum_{m = 1}^\infty \Re(q^{nm}) {a_{n, m}} \qquad \text{and} \qquad Y_n := \sum_{m = 1}^\infty \Im(q^{nm}) {a_{n, m}}.
\]
Then $(X_n, Y_n)$ is a bivariate Gaussian with covariance matrix $\left[\begin{matrix} R_n & S_n \\ S_n & T_n \end{matrix}\right]$
for
\[
R_n = \sum_{m = 1}^\infty \Re(q^{nm})^2, \quad S_n =  \sum_{m = 1}^\infty \Re(q^{nm}) \Im(q^{nm}), \quad\textrm{and}\quad T_n = \sum_{m = 1}^\infty \Im(q^{nm})^2.
\]
We find that $(X_n, Y_n) \overset{d} = (\sqrt{R_n} Z, \frac{S_n}{\sqrt{R_n}} Z + \frac{\sqrt{R_nT_n - S_n^2}}{\sqrt{R_n}} W)$, where $Z, W$
are independent standard Gaussians. In these terms, we have that
\begin{align*}
A_n(q, p) &= \EE\left[e^{(p^2 - p) X_n^2 + p Y_n^2}\right]
= \EE\left[\exp\left(\left[\begin{matrix} Z \\ W \end{matrix}\right]^\T M_n \left[\begin{matrix} Z \\ W \end{matrix}\right]\right)\right]
\end{align*}
with the matrix
\begin{equation} \label{eq:m-def}
M_n = \left[\begin{matrix} (p^2 - p) R_n + p \frac{S_n^2}{R_n} & \frac{p S_n\sqrt{R_nT_n - S_n^2}}{R_n}\\
\frac{p S_n\sqrt{R_nT_n - S_n^2}}{R_n} & p \frac{R_nT_n - S_n^2}{R_n}\end{matrix}\right].
\end{equation}
\guillaume{Notice that:
\[
\Tr(M_n) = (p^2 - p)R_n + p T_n \quad\textrm{and}\quad  \det(M_n) = p^2(p-1) (R_n T_n -S_n^2).
\]
This implies that $\Tr(M_n) > 0$ and $\det(M_n) > 0$.}
\xin{We now prove our first bound on $\EE[|\cQ(q)|^{p}]$.
Let $\lambda_1$ and $\lambda_2$ be the two eigenvalues of $M_n$. Let $W_1$ and $W_2$ be two independent standard Gaussians. When $\Tr(M_n)<\frac12$, we have
\begin{equation}
A_n(q, p) = \EE[e^{\lambda_1 W_1^2 + \lambda_2 W_2^2}]
= \frac{1}{\sqrt{1 - 2 \Tr(M_n) + 4 \det(M_n)}}.    
\end{equation} 
Since $\EE[|\cQ(q)|^{p}] =\prod_{n = 1}^\infty A_n(q, p)^2$ and $\det(M_n)>0$,   we have
\begin{equation}\label{eq:Tr-bound}
 \EE[|\cQ(q)|^{p}]  \le \prod_{n=1}^\infty\frac{1}{1 - 2 \Tr(M_n)}\textrm{ when } \Tr(M_n)<\frac12\textrm{ for all }n.
\end{equation} 
Since $R_n\ge 0$ and 
\begin{equation}
 R_n+T_n = \sum_{m=1}^{\infty}|q|^{2nm}=\frac{|q|^{2n}}{1 - |q|^{2n}},
\end{equation}
for $p\in [1,1.2]$ and $|q|<\frac12$ we have 
\begin{align}\label{eq:conv-b1}
\Tr(M_n)  \leq  (p^2 - 2p)R_n + p (R_n+T_n)\le 
\frac{p^2 |q|^{2n}}{1 - |q|^{2n}}\le 0.48.
\end{align}
For $C_0$ small enough such that $\frac1{1-2x}\le e^{-C_0x}$ for $x\in [0,0.48]$, by~\eqref{eq:Tr-bound} we have $ \EE[|\cQ(q)|^{p}]\le e^{-C_0\sum_{n=1}^\infty \Tr(M_n)}$. Therefore~\eqref{eq:conv-b1} then yields that
\begin{equation}\label{eq:p12}
\sum_{n=1}^\infty \Tr(M_n)  \le \sum_{n=1}^\infty \frac{p^2 |q|^{2n}}{1 - |q|^{2n}}\le 1   \textrm{ for }p\in [1,1.2]  \textrm{ and } |q|<\frac12.
\end{equation} 
Therefore   $\EE[|\cQ(q)|^{p}]<e^{-C_0}\le 1$ for $p\in [1,1.2]$ and $|q|<\frac12$ as desired. 

To prove~\eqref{eq:Q2bound}, let $f(q)=\sum_{n=1}^\infty T_n$ which is continuous in $q\in \D$. Since $T_n = 0$ for $q \in [0, 1)$, we have $f(q)=0$ for $q\in [0,1)$. 
Therefore, there exists a neighborhood $U$ of
$[0, 1)$ such that $f(q)\in [0,0.1]$ for each $q\in U$. Since 
\[
\Tr(M_n)  \leq  (p^2 - p)(R_n+T_n) + (2p-p^2)T_n \le 
\frac{p(p-1) |q|^{2n}}{1 - |q|^{2n}}+f(q),
\]
for each compact $K\subset U$,  $ \limsup_{p \rightarrow 1^+}  \max_{q\in K, n\in \mathbb N}\Tr(M_n)< 0.1$.
Since 
\begin{equation}\label{eq:Trsum}
\Tr(M_n) \le (p^2+p)(R_n+T_n)=\frac{(p^2+p)|q|^{2n}}{1-|q|^{2n}},
\end{equation}
 we get~\eqref{eq:Q2bound} for this choice of $U$.

It remains to prove the last bound.  For each $M>1$, we can find $\eps\in (0,0.1)$ such that $\Tr(M_n)<0.1$  for $|q|<\eps$ and $p\in [1,M]$. Now 
by~\eqref{eq:Trsum},  we have $\max_{|q|<\eps}  \sum_{n=1}^\infty\Tr (M_n) <\infty$ hence 
$\max_{|q|<\eps,\;  p\in [1,M]}  \EE |\cQ(q)|^{p}] <\infty$.} 
\end{proof}

\begin{proof}[Proof of Lemma~\ref{lem:hatA}] 
\xin{It suffices to prove (\ref{eq:upper-bound}),}
By H\"{o}lder inequality, for  $p_1, p_2, p_3 \in (1, \infty)$ with $\frac{1}{p_1} + \frac{1}{p_2} + \frac{1}{p_3} = 1$ we have
\begin{align}\label{eq:sec2_holder}
& \EE \left[ \left|e^{\frac{\alpha}{2} F_{\tau}(0)}\cQ(q) \right|
\Big( \int_0^1 \sin(\pi x)^{-\alpha\gamma/2}  e^{ \pi \gamma P x } e^{\frac{\gamma}{2} Y_{\infty}(x) }dx\Big)^{- \frac{\alpha}{\gamma}} \right]\\ \nonumber
\leq  \EE\left[e^{ \frac{p_1 |\alpha|}{2}  \left|F_\tau(0)\right|} \right]^{\frac{1}{p_1}}  &  \EE\left[|\cQ(q)|^{p_2}\right]^{\frac{1}{p_2}}  
\EE\left[\Big(\int_0^1\sin(\pi x)^{-\alpha\gamma/2}e^{\pi \gamma P x} e^{\frac{\gamma}{2} Y_{\infty}(x) }dx\Big)^{- \frac{\alpha p_3}{\gamma}} \right]^{\frac{1}{p_3}}.
\end{align}
\xin{By the finiteness of exponential moments of Gaussian random variables, we have $\EE \left[ e^{ \frac{p_1 |\alpha|}{2} |F_{\tau}(0)|} \right]^{\frac{1}{p_1}} < \infty$ for all $|q|<1$ and $p_1 \in (1,\infty)$. By Item 1 of Lemma~\ref{lem:GMC-moment}, the third expectation in the last line is finite if and only if  $ -\frac{\alpha p_3}{\gamma}<\frac{4}{\gamma^2} \wedge \frac{2}{\gamma} (Q -\alpha)$. This holds true} 
\begin{equation}\label{eq:p3}
\xin{\textrm{for $p_3>1$  if $\alpha\in [0,Q)$; and  for $p_3\in (1,-\frac{4}{\alpha\gamma})$ if $\alpha\in (-\frac{4}{\gamma},0)$.}}
\end{equation}
We now divide the proof into two cases.

\noindent \emph{\bf Case 1: $\alpha \in [0, Q)$.}  In this case, we need to prove (\ref{eq:upper-bound}) for
both $U = \{q \in \CC : |q| < \frac{1}{2}\}$ and $U$ being a small enough complex neighborhood of $[0, 1)$. 
\xin{By Lemma~\ref{lem:q-bound}, for both choices of $U$, we have that $\limsup_{p_2\to 1^{+}} \max_{q\in K}  \EE\left[|\cQ(q)|^{p_2}\right] < \infty$ 
for each compact $K\subset U$. Therefore we can find  $p_2 \in (1, \infty)$
close to $1$ and $p_1 \in (1,\infty)$, and $p_3 \in (1,\infty)$
such that $\frac{1}{p_1} + \frac{1}{p_2} + \frac{1}{p_3} = 1$.
This gives~\eqref{eq:upper-bound} for both choices of $U$ hence  Lemma~\ref{lem:hatA} for $\alpha \in [0, Q)$.}

\noindent \emph{\bf Case 2: $\alpha \in (-\frac{4}{\gamma}, 0)$.}  In this
case, we need to prove (\ref{eq:upper-bound}) for $q$ in a neighborhood $U$
of $0$. \xin{In light of~\eqref{eq:p3}, set $\bar p_2 = \frac{1}{1 + \frac{\gamma \alpha}{4}}>1$ so that $\frac{1}{\bar p_2}+(-\frac{\alpha\gamma}{4})=1$. By Lemma~\ref{lem:q-bound} for some $\eps=\eps_{2\bar p_2}>0$ we have
$\max_{|q|<\eps,\; p_2\in [1,2\bar p_2]} \EE[|\cQ(q)|^{p_2}] < \infty$.  By the GMC moment bound~\eqref{eq:GMC-moment1} in 
Lemma~\ref{lem:GMC-moment}, there exists $p_1 > 1$, $p_2\in [1,2\bar p_2]$ close to $\bar p_2$,  and $p_3 > 1$ close to
$-\frac{4}{\alpha \gamma}$ such that (\ref{eq:upper-bound})  holds for $U=\{ |q|<\eps_{2\bar p_2} \}$.} 
This gives Lemma \ref{lem:hatA} for $\alpha \in (-\frac{4}{\gamma}, 0)$.
\end{proof}

\begin{proof}[Proof of Lemma~\ref{lem:a-props}] Since Lemma~\ref{lem:a-props} (a) follows from Lemma~\ref{lem:hatA}, it remains to prove  Lemma~\ref{lem:a-props} (b). The analyticity in $\alpha$ can be treated by the well-established  argument first done in the bulk case in  \cite{KRV19b} and extended to the boundary case in~\cite{RZ20}.  We start by mapping everything to the half plane which is the setting of ~\cite{RZ20}.
Let $X_{\mathbb{H}}$ be the Gaussian field on $\mathbb{H}$ with covariance given by \eqref{eq:cov-X}.  
Let $\psi(z) = - \ii \frac{z-1}{z+1}$ be the conformal map from $\mathbb{D}$ to the upper-half plane $\mathbb{H}$. 
Let $\phi(x)=\psi(e^{2\pi\ii x}) = \tan(\pi x)$ for $x\in [0, 1/2) \cup (1/2, 1]$. Then $X_{\mathbb{H}}\circ \phi$ equals in law with $Y_{\infty}$. We assume that $Y_\infty=X_{\mathbb{H}}\circ \phi$ by enlarging the sample space of $Y_\infty$. Then the measure  $|(\phi^{-1}(y)')|e^{\frac{\gamma}{2}X_\bbH(y)} dy$ on $\RR$ is the pushforward  of the measure  $e^{\frac{\gamma}{2}Y_\infty(x)} dx$ under $\phi$. 
By performing this change of variable we have 
\begin{align}
\hat A^q_{\gamma,P}(\alpha)&= 
\mathbb{E} \left[ e^{\frac{\alpha}{2} F_{\tau}(0)} 	\cQ(q)
\left( \int_0^1 \sin(\pi x)^{-\alpha\gamma/2}  e^{ \pi \gamma P x } e^{\frac{\gamma}{2} Y_{\infty}(x) }dx   \right)^{- \frac{\alpha}{\gamma}} \right]\nonumber\\
   &=\EE\left[ e^{\frac{\alpha}{2} F_{\tau}(0)} 	\cQ(q) \left(\int_{\mathbb{R}}     |y|^{-\frac{\alpha\gamma}{2}} f_1(y)e^{\frac{\gamma}{2} X_{\mathbb{H}}(y)}   dy \right)^{-\frac{\alpha}{\gamma}} \right]\label{eq:ana_alpha}
\end{align}
where in the last expectation $e^{\frac{\alpha}{2} F_{\tau}(0)} 	\cQ(q)$ and $X_{\mathbb{H}}$ are conditionally independent given $Y_\infty$,  and $f_1: \mathbb{R} \mapsto (0, \infty)$  is such that the measure  $|y|^{-\frac{\alpha\gamma}{2}} f_1(y) dy$ is the pushforward  of $\sin(\pi x)^{-\frac{\alpha\gamma}{2}} e^{\pi \gamma P x} dx$ under $\phi$. We can check that $f_1$ is bounded and continuous.
We now repeat the steps of \cite[Lemma~5.8]{RZ20} to show that this last expectation is analytic in $\alpha$. Let $\mathbb{R}_{r} = \mathbb{R} \setminus (- e^{-r/2} , e^{- r/2})$ where $r>0$. Let $X_{\mathbb{H}, r }(0)$ be the radial part of $X_{\mathbb{H}}(0)$ obtained by taking the mean of $X_{\mathbb{H}}(0)$ over the upper-half circles of radius $e^{-r/2}$ centered at $0$, and let $\overline{X}_{\mathbb{H}, r }(0) = X_{\mathbb{H}, r }(0) - X_{\mathbb{H}, 0 }(0) $. Consider the quantity:
\begin{align*}
\mathcal{E}_{r} = \EE\left[ e^{\frac{\alpha}{2} \overline{X}_{\mathbb{H}, r}(0) - \frac{\alpha^2}{2} \mathbb{E}[\overline{X}_{\mathbb{H}, r}(0)^2]  } e^{\frac{\alpha}{2} F_{\tau}(0)} 	\cQ(q) \left(\int_{\mathbb{R}_r} f_1(y)e^{\frac{\gamma}{2} X_{\mathbb{H}}(y)}   dy \right)^{-\frac{\alpha}{\gamma}} \right].
\end{align*}
By applying Cameron-Martin’s theorem (Theorem \ref{thm:Girsanov}) to $e^{\frac{\alpha}{2} \overline{X}_{\mathbb{H}, r}(0)}$ in the above display, we arrive at:
$$\EE\left[ e^{\frac{\alpha}{2} F_{\tau}(0)} 	\cQ(q) \left(\int_{\mathbb{R}_r} f_1(y)e^{\frac{\gamma}{2} X_{\mathbb{H}}(y)+\frac{\alpha\gamma}{2}\mathbb{E}[ \overline{X}_{\mathbb{H}, r}(0)X_{\mathbb{H}}(y)]}   dy \right)^{-\frac{\alpha}{\gamma}} \right].$$
We know that $\mathbb{E}[ \overline{X}_{\mathbb{H}, r}(0)X_{\mathbb{H}}(y)]\to -\log|y|$ as $r \rightarrow + \infty $. Therefore the large $r$ limit of the quantity in the above display recovers the right hand side of \eqref{eq:ana_alpha}. Now we will choose $\alpha$ complex and write $\alpha = a + \ii b $. We want to show that there exists a neighborhood $V$ of $(-\frac{4}{\gamma}, Q)$ in $\mathbb{C}$ such that for all compact sets contained in $V$, $\mathcal{E}_{r}$ converges uniformly over the compact set. Record that $\overline{X}_{r+ t}(0) - \overline{X}_{r}(0 ) $ is a Brownian motion in $t \geq 0$. By applying Theorem \ref{thm:Girsanov} to the real part of the insertion, namely $e^{\frac{a}{2} \overline{X}_{\mathbb{H}, r}(0)}$, and by upper bounding the imaginary part one gets, for some $c >0$
\begin{align*}
& \left \vert \mathcal{E}_{r+1} - \mathcal{E}_r \right \vert  \nonumber
\le  c \,e^{\frac{r+1}{2}  b^2}\mathbb{E} \left[  \left|  Z_{r+1}^{-\alpha / \gamma}  - Z_r^{- \alpha /\gamma}\right| \right],
\end{align*}
 where $Z_r := e^{\frac{\alpha}{2} F_{\tau}(0)} 	\cQ(q) \int_{\mathbb{R}_{r}} |y|^{-\frac{a \gamma}{2}}  f_1(y) e^{\frac{\gamma}{2} X_{\mathbb{H}}(y)} dy $. Set now $W_r := Z_{r+1} - Z_{r}$. We want to estimate 
\begin{align*}
\E[ |(Z_r + W_r)^{-\alpha/\gamma} - Z_r^{-\alpha/\gamma}|] &\le \E[\mathbf{1}_{|W_r| < \epsilon}|(Z_r + W_r)^{-\alpha/\gamma} - Z_r^{-\alpha/\gamma}|]\\
&+ \E[  \mathbf{1}_{|W_r| \ge \epsilon}|(Z_r + W_r)^{-\alpha/\gamma} - Z_r^{-\alpha/\gamma}|],
\end{align*}
where $\epsilon>0$ will be fixed at the end. Then for some constant $c>0$,
\begin{align*}
&\E[ \mathbf{1}_{|W_r| < \epsilon}|(Z_r + W_r)^{- \alpha /\gamma} - Z_r^{- \alpha /\gamma}|]   \le \epsilon |\frac{\alpha}{\gamma}| \sup_{u\in [0,1]} \E[  |(1-u)Z_r + u W_r|^{-\textnormal{Re}(\alpha)/\gamma-1}] \le c \, \epsilon.
\end{align*}
For the other term, we use the H\"older inequality with $\lambda > 1$ such that $\frac{\lambda}{\lambda-1} \textnormal{Re}( - \frac{\alpha}{\gamma}) <  \frac{2}{\gamma}(Q-a) \wedge \frac{4}{\gamma^2}$, and $0< m < \frac{4}{\gamma^2}$. For some $c,c' >0$
\begin{align*}
&\E[ \mathbf{1}_{|W_r| \ge \epsilon}|(Z_r + W_r)^{- \alpha/\gamma} - Z_r^{-\alpha/\gamma}|] \le c\, \mathbb{P}(|W_r| \ge \epsilon)^{\frac{1}{\lambda}} \le c \epsilon^{-\frac{m}{\lambda}}\E[|W_r|^m]^{\frac{1}{\lambda}}\\ \nonumber
&\le c\epsilon^{-\frac{m}{\lambda}} \E \left[ \left\vert e^{\frac{\alpha}{2} F_{\tau}(0)} 	\cQ(q) \right \vert^m  \left| \int_{\mathbb{R}_{r+1} \setminus \mathbb{R}_r } |y|^{- \frac{a \gamma}{2}}  f_1(y) e^{\frac{\gamma}{2}X_{\mathbb{H}}(y)} dy \right|^m \right]^{\frac{1}{\lambda}}\\ \nonumber
&\le c' \epsilon^{-\frac{m}{\lambda}} \left( e^{-\frac{r}{2}((1+\frac{\gamma^2}{2}-\frac{\gamma a}{2})m - \frac{\gamma^2 m^2}{2})}\right)^{\frac{1}{\lambda}} =: c'\epsilon^{-\frac{m}{\lambda}} e^{-\frac{\theta}{\lambda}r},
\end{align*}
where in the last step $\theta \in \mathbb{R} $ is defined by the last equality and we have used the multifractal scaling property of the GMC, see e.g.~\cite[Section 3.6]{BP-notes} or \cite[Section 4]{RV-GMC}. We can choose a suitable $m$ such that $\theta>0$. Now take $\epsilon = e^{-\eta r}$ with $\eta = \frac{\theta}{\lambda + m}$. Then for some $c,c'>0$:
\begin{align*}
\left \vert \mathcal{E}_{r+1} - \mathcal{E}_r \right \vert \le c\,e^{\frac{r+1}{2}  b^2}(\epsilon + \epsilon^{-\frac{m}{\lambda}} e^{-\frac{\theta}{\lambda}r}) \le c'\, e^{-(\eta - \frac{1}{2} b^2)r}.
\end{align*}
Hence if one chooses the open set $V$ in such a way that $\frac{1}{2} b^2 < \eta$ always holds, all the above inequalities  hold true and thus we have shown that $\mathcal{E}_r$ converges locally uniformly.
This gives the first claim in Lemma~\ref{lem:a-props} (b).

 For the second claim, by  Lemma~\ref{lem:a-props} (a), $\cA^q_{\gamma,P}(\alpha)$ is analytic in $q$. Therefore for a small  enough contour $\mathcal C$ around the origin, we have by \guillaume{the Cauchy integral formula that} $\cA_{\gamma,P,n}(\alpha)= \frac{n!}{2\pi \ii}\oint_{\mathcal C} \cA^q_{\gamma,P}(\alpha)q^{-n-1}dq$. Then the analyticity in $\alpha $ of  $\cA^q_{\gamma,P}(\alpha)$ implies the result for $\cA_{\gamma,P,n}(\alpha)$.
\end{proof}
\xin{Recall $\bar p_2=\frac{1}{1+\frac{\alpha\gamma}{4}}$  and $\eps_{2\bar p_2}$ from Lemma~\ref{lem:q-bound}. 
We introduce:
\begin{define} \label{def:r-alpha}
Let $r_\alpha=\frac12$ for $\alpha\in [0,Q)$ and $r_\alpha=\eps_{2\bar p_2}$ for $\alpha\in (-\frac{\alpha}{\gamma}, 0)$.
\end{define}
By definition  $\max_{|q|<r_\alpha,\;  p\in [1,2\bar p_2]}  \EE |\cQ(q)|^{p}] <\infty$. 
Moreover, from the proof of Lemma~\ref{lem:a-props}, for each $\alpha\in (-\frac{4}{\gamma},Q)$ the convergent radius of $\mathcal A^q_{\gamma,P}$ in $q$ is at least $r_\alpha$. 
We will need $r_\alpha$ in Section~\ref{sec:bpz}.
}

\section{BPZ equation for deformed conformal blocks} \label{sec:bpz}
Following the outline in Section~\ref{sec:method-sum},  we now introduce a deformation of the conformal block (Definition~\ref{def:u-block}) and prove that it satisfies the \notion{BPZ equation} (Theorem \ref{thm:bpz}). Throughout this section we fix $\gamma\in(0,2)$, $Q=\frac{\gamma}{2}+\frac{2}{\gamma}$, 
and $P\in\RR$. 
\xin{We define the deformed conformal block, denoted by 
$\psi^\alpha_\chi(u, \tau)$, in a restricted range of $\tau$. For our application in Section~\ref{sec:ode}, we need that for each $u\in \bbH$, $\psi^\alpha_\chi(u, \tau)$ is defined when  $\Im(\tau)$ is sufficiently large. Our choice of range below may not be the largest possible but it suffices for this purpose and avoids further technicalities in justifying the definition.
To describe our range of $(u,\tau)$ we first record a basic fact on the theta function, which we prove in Appendix~\ref{subsec:u-theta}.}

\begin{lemma}\label{lem:g}
For $\tau\in\bbH$ and $q=e^{\ii\pi\tau}$, let $\xin{\band} := \{u: 0 < \Im(u) < \frac{3}{4} \Im(\tau)\}$.
There exists $q_0>0$ such that if $q\in (0,q_0)$, then $\Im (\xin{\frac{\log \Theta'_\tau(u)}{\log \Theta_\tau(u)}})<0$ for  $u\in \band$.
\end{lemma}
Choose $q_0$ satisfying Lemma~\ref{lem:g}. For $q \in (0, q_0)$, we define 
\begin{equation} \label{eq:fnu-def}
\xin{f(u,q)} \defeq \int_0^1 \Theta_{\tau}(u+x)^{\frac{\gamma\chi}{2}} |\Theta_{\tau}(x)|^{-\frac{\alpha\gamma}{2}}
e^{\pi \gamma P x} e^{\frac{\gamma}{2} Y_\tau(x)} dx \qquad \textrm{for } u\in\band.
\end{equation}
\xin{As explained in Appendix~\ref{subsec:u-theta},  $u\mapsto f(u,q)$ is almost surely analytic and nonzero on $\band$; see Lemma~\ref{lem:f-nu}. Moreover, $\log f(u,q)$ and hence  the fractional powers
of $\xin{f(u,q)}$ is specified in Definition~\ref{def:fractional-power}.} \guillaume{In order to define the $u$-deformed conformal block we need the following proposition.}
\begin{prop}\label{prop:u-block}
Fix $\chi\in\{\frac{\gamma}{2}, \frac{2}{\gamma}\}$ and $\alpha \in (-\frac{4}{\gamma}+\chi, Q)$.
For  $q\in (0,q_0)$ and $u\in\band$, we have $\EE\big[  |\xin{f(u,q)}|^{-\frac{\alpha}{\gamma} + \frac{\chi}{\gamma}}\big]<\infty$. Moreover, $\EE\big[ \xin{f(u,q)}^{-\frac{\alpha}{\gamma} + \frac{\chi}{\gamma}} \big]$
is analytic in $u$ on $\band$.  \xin{Finally, define the domain}
\begin{equation} \label{eq:d-def}
D^\alpha_\chi\defeq\{(u,q) : |q|<r_{\alpha-\chi} \textrm{ and } u\in\band \}
\end{equation}\xin{with $r_\alpha$ from Definition~\ref{def:r-alpha}}.
Then $\EE\big[ \xin{f(u,q)}^{-\frac{\alpha}{\gamma} + \frac{\chi}{\gamma}} \big]$  admits a bi-holomorphic extension in $(u,q)$ on $D^\alpha_\chi$.
\end{prop}
We defer the proof of Proposition~\ref{prop:u-block} to Section \ref{subsec:proof-prop} \xin{and
proceed to introduce the deformed conformal block. We will need the following expression:}
\begin{equation}\label{eq:lchi}
l_\chi = \chi^2/2 - \alpha \chi/2.
\end{equation}
\begin{define}\label{def:u-block}
For $\alpha \in (-\frac{4}{\gamma} + \chi, Q)$ and $\chi\in\{\frac{\gamma}{2}, \frac{2}{\gamma}\}$, define
\xin{\begin{equation} \label{eq:gen-block}
\Sigma^\alpha_{\chi}(u, q)\defeq	C(q) e^{\chi Pu \pi}   \Theta_{\tau}(u)^{- l_\chi} \EE\left[ \xin{f(u,q)}^{-\frac{\alpha}{\gamma} + \frac{\chi}{\gamma}}   \right] \qquad \textrm{for } (u,q)\in D^\alpha_\chi,
\end{equation}
where $C(q)\defeq q^{ \frac{\gamma l_\chi}{12 \chi} - \frac{1}{6} \frac{l_\chi^2}{\chi^2}-  \frac{1}{6 \chi^2}l_\chi(l_\chi+1) }
\Theta'_{\tau}(0)^{- \frac{2 l_\chi^2}{3 \chi^2} + \frac{l_\chi}{3}  + \frac{4 l_{\chi}}{3 \gamma \chi}} 
e^{-\frac12\ii \pi \alpha \gamma(-\frac{\alpha}{\gamma} + \frac{\chi}{\gamma})}$,
and $\EE\left[ \xin{f(u,q)}^{-\frac{\alpha}{\gamma} + \frac{\chi}{\gamma}}   \right]$ is extended to $D^\alpha_\chi$ as in Proposition~\ref{prop:u-block}.}  Define
\begin{equation}\label{eq:u-block}
\psi^\alpha_\chi(u, \tau) \defeq e^{\left(\frac{P^2}{2} + \frac{1}{6 \chi^2}l_\chi(l_\chi+1)  \right)\ii \pi \tau} \Sigma^\alpha_\chi(u,e^{\ii \pi \tau}) \quad \textrm{for }(u,e^{\ii \pi \tau})\in D^\alpha_\chi.
\end{equation}We call $ \psi^\alpha_\chi(u, \tau) $ the \emph{$u$-deformed conformal block}.
\end{define} 
\xin{Although $\psi^\alpha_\chi(u, \tau)$ and $\Sigma^\alpha_\chi(u,q)$ only differ by a simple factor that could have been absorbed in the definition of $C(q)$ from~\eqref{eq:gen-block}, we introduce both because $\psi^\alpha_\chi(u, \tau)$ is convenient for the BPZ equation while $\Sigma^\alpha_\chi(u,q)$ is convenient for  Section~\ref{sec:ode}, which concerns the $q$-series coefficients of  $\Sigma^\alpha_\chi(u,q)$. To get a more compact expression for $\psi^\alpha_\chi(u, \tau)$,} recall from Section~\ref{subsec:GMC-block} that $\Theta_{\tau}(x)^{-\alpha \gamma/2} = e^{-\ii \pi \alpha \gamma/2} |\Theta_{\tau}(x)|^{-\alpha \gamma/2}$ for $x\in(0,1)$ and $q\in(0,1)$.
By Definition \ref{def:u-block}, for $q\in(0,q_0\wedge r_{\alpha -\chi})$ we have the following expression
\begin{equation}\label{eq:q-block}
\psi^\alpha_\chi(u,\tau)= \revise{ \cW(e^{\ii \pi \tau}) }e^{\chi Pu \pi} \EE\left[\Big( \int_0^1 \cT(u, x)e^{\pi \gamma P x} e^{\frac{\gamma}{2} Y_\tau(x)} dx \Big)^{-\frac{\alpha}{\gamma} + \frac{\chi}{\gamma}}\right],
\end{equation}
for $\cW(q) \defeq  q^{\frac{P^2}{2} + \frac{\gamma l_\chi}{12 \chi} - \frac{1}{6} \frac{l_\chi^2}{\chi^2}} \Theta'_{\tau}(0)^{- \frac{2 l_\chi^2}{3 \chi^2} + \frac{l_\chi}{3} + \frac{4 l_{\chi}}{3 \gamma \chi} }$ and $\cT(u, x) := \frac{\Theta_\tau(u + x)^{\frac{\gamma}{2}\chi}}{\Theta_\tau(u)^{\frac{\gamma \chi}{2}} \Theta_\tau(x)^{\frac{\alpha\gamma}{2}}}$.

\xin{
We now state the BPZ equation for $\psi^\alpha_\chi(u,\tau)$ which we prove in Section~\ref{sec:bpz-proof}.}%
\begin{theorem} \label{thm:bpz}
For $\chi\in\{\frac{\gamma}{2}, \frac{2}{\gamma}\}$ and $\alpha \in (-\frac{4}{\gamma} + \chi, Q)$,  we have
\begin{equation} \label{eq:bpz1}
\Big(\partial_{uu} - l_\chi(l_\chi + 1) \wp(u) + 2\ii \pi \chi^2 \partial_\tau\Big)
\psi^\alpha_{\chi}(u, \tau)= 0 \qquad \textrm{for }(u,e^{\ii \pi \tau})\in D^\alpha_\chi,
\end{equation}
where $\wp$ is Weierstrass elliptic   function from Appendix~\ref{subsec:special}.
\end{theorem}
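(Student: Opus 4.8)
The plan is to derive \eqref{eq:bpz1} by performing an exact Gaussian (Girsanov) computation on the expectation in \eqref{eq:q-block}, exploiting that the insertion of a degenerate field at momentum $\chi\in\{\frac{\gamma}{2},\frac{2}{\gamma}\}$ produces a null-vector relation at level $2$. First I would reduce to the real-analytic regime $q\in(0,q_0\wedge r_{\alpha-\chi})$ and $u\in\band$: by Proposition~\ref{prop:u-block} and Definition~\ref{def:u-block} both sides of \eqref{eq:bpz1} are holomorphic on $D^\alpha_\chi$, so it suffices to verify the identity on this totally real sub-locus and then invoke analytic continuation. On that locus I can use the explicit GMC representation \eqref{eq:q-block}, i.e. work with
\[
\psi^\alpha_\chi(u,\tau)=\cW(q)\,e^{\chi Pu\pi}\,\EE\Big[\Big(\int_0^1 \cT(u,x)\,e^{\pi\gamma Px}\,e^{\frac{\gamma}{2}Y_\tau(x)}\,dx\Big)^{-\frac{\alpha}{\gamma}+\frac{\chi}{\gamma}}\Big],
\]
with $\cT(u,x)=\Theta_\tau(u)^{-\frac{\gamma\chi}{2}}\Theta_\tau(x)^{-\frac{\alpha\gamma}{2}}\Theta_\tau(u+x)^{\frac{\gamma\chi}{2}}$.

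The core of the argument is a bookkeeping of how the three operators $\partial_{uu}$, multiplication by $\wp(u)$, and $2\ii\pi\chi^2\partial_\tau$ act. The key point is the heat-type equation for the theta function: $\Theta_\tau$ satisfies $\partial_\tau\log\Theta_\tau=\frac{1}{4\ii\pi}\big(\partial_{xx}\log\Theta_\tau + (\partial_x\log\Theta_\tau)^2\big)$ up to the additive constants encoded in $\Theta'_\tau(0)$, together with $\partial_{xx}\log\Theta_\tau(x)=-\wp(x)+\text{const}$, where the constants are exactly arranged to be absorbed by the prefactor $\cW(q)$ (this is why $\cW$ has those particular exponents in $q$ and $\Theta'_\tau(0)$, and why $C(q)$ was defined as it was). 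Concretely, I would: (i) differentiate $\psi^\alpha_\chi$ twice in $u$, bringing down from $\cT(u,x)$ the factor $\frac{\gamma\chi}{2}\,\partial_u\log\frac{\Theta_\tau(u+x)}{\Theta_\tau(u)}$ and its square times the exponent $-\frac{\alpha}{\gamma}+\frac{\chi}{\gamma}$, so $\partial_{uu}\psi$ becomes an expectation of a GMC integral with two extra marked factors; (ii) differentiate once in $\tau$, where $\partial_\tau$ hits $\cW(q)$, the exponentials, $\cT$, and crucially the field $Y_\tau$ itself — here $\partial_\tau Y_\tau$ has covariance computable via the heat equation for $\Theta_\tau$, and Girsanov/integration-by-parts for the Gaussian field $Y_\tau$ converts this into second-order differential operators acting on the $x$-variable inside the integral; (iii) combine (i) and (ii) with the term $-l_\chi(l_\chi+1)\wp(u)$, and check that the integrand of the resulting expectation is a total $x$-derivative, hence integrates to the boundary terms at $x=0$ and $x=1$. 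The doubly-periodic / quasi-periodic behavior of $\Theta_\tau$ (the shift $x\mapsto x+1$ multiplies $\Theta_\tau$ by a controlled factor) will force these boundary terms to cancel, giving zero. The special role of $l_\chi=\frac{\chi^2}{2}-\frac{\alpha\chi}{2}$ is that it makes the ``level-two degenerate'' relation close: the coefficient of $\wp(u)$ produced by steps (i)+(ii) is precisely $l_\chi(l_\chi+1)$ when $\chi\in\{\frac{\gamma}{2},\frac{2}{\gamma}\}$, because then $\frac{\gamma\chi}{2}$ and the conformal weight of the $e^{\chi\phi}$ insertion satisfy the null-vector condition.

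I would organize the total-derivative identity as the statement that there is an explicit functional $G(u,x)$ (a ratio of theta functions with exponents built from $\alpha,\gamma,\chi$) such that, inside the expectation,
\[
\Big(\partial_{uu}-l_\chi(l_\chi+1)\wp(u)+2\ii\pi\chi^2\partial_\tau\Big)\big[\text{integrand}\big]
=\partial_x\big[G(u,x)\cdot(\text{GMC density with appropriate insertions})\big],
\]
which is the same mechanism as in \cite{KRV19b} but on the torus; the needed differentiation-under-the-expectation is licensed by Lemma~\ref{lem:analytic-basic} together with the moment bounds of Lemma~\ref{lem:GMC-moment} (applied to the deformed exponent $-\frac{\alpha}{\gamma}+\frac{\chi}{\gamma}$, which is why $\alpha>-\frac{4}{\gamma}+\chi$ is assumed). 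The main obstacle I anticipate is step (ii): rigorously justifying the exchange of $\partial_\tau$ with the expectation and with the GMC limit, and correctly identifying the second-order operator that $\partial_\tau Y_\tau$ contributes after Gaussian integration by parts — one must track the logarithmic singularity of $\EE[\partial_\tau Y_\tau(x)\,Y_\tau(y)]$ as $y\to x$ and confirm that the divergent piece is exactly what renormalizes into the $\wp$ term and the $q$-dependent prefactor, rather than producing an anomalous contribution. A secondary technical point is that $\wp$ and $\Theta_\tau$ live on $\band$ rather than all of $\mathbb{H}$, so one has to be careful that the boundary terms at $x=0,1$ remain inside the region where $f_\nu$ is analytic and nonzero (Lemma~\ref{lem:f-nu}); this is handled by the same quasi-periodicity bookkeeping that makes the boundary terms cancel. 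Once the total-derivative identity and the vanishing of boundary terms are in hand, \eqref{eq:bpz1} follows on $(0,q_0\wedge r_{\alpha-\chi})\times\band$ and then on all of $D^\alpha_\chi$ by analyticity.
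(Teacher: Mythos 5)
Your overall scaffolding matches the paper's: reduce to $q\in(0,q_0\wedge r_{\alpha-\chi})$, $u\in\band$ and extend by bi-holomorphicity (Proposition~\ref{prop:u-block}); differentiate under the expectation via Lemma~\ref{lem:analytic-basic} and Lemma~\ref{lem:GMC-moment}; handle $\partial_\tau Y_\tau$ by Girsanov; and close the computation with theta-function identities, with $\chi\in\{\frac{\gamma}{2},\frac{2}{\gamma}\}$ entering as the null-vector condition. However, the mechanism you place at the heart of the argument --- that the full BPZ operator applied to the integrand equals $\partial_x\bigl[G(u,x)\cdot(\text{GMC density})\bigr]$, so that everything reduces to boundary terms at $x=0,1$ cancelling by quasi-periodicity --- is a genuine gap: it is asserted rather than derived, and it does not match the structure of the computation. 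Already $\partial_{uu}\psi^\alpha_\chi$ contains a term $s(s-1)\int_0^1\int_0^1\partial_u\cT(u,y)\partial_u\cT(u,z)\,\cV_2(u,y,z)\,dy\,dz$ that is a genuine two-fold integral against the two-point density $\cV_2$, and no single total $x$-derivative can account for it. In the paper, integration by parts in the GMC variable is used only once, on the single cross term $2\chi P\pi s\int\partial_u\cT\,e^{\pi\gamma Py}\cV_1\,dy$ (Lemma~\ref{lem:parts}), and there the boundary terms vanish individually because $\partial_u\cT(u,y)\sim c\sin^{1-\frac{\alpha\gamma}{2}}(\pi y)$ --- which is why one must first take $\alpha<\frac{2}{\gamma}$ and then extend in $\alpha$ by Lemma~\ref{lem:psi-props}, a restriction your plan does not anticipate. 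The actual cancellation then proceeds in three algebraic steps you would still need to supply: the two-fold integrals combine via the three-term theta identity~\eqref{eq:theta-iden} into a one-fold integral (Lemma~\ref{lem:E2}); the resulting one-fold coefficient $\Delta_1(u,y)$ is independent of $y$ precisely because the prefactor of $\Theta_\tau'(u+y)^2/\Theta_\tau(u+y)^2$ is $-\frac{\gamma}{2}\chi(\chi-\frac{\gamma}{2})(\chi-\frac{2}{\gamma})=0$; and the remaining scalar multiple of $\psi^\alpha_\chi$ cancels against the $\wp(u)$ term via~\eqref{eq:wp-theta}.

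A secondary point: the technical obstacle you flag in step (ii) --- a logarithmic singularity of $\EE[\partial_\tau Y_\tau(x)Y_\tau(y)]$ as $y\to x$ needing renormalization into the $\wp$ term --- is not where the difficulty lies. Since $\partial_\tau Y_\tau=\partial_\tau F_\tau$ and $F_\tau$ is the smooth part of the field, that kernel equals $\frac{\ii\pi}{6}-\frac{\partial_\tau\Theta_\tau(y-z)}{\Theta_\tau(y-z)}+\frac13\frac{\partial_\tau\Theta'_\tau(0)}{\Theta'_\tau(0)}$, which is continuous on the diagonal; the genuinely singular kernel is $\partial_y\EE[Y_\tau(y)Y_\tau(z)]\sim -2\Theta_\tau'(y-z)/\Theta_\tau(y-z)$ arising from the integration by parts, and it is tamed by the circle-average regularization and the antisymmetry of $K'_\eps$ in the proof of Lemma~\ref{lem:parts}. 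As it stands, your proposal identifies the right ingredients but does not establish the identity on which it rests.
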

  
\subsection{Proof of Proposition~\ref{prop:u-block}}\label{subsec:proof-prop}
The \xin{first assertion follows from Lemma~\ref{lem:GMC-moment}. In fact,~\eqref{eq:GMC-moment2} implies that $\max_{u\in K}\EE\big[  |\xin{f(u,q)}|^{-\frac{\alpha}{\gamma} + \frac{\chi}{\gamma}}\big]<\infty$
for each compact $K\subset\band$.} For the second assertion, since the function $\xin{u\mapsto f(u,q)}^{-\frac{\alpha}{\gamma} + \frac{\chi}{\gamma}}$
is  almost surely analytic on $\band$, by Lemma~\ref{lem:analytic-basic} its expectation
$\EE\big[ \xin{f(u,q)}^{-\frac{\alpha}{\gamma} + \frac{\chi}{\gamma}} \big]$ is also analytic in $u$ on $\band$. To complete the proof of Proposition~\ref{prop:u-block}, it remains to establish the analytic extension of $\EE\big[ \xin{f(u,q)}^{-\frac{\alpha}{\gamma} + \frac{\chi}{\gamma}} \big]$ to $D^\alpha_\chi$.

Similarly to the proof of Lemma~\ref{lem:a-props}(a),
we use {Cameron-Martin's} theorem to manipulate the expression into a form
  with no $q$-dependence in the GMC moment. Recall the i.i.d. standard real Gaussians $a_n, b_n, a_{n, m}, b_{n, m}$ from~\eqref{eq:y-tau-def}---\eqref{eq:f-tau-def} in
  Section \ref{sec:gmc-def}.  For $q\in \D$ and
  $u\in \band$,  set $u' := u - \frac{\tau}{2}$ and
\begin{align}\label{eq:Xbound}
\cX(u,q) &\defeq -\chi
\sum_{n,m =1}^{\infty} \frac{1}{ \sqrt{2n}} \left( {(a_n + \ii b_n )}q^{(2m-2)n} e^{2 \pi \ii u n} + {(a_n - \ii b_n )}  q^{2nm} e^{-2 \pi \ii u n}   \right);\\
\cY(u,q) &\defeq 2 \chi \sum_{m,n,k \geq 1} a_{n,m}   \frac{q^{(2k-1 +m)n }}{\sqrt{n}}  \cos(2 \pi u' n)\\
&\phantom{===========} \nonumber
- 2 \chi \sum_{m,n,k \geq 1} b_{n,m} \frac{q^{(2k-1+m)n}}{\sqrt{n}}  \sin(2 \pi u' n).
\end{align}
Since $|q|^{3/2}<|e^{2\pi \ii u}|< 1< |e^{-2\pi \ii u}|<|q|^{-3/2}$ when $u\in \band$, the
series for both $\cX(u, q)$ and $\cY(u, q)$ converge almost surely in $q\in\D$,
and $e^{\cX(u,q)}$ and $e^{\cY(u, q)}$ have finite moments of all orders. \guillaume{Recall also $\mathcal{Q}(q)$ from \eqref{eq:Qq}.}
Now define 
\begin{multline}\label{eq:psi_ana_girsa}
\tilde{\psi}_\chi^\alpha(u,q) := \EE \left[  e^{\frac{\alpha}{2} F_{\tau}(0)} \cQ(q) e^{\cY(u,q)} e^{ \cX(u,q) -\frac12 \E[\cX(u,q)^2]}\right.\\
\Big( \int_0^1 (2\sin(\pi x))^{-\frac{\alpha \gamma}{2}} 
\revise{e^{- \ii \pi \frac{\chi \gamma}{2} x}} e^{  \pi \gamma P x } e^{\frac{\gamma}{2} Y_{\infty}(x) } dx  \Big)^{-\frac{\alpha}{\gamma} + \frac{\chi}{\gamma}}\Big].
\end{multline}
\xin{Proposition~\ref{prop:u-block} follows from the two lemmas below.}
\begin{lemma}\label{lem:psiana}
\xin{The function  $(u, q)\mapsto \tpsi^\alpha_\chi(u, q)$ admits
an analytic extension to $D^\alpha_\chi$.}
\end{lemma}
\begin{lemma}\label{lem:u12}
\xin{For $q\in(0,q_0\wedge r_{\alpha-\chi})$ and $\Im u=\frac12\Im\tau$, we have
\begin{align}\label{eq:bi-holo}
\EE\left[ f(u,q)^{-\frac{\alpha}{\gamma} + \frac{\chi}{\gamma}} \right]=
C_1(q)
\left( - \ii e^{-\ii \pi u} q^{1/6} \eta(q) \right)^{\frac{\chi}{2}(\chi -\alpha)}\tilde{\psi}_\chi^\alpha(u,q)
\end{align}
for $C_1(q)\defeq [q^{1/6} \eta(q)]^{\frac{ \alpha(\alpha - \chi)}{2}} e^{(\frac{\alpha \gamma}{8} - \frac{\gamma \chi}{8}  - \frac{\alpha^2}{8}  )\EE[F_{\tau}(0)^2] }$.}
\end{lemma}
\begin{proof}[\xin{Proof of Proposition~\ref{prop:u-block} given Lemmas~\ref{lem:psiana} and~\ref{lem:u12}}]
\xin{Since we have proved the first two assertions, we know that} $\EE\left[ f(u,q)^{-\frac{\alpha}{\gamma} + \frac{\chi}{\gamma}}\right]$ is analytic in $u\in\band$.
Therefore \eqref{eq:bi-holo} holds
not only for $\Im(u) = \frac{1}{2} \Im(\tau)$ but for all $u\in \band$.
\xin{By Lemma~\ref{lem:psiana},} the right hand side of (\ref{eq:bi-holo}) provides the desired analytic continuation in $(u,q)$ for $\EE\left[ f(u,q)^{-\frac{\alpha}{\gamma} + \frac{\chi}{\gamma}}\right]$ as in the last assertion.
\end{proof}

\begin{proof}[Proof of Lemma~\ref{lem:psiana}]
\xin{Our proof is parallel to that of Lemma~\ref{lem:hatA} so we will be brief here.} 
For $p_1, p_2, p_3 \in (1, \infty)$
with $\frac{1}{p_1} + \frac{1}{p_2} + \frac{1}{p_3} = 1$, H\"older's inequality yields that
\begin{align}\label{eq:holder_prop32}
\EE &\left[ \Big|e^{\frac{\alpha}{2} F_{\tau}(0)}  e^{\mathcal{Y}(u,q)} e^{ \cX(u,q) -\frac12 \E[\cX(u,q)^2]}\cQ(q) \Big|\right. \\ \nonumber
&\phantom{====}\Big \vert \int_0^1 (2 \sin(\pi x))^{-\alpha\gamma/2} \revise{e^{- \ii \pi \frac{\chi \gamma}{2} x}}  e^{ \pi \gamma P x } e^{\frac{\gamma}{2} Y_{\infty}(x) }dx\Big \vert ^{- \frac{\alpha}{\gamma} + \frac{\chi}{\gamma}} \Big]\\ \nonumber
&\leq \EE\left[ \left| e^{\frac{\alpha}{2} F_{\tau}(0)}  e^{\mathcal{Y}(u,q)} e^{ \cX(u,q) -\frac12 \E[\cX(u,q)^2]}\right|^{p_1} \right]^{\frac{1}{p_1}} \cdot  \EE\left[|\cQ(q)|^{p_2}\right]^{\frac{1}{p_2}} \\ \nonumber
&\phantom{====}\EE\Big[\Big \vert \int_0^1(2 \sin(\pi x))^{-\alpha\gamma/2} \revise{e^{- \ii \pi \frac{\chi \gamma}{2} x}} e^{\pi \gamma P x} e^{\frac{\gamma}{2} Y_{\infty}(x) }dx\Big \vert^{ \frac{(\chi -\alpha) p_3}{\gamma}} \Big]^{\frac{1}{p_3}}
\end{align}
We now choose ranges for $p_1, p_2, p_3$ for which each of the
three terms on the right side of (\ref{eq:holder_prop32}) is finite
\xin{for $\alpha \in (-\frac{4}{\gamma} + \chi, Q)$ and $|q| < r_{\alpha - \chi}$. This will imply that $\tilde{\psi}_\chi^\alpha(u,q)$ admits a bi-holomorphic extension to $D^\alpha_\chi$.}

For the first term, because a Gaussian
random variable has finite exponential moments, for any $p_1 >1$ we have
\begin{align*}
\EE\left[ \left| e^{\frac{\alpha}{2} F_{\tau}(0)}  e^{\mathcal{Y}(u,q)} e^{ \cX(u,q) -\frac12 \E[\cX(u,q)^2]}\right|^{p_1} \right]^{\frac{1}{p_1}} < \infty.
\end{align*}
As in the proof of Lemma~\ref{lem:hatA}, to analyze the other two terms we divide into cases based on the sign
of $\alpha - \chi$.  For $\alpha - \chi$ positive, the exponent $\frac{(\chi -\alpha) p_3}{\gamma}$ in the third term
is negative, meaning that the third term is finite for arbitrarily large $p_3$ \revise{thanks to Lemma \ref{lem:GMC-moment}}.  Also choosing $p_1$ arbitrarily
large, it remains to check that the second term is finite for $p_2$ close to $1$, which follows by Lemma \ref{lem:q-bound}
applied with $\alpha - \chi$ in place of $\alpha$.

For $\alpha - \chi$ negative, the third term is finite if $1 < p_3 < -\frac{4}{(\alpha - \chi)\gamma}$. \xin{Recall Definition~\ref{def:r-alpha} of $r_\alpha$.} Choosing $p_1$
arbitrarily large and $p_3$ close to $-\frac{4}{(\alpha - \chi) \gamma}$, it suffices to check that the second term
is finite for $p_2$ close to $\frac{1}{1 + \frac{\gamma(\alpha - \chi)}{4}}$, which again follows by Lemma \ref{lem:q-bound}
applied with $\alpha - \chi$ in place of $\alpha$.
 \end{proof}

\begin{proof}[Proof of Lemma~\ref{lem:u12}]
Assume $q \in (0, q_0 \wedge r_{\alpha - \chi})$ and $\Im u = \frac{1}{2} \Im \tau$.
\guillaume{Similarly as in the proof of Lemma \ref{lem:reduce}, by
Theorem \ref{thm:Girsanov}} we have 
\begin{align*}
\EE&\Big[ f(u,q)^{-\frac{\alpha}{\gamma} + \frac{\chi}{\gamma}}\Big]
= \EE\Big[\Big( \int_0^1  |\Theta_{\tau}(x)|^{-\frac{\alpha\gamma}{2}} \Theta_\tau(u + x)^{\frac{\gamma}{2}\chi} e^{\pi \gamma P x} e^{\frac{\gamma}{2} Y_\tau(x)}dx \Big)^{-\frac{\alpha}{\gamma} + \frac{\chi}{\gamma}}\Big] \\
& =  C_1(q)
\EE \Big[ e^{\frac{\alpha}{2} F_{\tau}(0) }  \Big( \int_0^1 e^{\frac{\gamma}{2} F_{\tau}(x) }  
(2\sin (\pi x))^{-\frac{\alpha \gamma}{2}} \Theta_{\tau}( x + u)^{\frac{\chi \gamma}{2}} e^{  \pi \gamma P x } e^{\frac{\gamma}{2} Y_{\infty}(x) } dx\Big)^{-\frac{\alpha}{\gamma} + \frac{\chi}{\gamma}} \Big].
\end{align*}	
Then by \eqref{eq:Ftau} and Theorem \ref{thm:Girsanov}, we get the following analog of~\eqref{eq:hatA-expression}
\begin{multline}\label{eq:Efnu}
\EE\left[ f(u,q)^{-\frac{\alpha}{\gamma} + \frac{\chi}{\gamma}}   \right]\\=C_1(q)
\EE \Big[ e^{\frac{\alpha}{2} F_{\tau}(0)} 
\cQ(q)
\Big( \int_0^1 (2\sin(\pi x))^{-\frac{\alpha \gamma}{2}} \Theta_{\tau}( x + u)^{\frac{\chi \gamma}{2}} e^{  \pi \gamma P x } e^{\frac{\gamma}{2} Y_{\infty}(x) }  dx\Big)^{-\frac{\alpha}{\gamma} + \frac{\chi}{\gamma}} \Big]
\end{multline}
for $\cQ(q)$ from~\eqref{eq:Qq}. We now claim that 
\begin{equation} \label{eq:Theta-cX}
\Theta_{\tau}(u+x) = - \ii \revise{ e^{-\ii \pi (u +x)} } q^{\frac{1}{6}} \eta(q) e^{\frac{1}{\chi}\E[Y_\infty(x)\cX(u,q)]}.
\end{equation} 
To see \eqref{eq:Theta-cX}, recalling that $u' =  u-\frac{\tau}{2}$, by~\eqref{eq:half-up} we have 
\begin{equation}
\Theta_{\tau}(u+x) = - \ii \revise{e^{-\ii \pi (u+x)}} q^{\frac{1}{6}} \eta(q)
\prod_{m=1}^{\infty}  (1 - q^{2m-1} e^{2 \pi \ii (u'+x)} )(1 - q^{2m-1} e^{-2 \pi \ii (u'+x)} ).
\end{equation}
Using \revise{$1-z=\exp\{ - \sum_{n=1}^{\infty} \frac{z^n}{n} \}$ } for $|z|<1$ and recalling \eqref{eq:sin-cos}, we have  
\begin{align*}
&\prod_{m=1}^{\infty}  (1 - q^{2m-1} e^{2 \pi \ii (u'+x)} )(1 - q^{2m-1} e^{-2 \pi \ii (u'+x)} )\\
&=  \exp\Big\{ -\sqrt{2} \sum_{n,m =1}^{\infty} \frac{q^{(2m-1)n}}{\sqrt{n}} { \left( \cos(2 \pi u'n)\EE[a_n Y_{\infty}(x )] - \sin(2 \pi u'n)\EE[b_n Y_{\infty}(x )] \right) } \Big\}. 
\end{align*}
Now, \eqref{eq:Theta-cX} follows from the observation that
\begin{align}\label{eq:u-u'}
\cX(u,q) = -\chi \sqrt{2}  \sum_{n,m =1}^{\infty} \frac{q^{(2m-1)n}}{\sqrt{n}} { \left( \cos(2 \pi u'n) a_n - \sin(2 \pi u'n) b_n \right).}
\end{align}
Since \xin{$q \in (0, q_0)$ and $\Im u'=0$,} \eqref{eq:u-u'} implies that   
\begin{equation}\label{eq:u'real}
\cX(u,q)\in\RR\quad \textrm{and}\quad \E [\cX(u,q)^2] =2\chi^2  \sum_{n,m =1}^{\infty} \frac{q^{2(2m-1)n}}{n}.
\end{equation}
By~\eqref{eq:Efnu} and \eqref{eq:Theta-cX}, we have
\begin{align} \nonumber
&\left( \int_0^1 (2\sin(\pi x))^{-\frac{\alpha \gamma}{2}} \Theta_{\tau}( x + u)^{\frac{\chi \gamma}{2}} e^{  \pi \gamma P x } e^{\frac{\gamma}{2} Y_{\infty}(x) } dx  \right)^{-\frac{\alpha}{\gamma} + \frac{\chi}{\gamma}} \\ \nonumber
&= [- \ii e^{-\ii \pi u} q^{1/6} \eta(q)]^{\frac{\chi}{2}(\chi -\alpha)} \times \\ \nonumber
&\quad \left( \int_0^1 (2\sin(\pi x))^{-\frac{\alpha \gamma}{2}} e^{\frac{\gamma}{2}\E[Y_\infty(x)\cX(u,q)]}
\revise{e^{- \ii \pi \frac{\chi \gamma}{2} x}} e^{  \pi \gamma P x } e^{\frac{\gamma}{2} Y_{\infty}(x) } dx  \right)^{-\frac{\alpha}{\gamma} + \frac{\chi}{\gamma}}.
\end{align}
Since \xin{$\cX(u,q)$ is a real Gaussian}, applying Cameron-Martin's  theorem again with
respect to the randomness of $(a_n)_{n \geq 1}$, $(b_n)_{n \geq 1}$ while freezing
  $(a_{n,m})_{n,m \geq 1}$, $(b_{n,m})_{n,m \geq 1}$, we get (\ref{eq:bi-holo}) \xin{by comparing with~\eqref{eq:psi_ana_girsa}.} 
\end{proof}

\guillaume{To finish this section }\xin{we record an analytic extension result including $\alpha$, which is needed in the next subsection and in Sections \ref{sec:ode} and \ref{sec:opes}.}
\begin{lemma} \label{lem:psi-props}
	Given $\chi\in\{\frac{\gamma}2, \frac{2}{\gamma} \}$, there exists an open set  in $\CC^3$ containing 
	$\{ (\alpha, u,q ): \alpha\in  (- \frac{4}{\gamma} + \chi, Q ), \xin{\Im  u>0}, q=0\}$ on which
	$ (\alpha, u,q)  \mapsto \Sigma^\alpha_{\chi}(u, q)$ has an analytic continuation.
\end{lemma}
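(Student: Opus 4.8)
The plan is to mirror the proof of Lemma~\ref{lem:a-props}(b), carrying along the extra variable $u$ and the extra weight $\Theta_\tau(u+x)^{\gamma\chi/2}$ inside $f_\nu$. Fix $\alpha_0\in(-\tfrac4\gamma+\chi,Q)$ and $u_0\in\HH$; since $\tfrac34\Im(\tau)\to\infty$ as $q\to0$, for $|q|$ small (depending on $u_0$) and $u$ in a small ball around $u_0$ one has $u\in\band$, so it is enough to exhibit a joint analytic continuation of $(\alpha,u,q)\mapsto\psi^\alpha_\chi(u,q)$ to a neighborhood of $(\alpha_0,u_0,0)$. By Definition~\ref{def:u-block} and the formula for $\hat\psi^\alpha_\chi$ in Proposition~\ref{prop:u-block}, $\psi^\alpha_\chi(u,q)$ is the product of the prefactor $e^{(\frac{P^2}{2}+\frac{1}{6\chi^2}l_\chi(l_\chi+1))\ii\pi\tau}C(q)e^{\chi Pu\pi}\Theta_\tau(u)^{-l_\chi}$ and the expectation $\EE[f_\nu(u)^{-\frac\alpha\gamma+\frac\chi\gamma}]$. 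Here $l_\chi$ is a polynomial in $\alpha$, $e^{\chi Pu\pi}$ is entire, and $\Theta_\tau(u)$ is jointly analytic and nonvanishing near $(u_0,0)$ (its zeros, on the lattice $\ZZ+\tau\ZZ$, are avoided because $0<\Im(u_0)<\tfrac34\Im(\tau)$), so $\Theta_\tau(u)^{-l_\chi}$ is analytic once a branch is fixed as in Proposition~\ref{prop:u-block}, and likewise for $\Theta'_\tau(0)^{(\cdots)}$ in $C(q)$; the only subtlety, the \emph{fractional} powers of $q$ carried by $C(q)$, the $\tau$-exponential and $\Theta'_\tau(0)^{(\cdots)}$, is dealt with below.

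For the expectation, write $\Theta_\tau(z)=q^{1/4}\bar g(z,q)$ with $\bar g$ (the product formula of Appendix~\ref{sec:theta}) analytic in $(z,q)$ and nonvanishing for $0<\Im(z)<\tfrac34\Im(\tau)$ and $|q|$ small, so that $f_\nu(u)=q^{\gamma\chi/8}\int_0^1\bar g(u+x,q)^{\gamma\chi/2}|\Theta_\tau(x)|^{-\frac{\alpha\gamma}{2}}e^{\pi\gamma Px}e^{\frac\gamma2 Y_\tau(x)}dx$. Decomposing $Y_\tau=Y_\infty+F_\tau$ and applying Girsanov's theorem (Theorem~\ref{thm:Girsanov}) exactly as in the derivation of~\eqref{eq:hatA2}--\eqref{eq:hatA-expression} rewrites $\EE[f_\nu(u)^{(\chi-\alpha)/\gamma}]$ as an explicit fractional power of $q$ times
\[
\EE\Big[e^{c_\alpha F_\tau(0)}\,\cQ_\chi(q)\Big(\int_0^1\bar g(u+x,q)^{\frac{\gamma\chi}{2}}(2\sin\pi x)^{-\frac{\alpha\gamma}{2}}e^{\pi\gamma Px}e^{\frac\gamma2 Y_\infty(x)}\,dx\Big)^{\frac{\chi-\alpha}{\gamma}}\Big],
\]
where $c_\alpha$ is affine in $\alpha$ and $\cQ_\chi(q)$ is a random function analytic for $|q|<1$ of the same form as $\cQ(q)$ in~\eqref{eq:Qq} (with the exponent shifted by $\chi$), while $\bar g(u+x,q)^{\gamma\chi/2}$ is analytic in $(u,q)$ and, for $u$ near $u_0$ and $|q|$ small, a continuous weight in $x\in[0,1]$ bounded above and below in modulus uniformly. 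As in the proof of Lemma~\ref{lem:a-props}(a) (see~\eqref{eq:hatA2}--\eqref{eq:hatAfactor}), all the fractional $q$-powers --- those of the prefactor, the $q^{\gamma\chi/8}$ pulled out of $f_\nu(u)^{(\chi-\alpha)/\gamma}$, and those produced by the Girsanov step --- combine into a factor analytic and nonvanishing near $q=0$ and analytic in $(\alpha,u)$. Mapping the unit circle to $\RR$ via $z\mapsto-\ii\frac{z-1}{z+1}$ as in Lemma~\ref{lem:a-props}(b) turns the $Y_\infty$-GMC into the $X_\HH$-GMC of~\cite[Lemma~5.6]{RZ20}, now with a bounded continuous weight depending analytically on $(u,q)$.

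Joint analyticity in $(\alpha,u,q)$ near $(\alpha_0,u_0,0)$ then follows by the argument of Lemma~\ref{lem:a-props}(b). Analyticity in $q$ is as in Lemma~\ref{lem:hatA}; the exponent $\tfrac{\chi-\alpha}{\gamma}$ is exactly why the relevant convergence radius is $r_{\alpha-\chi}>0$, matching the domain $D^\alpha_\chi$ in Proposition~\ref{prop:u-block}. For joint analyticity, realize the expectation as the $r\to\infty$ limit, locally uniform in $(\alpha,u,q)$, of truncated quantities $g_r(\alpha,u,q)$ obtained by integrating over $\RR_r=\RR\setminus(-e^{-r/2},e^{-r/2})$ after a further Girsanov. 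Each $g_r$ is jointly analytic because the truncated GMC integral is a.s.\ a complex number staying near the positive reals (so its power $\tfrac{\chi-\alpha}{\gamma}$ is analytic in $(\alpha,u,q)$) while $e^{c_\alpha F_\tau(0)}\cQ_\chi(q)$ is analytic in $(\alpha,q)$; and $\sum_r|g_{r+1}-g_r|<\infty$ locally uniformly by Hölder-decoupling $e^{c_\alpha F_\tau(0)}\cQ_\chi(q)$ (controlled via Lemma~\ref{lem:q-bound} and exponential moments of $F_\tau(0)$) from the GMC part, whose increments are summable by the estimates in~\cite[Lemma~5.6]{RZ20}. Finiteness of all the moments that occur is from Lemma~\ref{lem:GMC-moment}, the extra factor $\bar g(u+x,q)^{\gamma\chi/2}$ being a harmless bounded weight.

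The main obstacle is bookkeeping rather than a new phenomenon: one must check that every uniform estimate in the proof of Lemma~\ref{lem:a-props}(b) survives uniformly for $u$ in a fixed small complex ball around $u_0$ and $|q|$ small --- that $\bar g(u+x,q)^{\gamma\chi/2}$ is uniformly bounded above and below in modulus over $x\in[0,1]$, that the extracted fractional $q$-powers cancel as claimed, and that $f_\nu(u)$ (resp.\ the truncated GMC integral) stays, a.s.\ and with controlled moments, in a region avoiding a branch cut of $z\mapsto z^{(\chi-\alpha)/\gamma}$. All three are routine consequences of $\Theta_\tau$ having no zeros on $\{z:0<\Im(z)<\tfrac34\Im(\tau)\}$; the conceptual input --- Girsanov reduction to a~\cite{RZ20}-type moment plus Hölder control of the random $q$-dependent factor --- is already in place from Section~\ref{sec:block-def}.
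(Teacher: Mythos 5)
Your overall strategy is sound and lands in the same place as the paper, but you take a genuinely different route on the key technical step, and you under-price one part of it. The paper's proof of this lemma is short because it simply cites Proposition~\ref{prop:u-block} for the joint $(u,q)$-analyticity and then reuses the specific form obtained in that proposition's proof: there, the complex weight $\Theta_\tau(u+x)^{\gamma\chi/2}$ is \emph{removed from inside the GMC integral altogether} via the identity $\Theta_\tau(u+x)=-\ii e^{-\ii\pi u}q^{1/6}\eta(q)e^{\frac1\chi\EE[Y_\infty(x)\cX(u,q)]}$ and a second Girsanov shift by the real Gaussian $\cX(u,q)$ (performed first on the line $\Im u=\tfrac12\Im\tau$ where $\cX$ is real, then extended by analyticity in $u$). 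After that step all $(u,q)$-dependence sits in the external random factor $e^{\mathcal{Y}(u,q)}e^{\cX(u,q)-\frac12\E[\cX(u,q)^2]}\cQ(q)$, the GMC integral is the same $u$- and $q$-independent one as in Lemma~\ref{lem:a-props}, and the $\alpha$-analyticity is \emph{literally} the adaptation of \cite[Lemma~5.6]{RZ20} already carried out in Lemma~\ref{lem:a-props}(b). You instead keep the complex weight $\bar g(u+x,q)^{\gamma\chi/2}$ inside the integral, which forces you to run the RZ20 argument (and the $g_r$ truncation scheme) for a \emph{complex-valued}, $(u,q)$-dependent weight. That is doable via the half-plane containment of Lemmas~\ref{lem:g}--\ref{lem:f-nu} and the second bullet of Lemma~\ref{lem:GMC-moment}, but it is a further extension of \cite{RZ20} that the paper deliberately avoids, and your parenthetical that the truncated integral "stays near the positive reals" is not accurate — it only stays in an open half-plane (whose orientation depends on $u$), which is what actually has to be propagated through the estimates.

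Two smaller points. First, the cancellation of all fractional $q$-powers into an integer-power (analytic) prefactor is not generic bookkeeping: in the paper it is the explicit computation \eqref{eq:prefactor_hat_tilde}, where the residual exponent $\frac{\gamma}{12\chi}-\frac{1}{6\chi^2}+\frac{1}{3\chi\gamma}-\frac16$ vanishes precisely because $\chi\in\{\frac{\gamma}{2},\frac{2}{\gamma}\}$; your argument should at least note that this is where that hypothesis enters. Second, since Proposition~\ref{prop:u-block} is already proved at this point in the paper, your re-derivation of the $(u,q)$-analyticity duplicates work you could cite; the only genuinely new content of the lemma is the joint analyticity in $\alpha$, and for that the paper's reduction to a $(u,q)$-free GMC integral is the cleaner path.
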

\begin{proof}
Thanks to Proposition \ref{prop:u-block},
we have the desired analyticity with respect to $u$ and $q$. As explained
in the proof of Lemma \ref{lem:a-props} (b), analyticity in $\alpha$ follows
from a straightforward adaptation of the arguments of \cite[Lemma~5.6]{RZ20}. 
\end{proof}


\subsection{Proof of Theorem~\ref{thm:bpz}}  \label{sec:bpz-proof}
This proof is conceptually straightforward: we compute\xin{
$\Big(\partial_{uu} - l_\chi(l_\chi + 1) \wp(u) + 2\ii \pi \chi^2 \partial_\tau\Big)
\psi^\alpha_{\chi}(u, \tau)$ and find that it equals 0. In practice, we need to regularize $\psi^\alpha_{\chi}(u, \tau)$ so that there is no analytic issue when taking derivatives, and the calculation
is quite involved, requiring integration by parts and several identities on the theta function.

When checking~\eqref{eq:bpz1} we fix $\chi\in \{\frac{\gamma}{2},\frac{2}{\gamma}\}$ and assume throughout that
\begin{equation}~\label{eq:bpz-range}
    q\in(0,q_0\wedge r_{\alpha-\chi}),\quad u\in\band\quad\textrm{and}\quad \alpha \in (-\gamma + \chi, \frac{2}{\gamma}).
\end{equation}
Recall from~\eqref{eq:u-block} that $ \psi^\alpha_\chi(u,q)$ and $\Sigma^\alpha_\chi(u,q)$ are related by a simple factor. By the analyticity of $\Sigma^\alpha_\chi(u,q)$ in $(u,q)$ from Proposition~\ref{prop:u-block} and in $(\alpha,u,q)$ from Lemma~\ref{lem:psi-props}, once~\eqref{eq:bpz1} is verified for the range~\eqref{eq:bpz-range}, we know that it holds for the entire range claimed in Theorem~\ref{thm:bpz}.
The advantage of working with the range~\eqref{eq:bpz-range} is that we can use the expression~\eqref{eq:q-block} for $ \psi^\alpha_\chi(u,q)$, and the restriction on $\alpha$ will avoid certain singularity issues.
}

\xin{We introduce a regularization of $\psi^\alpha_{\chi}(u,\tau)$.
Recall $Y_\infty, F_\tau,Y_\tau$ from~\eqref{eq:y-tau-def}---~\eqref{eq:f-tau-def}. We first realize $Y_\infty$ as the restriction of a free boundary GFF on a half-infinite cylinder.} 
Recall $X_\bbH$ from Appendix~\ref{sec:gmc-app}. 
For \revise{$x\in  [0,1] \times \RR_{\ge 0}$, meaning $x \in \mathbb{C}$ with $\Re(x) \in [0,1]$ and $\Im(x) \in \RR_{\ge 0}$,} let $\phi(x)=- \ii \frac{e^{2\pi\ii x}-1}{e^{2\pi \ii x}+1}\in \guillaume{\bbH\cup \RR \cup \{ \infty \}}$.
Then $\phi$ conformally \revise{maps}  the half cylinder $\cC_+$ obtained by gluing the two vertical boundaries of  $[0,1]\times \RR_{\ge 0}$ to $ \guillaume{\bbH\cup \RR \cup \{ \infty \}} $ (\guillaume{with $\phi(1/2) =\infty$}). 
Let $Y_\infty(x)= X_\bbH(\phi(x))$ for $x\in \cC_+$. Then $Y_\infty(x)$ is a  free boundary GFF on $\cC_+$.

\guillaume{
Now fix $\eps>0$ small. Define $ \eta_{\eps}(\cdot) = \eps^{-2} \eta(\eps^{-1} \cdot )$, where $\eta $ is a radial smooth function supported on the unit disk satisfying $\int_{\mathbb{H}} \eta(x) d^2x =1 $. For $x\in (0,1)$, let $Y^\eps_\infty(x)$  be defined by the convolution:
\begin{align*}
Y_{\infty}^{\eps}(x) := (Y_{\infty} \ast \eta_{\eps})(x) = \int_{\mathcal{C}_+} Y_{\infty}(x -x') \eta_{\eps}(x')d^2x'.
\end{align*}
Let $Y^\eps_\tau\defeq Y^\eps_\infty+F_\tau$.}
Recall $\cT(u, x)$ \xin{and $\cW(q)$ from the expression~\eqref{eq:q-block} of  $\psi^\alpha_{\chi}(u,\tau)$ and define $s := -\frac{\alpha}{\gamma} + \frac{\chi}{\gamma}$.} Let
\begin{equation}\label{eq:psi_reg}
\psi^\alpha_{\chi, \eps}(u,\tau)=  \cW(e^{\ii \pi \tau}) e^{\chi Pu \pi} \EE\left[\Big( \int_0^1 \cT(u, x)e^{\pi \gamma P x} e^{\frac{\gamma}{2}Y^\eps_\tau(x)-\frac{\gamma^2}{8}\EE[Y^\eps_\tau(x)^2]} dx \Big)^{\xin{s}}\right].
\end{equation}
\xin{We first compute $\partial_{uu}\psi^{\alpha}_{\chi, \eps}$. For $\eps>0$, let 
\begin{align}
    V_\eps(u,\tau)&\defeq \int_0^1 e^{\frac{\gamma}{2} Y^\eps_\tau(x)-\frac{\gamma^2}{8}\EE[Y^\eps_\tau(x)^2]} \cT(u, x) e^{\pi \gamma P x} dx;\label{eq:V1-eps-def}\\
    \cV_{1,\eps}(u, y)&\defeq 
\EE\Big[  V_{\eps}(u,\tau)^{s - 1} e^{\frac{\gamma}{2} Y^\eps_\tau(y)-\frac{\gamma^2}{8}\EE[Y^\eps_\tau(y)^2]}  \Big].\label{eq:V1}
\end{align}
Here we omit the dependence of $\cV_{1,\eps}$ on $\tau$ for notation simplicity.
\guillaume{Computing the $u$ derivatives of} $\psi^{\alpha}_{\chi, \eps}$ in~\eqref{eq:psi_reg} we have} 
\[
\partial_u \psi^{\alpha}_{\chi, \eps}(u,q)= \chi P \pi \psi^{\alpha}_{\chi, \epsilon}(u,q) + s \cW(q) e^{\pi \chi Pu}\!\! \int_0^1 \partial_u\cT(u, y) e^{\pi \gamma P y} \cV_{1,\eps}(u,y) dy;
\]
\xin{\begin{equation}\label{eq:u-der}
\partial_{uu}\psi^{\alpha}_{\chi, \eps}(u,q)  = (\chi P \pi)^2 \psi^{\alpha}_{\chi, \eps}(u,q) 
+ \Xi^{\mathrm{uu}}_{1,\eps}  + \Xi^{\mathrm{uu}}_{2,\eps},
\end{equation}
where
\begin{align*}  
\cV_{2,\eps}(u, y, z) &\defeq \EE\Big[
V_{\eps}(u,\tau)^{s - 2}   e^{\frac{\gamma}{2} Y^\eps_\tau(y)-\frac{\gamma^2}{8}\EE[Y^\eps_\tau(y)^2]}  e^{\frac{\gamma}{2} Y^\eps_\tau(z)-\frac{\gamma^2}{8}\EE[Y^\eps_\tau(z)^2]} \Big];\\
\Xi_{1,\eps}^{\mathrm{uu}}&\defeq  2\chi P \pi s  \cW(q) e^{\pi \chi P u}\!\! \int_0^1 \partial_u\cT(u, y) e^{\pi \gamma P y} \cV_{1, \eps}(u,y) dy \\
&\quad \quad \quad \quad \quad \quad \quad + s \cW(q) e^{\pi \chi Pu}\!\! \int_0^1 \partial_{uu}\cT(u, y) e^{ \pi \gamma P y} \cV_{1,\eps}(u,y) dy;\\
\Xi^{\mathrm{uu}}_{2,\eps}&\defeq s(s - 1) \cW(q) e^{\pi \chi P u}\!\! \int_0^1 \int_0^1 \partial_{u}\cT(u, y) \partial_u\cT(u, z) e^{\pi \gamma P (y+z)} \cV_{2, \eps}(u,y,z) dy dz.
\end{align*}
The next lemma summarizes some basic properties of $\cV_{1,\eps}(u, y)$ and  $\cV_{2, \eps}(u, y, z)$.
\begin{lemma}\label{lem:V1}
Suppose $(q,u,\alpha)$ are in the range~\eqref{eq:bpz-range}. Let
\begin{equation}\label{eq:V-Girsanov}
\cV_1(u, y) \defeq \EE\Big[\Big(\int_0^1 e^{\frac{\gamma^2}{4}\EE[Y_\tau(x)Y_\tau(y)]} \cT(u, x) 
e^{\pi \gamma P x} e^{\frac{\gamma}{2} Y_\tau(x)} dx \Big)^{s - 1}\Big].
\end{equation}
Then for each fixed $u\in\band$, the function $\cV_1(u,\cdot)$  is bounded continuous on $[0,1]$.
Moreover,  $\lim_{\eps\to 0}\cV_{1,\eps}(u, y) =\cV_1(u, y)$ and 
\begin{equation}\label{eq:V1-V2}
\int_0^1    \cT(u, z) e^{ \pi \gamma P z} \cV_{2, \eps}(u, y, z) dz
=  \cV_{1,\eps}(u, y).
\end{equation}
Finally, \(\psi^\alpha_\chi(u, q)=  \cW(q) e^{\pi \chi P u} \int_0^1  \cT(u, z) e^{\pi \gamma P z} \cV_1(u, z) dz.\)
\end{lemma}
\begin{proof}
Since $\alpha\in (\chi-\gamma,\frac2\gamma)$ by~\eqref{eq:bpz-range}, we have  $s-1<0$. By the GMC moment bound~\eqref{eq:GMC-moment3} in Lemma~\ref{lem:GMC-moment} we get the assertion on the boundedness and continuity of $\cV_1$. Cameron-Martin's Theorem (Theorem \ref{thm:Girsanov}) yields \begin{equation}\label{eq:V1-Girsanov}
\cV_{1,\eps}(u, y) =\EE\Big[\Big(\int_0^1 e^{\frac{\gamma^2}{4}\EE[Y^\eps_\tau(x)Y_\tau^\eps(y)]} \cT(u, x) 
e^{\pi \gamma P x} e^{\frac{\gamma}{2} Y^\eps_\tau(x) -\frac{\gamma^2}{8}\EE [Y^\eps_\tau(x)^2] } dx \Big)^{s - 1}\Big]
\end{equation}
hence $\lim_{\eps\to 0}\cV_{1,\eps}(u, y) =\cV_1(u, y)$. The equation~\eqref{eq:V1-V2} follows from Fubini theorem and  the definition of $\cV_{1, \eps}$ and $\cV_{2, \eps}$.
The last assertion follows from the expression~\eqref{eq:q-block} of $\psi^\alpha_\chi(u, q)$. 
\end{proof}
}

We now compute $\partial_\tau \psi^\alpha_{\chi, \eps}(u, q)$.
\begin{lemma}\label{lem:partialtau}
\xin{Suppose $(q,u,\alpha)$ are in the range~\eqref{eq:bpz-range}.} We have
\begin{align*}
\partial_\tau \psi^\alpha_{\chi, \eps}(u, q) = &\ii \pi \Big(\frac{P^2}{2} + \frac{\gamma l_\chi}{12\chi} - \frac{1}{6} \frac{l_\chi^2}{\chi^2}\Big) \psi^{\alpha}_{\chi, \eps}(u,q) 
+ \Big(-\frac{2 l_\chi^2}{3 \chi^2} + \frac{l_\chi}{3} + \frac{2}{3} s\Big) \frac{\partial_\tau \Theta_\tau'(0)}{\Theta_\tau'(0)} \psi^{\alpha}_{\chi, \eps}(u,q),\\
&+\Xi^{\mathrm{tau}}_{1,\eps}+\Xi^{\mathrm{tau}}_{2,\eps}
\end{align*} where 
\xin{
\begin{align*}
    \Xi^{\mathrm{tau}}_{1,\eps}&=s \cW(q) e^{\pi \chi P u} \int_0^1 \partial_{\tau} \cT(u, y) e^{\pi \gamma P y} \cV_{1,\eps}(u, y) dy,\\
    \Xi^{\mathrm{tau}}_{2,\eps}&=\frac{\gamma^2 s(s-1)}{4}  \cW(q) e^{\pi \chi P u} \int_0^1 \int_0^1 \left(\frac{\ii\pi}{6} - \frac{\partial_\tau \Theta_\tau(y - z)}{\Theta_\tau(y - z)} + \frac{1}{3}\frac{\partial_\tau \Theta'_\tau(0)}{\Theta'_\tau(0)}  \right)\\
&\phantom{=============} \times \cT(u, y) \cT(u, z) e^{\pi \gamma P y + \pi \gamma P z} \cV_{2, \eps}(u, y, z) dy dz.
\end{align*}}
\end{lemma}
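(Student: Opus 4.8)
The plan is to differentiate the explicit formula \eqref{eq:q-block} for $\psi^\alpha_\chi(u,q)$ directly in $\tau$, treating the expectation with the same interchange of derivative and expectation justified by Lemma~\ref{lem:analytic-basic} that was used for the $u$-derivatives. Write $\psi^\alpha_\chi(u,\tau) = \cW(q)\, e^{\pi\chi Pu}\, \EE\big[(\int_0^1 \cT(u,x) e^{\pi\gamma Px} e^{\frac{\gamma}{2}Y_\tau(x)}dx)^s\big]$. The $\tau$-dependence sits in three places: the prefactor $\cW(q)$, the kernel $\cT(u,x) = \Theta_\tau(u)^{-\gamma\chi/2}\Theta_\tau(x)^{-\alpha\gamma/2}\Theta_\tau(u+x)^{\gamma\chi/2}$, and the law of the field $Y_\tau$. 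First I would compute $\partial_\tau\log\cW(q)$: since $\cW(q) = q^{\frac{P^2}{2} + \frac{\gamma l_\chi}{12\chi} - \frac{1}{6}\frac{l_\chi^2}{\chi^2}}\Theta'_\tau(0)^{-\frac{2l_\chi^2}{3\chi^2}+\frac{l_\chi}{3}+\frac{4l_\chi}{3\gamma\chi}}$ and $q=e^{\ii\pi\tau}$, logarithmic differentiation gives exactly the first line $\ii\pi(\frac{P^2}{2}+\frac{\gamma l_\chi}{12\chi}-\frac16\frac{l_\chi^2}{\chi^2})\psi^\alpha_\chi$ plus the term $(-\frac{2l_\chi^2}{3\chi^2}+\frac{l_\chi}{3})\frac{\partial_\tau\Theta'_\tau(0)}{\Theta'_\tau(0)}\psi^\alpha_\chi$; here one must note that $\frac{4l_\chi}{3\gamma\chi} = \frac{2}{3}(\frac{\chi}{\gamma}-\frac{\alpha}{\gamma})\cdot\frac{1}{?}$ — more precisely $\frac{4l_\chi}{3\gamma\chi} = \frac{2}{3}\cdot\frac{2l_\chi}{\gamma\chi}$ and using $l_\chi = \frac{\chi^2-\alpha\chi}{2}$ one gets $\frac{2l_\chi}{\gamma\chi} = \frac{\chi-\alpha}{\gamma} = s$, which explains the coefficient $\frac{2}{3}s$ appearing in the second line. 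So the $\cW$-contribution accounts for the entire first line and the first piece of the second line.

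Next I would handle the $\tau$-derivative of the integrand. Differentiating $\cT(u,x)$ in $\tau$ and pulling it through the $s$-th power (chain rule, one integrand gets $\partial_\tau$) produces the term $s\cW(q)e^{\pi\chi Pu}\int_0^1 \partial_\tau\cT(u,y)e^{\pi\gamma Py}\cV_1(u,y)\,dy$, using the definition \eqref{eq:V1} of $\cV_1$; the integrability of $\partial_\tau\cT$ near $y=0,1$ follows as in \eqref{eq:partialT} since each $\partial_\tau\log\Theta_\tau(\cdot)$ factor stays bounded there and $\alpha<Q$. The genuinely delicate part is the $\tau$-derivative of the law of $Y_\tau$. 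Here I would use the fact that $Y_\tau = Y_\infty + F_\tau$ with $F_\tau$ given by the explicit $q$-series \eqref{eq:def-F}, so after the Girsanov/Cameron-Martin manipulations already recorded in \eqref{eq:hatA2} and the normalization \eqref{eq:normalization}, the $q$-dependence of the field enters through smooth coefficients; differentiating $\EE[(\int\cdots)^s]$ in $\tau$ via the covariance structure and Gaussian integration by parts (Girsanov, Theorem~\ref{thm:Girsanov}) generates a \emph{two-point} term weighted by $\partial_\tau\EE[Y_\tau(y)Y_\tau(z)]$. By Lemma~\ref{lem:field-covar}, $\EE[Y_\tau(y)Y_\tau(z)] = -2\log|\Theta_\tau(y-z)| + 2\log|q^{1/6}\eta(q)|$, so $\partial_\tau$ of this covariance is $-2\frac{\partial_\tau\Theta_\tau(y-z)}{\Theta_\tau(y-z)} + 2\partial_\tau\log(q^{1/6}\eta(q))$; using $\partial_\tau\log\eta(q) = \frac{\ii\pi}{12}\cdot(\text{Eisenstein series})$ together with the heat-type identity relating $\partial_\tau\eta$ and $\Theta'_\tau(0)$ (namely $\Theta'_\tau(0) = 2\pi\eta(q)^3$, giving $\frac{\partial_\tau\Theta'_\tau(0)}{\Theta'_\tau(0)} = 3\frac{\partial_\tau\eta(q)}{\eta(q)}$) one rewrites $\partial_\tau\log(q^{1/6}\eta(q)) = \frac{\ii\pi}{6} + \frac13\frac{\partial_\tau\Theta'_\tau(0)}{\Theta'_\tau(0)}$. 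Multiplying by the combinatorial factor $\frac{\gamma^2 s(s-1)}{4}$ coming from differentiating $(\int\cdots)^s$ twice and recognizing the resulting double integral against $\cV_2$ from \eqref{eq:V2} yields precisely the last line of the claimed formula.

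The main obstacle I expect is the bookkeeping around the \emph{diagonal} $y=z$ in the two-point term and justifying the Gaussian integration by parts rigorously: $\cV_2(u,y,z)$ and the kernel $\partial_\tau\EE[Y_\tau(y)Y_\tau(z)]$ individually have a logarithmic singularity as $y\to z$, and one must check this singularity is integrable (which it is, since $\log|y-z|$ is locally integrable in two dimensions and $\cV_2$ is bounded continuous by the argument below \eqref{eq:V-Girsanov}), as well as confirm there is no extra "contact term" contribution. Concretely I would carry out the differentiation at the regularized level (using $Y_{\tau,N}$ from Definition~\ref{def:gmc}), where everything is a finite-dimensional smooth Gaussian computation and the two-point term arises cleanly from $\partial_\tau$ of the finite covariance matrix, and then pass to the limit $N\to\infty$ using the moment bounds of Lemma~\ref{lem:GMC-moment} and dominated convergence, exactly as in the analyticity arguments of Section~\ref{subsec:analytic-bl}. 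A secondary subtlety is matching the various $\Theta'_\tau(0)$-logarithmic-derivative terms and the constant $\frac{\ii\pi}{6}$: these require the modular/heat identities for $\eta$ and $\Theta_\tau$ recalled in Appendix~\ref{sec:theta}, and care with the normalization $2\log|q^{1/6}\eta(q)|$ versus $q^{-1/12}\eta(q)$ conventions used elsewhere in the paper.
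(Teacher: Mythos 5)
Your plan follows essentially the same route as the paper: differentiate \eqref{eq:q-block} in $\tau$, split the contributions into the prefactor $\cW(q)$ (which indeed yields the first two $\psi$-proportional terms, including the $\frac{2}{3}s$ via $\frac{4l_\chi}{3\gamma\chi}=\frac{2}{3}s$), the kernel $\partial_\tau\cT$ term against $\cV_1$, and a Girsanov/Gaussian-integration-by-parts two-point term against $\cV_2$; the paper likewise cancels the diagonal contribution against the derivative of the normalization $-\frac{\gamma^2}{8}\EE[F_\tau(0)^2]$.

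The one concrete error is a factor of $2$ in the covariance term. The kernel produced by the Gaussian computation is $\EE[\partial_\tau F_\tau(y)\,F_\tau(z)]=\tfrac12\,\partial_\tau\EE[F_\tau(y)F_\tau(z)]=\tfrac12\,\partial_\tau\EE[Y_\tau(y)Y_\tau(z)]$, i.e.\ exactly the parenthetical expression $\frac{\ii\pi}{6}-\frac{\partial_\tau\Theta_\tau(y-z)}{\Theta_\tau(y-z)}+\frac13\frac{\partial_\tau\Theta_\tau'(0)}{\Theta_\tau'(0)}$ in the statement. You instead take the kernel to be $\partial_\tau\EE[Y_\tau(y)Y_\tau(z)]=-2\frac{\partial_\tau\Theta_\tau(y-z)}{\Theta_\tau(y-z)}+2\partial_\tau\log(q^{1/6}\eta(q))$ and still multiply by $\frac{\gamma^2 s(s-1)}{4}$, which would give twice the last line of the lemma. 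The missing $\tfrac12$ is the standard one in the Gaussian heat identity $\partial_\tau\EE[F(Y)]=\tfrac12\sum\dot C_{ij}\,\EE[\partial_i\partial_jF(Y)]$ (equivalently, it appears when you write $\partial_\tau[e^{\frac{\gamma}{2}F_\tau(y)}]$ as $\frac{\gamma}{2}\partial_\tau F_\tau(y)\,e^{\frac{\gamma}{2}F_\tau(y)}$ and then apply Girsanov to the single Gaussian $\partial_\tau F_\tau(y)$, as the paper does in the proof of \eqref{eq:der-F}). With that factor restored, your derivation matches the paper's.
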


\begin{proof} Recall $\xin{V_{\eps}(u,\tau)} =\int_0^1 e^{\frac{\gamma}{2} Y^\eps_\tau(x)-\frac{\gamma^2}{8}\EE[Y^\eps_\tau(x)^2]} \cT(u, x) e^{\pi \gamma P x} dx.$
\revise{
Taking the $\tau$-derivative of~\eqref{eq:q-block}, we obtain 
\begin{equation}\label{eq:tau-der1}
\partial_\tau \psi^\alpha_{\chi,\eps}(u, q) =\partial_\tau(\log \cW(q)) \psi^{\alpha}_{\chi, \eps}(u,q)+ s  \cW(q) e^{\pi \chi P u}\EE\left[\xin{ V_{\eps}(u,\tau)}^{s-1} \partial_\tau \xin{ V_{\eps}(u,\tau)}\right].
\end{equation}
Note that 
\begin{equation} \label{eq:der-W}
\partial_\tau(\log \cW(q)) = \ii \pi \Big(\frac{P^2}{2} + \frac{\gamma l_\chi}{12\chi} - \frac{1}{6} \frac{l_\chi^2}{\chi^2}\Big) + \Big(-\frac{2 l_\chi^2}{3 \chi^2} + \frac{l_\chi}{3} + \frac{2}{3} s\Big) \frac{\partial_\tau \Theta_\tau'(0)}{\Theta_\tau'(0)},
\end{equation}
and
\begin{multline} \label{eq:der-V}
\EE\left[\xin{ V_{\eps}(u,\tau)}^{s-1} \partial_\tau \xin{ V_{\eps}(u,\tau)}\right]=
\int_0^1 \partial_{\tau} \cT(u, y) e^{\pi \gamma P y} \cV_{1,\eps}(u, y) dy\\ 
+ \int_0^1 \cT(u, y) e^{\pi \gamma P y} \EE\left[\xin{ V_{\eps}(u,\tau)}^{s-1} \partial_{\tau}[ e^{\frac{\gamma}{2} Y^\eps_\tau(y)-\frac{\gamma^2}{8}\EE[Y^\eps_\tau(y)^2]}  ]dy\right], 
\end{multline}
where 
\begin{align}\label{eq1:sec3}
\partial_{\tau}[&e^{\frac{\gamma}{2} Y^\eps_\tau(y)-\frac{\gamma^2}{8}\EE[Y^\eps_\tau(y)^2]}  ]dy \\ \nonumber
&=\partial_{\tau}\left[e^{- \frac{\gamma^2}{8}\EE[F_\tau(0)^2]} e^{ \frac{\gamma}{2} F_\tau(y) }\right] e^{\frac{\gamma}{2} Y^\eps_{\infty}(y)-\frac{\gamma^2}{8}\EE[Y^\eps_{\infty}(y)^2]} dy\\ \nonumber
&= \partial_\tau\Big(\frac{\gamma}{2} F_\tau(y) - \frac{\gamma^2}{8}\EE[F_\tau(0)^2]\Big) \times e^{\frac{\gamma}{2} Y^\eps_\tau(y)-\frac{\gamma^2}{8}\EE[Y^\eps_\tau(y)^2]} dy.
\end{align}
We claim that 
\begin{align} \label{eq:der-F}
\EE&\Big[\xin{ V_{\eps}(u,\tau)}^{s-1} \partial_{\tau}[e^{\frac{\gamma}{2} Y_{\tau}(y)}]dy\Big] \\ \nonumber
&= \EE \Big[ \frac{\gamma^2}{4}(s-1) e^{\frac{\gamma}{2} Y^\eps_\tau(y)-\frac{\gamma^2}{8}\EE[Y^\eps_\tau(y)^2]}\\ \nonumber
&\phantom{==}\int_0^1 \EE[F_{\tau}(z)\partial_{\tau}F_{\tau}(y)]\cT(u, z) e^{\pi \gamma P z}   \xin{ V_{\eps}(u,\tau)}^{s - 2} e^{\frac{\gamma}{2} Y^\eps_\tau(z)-\frac{\gamma^2}{8}\EE[Y^\eps_\tau(z)^2]} dz dy \Big].
\end{align}
\xin{Since $q = e^{\ii \pi \tau}$ and we restrict to real $q$, we have $\partial_\tau=\ii \pi q \partial_q$ where $\partial_q$ a real derivative.} By (\ref{eq:y-tau-def}) and \eqref{eq:Theta'0},
we find that
\begin{multline*}
\EE[\partial_\tau F_\tau(y)F_\tau(z)] = 4\pi \ii \sum_{m, n = 1}^\infty m q^{2nm} \cos(2\pi n(y - z))
= \frac{1}{2} \partial_\tau \EE[F_\tau(y)F_\tau(z)]\\
= - \partial_\tau \log\left| q^{-\frac{1}{6}}\frac{\Theta_\tau(y - z)}{\eta(q)}\right|
=\frac{\ii\pi}{6} - \frac{\partial_\tau \Theta_\tau(y - z)}{\Theta_\tau(y - z)} + \frac{1}{3}\frac{\partial_\tau \Theta'_\tau(0)}{\Theta'_\tau(0)}.
\end{multline*}

Combining with~\eqref{eq:tau-der1}---\eqref{eq:der-F}, we obtain Lemma~\ref{lem:partialtau}.

It remains to prove~\eqref{eq:der-F}. By Cameron-Martin's Theorem (Theorem\ref{thm:Girsanov}), 
\begin{align*}
& \EE\Big[\xin{ V_{\eps}(u,\tau)}^{s-1} \partial_\tau F_\tau(y)  e^{\frac{\gamma}{2} Y^\eps_\tau(y)-\frac{\gamma^2}{8}\EE[Y^\eps_\tau(y)^2]} dy \Big] \\
&= \frac{d}{d \delta}_{\vert \delta =0 } \EE\Big[\xin{ V_{\eps}(u,\tau)}^{s-1} e^{\delta \partial_\tau F_\tau(y)  - \frac{\delta^2}{2} \EE[\partial_\tau F_\tau(y) ^2] } e^{\frac{\gamma}{2} Y^\eps_\tau(y)-\frac{\gamma^2}{8}\EE[Y^\eps_\tau(y)^2]} dy \Big] \\
&= \frac{d}{d \delta}_{\vert \delta =0 } \EE\Big[\Big(\int_0^1\cT(u, z) e^{\pi \gamma P z} e^{\frac{\gamma}{2} Y^\eps_\tau(z)-\frac{\gamma^2}{8}\EE[Y^\eps_\tau(z)^2]} e^{ \frac{\gamma \delta}{2}\EE[Y_{\tau}(z) \partial_\tau F_\tau(y)] } dz \Big)^{s-1} \\
&\phantom{===========} \times e^{\frac{\gamma}{2} Y^\eps_\tau(y)-\frac{\gamma^2}{8}\EE[Y^\eps_\tau(y)^2]} e^{ \frac{\gamma \delta}{2} \EE[Y_{\tau}(y) \partial_\tau F_\tau(y)]}dy \Big] \\
&= \frac{\gamma}{2}(s-1)  \EE \Big[ 
\Big(\int_0^1 \EE[F_{\tau}(z)\partial_{\tau}F_{\tau}(y)]\cT(u, z) e^{\pi \gamma P z}   \xin{ V_{\eps}(u,\tau)}^{s - 2} e^{\frac{\gamma}{2} Y^\eps_\tau(z)-\frac{\gamma^2}{8}\EE[Y^\eps_\tau(z)^2]}dz\Big) \\
&\phantom{===========}
\times e^{\frac{\gamma}{2} Y^\eps_\tau(y)-\frac{\gamma^2}{8}\EE[Y^\eps_\tau(y)^2]} dy \Big] \\
&+ \frac{\gamma}{2} \EE[F_{\tau}(0) \partial_\tau F_{\tau}(0) ] \EE\Big[\xin{ V_{\eps}(u,\tau)}^{s-1}  e^{\frac{\gamma}{2} Y^\eps_\tau(y)-\frac{\gamma^2}{8}\EE[Y^\eps_\tau(y)^2]} dy \Big].
\end{align*}
This computation combined with \eqref{eq1:sec3} and the fact that $\partial_\tau\EE[F_\tau(0)^2]=2\EE[F_\tau(0)\partial_\tau F_\tau(0)]$ implies \eqref{eq:der-F}. }
\end{proof}

To obtain the desired cancellation, we need to perform an integration by parts on \xin{the first term of 
$\Xi_{1,\eps}^{\mathrm{uu}}$ from} $\partial_{uu} \psi^\alpha_{\chi,\eps}(u, q)$. 
\begin{lemma}\label{lem:parts} 
\xin{Suppose $(q,u,\alpha)$ are in the range~\eqref{eq:bpz-range}.}  Let:
\begin{equation*}
\xin{\hat\Xi^{\mathrm{uu}}_2\defeq} \gamma P \pi\int_0^1 \partial_u\cT(u, y) e^{\pi \gamma P y} \cV_{1, \eps}(u,y) dy+\int_0^1 \partial_{uy}\cT(u, y) e^{\pi \gamma P y} \cV_{1, \eps}(u,y) dy.    
\end{equation*}
Then with an error term $o_{\eps}(1)$ uniform in $y,z \in [0,1]^2$ we have: 
\begin{align}
\xin{\hat\Xi^{\mathrm{uu}}_2} = &\frac{\chi\gamma^3(s-1)}{8}
\int_0^1\int_0^1   \left(   \frac{\Theta_{\tau}'(y - z)}{\Theta_{\tau}(y - z)} \Big(\frac{\Theta_\tau'(u + y)}{\Theta_\tau(u + y)} - \frac{\Theta_\tau'(u + z)}{\Theta_\tau(u + z)}\Big) +o_{\eps}(1) \right)\label{eq:part} \\ \nonumber
&
\phantom{===} \times \cT(u,y)\cT(u, z)  e^{\pi \gamma P(y+z)} \cV_{2, \eps}(u, y, z) dy dz. 
\end{align}
\end{lemma}

\begin{proof}[\xin{Proof of Lemma~\eqref{lem:parts}}]
Given $u\in\band$, we can check that for some constant $c>0$, as $y\to 0$ or $y\to 1$, we have:
\begin{equation}\label{eq:partialT}
\partial_u \cT(u, y)  = \frac{\gamma \chi}{2} \left( \frac{\Theta'_{\tau}(u+y)}{\Theta_{\tau}(u+y)} - \frac{\Theta'_{\tau}(u)}{\Theta_{\tau}(u)} \right) \cT(u, y) \sim c\sin^{1-\frac{\alpha\gamma}{2}}(\pi y).
\end{equation}
\revise{Similarly, $ \partial_{uu} \cT(u, y) \sim  c\sin^{1-\frac{\alpha\gamma}{2}}(\pi y)$. Since we assume $\alpha<\frac{2}{\gamma}$, both $\partial_u \cT$ and $\partial_{uu} \cT$ are continuous in $y$  on $[0,1]$. 
We also record that for $\cT(u, y)$, one has $ \cT(u, y) \sim  c\sin^{-\frac{\alpha\gamma}{2}}(\pi y)$, which is an integrable singularity for $\alpha < \frac{2}{\gamma}$. }

By integration by parts we have 
\begin{align}
& \pi \gamma P \int_0^1 \partial_u\cT(u, y) e^{\pi \gamma P y} \cV_{1,\eps}(u,y) dy
= \int_0^1 \partial_u\cT(u, y) [\partial_y e^{\pi \gamma P y}] \cV_{1,\eps}(u,y) dy   \nonumber\\
=& -\int_0^1 \partial_{uy}\cT(u, y) e^{\pi \gamma P y} \cV_{1,\eps}(u,y) dy   -\int_0^1 \partial_u\cT(u, y) e^{\pi \gamma P y} \partial_y\cV_{1,\eps}(u, y) dy. \label{eq:part2}
\end{align}
Here there are no boundary terms since $\alpha < \frac{2}{\gamma}$.
Recalling \eqref{eq:partialT}, one has $\partial_u\cT(u, y)$ for $y=0$ or $1$.

\xin{Recall~\eqref{eq:V1-Girsanov} where we applied Cameron-Martin's Theorem (Theorem \ref{thm:Girsanov}) to 
$\cV_{1, \eps}$. 
}
Applying $\partial_y$ to~\eqref{eq:V1-Girsanov} and using Cameron-Martin's Theorem \ref{thm:Girsanov} again, we find that:
\[
\partial_y \cV_{1,\eps}(u, y) = \frac{\gamma^2}{4}(s-1)  \int_0^1 \cT(u, z)  \partial_y \EE[Y^\eps_\tau(y)Y^{\eps}_\tau(z)] e^{\pi \gamma P z} \cV_{2,\eps}(u, y, z) dz.
\]
Let $K_\eps(\cdot)$  be such that  $\EE[Y^\eps_\tau (x) Y^\eps_\tau(y)]=K_\eps(x-y)$ \guillaume{and denote $K'_\eps(y) := \partial_y K_\eps(y)$}. 
Therefore $\int_0^1 \partial_u\cT(u, y) e^{\pi \gamma P y}  \partial_y \cV_{1,\eps}(u, y) dy$ equals
\begin{align*}
&\frac{\gamma^2(s-1)}{4} \int_0^1\int_0^1 \frac{\partial_u\cT(u, y)}{\cT(u,y)} \partial_y \EE[Y^\eps_\tau(y)Y^\eps_\tau(z)]\\
&\phantom{=================} \cT(u,y)\cT(u, z)  e^{\pi \gamma P(y+z)} \cV_{2,\eps}(u, y, z) dy dz\\
=&\frac{\gamma^2(s-1)}{8} \int_0^1\int_0^1 \left(\frac{\partial_u\cT(u, y)}{\cT(u,y)}  -\frac{\partial_u\cT(u, z)}{\cT(u,z)}   \right) K'_\eps(y-z)\\
&\phantom{=================} \cT(u,y)\cT(u, z)  e^{\pi \gamma P(y+z)} \cV_{2,\eps}(u, y, z) dy dz,
\end{align*}
where we have used that  $\cT(u, y) \cT(u, z) e^{\pi \gamma P y + \pi \gamma P z} \cV_{2,\eps}(u, y, z) dy dz$
is symmetric under interchange of $y$ and $z$ and that the derivative $K'_\eps$ is an odd function.
\guillaume{
Note that $\sin(\pi y) K'_\eps(y)$ is a bounded continuous function on $[0,1]$ and uniformly converges to $\sin(\pi y) \partial_y\EE[Y_\tau(0)Y_\tau (y)]$. One can check this claim simply by explicitly writing out the convolution defining the smoothing procedure:
\begin{align*}
K_{\eps}(y) &= \int_{\mathcal{C}_+} \int_{\mathcal{C}_+} \mathbb{E}[Y_{\infty}(y-x)Y_{\infty}(x')] \eta_{\eps}(x) \eta_{\eps}(x') d^2x d^2x' + \mathbb{E}[F_{\tau}(y-x)F_{\tau}(x')],\\
 K'_{\eps}(y) &= \int_{\mathcal{C}_+} \int_{\mathcal{C}_+} \partial_y \mathbb{E}[Y_{\infty}(y-x)Y_{\infty}(x')] \eta_{\eps}(x) \eta_{\eps}(x') d^2x d^2x' + \partial_y \mathbb{E}[F_{\tau}(y-x)F_{\tau}(x')]\\
& = \int_{\mathcal{C}_+} \int_{\mathcal{C}_+} \frac{\pi}{\tan(\pi(y -x - x'))} \eta_{\eps}(x) \eta_{\eps}(x') d^2x d^2x' + \partial_y \mathbb{E}[F_{\tau}(y-x)F_{\tau}(x')].
\end{align*}
}
 Then since $\frac{\partial_u\cT(u, y)}{\cT(u,y)}  $ is uniformly bounded in $y \in [0,1]$ for a $u$ satisfying $\mathrm{Im}(u)>0$, the quantity $\left(\frac{\partial_u\cT(u, y)}{\cT(u,y)}  -\frac{\partial_u\cT(u, z)}{\cT(u,z)}   \right) K'_\eps(y-z) $ uniformly converges to 
\begin{multline*}
\frac{\gamma \chi}{2}  \partial_y \EE[Y_\tau(y)Y_\tau(z)] \Big(\frac{\Theta_\tau'(u + y)}{\Theta_\tau(u + y)} - \frac{\Theta_\tau'(u + z)}{\Theta_\tau(u + z)}\Big)\\=
-\chi\gamma \frac{\Theta_{\tau}'(y - z)}{\Theta_{\tau}(y - z)} \Big(\frac{\Theta_\tau'(u + y)}{\Theta_\tau(u + y)} - \frac{\Theta_\tau'(u + z)}{\Theta_\tau(u + z)}\Big).
\end{multline*}
\revise{Therefore we obtain the $o_{\eps}(1)$ on the right hand side of~\eqref{eq:part}, which is uniform in $y,z$. }
 Combining with~\eqref{eq:part2}, this concludes our proof. 
\end{proof}


\begin{lemma}\label{lem:E2}
We have \xin{$\lim_{\eps\to 0} \Xi^{\mathrm{uu}}_{2,\eps}+\hat\Xi^{\mathrm{uu}}_{2,\eps}+\Xi^{\mathrm{tau}}_{2,\eps}=\Xi_2$} where \begin{align*}
 \Xi_2  & \xin{\defeq} \frac{\chi^2\gamma^2}{2}s(s - 1)  \cW(q) e^{\pi \chi P u} \int_0^1  \wDelta(u, y) \cT(u, y) e^{\pi \gamma P y} \cV_1(u, y) dy; \\
 \wDelta(u, x) &\defeq  \frac{1}{2} \frac{\Theta''_\tau(u + x)}{\Theta_\tau(u + x)}
- \frac{\Theta_\tau'(u + x)}{\Theta_\tau(u + x)} \frac{\Theta_\tau'(u)}{\Theta_\tau(u)} + \frac{1}{2} \frac{\Theta'_\tau(u)^2}{\Theta_\tau(u)^2} - \frac{\pi^2}{6} - \frac{1}{6} \frac{\Theta_\tau'''(0)}{\Theta_\tau'(0)}.
\end{align*}
\end{lemma}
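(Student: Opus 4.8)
The plan is to isolate the three contributions to $\Xi_2$ carrying a genuine double integral over $[0,1]^2$, factor out a common scalar and a common integrand, reduce the remaining kernel to a sum of a function of $u+y$ and the same function of $u+z$ via a classical elliptic identity, and collapse the double integral to a single one by symmetry. Concretely, after the integration-by-parts substitution made just above the statement, the double-integral terms are: the last term of $\partial_{uu}\psi^\alpha_\chi(u,q)$ in~\eqref{eq:u-der}; the double-integral term produced by Lemma~\ref{lem:parts} applied to $2\chi P\pi s\,\cW(q)e^{\pi\chi Pu}\int_0^1\partial_u\cT(u,y)e^{\pi\gamma Py}\cV_1(u,y)\,dy$; and $2\ii\pi\chi^2$ times the last term of $\partial_\tau\psi^\alpha_\chi(u,q)$ in Lemma~\ref{lem:partialtau}. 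I would rewrite each of these using $\partial_u\cT(u,y)=\tfrac{\gamma\chi}{2}\big(\tfrac{\Theta_\tau'(u+y)}{\Theta_\tau(u+y)}-\tfrac{\Theta_\tau'(u)}{\Theta_\tau(u)}\big)\cT(u,y)$ from~\eqref{eq:partialT}; then all three carry the prefactor $\tfrac{\chi^2\gamma^2}{4}s(s-1)\cW(q)e^{\pi\chi Pu}$ and the same integrand $\cT(u,y)\cT(u,z)e^{\pi\gamma P(y+z)}\cV_2(u,y,z)\,dy\,dz$, with absolute convergence and Fubini justified exactly as in Lemma~\ref{lem:parts} (using $\alpha<\tfrac{2}{\gamma}$, which we may assume by Lemma~\ref{lem:psi-props}, the integrable singularities of $\cT(u,\cdot)$, and the boundedness and continuity of $\cV_2$). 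Writing $\zeta(x):=\Theta_\tau'(x)/\Theta_\tau(x)$, this gives $\Xi_2=\tfrac{\chi^2\gamma^2}{4}s(s-1)\cW(q)e^{\pi\chi Pu}\int_0^1\!\int_0^1 G(u,y,z)\,\cT(u,y)\cT(u,z)e^{\pi\gamma P(y+z)}\cV_2(u,y,z)\,dy\,dz$ with
\begin{multline*}
G(u,y,z):=\big(\zeta(u+y)-\zeta(u)\big)\big(\zeta(u+z)-\zeta(u)\big)+\zeta(y-z)\big(\zeta(u+y)-\zeta(u+z)\big)\\
+2\ii\pi\Big(\tfrac{\ii\pi}{6}-\tfrac{\partial_\tau\Theta_\tau(y-z)}{\Theta_\tau(y-z)}+\tfrac13\tfrac{\partial_\tau\Theta_\tau'(0)}{\Theta_\tau'(0)}\Big).
\end{multline*}

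The heart of the matter is the elliptic identity $G(u,y,z)=\wDelta(u,y)+\wDelta(u,z)$. Setting $a=u+y$, $b=u+z$, expanding the definition of $\wDelta$, and using $\tfrac12\tfrac{\Theta_\tau''(x)}{\Theta_\tau(x)}=\tfrac12\zeta'(x)+\tfrac12\zeta(x)^2$ together with $\wp(x)=-\zeta'(x)+\tfrac13\tfrac{\Theta_\tau'''(0)}{\Theta_\tau'(0)}$ from Appendix~\ref{sec:theta}, all the $\zeta(u)$-terms cancel and the claim becomes
\[
-\tfrac12\big(\zeta(a)-\zeta(b)\big)^2+\zeta(a-b)\big(\zeta(a)-\zeta(b)\big)+2\ii\pi\Big(\tfrac{\ii\pi}{6}-\tfrac{\partial_\tau\Theta_\tau(a-b)}{\Theta_\tau(a-b)}+\tfrac13\tfrac{\partial_\tau\Theta_\tau'(0)}{\Theta_\tau'(0)}\Big)=-\tfrac12\wp(a)-\tfrac12\wp(b)-\tfrac{\pi^2}{3}.
\]
Here I would invoke the Weierstrass addition theorem in the form $\big(\zeta(a)-\zeta(b)-\zeta(a-b)\big)^2=\wp(a)+\wp(b)+\wp(a-b)$, which holds for $\zeta=\Theta_\tau'/\Theta_\tau$ and $\wp$ as defined in Appendix~\ref{sec:theta} because the linear term distinguishing $\zeta$ from the Weierstrass zeta cancels in the combination $\zeta(a)-\zeta(b)-\zeta(a-b)$. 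Expanding this square and substituting, the $\wp(a),\wp(b)$ terms on the two sides cancel and the identity reduces to the single-variable statement (with $x=a-b$)
\[
-\tfrac12\wp(x)+\tfrac12\zeta(x)^2+2\ii\pi\Big(\tfrac{\ii\pi}{6}-\tfrac{\partial_\tau\Theta_\tau(x)}{\Theta_\tau(x)}+\tfrac13\tfrac{\partial_\tau\Theta_\tau'(0)}{\Theta_\tau'(0)}\Big)=-\tfrac{\pi^2}{3}.
\]
Using $-\tfrac12\wp(x)+\tfrac12\zeta(x)^2=\tfrac12\tfrac{\Theta_\tau''(x)}{\Theta_\tau(x)}-\tfrac16\tfrac{\Theta_\tau'''(0)}{\Theta_\tau'(0)}$ and $2\ii\pi\cdot\tfrac{\ii\pi}{6}=-\tfrac{\pi^2}{3}$, this is equivalent to $\tfrac{1}{2\Theta_\tau(x)}\big(\Theta_\tau''(x)-4\ii\pi\,\partial_\tau\Theta_\tau(x)\big)=\tfrac16\tfrac{\Theta_\tau'''(0)}{\Theta_\tau'(0)}-\tfrac{2\ii\pi}{3}\tfrac{\partial_\tau\Theta_\tau'(0)}{\Theta_\tau'(0)}$, and both sides vanish: the left side by the heat equation $\partial_\tau\Theta_\tau=\tfrac{1}{4\ii\pi}\Theta_\tau''$, and the right side by its $x$-derivative at $0$, i.e.\ $\Theta_\tau'''(0)=4\ii\pi\,\partial_\tau\Theta_\tau'(0)$ (both from Appendix~\ref{sec:theta}).

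With the identity in hand, I would finish by collapsing the double integral: $\cT(u,y)\cT(u,z)e^{\pi\gamma P(y+z)}\cV_2(u,y,z)$ is symmetric under $y\leftrightarrow z$, so $G(u,y,z)=\wDelta(u,y)+\wDelta(u,z)$ gives $\int_0^1\!\int_0^1 G\,(\cdots)=2\int_0^1\!\int_0^1\wDelta(u,y)\,(\cdots)$; and from the measure definitions~\eqref{eq:V1}--\eqref{eq:V2} and linearity of the expectation, $\int_0^1\cT(u,z)e^{\pi\gamma Pz}\cV_2(u,y,z)\,dz=\cV_1(u,y)$ (a legitimate identity of integrable objects by the estimates of the first paragraph). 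Hence the double integral equals $2\int_0^1\wDelta(u,y)\cT(u,y)e^{\pi\gamma Py}\cV_1(u,y)\,dy$, which together with the prefactor yields the asserted formula for $\Xi_2$. The main obstacle is the second paragraph: pinning down the precise combination of the Weierstrass addition theorem and the (differentiated) heat equation in the normalization of Appendix~\ref{sec:theta}, and correctly tracking every constant ($\tfrac{\pi^2}{6}$, $\tfrac{\Theta_\tau'''(0)}{\Theta_\tau'(0)}$, $\tfrac{\partial_\tau\Theta_\tau'(0)}{\Theta_\tau'(0)}$); the rest is bookkeeping and reuse of estimates already established for Lemma~\ref{lem:parts}.
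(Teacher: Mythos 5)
Your proof is correct and follows essentially the same route as the paper: the same three double-integral contributions with the same common prefactor $\tfrac{\chi^2\gamma^2}{4}s(s-1)$ and integrand, the same reduction of the kernel to $\wDelta(u,y)+\wDelta(u,z)$, and the same symmetrization collapsing $\cV_2$ to $\cV_1$. The only difference is that where the paper applies the heat equation~\eqref{eq:theta-heat} and then quotes the identity~\eqref{eq:theta-iden} with $(a,b)=(u+y,u+z)$, you re-derive the equivalent fact from the Weierstrass addition theorem together with the differentiated heat equation; this derivation checks out (the linear term by which $\Theta_\tau'/\Theta_\tau$ differs from the Weierstrass zeta indeed cancels in $\zeta(a)-\zeta(b)-\zeta(a-b)$, and all the constants match).
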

\begin{proof}
Combining~\eqref{eq:u-der}, Lemma~\ref{lem:partialtau}, and Lemma~\ref{lem:parts}, we have  \revise{
\begin{multline}\label{eq:E2}
\xin{\Xi^{\mathrm{uu}}_{2,\eps}+\hat\Xi^{\mathrm{uu}}_{2,\eps}+\Xi^{\mathrm{tau}}_{2,\eps}} =   \frac{\chi^2\gamma^2}{2} s(s - 1)  \cW(q) e^{\pi \chi P u}\times \\
 \int_0^1\int_0^1 (\Delta_{2}(y, z) +o_{\eps}(1) ) \cT(u, y) \cT(u, z) e^{\pi \gamma P y + \pi \gamma P z} \cV_{2, \eps}(u, y, z) dy dz
\end{multline} }
for
\begin{align*}
\Delta_{2}(y, z) &\defeq \left[
\frac12\frac{\Theta_{\tau}'(y - z)}{\Theta_{\tau}(y - z)} \Big(\frac{\Theta_\tau'(u + y)}{\Theta_\tau(u + y)} - \frac{\Theta_\tau'(u + z)}{\Theta_\tau(u + z)}\Big)\right.\\
&\phantom{=====} \left.+ \frac{1}{2} \Big(\frac{\Theta_\tau'(u + y)}{\Theta_\tau(u + y)} - \frac{\Theta_\tau'(u)}{\Theta_\tau(u)}\Big) \Big(\frac{\Theta_\tau'(u + z)}{\Theta_\tau(u + z)} - \frac{\Theta_\tau'(u)}{\Theta_\tau(u)}\Big)\right.\\
&\phantom{================} \left.- \frac{1}{4} \frac{\Theta_\tau''(y - z)}{\Theta_\tau(y - z)} - \frac{\pi^2}{6} + \frac{1}{12} \frac{\Theta'''_\tau(0)}{\Theta_\tau'(0)} 
\right],
\end{align*}
where we have used (\ref{eq:theta-heat}).  Applying  (\ref{eq:theta-iden}) with $(a,b)=(u + y, u + z)$, we get
\begin{equation}\label{eq:Delta2}
\Delta_{2}(y, z) = \frac{1}{2}\left(\wDelta(u, y) + \wDelta(u, z) \right).
\end{equation}
Therefore \xin{$\lim_{\eps\to 0} \Xi^{\mathrm{uu}}_{2,\eps}+\hat\Xi^{\mathrm{uu}}_{2,\eps}+\Xi^{\mathrm{tau}}_{2,\eps}=\Xi_2$ equals
$\frac{\chi^2\gamma^2}{2} s(s - 1)  \cW(q) e^{\pi \chi P u}$ times} 
\begin{equation}\label{eq:lim-V2}
\lim_{\eps\to 0}\int_0^1 \int_0^1 \frac{1}{2}  \Big(\wDelta(u, y) + \wDelta(u, z) \Big) \cT(u, y) \cT(u, z) e^{\pi \gamma P y + \pi \gamma P z} \cV_{2, \eps}(u, y, z) dy dz.    
\end{equation}
\xin{Recall \( \int_0^1    \cT(u, z) e^{ \pi \gamma P z} \cV_{2, \eps}(u, y, z) dz
=  \cV_{1,\eps}(u, y)\) from~\eqref{eq:V1-V2}.Therefore the limit in~\eqref{eq:lim-V2} equals 
$\int_0^1  \wDelta(u, y) \cT(u, y) e^{\pi \gamma P y} \cV_1(u, y) dy$ as desired.}
\end{proof}

\begin{proof}[Proof of Theorem~\ref{thm:bpz}]
\xin{Recall $\widetilde\Delta$ from Lemma~\ref{lem:E2}.  By~\eqref{eq:u-der}, Lemma~\ref{lem:partialtau}---Lemma~\ref{lem:E2}, we see that $\lim_{\eps\to0}\Big(\partial_{uu}+ 2\ii \pi \chi^2 \partial_\tau\Big) \psi^\alpha_{\chi,\eps}(u, q)=\Xi$} where 
\begin{equation}
    \xin{\Xi}   \defeq  s  \cW(q) e^{\pi \chi P u} \int_0^1 \Delta_1(u, y) \cT(u, y) e^{\pi \gamma P y} \cV_1(u, y) dy;   \label{eq:Xi-def}
\end{equation}
\begin{align}
\Delta_1(u, y) = -\frac{2 \chi}{\gamma} \frac{\partial_{uy} \cT(u, y)}{\cT(u, y)} + \frac{\partial_{uu} \cT(u, y)}{\cT(u, y)}+ 2\ii \pi \chi^2 \frac{\partial_\tau \cT(u, y)}{\cT(u, y)}  + (s - 1) \frac{\chi^2 \gamma^2}{2} \wDelta(u, y).\nonumber
\end{align}
\xin{We claim that \(\Xi=s  \Delta_1(u, y) \psi^\alpha_\chi(u, q)\).}
\guillaume{Then by} (\ref{eq:wp-theta}),\begin{multline*}
\xin{s  \Delta_1(u, y) - l_\chi(l_\chi + 1) \wp(u)} 
= - \frac{l_\chi}{3}(\chi - \frac{\gamma}{2})(\chi - \frac{2}{\gamma}) \frac{\Theta_\tau'''(0)}{\Theta_\tau'(0)}\\
+ l_\chi(l_\chi + 1) \Big(\frac{\Theta_\tau'(u)^2}{\Theta_\tau(u)^2} - \frac{\Theta_\tau''(u)}{\Theta_\tau(u)} + \frac{1}{3} \frac{\Theta_\tau'''(0)}{\Theta_\tau'(0)}\Big) - l_\chi(l_\chi + 1) \wp(u)= 0.
\end{multline*} 
\xin{So this claims yields  $\Big(\partial_{uu} - l_\chi(l_\chi + 1) \wp(u) + 2\ii \pi \chi^2 \partial_\tau\Big) \psi^\alpha_\chi(u, q) = 0$ as desired.}

\xin{It remains to prove \(\Xi=s  \Delta_1(u, y) \psi^\alpha_\chi(u, q)\).} We compute 
\begin{align*}
\frac{\partial_{uu}\cT(u, y)}{\cT(u, y)} &= \frac{\gamma \chi}{2} \left( \frac{\Theta_{\tau}''(u+y)}{\Theta_{\tau}(u+y)} - \frac{\Theta_{\tau}''(u)}{\Theta_{\tau}(u)} - \left(\frac{ \Theta_{\tau}'(u+y)}{\Theta_{\tau}(u+y)}\right)^2 + \left( \frac{\Theta_{\tau}'(u)}{\Theta_{\tau}(u)}\right)^2 \right)\\
&\phantom{=}+ \frac{\gamma^2 \chi^2}{4} \left( \frac{\Theta_{\tau}'(u+y)}{\Theta_{\tau}(u+y)} - \frac{\Theta_{\tau}'(u)}{\Theta_{\tau}(u)} \right)^2,\\
\frac{\partial_{uy}\cT(u, y)}{\cT(u, y)} &= \frac{\gamma \chi}{2} \left( \frac{\Theta''_{\tau}(u+y)}{\Theta_{\tau}(u+y)} - \left(\frac{\Theta'_{\tau}(u+y)}{\Theta_{\tau}(u+y)} \right)^2 \right)\\
&\phantom{=} + \frac{\gamma \chi}{2} \left( \frac{\Theta'_{\tau}(u+y)}{\Theta_{\tau}(u+y)} -  \frac{\Theta'_{\tau}(u)}{\Theta_{\tau}(u)}  \right) \left( \frac{\gamma \chi}{2}  \frac{\Theta'_{\tau}(u+y)}{\Theta_{\tau}(u+y)} - \frac{\alpha \gamma}{2}  \frac{\Theta'_{\tau}(y)}{\Theta_{\tau}(y)} \right),\\
\frac{\partial_{\tau}\cT(u, y)}{\cT(u, y)} &= \frac{1}{4 \pi \ii} \left( -\frac{\alpha \gamma}{2} \frac{\Theta''_{\tau}(y)}{\Theta_{\tau}(y)} + \frac{\gamma \chi}{2} \frac{\Theta''_{\tau}(u+y)}{\Theta_{\tau}(u+y)} - \frac{\gamma \chi}{2} \frac{\Theta''_{\tau}(u)}{\Theta_{\tau}(u)} \right).
\end{align*}
The total prefactor of $\frac{\Theta_\tau'(u + y)^2}{\Theta_\tau(u + y)^2}$ in $\Delta_1(u, y)$ is therefore
\[
-\frac{\gamma}{2} \chi + (1 + \frac{\gamma^2}{4})\chi^2 - \frac{\gamma}{2} \chi^3 = -\frac{\gamma}{2} \chi (\chi - \frac{\gamma}{2})(\chi - \frac{2}{\gamma}) = 0.
\]
Similarly, the total prefactor of $\frac{\Theta_\tau'(u)^2}{\Theta_\tau(u)^2}$ in $\Delta_1(u, y)$ is
$\frac{\gamma}{2} \chi - \frac{\alpha \gamma}{4} \chi^2 + \frac{\gamma}{4} \chi^3$. We may therefore write
\begin{align*}
\Delta_1(u, y) &= \frac{\gamma}{2}\Big(\chi - \frac{\alpha}{2} \chi^2 + \frac{1}{2} \chi^3\Big) \frac{\Theta_\tau'(u)^2}{\Theta_\tau(u)^2} + \chi \Delta_1^1(u, y) + \chi^2 \Delta_1^2(u, y) + \chi^3 \Delta_1^3(u, y)
\end{align*}
for
\begin{align*}
\Delta_1^1(u, y) &= \frac{\gamma}{2} \left(\frac{\Theta_{\tau}''(u+y)}{\Theta_{\tau}(u+y)} - \frac{\Theta_{\tau}''(u)}{\Theta_{\tau}(u)}\right),\\
\Delta_1^2(u, y)
&= - (1 + \frac{\gamma^2}{4}) \frac{\Theta''_{\tau}(u+y)}{\Theta_{\tau}(u+y)} + \frac{\alpha \gamma}{4} \frac{\Theta_\tau''(u)}{\Theta_\tau(u)} + \frac{\alpha \gamma \pi^2}{12} - \frac{\alpha \gamma}{6} \frac{\Theta_\tau'''(0)}{\Theta_\tau'(0)} + \frac{\pi^2 \gamma^2}{12} + \frac{\gamma^2}{12} \frac{\Theta_\tau'''(0)}{\Theta_\tau'(0)},\\ 
\Delta_1^3(u, y) &= \frac{\gamma}{2} \frac{\Theta''_\tau(u + y)}{\Theta_\tau(u + y)} - \frac{\gamma}{4} \frac{\Theta_\tau''(u)}{\Theta_\tau(u)} - \frac{\pi^2 \gamma}{12} - \frac{\gamma}{12} \frac{\Theta'''_\tau(0)}{\Theta_\tau'(0)}.
\end{align*}
\xin{Here to obtain $\Delta_1^2$  we used} (\ref{eq:theta-iden}) for $(a, b) = (u + y, y)$.  Adding
$0 = (-\frac{\gamma}{2} \chi + (1 + \frac{\gamma^2}{4})\chi^2 - \frac{\gamma}{2}\chi^3)\frac{\Theta_\tau''(u + y)}{\Theta_\tau(u + y)}$ \xin{to $\Delta_1(u, y)$}, we obtain
\begin{multline*}
\Delta_1(u, y) = \Big(\frac{\chi\gamma}{2} - \frac{\alpha\gamma}{4} \chi^2 + \frac{\gamma}{4} \chi^3\Big) \frac{\Theta_\tau'(u)^2}{\Theta_\tau(u)^2}
- \Big(\frac{\chi \gamma}{2} - \frac{\alpha \gamma}{4} \chi^2 + \frac{\chi^3 \gamma}{4}\Big) \frac{\Theta_\tau''(u)}{\Theta_\tau(u)}\\
+ \Big(-\frac{\chi^2 \alpha \gamma}{6} - \frac{\chi^3 \gamma}{12} + \frac{\chi^2\gamma^2}{12}\Big) \frac{\Theta'''_\tau(0)}{\Theta_\tau'(0)}
+ \Big(\frac{\pi^2 \alpha \gamma \chi^2}{12} - \frac{\pi^2 \gamma \chi^3}{12} + \frac{\pi^2 \chi^2 \gamma^2}{12}\Big).
\end{multline*}
Therefore $\Delta_1(u, y)$ does not depend on $y$.  \xin{Hence by~\eqref{eq:Xi-def} we have}
\[
\xin{\Xi} = s  \cW(q) e^{\pi \chi P u} \Delta_1(u, y) \revise{ \int_0^1  \cT(u, z) e^{\pi \gamma P z} \cV_1(u, z) dz }.
\]
\xin{Since 
\(\psi^\alpha_\chi(u, q)=  \cW(q) e^{\pi \chi P u} \int_0^1  \cT(u, z) e^{\pi \gamma P z} \cV_1(u, z) dz
\) by Lemma~\ref{lem:V1}, we get \(\Xi=s  \Delta_1(u, y) \psi^\alpha_\chi(u, q)\) as claimed.}
\end{proof}

\section{From the BPZ equation to hypergeometric
differential equations}\label{sec:ode}
In this section, we apply separation of variables to the BPZ equation in Theorem~\ref{thm:bpz} to show that,
up to a renormalization and change of variable, the coefficients of the $q$-series expansion of the $u$-deformed
block satisfy the system of hypergeometric differential equations (\ref{eq:hgf1}), giving access to
certain analytic properties of the $u$-deformed block beyond the scope of GMC.

\xin{Fix $\chi\in \{\frac{\gamma}{2},\frac{2}{\gamma}\}$ and $\alpha\in (-\frac{4}{\gamma}+\chi,Q)$.
Recall  $\Sigma^\alpha_{\chi}(u, q)$ from (\ref{eq:gen-block}). For $u $ on the upper half plane $\bbH$,  $\Sigma^\alpha_{\chi}(u, q)$  is analytic in $q$ for $|q|$ small enough. More precisely, this holds when $|q|<r_{\alpha-\chi}\wedge q_0$ and $-\frac{3}{4}\log |q|>\Im u$. 
For each $u\in\bbH$, let $\sigma^\alpha_{\chi, n}(u)$ be the $q$-series coefficients of $\Sigma^\alpha_{\chi}(u, q)$. Namely,}
\begin{equation} \label{eq:psin}
\Sigma^\alpha_{\chi}(u, q)= 
\sum_{n = 0}^\infty \sigma^\alpha_{\chi, n}(u) q^n \qquad \text{ for $u \in \HH$ and $|q|$ small enough}.
\end{equation}
\xin{By Theorem~\ref{thm:bpz}, $\psi^\alpha_\chi(u, \tau) = e^{\left(\frac{P^2}{2} + \frac{1}{6 \chi^2}l_\chi(l_\chi+1)  \right)\ii \pi \tau} \Sigma^\alpha_\chi(u,e^{\ii \pi \tau})$ satisfies the BPZ equation, where $l_\chi= \frac{\chi^2}{2} - \frac{\alpha \chi}{2}$. This yields a system of differential equations for $\sigma^\alpha_{\chi, n}(u)$. To get the desired hypergeometric equations, we need the change of variable from the following lemma.}
\begin{lemma}\label{lem:sin2}
The map $u\mapsto \sin^2(\pi u)$ is a holomorphic bijection from
$(0,1)\times(0,\infty)$ to $\CC\setminus(-\infty,1]$ which maps $\{u: \Re u =1/2,\Im u>0\}$  to $(1,\infty)$.
\end{lemma}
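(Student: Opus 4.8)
The plan is to factor the map through the substitution $\zeta = e^{2\pi\ii u}$. Using $\sin(\pi u) = \tfrac{1}{2\ii}(e^{\ii\pi u}-e^{-\ii\pi u})$ one computes the algebraic identity $\sin^2(\pi u) = -\tfrac{(\zeta-1)^2}{4\zeta}$, so that $u\mapsto \sin^2(\pi u)$ is the composition $h\circ e$, where $e(u):=e^{2\pi\ii u}$ and $h(\zeta):=-\tfrac{(\zeta-1)^2}{4\zeta}$ is a rational map. First I would verify that $e$ restricts to a conformal bijection from $(0,1)\times(0,\infty)$ onto the slit disk $\D\setminus[0,1)$: writing $u=x+\ii y$ with $x\in(0,1)$, $y>0$, the modulus $|e(u)|=e^{-2\pi y}$ runs over $(0,1)$ and the argument $2\pi x$ runs over $(0,2\pi)$, so $e$ is injective with image exactly $\{0<|\zeta|<1\}\setminus(0,1)=\D\setminus[0,1)$, and $e'\ne 0$ gives conformality. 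A direct computation also shows that $e$ carries the half-line $\{\Re u=\tfrac12,\ \Im u>0\}$ onto the interval $(-1,0)$.

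The core of the proof is then to show that $h$ is a conformal bijection from $\D\setminus[0,1)$ onto $\CC\setminus(-\infty,1]$ carrying $(-1,0)$ onto $(1,\infty)$. Conformality is immediate: $h$ is holomorphic on $\CC\setminus\{0\}\supseteq\D\setminus[0,1)$ and $h'(\zeta)=\tfrac{1-\zeta^2}{4\zeta^2}$ vanishes only at $\zeta=\pm1\notin\D$. For injectivity, the equation $h(\zeta_1)=h(\zeta_2)$ is equivalent to $\zeta_1+\zeta_1^{-1}=\zeta_2+\zeta_2^{-1}$, which rearranges to $(\zeta_1-\zeta_2)(\zeta_1\zeta_2-1)=0$; since $\zeta_1\zeta_2=1$ is impossible for $\zeta_1,\zeta_2\in\D$, we get $\zeta_1=\zeta_2$. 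For the image, the equation $h(\zeta)=w$ is equivalent to the quadratic $\zeta^2+(4w-2)\zeta+1=0$, whose two roots have product $1$. On the one hand, a short case analysis on this quadratic (nonnegative discriminant gives one real root in $(0,1]$ and one in $[1,\infty)$; negative discriminant gives two unimodular roots) shows that if $h(\zeta)\in(-\infty,1]$ with $\zeta\in\D$ then $\zeta\in(0,1)\subset[0,1)$; combined with $h(0)=\infty$, this gives $h(\D\setminus[0,1))\subseteq\CC\setminus(-\infty,1]$. On the other hand, if $w\notin(-\infty,1]$ then the two roots cannot both be unimodular (two unimodular roots with product $1$ force $w=\tfrac{1-\cos\theta}{2}\in[0,1]$), so exactly one root lies in $\D$, and it does not lie on $[0,1)$ because $h$ maps $(0,1)$ onto $(-\infty,0)$ and $0$ to $\infty$; this gives surjectivity onto $\CC\setminus(-\infty,1]$. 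Finally, on $(-1,0)$ one has $h'(t)=\tfrac{1-t^2}{4t^2}>0$, with $h(t)\to 1$ as $t\to-1^+$ and $h(t)\to+\infty$ as $t\to0^-$, so $h$ maps $(-1,0)$ monotonically onto $(1,\infty)$.

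Composing, $h\circ e$ is the desired conformal bijection from $(0,1)\times(0,\infty)$ onto $\CC\setminus(-\infty,1]$, and it carries $\{\Re u=\tfrac12,\ \Im u>0\}$ onto $(1,\infty)$. No step presents a genuine obstacle; the only place requiring care is the case analysis on the quadratic $\zeta^2+(4w-2)\zeta+1=0$, which is what pins down the precise shape of the slit — i.e., that the complement of the image is exactly $(-\infty,1]$ and not, say, a different ray or a ray together with $\{\infty\}$ — and identifies which of the two preimage branches lands in the slit disk.
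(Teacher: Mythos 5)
Your proof is correct. The paper states this lemma as a basic fact and gives no proof, so there is nothing to compare against; your factorization of $\sin^2(\pi u)=h(e^{2\pi\ii u})$ with $h(\zeta)=-\tfrac{(\zeta-1)^2}{4\zeta}$, together with the Joukowski-type injectivity argument and the root analysis of $\zeta^2+(4w-2)\zeta+1=0$, is a complete and standard way to establish it. One cosmetic remark: your parenthetical that a nonnegative discriminant yields one root in $(0,1]$ and one in $[1,\infty)$ holds for $w\le 0$ but not at $w=1$, where the double root is $-1$; this does not affect your conclusion, since in that case neither root lies in $\D$ anyway.
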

\xin{In light of Lemma~\ref{lem:sin2}, for $w\in \CC\setminus (-\infty,1]$, let}
\begin{equation} \label{eq:phin-def}
\phi_{\chi, n}^\alpha(w) \defeq \xin{\sin(\pi u)^{l_\chi}  \sigma_{\chi,n}^\alpha(u)}
\text{ for } w = \sin^2(\pi u) \textrm{ and } u \in (0,1)\times (0,\infty).
\end{equation}
\xin{Here $\sin(\pi u)^{l_\chi}=e^{l_\chi\log \sin(\pi u)}$ where  $\arg(\sin(\pi u))$ is specified by requiring $\arg(\sin(\pi u))=0$ when $\Re u=\frac12$.
We now describe the hypergeometric equations which will be satisfied by  $\phi_{\chi, n}^\alpha(w)$.}
Define the differential operator
\begin{equation}\label{eq:hyper-op}
\hpg_\chi \defeq w(1 - w)\partial_{ww} + (1/2 - l_\chi - (1 - l_\chi)w)\partial_w.
\end{equation}
For $n\ge 1$, recall the coefficients $\wp_n(u)$ in the $q$-series expansion of 
Weierstrass's elliptic function $\wp(u)$ from~\eqref{eq:wpn} and the polynomials  $\wwp_n(w)$ such that
$\wwp_{n}(w) = \wp_n(u)$ for $w = \sin^2(\pi u)$.
Consider the system of differential equations on the sequence of functions $ \{\phi_{n}(w)\}_{n\ge 0}$
\begin{align} \label{eq:hgf1}
\Big(\hpg_\chi  - \Big(\frac{1}{4} l_\chi^2 + \frac{1}{4} \chi^2 (P^2 + 2n)\Big) \Big) \phi_{n}(w)
= \frac{l_\chi(l_\chi+1)}{4\pi^2} \sum_{l = 1}^n  \wwp_{l}(w) \phi_{n - l}(w).
\end{align}
Here we adopt the convention that the empty summation $\sum_{l = 1}^{0}$ is $0$ so that~\eqref{eq:hgf1} is
homogeneous for $n=0$.  For $n\in\NN_0$, (\ref{eq:hgf1}) is a  \emph{hypergeometric
differential equation} with parameters $(A_{\chi, n}, B_{\chi, n}, C_\chi)$ defined by
\begin{equation} \label{eq:hgf-params}
A_{\chi, n} = -\frac{l_\chi}{2} + \ii \frac{\chi}{2} \sqrt{P^2 + 2n},
\qquad  B_{\chi, n} = -\frac{l_\chi}{2} - \ii \frac{\chi}{2}\sqrt{P^2 + 2n}, \qquad C_\chi = \frac{1}{2} - l_\chi.
\end{equation}
\begin{prop} \label{prop:bpz-to-hgf}\xin{Fix $\chi\in \{\frac{\gamma}{2},\frac{2}{\gamma}\}$ and $\alpha\in (-\frac{4}{\gamma}+\chi,Q)$.}
Equations~\eqref{eq:hgf1} hold for $\{\phi_{\chi, n}^\alpha(w)\}_{n\ge 0}$ on \xin{$\CC\setminus (-\infty,1]$.}
\end{prop}

\xin{Proposition \ref{prop:bpz-to-hgf} follows from a straightforward calculation starting from the BPZ equation, which we give in Section~\ref{subsec:sep-bpz}. It allows us to understand analytic properties $\phi^\alpha_{\chi,n}$ around 0 using the well-known solution theory for hypergeometric equations;
see Appendix \ref{sec:hgf} for a summary of needed facts  on hypergeometric equations. Since in general solutions of such equations have branch cut on the real line, it is instrumental to consider the restriction of $\phi^\alpha_{\chi,n}$ to the upper and lower half planes separately. We have Corollary~\ref{cor:decompose} below of Proposition~\ref{prop:bpz-to-hgf}, which relies on the following property.} 
 \begin{define}[Property (R)]\label{def:property-R}
A function $f$ on the closed unit disk  $\overline\D$ satisfies Property (R) if $f$ is of the form $f(w)=\sum_{n = 0}^{\infty} a_n w^n$
for $|w|\le 1$ with $\sum_{n = 0}^{\infty} |a_n|<\infty$.
\end{define}
  
\begin{corr}\label{cor:decompose}
Suppose \xin{$\gamma,\chi,\alpha$ in Proposition~\ref{prop:bpz-to-hgf} are such that}
$C_\chi$ in~\eqref{eq:hgf-params} is not an integer.
\xin{Let $\D_1=\{w\in \CC: |w|<1, \Im w>0 \}$ and $\D_2=\{w\in \CC: |w|<1, \Im w<0 \}$.} Then for $i \in \{1, 2\}$, there exist   functions $\{\phi^{\alpha, 1}_{\chi, n, i}(w)\}_{n\ge 0}$ and $\{\phi^{\alpha, 2}_{\chi, n, i}(w)\}_{n\ge 0}$  on the closed unit disk  $\overline \D$ satisfying Property (R) such that $\phi^{\alpha, 1}_{\chi, n, i}(w)$ and $w^{1 - C_\chi} \phi^{\alpha, 2}_{\chi, n, i}(w)$ are solutions to
equations~\eqref{eq:hgf1} and 
\[
\phi^\alpha_{\chi, n}(w) = \phi^{\alpha, 1}_{\chi, n, i}(w)
+ w^{1 - C_\chi} \phi^{\alpha, 2}_{\chi, n, i}(w) \quad \textrm{for }  \xin{w\in \D_i},
\]
where $w^{1-C_\chi} := e^{(1-C_\chi) \log w}$ is specified by requiring $\arg w\in (-\pi, \pi)$.
\end{corr}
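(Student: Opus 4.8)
The plan is to extract the desired decomposition inductively in $n$ from the structure of the inhomogeneous hypergeometric equations~\eqref{eq:hgf1}, together with the analyticity of $\phi^\alpha_{\chi,n}$ on $\Dw_i$ coming from Proposition~\ref{prop:u-block} and Proposition~\ref{prop:bpz-to-hgf}. First I would record the base case $n=0$: equation~\eqref{eq:hgf1} with $n=0$ is the \emph{homogeneous} hypergeometric equation with parameters $(A_{\chi,0},B_{\chi,0},C_\chi)$. Since $C_\chi$ is not an integer, the general solution near $w=0$ is an arbitrary linear combination of $v^\alpha_{1,\chi,0}(w)$ and $w^{1-C_\chi}v^\alpha_{2,\chi,0}(w)$, both of which satisfy Property (R) because, as noted after~\eqref{eq:defv2} and in Appendix~\ref{subsec:homo}, the relevant $_2F_1$ series have radius of convergence $1$ and (using $C_\chi-A_{\chi,0}-B_{\chi,0}=\tfrac12\in(0,1)$) converge absolutely up to and on $|w|=1$. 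Thus on $\Dw_i\cap\D$ we may write $\phi^\alpha_{\chi,0}(w)=\phi^{\alpha,1}_{\chi,0,i}(w)+w^{1-C_\chi}\phi^{\alpha,2}_{\chi,0,i}(w)$ by matching the known local behavior of $\phi^\alpha_{\chi,0}$ at $w=0$; here $\phi^\alpha_{\chi,0}$ is holomorphic on a neighborhood of $0$ in $\Dw_i$ (no branch cut needed since $l_\chi$ at $q=0$ order is the relevant exponent), and the two model solutions span the solution space, so the coefficients in the combination are uniquely determined.

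For the inductive step, suppose the decomposition holds for all indices $<n$ with the stated Property (R). The right-hand side of~\eqref{eq:hgf1}, namely $\frac{l_\chi(l_\chi+1)}{4\pi^2}\sum_{l=1}^n\wwp_l(w)\phi^\alpha_{\chi,n-l}(w)$, is then a sum of a Property (R) function (the contribution of the $\phi^{\alpha,1}_{\chi,n-l,i}$ pieces, since $\wwp_l$ is a polynomial and products/sums of Property (R) functions are Property (R)) plus $w^{1-C_\chi}$ times a Property (R) function (the contribution of the $\phi^{\alpha,2}_{\chi,n-l,i}$ pieces). I would then invoke the results of Appendix~\ref{subsec:inhomo} — which apply because~\eqref{eq:ABC} holds, as $C_\chi-A_{\chi,n}-B_{\chi,n}=\tfrac12$ — to produce a particular solution of~\eqref{eq:hgf1} of the form (Property (R) function) $+\,w^{1-C_\chi}\cdot$(Property (R) function), obtained by solving the recursion for the power-series coefficients of each of the two "sectors'' separately; the absolute summability of the coefficients follows from the standard growth estimates for solutions of the hypergeometric recursion together with the $\tfrac12$-gap condition. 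Adding to this particular solution the homogeneous solutions $v^\alpha_{1,\chi,n}(w)$ and $w^{1-C_\chi}v^\alpha_{2,\chi,n}(w)$ (again Property (R) by the $n=0$ argument), we obtain the full affine space of local solutions, each of the desired form. Finally, $\phi^\alpha_{\chi,n}(w)=\psi^\alpha_{\chi,n}(u)$ with $w=\sin^2(\pi u)$ is, by Proposition~\ref{prop:u-block} and the normalization~\eqref{eq:phi-def}, analytic on $\Dw_i$ and solves~\eqref{eq:hgf1}, so it must coincide on $\Dw_i\cap\D$ with one member of this affine space; that determines $\phi^{\alpha,1}_{\chi,n,i}$ and $\phi^{\alpha,2}_{\chi,n,i}$ and closes the induction.

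One small point to handle carefully is the branch of $w^{1-C_\chi}$: the corollary fixes $\arg w\in(-\pi,\pi)$, so the identity is asserted on $\Dw_i\cap\D$, which lies in a single half-plane ($\Dw_1$ in the upper, $\Dw_2$ in the lower) and hence avoids $(-\infty,0]$; I would make explicit that the local solution basis at $w=0$ used above is the one compatible with this branch, which is exactly why the decomposition is stated separately for $i=1$ and $i=2$ (the connection coefficients differ across the cut). The main obstacle I anticipate is not any single step but the bookkeeping of the inductive claim that both "sectors'' stay within Property (R): one must check that the coefficient recursion arising from matching $\sum a_n w^n$ and $w^{1-C_\chi}\sum b_n w^n$ against the inhomogeneous term genuinely decouples into two independent recursions (it does, because $1-C_\chi\notin\ZZ$ so the two families of exponents $\{n\}$ and $\{n+1-C_\chi\}$ never collide), and that the resulting coefficient sequences are absolutely summable — this is where the hypothesis $C_\chi\notin\ZZ$ and the gap $C_\chi-A_{\chi,n}-B_{\chi,n}=\tfrac12$ are both essential, and where I would lean on the summability lemmas of Appendix~\ref{subsec:inhomo} rather than redo the estimates.
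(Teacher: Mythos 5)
Your proposal is correct and follows essentially the same route as the paper: the base case $n=0$ uses that $\phi^\alpha_{\chi,0}$ solves the homogeneous hypergeometric equation on $\Dw_i\cap\D$ and hence lies in the two-dimensional span of $v^\alpha_{1,\chi,0}(w)$ and $w^{1-C_\chi}v^\alpha_{2,\chi,0}(w)$ (both Property (R) by Lemma~\ref{lem:v-R}), and the inductive step feeds the sector-decomposed inhomogeneous term into Lemma~\ref{lem:property-R} exactly as the paper does. The only cosmetic difference is how the two coefficients are pinned down in the base case (the paper evaluates at two points $w_1,w_2$ with a nonvanishing determinant, you appeal to local behavior at $w=0$), which does not affect the argument.
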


We prove Corollary~\ref{cor:decompose} in Section~\ref{subsec:phi-analytic}.
\xin{Moreover, we will prove the following linear relation between these $\phi^{\alpha, j}_{\chi, n, i}$'s, which will be used several times.}
\begin{lemma}\label{lem:phase-phi}
In the setting of Corollary~\ref{cor:decompose}, for $w\in \overline\D$, we have 
\begin{equation} \label{eq:phi-prop}
\phi^{\alpha, 1}_{\chi, n, 2}(w) = e^{\pi \chi P - \ii \pi l_\chi} \phi^{\alpha, 1}_{\chi, n, 1}(w) \textrm{ and }
\phi^{\alpha, 2}_{\chi, n, 2}(w) = -e^{\pi \chi P + \ii \pi l_\chi} \phi^{\alpha, 2}_{\chi, n, 1}(w).
\end{equation}
\end{lemma}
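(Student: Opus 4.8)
Here is a plan for proving Lemma~\ref{lem:phase-phi}.

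\medskip

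The plan is to extract the two phase relations by comparing, on the slit segment $[-1,0]$, the two branch representations of the deformed block coming from $\overline{\D}_1$ and $\overline{\D}_2$ in~\eqref{eq:phi-ext}. The key point is that both decompositions use the same symbol $w^{1-C_\chi}=e^{(1-C_\chi)\log w}$ but are realized on opposite sides of the cut $(-\infty,0]$, so on $[-1,0)$ their $w^{1-C_\chi}$ factors differ by a monodromy phase $e^{\pm 2\ii\pi(1-C_\chi)}$, while the blocks themselves are tied together by the relation of Lemma~\ref{lem:ext-phi}.

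\medskip

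Concretely, first I would fix an interval $(-\eps,0)\subseteq[-1,0)\cap\overline{\Dw_1}$; such an interval exists because, by Lemma~\ref{lem:sin2}, the images under $u\mapsto\sin^2(\pi u)$ of points $u=\ii t$ with $t>0$ small are exactly $-\sinh^2(\pi t)$, which fill a one-sided neighbourhood of $0$ on the negative real axis lying in $\overline{\Dw_1}$. On this interval the boundary value of $w^{1-C_\chi}$ obtained as a limit from $\D_1$ (where $\arg w\in(0,\pi)$) is $|w|^{1-C_\chi}e^{\ii\pi(1-C_\chi)}$, while the one from $\D_2$ (where $\arg w\in(-\pi,0)$) is $|w|^{1-C_\chi}e^{-\ii\pi(1-C_\chi)}$. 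Substituting~\eqref{eq:phi-ext} for $i=1$ and $i=2$ into the identity $\phi^\alpha_{\chi,n,2}(w)=e^{\pi\chi P-\ii\pi l_\chi}\phi^\alpha_{\chi,n,1}(w)$ from~\eqref{eq:phase-phi}, and writing $w=-t$ with $t\in(0,\eps)$, gives
\[
\phi^{\alpha,1}_{\chi,n,2}(-t)+t^{1-C_\chi}e^{-\ii\pi(1-C_\chi)}\phi^{\alpha,2}_{\chi,n,2}(-t)
= e^{\pi\chi P-\ii\pi l_\chi}\Bigl(\phi^{\alpha,1}_{\chi,n,1}(-t)+t^{1-C_\chi}e^{\ii\pi(1-C_\chi)}\phi^{\alpha,2}_{\chi,n,1}(-t)\Bigr).
\]

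\medskip

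The next step is to separate the two sectors. Each $\phi^{\alpha,j}_{\chi,n,i}$ has Property~(R), hence is analytic near $0$, so both the $t^{0}$-component and the $t^{1-C_\chi}$-component of the displayed equation are germs of analytic functions of $t$ at $0$. Since we are in the setting of Corollary~\ref{cor:decompose}, $C_\chi\notin\ZZ$, so $1-C_\chi=\tfrac12+l_\chi\notin\ZZ$ and the germs $1$ and $t^{1-C_\chi}$ are linearly independent over the ring of analytic germs at $0$: in an identity $a(t)=t^{1-C_\chi}b(t)$ with $a,b$ analytic near $0$, matching lowest-order exponents forces $a\equiv b\equiv 0$. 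This yields $\phi^{\alpha,1}_{\chi,n,2}=e^{\pi\chi P-\ii\pi l_\chi}\phi^{\alpha,1}_{\chi,n,1}$ and $e^{-\ii\pi(1-C_\chi)}\phi^{\alpha,2}_{\chi,n,2}=e^{\pi\chi P-\ii\pi l_\chi}e^{\ii\pi(1-C_\chi)}\phi^{\alpha,2}_{\chi,n,1}$ first on $(-\eps,0)$ and then, by the identity theorem and continuity, on all of $\overline{\D}$. Inserting $C_\chi=\tfrac12-l_\chi$ gives $e^{2\ii\pi(1-C_\chi)}=-e^{2\ii\pi l_\chi}$, so the second relation becomes $\phi^{\alpha,2}_{\chi,n,2}=-e^{\pi\chi P+\ii\pi l_\chi}\phi^{\alpha,2}_{\chi,n,1}$, which together with the first is exactly~\eqref{eq:phi-prop}.

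\medskip

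I expect the only genuinely delicate point to be the consistent branch bookkeeping for $w^{1-C_\chi}$ on the two sides of the cut --- together with checking that $[-1,0)\cap\overline{\Dw_1}$ really contains an interval, not merely a sequence accumulating at $0$, which is what the sector-separation argument requires; the remaining manipulations are routine.
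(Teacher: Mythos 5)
Your proof is correct, and the branch bookkeeping for $w^{1-C_\chi}$ on the two sides of the cut matches the paper's conventions exactly (the paper's own proof writes down the same two boundary expansions on $(-c,0)$, and your interval $(-\eps,0)\subset\overline{\Dw_1}$ exists for the reason you give, which is also how the paper chooses its $c>0$). The difference is in how the two sectors are separated. The paper first proves the relation $\phi^{\alpha,1}_{\chi,n,2}=e^{\pi\chi P-\ii\pi l_\chi}\phi^{\alpha,1}_{\chi,n,1}$ on its own, by an induction on $n$ through the hypergeometric hierarchy: the difference $\phi_n=\phi^{\alpha,1}_{\chi,n,2}-e^{\pi\chi P-\ii\pi l_\chi}\phi^{\alpha,1}_{\chi,n,1}$ solves the homogeneous equation (once the lower-order relations are known, the inhomogeneous parts cancel), satisfies Property (R), and vanishes at $w=0$ by Lemma~\ref{lem:ext-phi}, hence vanishes identically by the one-dimensionality of the Property-(R) solution space. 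Only then does the paper subtract the analytic parts on $(-c,0)$ and cancel $|w|^{1-C_\chi}$ to get the second relation. You instead extract both relations at once from the single identity on the slit, using that $1$ and $t^{1-C_\chi}$ are linearly independent over analytic germs at $0$ when $C_\chi\notin\ZZ$; this bypasses the induction entirely and is a genuine (if mild) streamlining. The trade-off is that your argument needs the identity of Lemma~\ref{lem:ext-phi} on a whole interval, whereas the paper's ODE-based step for the first relation would go through knowing it only at the single point $w=0$; since Lemma~\ref{lem:ext-phi} does supply the interval, both routes are available here.
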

\xin{The hypergeometric equations allow
us to analytically extend $\phi^{\alpha, j}_{\chi, n, i}(w)$  jointly  in $\alpha$ and $w$ beyond the range achieved  in Lemma~\ref{lem:psi-props} by GMC arguments. For this} we need the following generalization of Property (R).
\begin{define}\label{def:walpha}
Suppose $U\subset\CC$ is an open set. We say that a function $g(w,\alpha)$  is $(w,\alpha)$-regular on $\overline \D \times U$ if
$g$ can be written as $g(w,\alpha)=\sum_{n=0}^{\infty} a_n(\alpha) w^n$ satisfying two properties: (1) $a_n(\alpha)$ are analytic functions on $U$;
and (2) $\sum_{n=0}^{\infty}|a_n(\alpha)|<\infty$ where the convergence holds uniformly \xin{for $\alpha$ in} each compact subset of $U$.
\end{define}

\begin{lemma} \label{lem:phi-alpha-analytic}
\xin{Recall $C_\chi=\frac12-l_\chi=1-\frac{(Q-\alpha) \chi}{2}$.}
For each $i, j \in \{1, 2\}$ and  $n\in \NN_0$, there exists an open complex neighborhood $U$ of 
 $\{\alpha\in (-\frac{4}{\gamma} + \chi, Q): C_\chi\notin \ZZ \}$  such that   the function \xin{$(w,\alpha) \mapsto \phi^{\alpha, j}_{\chi, n, i}(w)$ in Corollary~\ref{cor:decompose}} have an extension to $\overline \D\times U$
which is $(w,\alpha)$-regular in the sense of Definition~\ref{def:walpha}.
\end{lemma}
\xin{Lemma~\ref{lem:phi-alpha-analytic}   will be used in Section \ref{sec:block-proof} to prove the shift equations (Theorem \ref{thm:shift}) for the probabilistic
conformal block.  We now construct a particular solution to~(\ref{eq:hgf1}), which will be used to in the proof of  Lemma~\ref{lem:phi-alpha-analytic} and Theorem \ref{thm:shift}.
To define it,} for $i=1,2$ and $n\ge 1$,  let
\begin{equation}\label{eq:defg}
g_{\chi, n, i}^{\alpha, j}(w) = \frac{l_\chi(l_\chi + 1)}{4 \pi^2} \sum_{l = 1}^n \wwp_{l}(w) \phi_{\chi, n - l, i}^{\alpha, j}(w)\quad \textrm{for }w\in \overline \D_i.
\end{equation}
Then $g_{\chi, n, i}^{\alpha, j}(w)$  satisfies Property (R) by Corollary~\ref{cor:decompose}.
\xin{By the solution theory of homogeneous hypergeometric equations (see Lemma~\ref{lem:property-R} and Section~\ref{sec:particular}),} we can define the following functions in terms of $g_{\chi, n, i}^{\alpha, j}(w)$.
\begin{define}\label{def:particular}
For $n\ge 1$, and $i=1,2$, let $G_{\chi, n, i}^{\alpha, 1}(w)$ be the unique function \xin{on $\overline\D$} satisfying Property (R) such that  $G_{\chi, n, i}^{\alpha, 1}(1)=0$ and
\[
\left(\hpg_\chi  - \left(\frac{1}{4} l_\chi^2 + \frac{1}{4} \chi^2 (P^2 + 2n)\right) \right)G_{\chi, n, i}^{\alpha, 1}(w)
= g^{\alpha, 1}_{\chi, n, i}(w) \textrm{ for }w\in \overline{\D}_i.
\]
  Let $G_{\chi, n, i}^{\alpha, 2}(w)$ be the unique function \xin{on $\overline{\D}$} satisfying Property (R) such that 
and $G_{\chi, n, i}^{\alpha, 2}(1)=0$ and 
\[
\left(\hpg_\chi  - \left(\frac{1}{4} l_\chi^2 + \frac{1}{4} \chi^2 (P^2 + 2n)\right) \right)  w^{1 - C_\chi}G_{\chi, n, i}^{\alpha, 2}(w)
= w^{1 - C_\chi}g^{\alpha, 2}_{\chi, n, i}(w)  \textrm{ for }w\in \overline{\D}_i.
\] Define $G^\alpha_{\chi, 0, i}(w) = G_{\chi, 0, i}^{\alpha, 1}(w) =G_{\chi, 0, i}^{\alpha, 2}(w)=0$ and
\[
G^\alpha_{\chi, n, i}(w) := G_{\chi, n, i}^{\alpha, 1}(w) + w^{1 - C_\chi} G_{\chi, n, i}^{\alpha, 2}(w)\qquad \textrm{for } w\in \overline\D_i \textrm{ and }  n\ge 1.
\]
\end{define}
\xin{We need the following two facts on  $G^{\alpha, j}_{\chi, n, i}$ which we prove in Section~\ref{sec:particular}.}
\begin{prop} \label{prop:phi-G}
For $i=1,2$, the functions $\{G^\alpha_{\chi, n, i}(w)\}_{n\ge 0}$ are  a solution to (\ref{eq:hgf1}) satisfying the following:
\begin{itemize}
    \item[{\bf (a).}] $G_{\chi, n, 1}^\alpha(1) = G_{\chi, n, 2}^\alpha(1) = 0$, $G_{\chi, n, 2}^{\alpha, 1}(0) = e^{\pi \chi P - \ii \pi l_\chi} G_{\chi, n, 1}^{\alpha, 1}(0)$,
and $G_{\chi, n, 2}^{\alpha, 2}(0) = -e^{\pi \chi P + \ii \pi l_\chi} G_{\chi, n, 1}^{\alpha, 2}(0)$ for each $n\ge 0$.
\item[{\bf (b).}] \xin{Lemma~\ref{lem:phi-alpha-analytic} holds with $G^{\alpha, j}_{\chi, n, i}(w)$ in place of  $\phi^{\alpha, j}_{\chi, n, i}(w)$. Namely for each $i, j \in \{1, 2\}$ and  $n\in \NN_0$, there exists an open complex neighborhood $U$ of 
 $\{\alpha\in (-\frac{4}{\gamma} + \chi, Q): C_\chi\notin \ZZ \}$  such that $(w,\alpha) \mapsto G^{\alpha, j}_{\chi, n, i}(w)$ has an extension to $\overline \D\times U$
which is $(w,\alpha)$-regular.}
\end{itemize}
\end{prop}

We now give the proofs of all the statements claimed above. 
\subsection{\xin{Proof of Proposition~\ref{prop:bpz-to-hgf}}}\label{subsec:sep-bpz}
 The  BPZ equation (\ref{eq:bpz1}) for $\psi^\alpha_\chi(u, \tau)= e^{\left(\frac{P^2}{2} + \frac{1}{6 \chi^2}l_\chi(l_\chi+1)  \right)\ii \pi \tau} \Sigma^\alpha_\chi(u,e^{\ii \pi \tau})$ implies that for $u \in \HH$, we have
\begin{multline*}
\sum_{n \geq 0} \Big[\partial_{uu}  \sigma_{\chi, n}^\alpha(u) - l_\chi(l_\chi+1)\sum_{l = 0}^n \wp_l(u)  \sigma_{\chi, n - l}^\alpha(u) - 2\pi^2\chi^2 n  \sigma_{\chi, n}^\alpha(u)\\
- 2 \pi^2 \chi^2 \Big(\frac{P^2}{2} + \frac{1}{6 \chi^2} l_\chi(l_\chi+1)\Big)  \sigma_{\chi, n}^\alpha(u)\Big] q^n = 0.
\end{multline*}
Therefore for each $n=0,1,2..$, we have 
\begin{multline*}
\left(\partial_{uu} - l_\chi(l_\chi+1) \frac{\pi^2}{\sin^2(\pi u)} - \pi^2 \chi^2 (P^2 + 2n) \right)  \sigma_{\chi, n}^\alpha(u)\\
= l_\chi(l_\chi+1) \sum_{l = 1}^n \wp_l(u)  \sigma_{\chi, n - l}^\alpha(u).
\end{multline*}
\xin{Setting $\hat \phi_{\chi, n}^\alpha(u) \defeq \phi_{\chi, n}^\alpha(\sin^2(\pi u))= \sin(\pi u)^{l_\chi}  \sigma_{\chi,n}^\alpha(u)$,}  we get
\begin{multline} \label{eq:pre-hgf}
\left(\partial_{uu} - 2 \pi l_\chi \cot(\pi u) \partial_u - \pi^2 l_\chi^2 - \pi^2\chi^2(P^2 + 2n)\right) \hat{\phi}_{\chi, n}^\alpha(u)\\
= l_\chi(l_\chi+1) \sum_{l = 1}^n \wp_l(u) \hat{\phi}_{\chi, n - l}^\alpha(u).
\end{multline}
Notice that $2 \pi \sqrt{w(1 - w)} \partial_w = \partial_u$, hence for $n\ge 0$, checking
\[
\partial_{uu} - 2 \pi l_\chi \cot(\pi u) \partial_u - \pi^2 l_\chi^2 - \pi^2\chi^2(P^2 + 2n)
= 4\pi^2 \Big( \cH_\chi 
- \Big(\frac{l_\chi^2}{4} + \frac{\chi^2}{4} (P^2 + 2n)\Big)\Big)
\]
implies that the equations~\eqref{eq:hgf1} hold for
$\{\phi_{\chi, n}^\alpha(w)\}_{n\ge 0}$. \qed

\subsection{Analytic properties via hypergeometric equations}\label{subsec:phi-analytic}
We now prove Corollary~\ref{cor:decompose} and Lemma~\ref{lem:phase-phi}.
\xin{With $\hpg_\chi$  from~\eqref{eq:hyper-op} and $A_{\chi,n}$, $B_{\chi,n}$, and $C_\chi$ from~\eqref{eq:hgf-params}, we have
\[
\hpg_\chi  - \left(\frac{1}{4} l_\chi^2 + \frac{1}{4} \chi^2 (P^2 + 2n) \right)
= (w(1 - w) \partial_{ww} + (C_\chi - (1 +A_{\chi,n} + B_{\chi,n}) w)\partial_w - A_{\chi,n}B_{\chi,n}) 
\]
This makes~\eqref{eq:hgf1} into hypergeometric equations 
of the form~\eqref{eq:gauss-hgf} which we reviewed in Appendix~\ref{sec:hgf}.} 
\begin{proof}[Proof of Corollary~\ref{cor:decompose}]
Note that  $C_\chi - A_{\chi,n}- B_{\chi,n}=\frac12$, and
\begin{multline*}
|\Im (C_\chi - A_{\chi,n})|=|\Im (C_\chi - B_{\chi,n})|\\
=|\Im (1- A_{\chi,n})|=|\Im(1- B_{\chi,n})|= \frac{\chi}{2} \sqrt{P^2+2n}.
\end{multline*}
Therefore if $C_\chi\notin\ZZ$, then $(A_{\chi,n}, B_{\chi,n},C_\chi)$ satisfy the condition in Lemma~\ref{lem:property-R} on inhomogeneous solutions.
\xin{Following Appendix~\ref{subsec:homo}, for each $n\in \NN_0$, let
\begin{align}
v^\alpha_{1, \chi, n}(w) &:= {_2F_1}(A_{\chi, n}, B_{\chi, n}, C_\chi, w),\label{eq:defv1}\\
v^\alpha_{2, \chi, n}(w) &:= {_2F_1}(1 + A_{\chi, n} - C_\chi, 1 + B_{\chi, n} - C_{\chi}, 2 - C_\chi, w),\label{eq:defv2}
\end{align}
where $ {_2F_1}$ is the Gauss hypergeometric function. Then $v^\alpha_{1, \chi, n}(w)$ and $v^\alpha_{2, \chi, n}(w)$ satisfy Property (R), and
 $v^\alpha_{1, \chi, n}(w)$ and $w^{1 - C_\chi} v^\alpha_{2, \chi, n}(w)$ form a linear  basis of the solutions to 
 $\hpg_\chi  - \left(\frac{1}{4} l_\chi^2 + \frac{1}{4} \chi^2 (P^2 + 2n) \right)f=0$.}

Fix $n\in \NN_0$ and $i\in \{1,2\}$.	Choose $w_1\neq w_2$ \xin{in $\D_i$} such that  
\begin{equation}\label{eq:linear}
v^{\alpha}_{1, \chi, n}(w_1)w_2^{1 - C_\chi} v^{\alpha}_{2, \chi, n}(w_2)-  w_1^{1 - C_\chi} v^{\alpha}_{2, \chi, n}(w_1) v^{\alpha}_{1, \chi, n}(w_2)\neq 0.
\end{equation}
\xin{For $n=0$},  $\phi^{\alpha}_{\chi, n, i}$ is linear combination of $v^\alpha_{1, \chi, n}(w)$  and  $w^{1 - C_\chi} v^\alpha_{2, \chi, n}(w)$ uniquely specified by the values of 
$\phi^{\alpha}_{\chi, n, i}(w_1)$ and $\phi^{\alpha}_{\chi, n, i}(w_2)$, \xin{since $\phi^{\alpha}_{\chi, 0, i}$ is a solution to the homogeneous hypergeometric equation.}   This gives the existence and uniqueness of 	$\phi^{\alpha, 1}_{\chi, 0, i}$ and $\phi^{\alpha, 2}_{\chi, 0, i}$.  
Note that $\wwp_{l}(w)$ are polynomials and hence entire functions for all $l\in \NN$. Now for $n=1$,
the existence and uniqueness of 	$\phi^{\alpha, 1}_{\chi, n, i}$ and $\phi^{\alpha, 2}_{\chi, n, i}$ follows from Lemma~\ref{lem:property-R}. 
Furthermore, the result for general $n$ follows from inductively applying Lemma~\ref{lem:property-R}.
\end{proof}
In light of Corollary~\ref{cor:decompose}, let  $\phi^\alpha_{\chi, n,i}$ be the continuous extension of $\phi^\alpha_{\chi,n}$ from $\D_i$ to $\overline\D_i\defeq \D_i\cup \partial \D_i$.
Namely,  
\begin{equation}\label{eq:phi-ext}
\phi^\alpha_{\chi, n,i}(w)= \phi^{\alpha, 1}_{\chi, n, i}(w)
+ w^{1 - C_\chi} \phi^{\alpha, 2}_{\chi, n, i}(w) \quad \textrm{for } w\in \overline\D_i  \textrm{ and }i\in \{1,2\}.
\end{equation} 
\xin{We will use the following lemma to prove Lemma~\ref{lem:phase-phi}.}
\begin{lemma}\label{lem:ext-phi}
For each $n\in \NN_0$ and 
\xin{$w\in [-1,0]$} we have 
\begin{equation}\label{eq:phase-phi}
\phi^\alpha_{\chi,n,1}(1)=\phi^\alpha_{\chi,n,2}(1)  \textrm{ and }\phi^\alpha_{\chi,n,2}(w)= e^{\pi \chi P-\pi \ii l_\chi} \phi^\alpha_{\chi,n,1}(w).
\end{equation}
\end{lemma}
\begin{proof}
Let $f(w)\defeq \phi^\alpha_{\chi,0}(1-w)$.  
Then $f$ solves the hypergeometric equation
for parameters $(A,B,C) = (A_{\chi,0},B_{\chi,0},1+A_{\chi,0}+B_{\chi,0}-C_{\chi})$. Since $C_\chi - A_{\chi,0}- B_{\chi,0}=\frac12$,
applying Lemma~\ref{lem:continuity} with $U\defeq \{z\in \D : 1-z\in \D_i \}$, and $D\defeq \D \setminus [0,1]$, 
we see that  as $w\in D$ tends to 0, $f(w)$ tends to a finite number $c_0$.
\xin{Therefore as $1 - w \in \xin{\CC\setminus (-\infty, 1]}$ tends to 1, $\phi^\alpha_{\chi,0}(1 - w)=f(w)$ tends to $c_0$.}  
Inductively applying Lemmas~\ref{lem:weak} and~\ref{lem:continuity}, we see that
\xin{as $1 - w \in  \CC\setminus (-\infty, 1]$ tends to 1, $\phi^\alpha_{\chi,n}(1 - w)=f(w)$ tends to a finite constant $c_n$.}
On the other hand, we have that $\phi^\alpha_{\chi,n}(w)=\phi^\alpha_{\chi,n,1}(w)$ tends to $\phi^\alpha_{\chi,n,1}(1)$ as $w\to 1$ within $\D_1$.
Therefore $\phi^\alpha_{\chi,n,1}(1)=\xin{c_n}$. The same argument gives $\phi^\alpha_{\chi,n,2}(1)=c_n$, hence $\phi^\alpha_{\chi,n,1}(1)=\phi^\alpha_{\chi,n,2}(1)$ as desired.

For the second identity, \xin{let $\hat{\phi}^\alpha_{\chi,n}(u) = \sin(\pi u)^{l_\chi}  \sigma_{\chi,n}^\alpha(u)$.} We claim that 
\begin{equation}\label{eq:shift-phase}
\yi{\hat{\phi}^\alpha_{\chi,n}(u+1)= e^{\pi \chi P-\pi \ii l_\chi} \hat{\phi}^\alpha_{\chi,n}(u)} \quad \textrm{for} \quad \yi{u \in \HH}.
\end{equation} 
Since $\frac{\gamma\chi}{2}(-\frac{\alpha}{\gamma} + \frac{\chi}{\gamma})=l_\chi$, Lemma~\ref{lem:f-nu} implies that 
$\EE\left[ f(u + 1,q)^{-\frac{\alpha}{\gamma} + \frac{\chi}{\gamma}} \right]= e^{-\pi \ii l_\chi} \EE\left[ \xin{f(u,q)}^{-\frac{\alpha}{\gamma} + \frac{\chi}{\gamma}} \right]$
\yi{for $|q|$ chosen sufficiently small for both sides to be defined}.
\yi{Because of the $e^{\pi \chi P u}$ factor in $\Sigma^\alpha_\chi(u, q)$ from~\eqref{eq:gen-block},} we get~\eqref{eq:shift-phase}.

Now note that $\phi^\alpha_{\chi,n,1}(\sin^2(\pi \ii t)) = \yi{\hat{\phi}^\alpha_{\chi,n}(\ii t)}$ and $\phi^\alpha_{\chi,n,2}(\sin^2(\pi(1+\ii t))) = \yi{\hat{\phi}^\alpha_{\chi,n}(1+\ii t)}$ \yi{for $t>0$.}  By~\eqref{eq:shift-phase}, we have $\yi{\hat{\phi}^\alpha_{\chi,n}(\ii t + 1)}= e^{\pi \chi P-\pi \ii l_\chi} \yi{\hat{\phi}^\alpha_{\chi,n}(\ii t)}$, 
hence $\phi^\alpha_{\chi,n,2}(w)= e^{\pi \chi P-\pi \ii l_\chi} \phi^\alpha_{\chi,n,1}(w)$ for  \xin{$w\in [-1,0)$.}
Taking the limit $w \to 0$ yields $\phi^\alpha_{\chi,n,2}(0)= e^{\pi \chi P-\pi \ii l_\chi} \phi^\alpha_{\chi,n,1}(0)$.
\end{proof}

\begin{proof}[Proof of Lemma~\ref{lem:phase-phi}]
By Lemma~\ref{lem:ext-phi}, we have $\phi^{\alpha, 1}_{\chi, n, 2}(0) = e^{\pi \chi P - \ii \pi l_\chi} \phi^{\alpha, 1}_{\chi, n, 1}(0)$. 
Set $\phi_n=\phi^{\alpha, 1}_{\chi, n, 2}- e^{\pi \chi P - \ii \pi l_\chi} \phi^{\alpha, 1}_{\chi, n, 1}$. 
Then $\phi_0$ is a solution to the $n=0$ case of~\eqref{eq:hgf1} which satisfies Property (R) and has the
value $\phi_0(0)=0$. By \xin{Lemma~\ref{lem:1D-R} on} the structure of the solution space of the hypergeometric equation, we must have $\phi_0\equiv 0$.
Since $\phi_1$ is a solution to~\eqref{eq:hgf1} with $n=1$, we similarly get $\phi_1\equiv 0$.  Continuing via induction on $n$, we get $\phi_n\equiv 0$, hence
$\phi^{\alpha, 1}_{\chi, n, 2}(w) = e^{\pi \chi P - \ii \pi l_\chi} \phi^{\alpha, 1}_{\chi, n, 1}(w)$ for all $n$. 

By Lemma~\ref{lem:ext-phi}, for $w \in \xin{[-1,0]}$  we have
\[
\phi^\alpha_{\chi, n, 2}(w) -\phi^{\alpha, 1}_{\chi, n, 2}(w)
=e^{\pi \chi P- \ii \pi l_\chi}(\phi^\alpha_{\chi, n, 1}(w) -  \phi^{\alpha, 1}_{\chi, n, 1}(w)).
\]
On the other hand, by~\eqref{eq:phi-ext},  since  $w^{1-C_\chi}$ has branch cut at $(-\infty, 0)$, we have on \xin{$(-1, 0)$} that
\begin{align*}
\phi^\alpha_{\chi, n, 1}(w) &= \phi^{\alpha, 1}_{\chi, n, 1}(w)
+ e^{\pi(1 - C_\chi) \ii}|w|^{1 - C_\chi} \phi^{\alpha, 2}_{\chi, n, 1}(w); \\
\phi^\alpha_{\chi, n, 2}(w) &= \phi^{\alpha, 1}_{\chi, n, 2}(w)
+ e^{-\pi(1 - C_\chi) \ii}|w|^{1 - C_\chi} \phi^{\alpha, 2}_{\chi, n, 2}(w). 
\end{align*}
Putting these together, we have $\phi^{\alpha, 2}_{\chi, n, 2}(w) =e^{\pi \chi P - \ii \pi l_\chi}  e^{2(1-C_\chi)\pi \ii}  \phi^{\alpha, 2}_{\chi, n, 1}(w) = -e^{\pi \chi P + \ii \pi l_\chi} \phi^{\alpha, 2}_{\chi, n, 1}(w)$ on \xin{$(-1,0)$.} Therefore, $\phi^{\alpha, 2}_{\chi, n, 2} = -e^{\pi \chi P + \ii \pi l_\chi} \phi^{\alpha, 2}_{\chi, n, 1}$ on $\overline \D$ by their analyticity.
\end{proof}

\subsection{The particular solution and $(w,\alpha)$-regularity} \label{sec:particular} \xin{By Definition~\ref{def:particular}, $\{G^\alpha_{\chi, n, i}(w)\}_{n\ge 0}$ solves (\ref{eq:hgf1}) as asserted in Proposition~\ref{prop:phi-G}. In this section we will first prove Proposition~\ref{prop:phi-G} (a) and then prove Lemma~\ref{lem:phi-alpha-analytic} and Proposition~\ref{prop:phi-G} (b) together.}

\begin{proof}[Proof of Proposition~\ref{prop:phi-G} (a)]
By Definition~\ref{def:particular},  \(G_{\chi, n, 1}^\alpha(1) = G_{\chi, n, 2}^\alpha(1) = 0\).
By Lemma~\ref{lem:phase-phi},  
we have $g_{\chi, n, 2}^{\alpha, 1}(w) = e^{\pi \chi P - \ii \pi l_\chi} g_{\chi, n, 1}^{\alpha, 1}(w)$ and $g_{\chi, n, 2}^{\alpha, 2}(w) = -e^{\pi \chi P + \ii \pi l_\chi} g_{\chi, n, 1}^{\alpha, 2}(w)$, 
which implies that $G_{\chi, n, 2}^{\alpha, 1}(0) = e^{\pi \chi P - \ii \pi l_\chi} G_{\chi, n, 1}^{\alpha, 1}(0)$ and
$G_{\chi, n, 2}^{\alpha, 2}(0) = -e^{\pi \chi P + \ii \pi l_\chi} G_{\chi, n, 1}^{\alpha, 2}(0)$. \qedhere
\end{proof}

\begin{proof}[\xin{Proof of Lemma~\ref{lem:phi-alpha-analytic} and 
Proposition~\ref{prop:phi-G} (b)}]
We will repeatedly use two key facts about $(w,\alpha)$-regularity. Firstly, if $f(\alpha)$ is analytic on a domain $U\subset\CC$ and $g(w)$ satisfies Property (R) on $\overline{\D}$, then $f(\alpha)g(w)$
is $(w,\alpha)$-regular on $\overline{\D}\times U$. Moreover, $(w,\alpha)$-regularity is preserved by the solution to hypergeometric differential equation as stated in Lemma~\ref{lem:alpha-ext2}.

\xin{We now inductively prove the following statement index by integer $m\ge 0$:
\begin{equation}\label{eq:ind-state}
\textrm{Proposition~\ref{prop:phi-G} (b) holds for $G^{\alpha, j}_{\chi, m, i}$ and Lemma~\ref{lem:phi-alpha-analytic} holds $\phi^{\alpha, j}_{\chi, m-1, i}$.}
\end{equation}
Since $G^{\alpha, j}_{\chi, 0, i}(w)=0$ and Statement~\eqref{eq:ind-state} on  $\phi^{\alpha, j}_{\chi, -1, i}$ is vacuous, Statement~\eqref{eq:ind-state} holds for $m=0$. Now we fix $n\ge 0$ and assume that Statement~\eqref{eq:ind-state} holds for any $m\le n$. We aim at proving  Statement~\eqref{eq:ind-state} for $m=n+1$.}

\xin{By the definition of $G^\alpha_{\chi, n, i}$ , we see that $\phi^\alpha_{\chi, n, i}(w)-G^\alpha_{\chi, n, i}(w)$ is a solution to the homogeneous version of the  hypergeometric equation~\eqref{eq:hgf1}. Recall $ v^\alpha_{1, \chi, n}(w)$ and $ v^\alpha_{2, \chi, n}(w)$ from~\eqref{eq:defv1} and~\eqref{eq:defv2}. By Lemma~\ref{lem:1D-R}}
we may write
\begin{equation} \label{eq:alpha-decomp}
\phi^\alpha_{\chi, n, i}(w) = G^\alpha_{\chi, n, i}(w) + X^1_{\chi, n, i}(\alpha) v^\alpha_{1, \chi, n}(w) +
X^2_{\chi, n, i}(\alpha) w^{1 - C_\chi} v^\alpha_{2, \chi, n}(w),
\end{equation}
for $X^1_{\chi, n, i}(\alpha)$ and $X^2_{\chi, n, i}(\alpha)$ independent of $w$. Thus  
\begin{equation} \label{eq:phi-decomp}
\phi^{\alpha, j}_{\chi, n, i}(w) = G^{\alpha, j}_{\chi, n, i}(w) + X^j_{\chi, n, i}(\alpha) v^\alpha_{j, \chi, n}(w) \quad \textrm{for }i,j\in \{1,2\}.
\end{equation}
For $i=1,2$ recall from Corollary~\ref{cor:decompose} that $ \phi^{\alpha, 1}_{\chi, n, i}(w)
+ w^{1 - C_\chi} \phi^{\alpha, 2}_{\chi, n, i}(w)=\phi^\alpha_{\chi, n}(w)$ on $w\in \D_i$. By Lemma~\ref{lem:psi-props} \xin{and Equations~\eqref{eq:psin} and~\eqref{eq:phin-def}},  $\phi^\alpha_{\chi, n}(w)$ is analytic  in $\alpha$ on a complex neighborhood of $(-\frac{4}{\gamma} + \chi, Q)$.
Due to the analyticity of $G^{\alpha, j}_{\chi, n, i}(w)$ in $\alpha$ by induction hypothesis, 
we see that 
$F^\alpha(w)\defeq X^1_{\chi, n, i}(\alpha) v^\alpha_{1, \chi, n}(w) + X^2_{\chi, n, i}(\alpha) w^{1 - C_\chi} v^\alpha_{2, \chi, n}(w)$ is analytic  in $\alpha$ on a complex neighborhood of  $\{\alpha\in (-\frac{4}{\gamma} + \chi, Q):  C_\chi\notin \ZZ \}$ for $w\in \D_i$.

For each $\alpha_0\in \CC$, there exist $w_1, w_2\in \D_i$ such that equation~\eqref{eq:linear} holds with $\alpha=\alpha_0$
and $v^{\alpha}_{j, \chi, n}(w_k)$ is analytic at $\alpha_0$ for $1\le j,k\le 2$. By solving linear systems, $X^1_{\chi, n, i}(\alpha)$ and $X^2_{\chi, n, i}(\alpha)$ can be expressed in terms of  $F^\alpha(w_k)$ and $ \{v^\alpha_{j, \chi, n}(w_k)\}_{1\le j,k\le 2}$. Therefore $X^1_{\chi, n, i}(\alpha)$ and $X^2_{\chi, n, i}(\alpha)$  are  analytic   in $\alpha$
on a complex neighborhood of  $\{\alpha\in (-\frac{4}{\gamma} + \chi, Q):  C_\chi\notin \ZZ \}$. 
By Lemmas~\ref{lem:v-R} and~\ref{lem:v-analytic} on the regularity of $v^\alpha_{j,\chi,n}(\alpha)$, equation~\eqref{eq:phi-decomp} yields that  $\phi^{\alpha, j}_{\chi, n, i}$ is $(w,\alpha)$-regular on $\overline \D \times U$.
Recall $g_{\chi, n, i}^{\alpha, j}(w)$ from~\eqref{eq:defg}. By the induction hypothesis, we see that $g_{\chi, n, i}^{\alpha, j}(w)$ is $(w,\alpha)$-regular on $\overline \D \times U$.
By Lemmas~\ref{lem:property-R} and~\ref{lem:alpha-ext2}, we see that $G^{\alpha, j}_{\chi, n+1, i}$ is $(w,\alpha)$-regular on $\overline \D \times U$. This concludes our induction.
\end{proof}

\section{Operator product expansions for conformal blocks} \label{sec:opes}
In this section, \xin{we prove that the $q$-series coefficients
$\{\cA_{\gamma, P, n}(\alpha)\}_{n \in \NN_0}$ of $\cA^q_{\gamma, P}(\alpha)$ from~\eqref{eq:ca-def},
 which were originally defined for $\alpha$ on a
complex neighborhood of $(-\frac{4}{\gamma}, Q)$ may be analytically continued to a complex neighborhood
of $(-\frac{4}{\gamma}, 2Q)$. Moreover, they are linearly related to  $\phi_{\chi, n, i}^{\alpha, j}$ from Corollary~\ref{cor:decompose}. To state the result, we introduce the following quantities.}
Fix $\chi\in\{\frac{\gamma}{2},\frac{2}{\gamma}\}$ and recall $l_\chi= \frac{\chi^2}{2} - \frac{\alpha \chi}{2}$  from~\eqref{eq:lchi}. 
Define the functions
\begin{align} \label{eq:w1}
W_\chi^-&(\alpha, \gamma) \defeq  \pi^{l_\chi} (2\pi e^{\ii \pi})^{-\frac{1}{3}\Big(2 + \frac{2 \gamma l_\chi }{\chi}+ \frac{4 l_{\chi}}{\chi \gamma} + \frac{ 6 l_\chi^2}{\chi^2}\Big)}; \\ \label{eq:w2}
W_\chi^+&(\alpha, \gamma) \defeq  -e^{2\ii \pi l_\chi - 2\ii \pi \chi^2} (2\pi e^{\ii \pi})^{-\frac{1}{3} \left( \frac{\gamma l_{\chi}}{\chi} + \frac{2 l_\chi}{\chi^2} - 8 l_{\chi} + \frac{6 l_{\chi}^2}{\chi^2} \right) } \pi^{-l_\chi - 1}  \\ \nonumber
& \frac{1 - e^{2\pi \chi P - 2\ii \pi l_\chi}}{\chi(Q - \alpha)} (\frac{4}{\gamma^2})^{\mathbf{1}_{\chi = \frac{2}{\gamma}}}\frac{\Gamma(\frac{\alpha \chi}{2} - \frac{\chi^2}{2} + \frac{2 \chi}{\gamma})\Gamma(1 - \alpha \chi) \Gamma(\alpha \chi - \chi^2)}{\Gamma(\frac{\alpha \chi}{2} - \frac{\chi^2}{2})\Gamma(1 -\frac{\gamma^2}{4})^{\frac{2 \chi}{\gamma}}}.
\end{align}
\xin{Recall from \eqref{eq:Theta'0} that} $\Theta'_{\tau}(0)$ is equal to $q^{1/4}$ times a power series in $q$ with radius of convergence $1$. We define the quantities $\eta^\pm_{\chi,  n}(\alpha)$ as coefficients of the following $q$-series expansions:
\begin{align} \label{eq:eta-m}
\Theta_{\tau}'(0)^{\frac{4}{3} \frac{l_\chi(l_\chi + 1)}{\chi^2} + \frac{2}{3} l_\chi + \frac{2}{3}} &= 
q^{\frac{1}{3} \frac{l_\chi(l_\chi + 1)}{\chi^2} + \frac{1}{6}l_\chi + \frac{1}{6}} \sum_{n = 0}^\infty \eta^-_{\chi, n}(\alpha) q^n;\\ \label{eq:eta-p}
\Theta'_\tau(0)^{\frac{4}{3} \frac{l_\chi(l_\chi + 1)}{\chi^2} - \frac{2}{3} l_\chi} &= q^{\frac{1}{3} \frac{l_\chi(l_\chi + 1)}{\chi^2} - \frac{1}{6}l_\chi} \sum_{n = 0}^\infty \eta^+_{\chi, n}(\alpha) q^n.
\end{align}
\xin{We are now ready to state the main theorem of this section.}
\begin{theorem} \label{thm:series-opes}
For each $n\in \NN_0$, the function
$\cA_{\gamma, P, n}(\alpha)$ can be analytically extended to a complex neighborhood of $(-\frac{4}{\gamma}, 2Q)$. Recall $\phi_{\chi, n, i}^{\alpha, j}$ from Corollary~\ref{cor:decompose}. 
For $\chi\in \{ \frac{\gamma}{2},\frac{2}{\gamma}\}$ and $\alpha\in (\chi,Q)$ we have
\begin{align}
\phi_{\chi, n, 1}^{\alpha, 1}(0)  &= W^-_\chi(\alpha, \gamma) \Big[\eta_{\chi, 0}^-(\alpha) \cA_{\gamma, P, n}(\alpha - \chi)
+ \sum_{m = 0}^{n - 1} \eta_{\chi, n - m}^-(\alpha) \cA_{\gamma, P, m}(\alpha - \chi)\Big];  \label{eq:opeq-0a}\\ 
\phi^{\alpha, 2}_{\chi, n, 1}(0)&= W^+_\chi(\alpha, \gamma)\Big[\eta_{\chi, 0}^+(\alpha) \cA_{\gamma, P, n}(\alpha + \chi)
+ \sum_{m = 0}^{n - 1} \eta_{\chi, n - m}^+(\alpha) \cA_{\gamma, P, m}(\alpha + \chi)\Big]\label{eq:wope-0b},
\end{align}
where we interpret $\cA_{\gamma, P, n}(\alpha)$ via the analytic extension above and use the convention that
the value of the empty summation $\sum_{m = 0}^{- 1}$ is $0$. \xin{(Note that $\phi_{\chi, n, 1}^{\alpha, 1}(0)$ is well defined for $\chi\in \{ \frac{\gamma}{2},\frac{2}{\gamma}\}$ and $\alpha\in (\chi,Q)$ since $C_\chi=\frac{1}{2}-l_\chi=1-\frac{(Q-\alpha) \chi}{2}\in (\frac12,1)$ and thus the condition of Corollary \ref{cor:decompose} is satisfied.)} 
\end{theorem}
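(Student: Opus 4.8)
The plan is to extract both conclusions of Theorem~\ref{thm:series-opes} — the analytic continuation of the $q$-series coefficients $\cA_{\gamma,P,n}(\alpha)$ past $\alpha=Q$, and the two fusion identities \eqref{eq:opeq-0a}--\eqref{eq:wope-0b} — from the operator product expansion of the deformed block as the deformation point degenerates, $u\to 0$. Recall from Corollary~\ref{cor:decompose} that, in the variable $w=\sin^2(\pi u)$, the normalized deformed block $\phi^\alpha_\chi(u,q)=\sin(\pi u)^{l_\chi}\psi^\alpha_\chi(u,\tau)$ decomposes near $w=0$ into a regular ``channel $1$'' piece (order $w^{0}$) and a ``channel $2$'' piece (order $w^{1-C_\chi}$), whose $q^n$-coefficients at $w=0$ are $\phi^{\alpha,1}_{\chi,n,1}(0)$ and $\phi^{\alpha,2}_{\chi,n,1}(0)$. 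The heuristic is that as $u\to0$ the vertex insertion of weight $\chi$ at $u$ fuses with the insertion of weight $\alpha$ at $0$, channel $1$ producing the undeformed block with momentum $\alpha-\chi$ (the leading fusion channel, since $\Delta_{\alpha-\chi}-\Delta_\alpha-\Delta_\chi<\Delta_{\alpha+\chi}-\Delta_\alpha-\Delta_\chi$ for $\alpha<Q$) and channel $2$ producing the block with momentum $\alpha+\chi$; matching the two sides and reading off $q^n$-coefficients will give \eqref{eq:opeq-0a}--\eqref{eq:wope-0b}.

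\textbf{Step 1 (the OPE).} I would invoke the operator product expansion for the deformed block, Theorem~\ref{thm:opes}, which describes the $u\to0$ asymptotics of $\EE[f_\nu(u)^{-\alpha/\gamma+\chi/\gamma}]$. Near $u=0$ the factor $\Theta_\tau(u+x)^{\gamma\chi/2}$ collides with $\Theta_\tau(x)^{-\alpha\gamma/2}$ at $x\in\{0,1\}$ inside the GMC integral; a Girsanov computation of the type carried out in Section~\ref{subsec:proof-prop} localizes this collision and reduces the leading term to a GMC moment with the insertion $\alpha$ replaced by $\alpha-\chi$, which after normalization is $\cA^q_{\gamma,P}(\alpha-\chi)$, and reduces the next term (obtained after one Taylor subtraction) to a moment with insertion $\alpha+\chi$. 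When $\alpha+\chi\ge Q$ — in particular for $\chi=\tfrac{2}{\gamma}$ — this second moment is divergent and must be re-expressed through the reflection principle for one-dimensional GMC of \cite{RZ20}, which identifies it with an explicit meromorphic-in-$\alpha$ multiple of the convergent moment at the reflected momentum $2Q-\alpha-\chi$. Matching these asymptotics with the $w$-expansion of $\phi^\alpha_\chi$ from Corollary~\ref{cor:decompose}, passing to $q$-series — the $\Theta_\tau'(0)$-powers in the normalizations $C(q),\cW(q)$ expand into the coefficients $\eta^\pm_{\chi,n}(\alpha)$ of \eqref{eq:eta-m}--\eqref{eq:eta-p}, and the remaining structure constants assemble into $W^\pm_\chi(\alpha,\gamma)$ of \eqref{eq:w1}--\eqref{eq:w2} — and equating coefficients of $q^n$ yields \eqref{eq:opeq-0a} and \eqref{eq:wope-0b} for $\alpha$ in a base range where the $\alpha\pm\chi$ moments are a priori admissible, say $\alpha\in(\chi,Q)$ with $C_\chi\notin\ZZ$.

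\textbf{Step 2 (analytic continuation of $\cA_{\gamma,P,n}$).} From the reflection identity produced in channel $2$, $\cA^q_{\gamma,P}(\beta)$ for $\beta\in(Q,2Q)$ equals an explicit meromorphic multiple of $\cA^q_{\gamma,P}(2Q-\beta)$ with $2Q-\beta\in(0,Q)$; since each $\cA_{\gamma,P,m}(\cdot)$ is already analytic on a complex neighborhood of $(-\tfrac{4}{\gamma},Q)$ by Lemma~\ref{lem:a-props}(c), this defines the analytic continuation of $\cA_{\gamma,P,n}$ to a complex neighborhood of $(-\tfrac{4}{\gamma},2Q)$ once the potential singularity at $\beta=Q$ is checked to be removable. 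Equivalently — the form convenient for bootstrapping — one solves \eqref{eq:wope-0b} for $\cA_{\gamma,P,n}(\alpha+\chi)$: by induction on $n$ it equals $\eta^+_{\chi,0}(\alpha)^{-1}$ times an explicit combination of $\phi^{\alpha,2}_{\chi,n,1}(0)$, the explicit data $W^+_\chi(\alpha,\gamma),\eta^+_{\chi,j}(\alpha)$, and the lower coefficients $\cA_{\gamma,P,m}(\alpha+\chi)$; since $\phi^{\alpha,2}_{\chi,n,1}(0)$ is analytic in $\alpha$ on a neighborhood of $(-\tfrac{4}{\gamma}+\chi,Q)$ by Lemmas~\ref{lem:alpha-ana-hyp} and~\ref{lem:phi-alpha-analytic}, this pushes the analyticity of $\cA_{\gamma,P,n}$ past $Q$ by $\chi$. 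Iterating over $\chi\in\{\tfrac{\gamma}{2},\tfrac{2}{\gamma}\}$, and using at each later stage Corollary~\ref{corr:gap-extend}-type extensions of $\phi^{\alpha,j}_{\chi,n,i}$ and $G^{\alpha,j}_{\chi,n,i}$ beyond $\alpha=Q$ (which become available once $\cA_{\gamma,P,m}$ is known analytic on the enlarged domain), reaches $2Q=Q+\tfrac{\gamma}{2}+\tfrac{2}{\gamma}$ in finitely many steps; because of this interdependence, the continuations of $\cA_{\gamma,P,n}$ and of the hypergeometric channel functions are run as one simultaneous induction on $n$ and on the number of shifts.

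\textbf{Step 3 and the main obstacle.} Once $\cA_{\gamma,P,n}$ is analytic on a neighborhood of $(-\tfrac{4}{\gamma},2Q)$, both sides of \eqref{eq:opeq-0a} and \eqref{eq:wope-0b} are analytic in $\alpha$ on a neighborhood of $\{\alpha\in(\chi,Q):C_\chi\notin\ZZ\}$, so the identities propagate from the base range to the full stated range by the identity theorem. The decisive difficulty is the OPE in the resonant case $\chi=\tfrac{2}{\gamma}$: producing channel $2$, and thereby making sense of $\cA^q_{\gamma,P}(\alpha+\tfrac{2}{\gamma})$ for $\alpha+\tfrac{2}{\gamma}\ge Q$, requires the reflection argument built on the exact one-dimensional GMC results and techniques of \cite{RZ20}. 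A secondary but laborious point is pinning down exactly the structure constants $W^\pm_\chi(\alpha,\gamma)$ and the coefficients $\eta^\pm_{\chi,n}(\alpha)$ coming from the $\Theta_\tau'(0)$-renormalizations so that the two identities hold on the nose.
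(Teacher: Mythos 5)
Your proposal follows the paper's proof in all essentials: \eqref{eq:opeq-0a} comes from direct evaluation at $u=0$ (Lemma~\ref{lem:ope-0}) and \eqref{eq:wope-0b} from the next-order OPE (Lemma~\ref{thm:opes}, i.e.\ Lemma~\ref{lem:ope1} without reflection and Lemma~\ref{lem:ope-reflection} with reflection), expanded in $q$-series; and the continuation of $\cA_{\gamma,P,n}$ is obtained by gluing the original domain $(-\frac{4}{\gamma},Q)$, the interval produced by solving \eqref{eq:wope-0b} for $\cA_{\gamma,P,n}(\alpha+\frac{\gamma}{2})$ using the analyticity of $\phi^{\alpha,2}_{\frac{\gamma}{2},n,1}(0)$ from Lemma~\ref{lem:alpha-ana-hyp} together with induction on $n$, and the explicit reflection-based definition \eqref{eq:a-extend} on $(Q,2Q)$. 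One caution on your Step 2: the ``iterate the shift over $\chi$'' variant cannot by itself reach $2Q$. The identities \eqref{eq:wope-0b} are only \emph{established} for $\alpha<Q$, so shifts launched from the base range stop at $Q+\frac{2}{\gamma}=2Q-\frac{\gamma}{2}$; and extending $\phi^{\alpha,2}_{\chi,n,1}(0)$ past $\alpha=Q$ via a Corollary~\ref{corr:gap-extend}-type argument in order to shift again is circular, since that extension is itself deduced from \eqref{eq:wope-0b} together with the analyticity of $\cA_{\gamma,P,n}(\alpha+\chi)$ — it can never tell you more about $\cA$ than you already put in. The region near $2Q$ genuinely requires the explicit formula \eqref{eq:a-extend} (your first route), with the single $\chi=\frac{\gamma}{2}$ shift serving only to bridge the three overlapping intervals across $\alpha=Q$.
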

Our proof of Theorem~\ref{thm:series-opes} relies on the operator product expansion (OPE) for the $u$-deformed
conformal blocks.  \xin{We will state the OPE results in Section~\ref{subsec:ope1}. Since these results  
follow from a direct adaptation of the methods of \cite{KRV19b,RZ20}, we will give brief proofs in Appendix \ref{sec:ope-proof}.} We complete the proof of Theorem~\ref{thm:series-opes}
in Section~\ref{subsec:proof-ope}.

\xin{Theorem~\ref{thm:series-opes} allows} us to strengthen Lemma~\ref{lem:phi-alpha-analytic} and  the last assertion of Proposition~\ref{prop:phi-G} into  the following 
Corollary \ref{corr:gap-extend}, which gives analytic extensions of $G^{\alpha, j}_{\chi, n, i}(w)$ and
$\phi^{\alpha, j}_{\chi, n, i}(w)$.

\begin{corr} \label{corr:gap-extend}
Fix $\gamma\in(0,2)$ and $\chi\in\{\frac{\gamma}{2},\frac{2}{\gamma} \}$.	There exists a complex neighborhood $V$ of  $(-\frac{4}{\gamma} + \chi, 2Q  - \chi)$ such that 
$\phi^{\alpha, j}_{\chi, n, i}(w)$  in Lemma~\ref{lem:phi-alpha-analytic} and \xin{$G^{\alpha, j}_{\chi, n, i}(w)$  in Proposition~\ref{prop:phi-G}} admit an extension on $\overline{\D}\times V$ which is $(w,\alpha)$-regular (see Definition~\ref{def:walpha}) for $i,j=1,2$ and $n\in \NN_0$.
\end{corr}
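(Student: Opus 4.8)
The plan is to induct on $n$, extending at each stage both $G^{\alpha,j}_{\chi,n,i}$ and $\phi^{\alpha,j}_{\chi,n,i}$, and to use Theorem~\ref{thm:series-opes} as the new ingredient that pushes the $\alpha$-domain of Lemma~\ref{lem:phi-alpha-analytic} past $\alpha=Q$ and across the exceptional locus $\{C_\chi\in\ZZ\}$. Throughout I would exploit that $(w,\alpha)$-regularity (Definition~\ref{def:walpha}) is preserved under finite sums, Cauchy products, multiplication by functions analytic in $\alpha$, and---via Lemma~\ref{lem:alpha-ext2}---solving the inhomogeneous hypergeometric equation, together with the fact that $\frac{l_\chi(l_\chi+1)}{4\pi^2}\wwp_l(w)$ is a polynomial in $(w,\alpha)$ and hence $(w,\alpha)$-regular on $\overline\D\times\CC$.

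First I would reduce to $i=1$: by Lemma~\ref{lem:phase-phi} one has $\phi^{\alpha,1}_{\chi,n,2}=e^{\pi\chi P-\ii\pi l_\chi}\phi^{\alpha,1}_{\chi,n,1}$ and $\phi^{\alpha,2}_{\chi,n,2}=-e^{\pi\chi P+\ii\pi l_\chi}\phi^{\alpha,2}_{\chi,n,1}$ on $\overline\D$; by \eqref{eq:defg} the same relations hold for $g^{\alpha,j}_{\chi,n,\cdot}$, hence by the uniqueness in Definition~\ref{def:particular} (cf.\ Proposition~\ref{prop:phi-G}) for $G^{\alpha,j}_{\chi,n,\cdot}$ as well. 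Since $l_\chi$ is affine in $\alpha$, the prefactors $e^{\pi\chi P\mp\ii\pi l_\chi}$ are entire in $\alpha$, so regularity for $i=1$ transfers to $i=2$.

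For the induction itself I would fix one complex neighborhood $V$ of $(-\tfrac4\gamma+\chi,2Q-\chi)$ for all $n$ (the obstructions $\{C_\chi\in\ZZ\}$ and the poles of $W^\pm_\chi$ being $n$-independent). In the base case $n=0$ one has $G^{\alpha,j}_{\chi,0,i}\equiv0$ and, from \eqref{eq:phi-decomp} at $w=0$ with $v^\alpha_{j,\chi,0}(0)=1$, $\phi^{\alpha,j}_{\chi,0,1}(w)=\phi^{\alpha,j}_{\chi,0,1}(0)\,v^\alpha_{j,\chi,0}(w)$; the $n=0$ cases of \eqref{eq:opeq-0a}--\eqref{eq:wope-0b} express $\phi^{\alpha,1}_{\chi,0,1}(0)$ and $\phi^{\alpha,2}_{\chi,0,1}(0)$ through $W^\mp_\chi$, $\eta^\mp_{\chi,0}$, and $\cA_{\gamma,P,0}(\alpha\mp\chi)$, the last of which is analytic on a neighborhood of $(-\tfrac4\gamma,2Q)$ by Theorem~\ref{thm:series-opes}. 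Since these identities hold on $(\chi,Q)\setminus\{C_\chi\in\ZZ\}$---a set with accumulation points in the domain of Lemma~\ref{lem:phi-alpha-analytic}---the identity theorem promotes the right-hand sides to the analytic continuations of the left, and combined with the $(w,\alpha)$-regularity of $v^\alpha_{j,\chi,0}$ (Lemmas~\ref{lem:v-R},~\ref{lem:v-analytic}) this gives $n=0$. For the inductive step, given the claim for all $m<n$, the function $g^{\alpha,j}_{\chi,n,1}(w)=\frac{l_\chi(l_\chi+1)}{4\pi^2}\sum_{l=1}^n\wwp_l(w)\phi^{\alpha,j}_{\chi,n-l,1}(w)$ is $(w,\alpha)$-regular on $\overline\D\times V$, so Lemma~\ref{lem:alpha-ext2} makes $G^{\alpha,j}_{\chi,n,1}$ $(w,\alpha)$-regular there; then $X^j_{\chi,n,1}(\alpha)=\phi^{\alpha,j}_{\chi,n,1}(0)-G^{\alpha,j}_{\chi,n,1}(0)$ extends since its first term does by \eqref{eq:opeq-0a}--\eqref{eq:wope-0b} (finitely many $\cA_{\gamma,P,m}(\alpha\mp\chi)$, $0\le m\le n$) and its second by the previous line; finally $\phi^{\alpha,j}_{\chi,n,1}=G^{\alpha,j}_{\chi,n,1}+X^j_{\chi,n,1}v^\alpha_{j,\chi,n}$ closes the induction, and the reduction step yields the full statement.

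The hard part will be the behaviour across the discrete set $\{\alpha:C_\chi\in\ZZ\}$, which lies \emph{inside} $(-\tfrac4\gamma+\chi,2Q-\chi)$: there the fundamental system $\{v^\alpha_{1,\chi,n},\,w^{1-C_\chi}v^\alpha_{2,\chi,n}\}$ degenerates, the construction of Definition~\ref{def:particular} is not directly available, and a priori $\phi^{\alpha,j}_{\chi,n,i}$ and $G^{\alpha,j}_{\chi,n,i}$ extend only meromorphically. One must argue the apparent poles are removable, using the factorization $v^\alpha_{j,\chi,n}(w)=\Gamma(\cdot)^{-1}\cdot(\text{entire in }\alpha)$ together with a compensating zero of $X^j_{\chi,n,1}(\alpha)$ forced by the OPE formulas, and relatedly verify that the explicit constants $W^\pm_\chi$ contribute no genuine poles---in particular that the $\frac1{\chi(Q-\alpha)}$ factor and the $\Gamma$-functions in $W^+_\chi$ are cancelled by zeros of the relevant $\cA_{\gamma,P,m}(\alpha\pm\chi)$, which should be visible from the structure of these coefficients. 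Everything else is routine bookkeeping with $(w,\alpha)$-regularity.
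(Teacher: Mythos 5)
Your proof follows essentially the same route as the paper's: an induction on $n$ in which Theorem~\ref{thm:series-opes} supplies the analytic continuation of $\phi^{\alpha,1}_{\chi,n,1}(0)$ and $\phi^{\alpha,2}_{\chi,n,1}(0)$ to a neighborhood of $(-\frac{4}{\gamma}+\chi,\,2Q-\chi)$, Lemma~\ref{lem:phase-phi} transfers this to $i=2$, Lemma~\ref{lem:alpha-ext2} propagates $(w,\alpha)$-regularity to $G^{\alpha,j}_{\chi,n,i}$, and the identity $X^j_{\chi,n,i}(\alpha)=\phi^{\alpha,j}_{\chi,n,i}(0)-G^{\alpha,j}_{\chi,n,i}(0)$ together with \eqref{eq:alpha-decomp} closes the induction. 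Your closing paragraph about the degenerate locus $\{C_\chi\in\ZZ\}$ and the apparent poles of $W^{\pm}_\chi$ flags a point that the paper's own (very terse) proof passes over silently rather than resolves, so it is a fair caveat but not a divergence from the paper's argument.
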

\begin{proof}
\xin{Since $\cA_{\gamma, P, n}(\alpha)$ can be analytically extended to a complex neighborhood of $(-\frac{4}{\gamma}, 2Q)$, by~\eqref{eq:opeq-0a} and~\eqref{eq:wope-0b},} there exists an open complex neighborhood $V$ of $(-\frac{4}{\gamma} + \chi, 2Q  - \chi)$ on which
$\phi_{\chi, n, 1}^{\alpha, 1}(0)$ and $\phi^{\alpha, 2}_{\chi, n, 1}(0)$ admit analytic extension in $\alpha$.  By Lemma~\ref{lem:phase-phi}, the same holds for 
$\phi_{\chi, n, 1}^{\alpha, 2}(0)$ and $\phi^{\alpha, 2}_{\chi, n, 2}(0)$. 

\xin{Recall~\eqref{eq:phi-decomp} that \(\phi^{\alpha, j}_{\chi, n, i}(w) = G^{\alpha, j}_{\chi, n, i}(w) + X^j_{\chi, n, i}(\alpha) v^\alpha_{j, \chi, n}(w)\) for $i,j\in \{1,2\}$.}
Setting $w = 0$ yields
\begin{equation}\label{eq:X-alpha}
X^j_{\chi, n, i}(\alpha) = \phi^{\alpha, j}_{\chi, n, i}(0) - G^{\alpha, j}_{\chi, n, i}(0).
\end{equation}
For $n=0$, equation~\eqref{eq:X-alpha} implies that $X^j_{\chi, n, i}(\alpha)$ admits an analytic extension to $V$, hence $\phi^{\alpha, j}_{\chi, 0, i}$ is $(w,\alpha)$-regular on $\overline{\D}\times V$
by~\eqref{eq:alpha-decomp}. Now by Lemma~\ref{lem:alpha-ext2} we see that $G^{\alpha, j}_{\chi, 1, i}$ admits a $(w,\alpha)$-regular extension on $\overline{\D}\times V$. This further implies that $X^j_{\chi, 1, i}(\alpha)$ admits an analytic extension to $V$, hence $\phi^{\alpha, j}_{\chi, 1, i}$ and  $G^{\alpha, j}_{\chi, 2, i}$ admit $(w,\alpha)$-regular extension on $\overline{\D}\times V$. Now by induction in $n$ we get that 
$\phi^{\alpha, j}_{\chi, n, i}$ and  $G^{\alpha, j}_{\chi, n, i}$ admit $(w,\alpha)$-regular extension on $\overline{\D}\times V$.
\end{proof}

Theorem~\ref{thm:series-opes} and Corollary~\ref{corr:gap-extend} are the only ingredients used in the proof of Theorem~\ref{thm:nek-block} in Section~\ref{sec:block-proof}. \xin{The reader may skip the rest of this section in the first reading.}


\subsection{Operator product expansion}\label{subsec:ope1}
Recall the $u$-deformed block $\psi_{\chi}^\alpha(u, \tau)$ from Definition~\ref{def:u-block}.
We define the renormalized deformed block by
\begin{equation} \label{eq:renorm-block-def}
\phi^\alpha_{\chi}(u, q) \defeq \sin(\pi u)^{l_\chi} \psi_{\chi}^\alpha(u, \tau).
\end{equation}
Throughout this section we assume that \xin{$\gamma\in (0,2)$, $P\in \RR$, $\tau\in \ii \RR $ and $q=e^{\ii \pi \tau }\in (0,1)$. Moreover,  $q$ and $u$ are small enough such that $(u,q)\in D^\alpha_\chi$ as in equation~\eqref{eq:d-def}.
We will provide three operator product expansion (OPE) results for $\phi^\alpha_{\chi}(u, q)$
which describe its asymptotic behavior  
as $u$ tends to $0$.}  The first one corresponds to the direct evaluation of
$\phi^\alpha_\chi(u, q)$ at $u = 0$.

\begin{lemma} \label{lem:ope-0}
\xin{Consider $u = \ii t$ with  $t \in (0, \frac{1}{2} \Im(\tau))$.}
For $\alpha \in (-\frac{4}{\gamma} + \chi , Q)$,
\begin{equation} \label{eq:ope-0a}
\xin{\lim_{u \to 0} \phi_\chi^\alpha(u, q)} = W^-_\chi(\alpha, \gamma) q^{\frac{P^2}{2} - \frac{1}{6} \frac{l_\chi(l_\chi + 1)}{\chi^2} - \frac{1}{6} l_\chi - \frac{1}{6}} \Theta_{\tau}'(0)^{\frac{4}{3} \frac{l_\chi(l_\chi + 1)}{\chi^2} + \frac{2}{3} l_\chi + \frac{2}{3}} \cA^{q}_{\gamma, P}(\alpha - \chi).
\end{equation}
\end{lemma}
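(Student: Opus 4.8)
The plan is to obtain \eqref{eq:ope-0a} by evaluating $\phi^\alpha_\chi(u,q)$ at $u=0$, i.e.\ by computing $\lim_{u\to 0}\phi^\alpha_\chi(u,q)$ directly from the GMC representation, and then extending to all $q$ by analyticity. Recall from \eqref{eq:renorm-block-def} and \eqref{eq:q-block} that for $q\in(0,q_0\wedge r_{\alpha-\chi})$ and $u\in\band$,
\[
\phi^\alpha_\chi(u,q)=\cW(q)\,e^{\chi Pu\pi}\sin(\pi u)^{l_\chi}\,\EE\Big[\Big(\int_0^1\cT(u,x)\,e^{\pi\gamma Px}\,e^{\frac\gamma2 Y_\tau(x)}\,dx\Big)^{-\frac\alpha\gamma+\frac\chi\gamma}\Big],
\]
with $\cT(u,x)=\Theta_\tau(u)^{-\frac{\gamma\chi}{2}}\Theta_\tau(x)^{-\frac{\alpha\gamma}{2}}\Theta_\tau(u+x)^{\frac\gamma2\chi}$. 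First I would send $u\to 0$ along the positive imaginary axis. Since $\Theta_\tau(0)=0$, we have $\Theta_\tau(u)\sim\Theta'_\tau(0)\,u$ and $\sin(\pi u)\sim\pi u$, so pulling the factor $\Theta_\tau(u)^{-\frac{\gamma\chi}{2}}$ out of the power (its contribution to the exponent being $-\frac{\gamma\chi}{2}(-\frac\alpha\gamma+\frac\chi\gamma)=-l_\chi$) and using $\sin(\pi u)^{l_\chi}u^{-l_\chi}\to\pi^{l_\chi}$ produces a limiting prefactor $\pi^{l_\chi}\Theta'_\tau(0)^{-l_\chi}$, up to branch-induced phases $e^{\ii\pi(\cdots)}$ which I would track against the conventions \eqref{eq:phase}, \eqref{eq:phase1}. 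At the same time $\Theta_\tau(u+x)^{\frac\gamma2\chi}\to\Theta_\tau(x)^{\frac\gamma2\chi}$, so the integrand converges to $\Theta_\tau(x)^{-\frac{(\alpha-\chi)\gamma}{2}}e^{\pi\gamma Px}e^{\frac\gamma2 Y_\tau(x)}$, which is exactly the integrand in the definition \eqref{eq:ca-def} of $\cA^q_{\gamma,P}(\alpha-\chi)$ (note $\frac{\gamma\chi}{2}-\frac{\alpha\gamma}{2}=-\frac{(\alpha-\chi)\gamma}{2}$ and $-\frac\alpha\gamma+\frac\chi\gamma=-\frac{\alpha-\chi}{\gamma}$).

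The one genuinely analytic point is the interchange of $\lim_{u\to 0}$ with the expectation. I would justify it by dominated convergence: the limiting weight $|\Theta_\tau(x)|^{-\frac{(\alpha-\chi)\gamma}{2}}$ is locally integrable on $[0,1]$ for $\alpha\in(-\frac4\gamma+\chi,Q)$ (so that $\alpha-\chi\in(-\frac4\gamma,Q)$), and the uniform GMC moment bounds of Lemma~\ref{lem:GMC-moment}, together with continuity of $u\mapsto f_\nu(u)$ up to $u=0$, control $\big|\big(\int_0^1\cT(u,x)e^{\pi\gamma Px}e^{\frac\gamma2 Y_\tau(x)}dx\big)^{-\frac\alpha\gamma+\frac\chi\gamma}\big|$ uniformly for $u$ near $0$; such boundary estimates for one-dimensional GMC are by now standard, cf.\ \cite{KRV19b,RZ20}. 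Granting this, and then expressing the limiting GMC expectation through $\cA^q_{\gamma,P}(\alpha-\chi)$ via \eqref{eq:ca-def} (extracting the explicit factor $q^{-\frac1{12}(-(\alpha-\chi)\gamma-\frac{2(\alpha-\chi)}\gamma+2)}\eta(q)^{-((\alpha-\chi)\gamma+\frac{2(\alpha-\chi)}\gamma-\frac32(\alpha-\chi)^2-2)}$), I obtain $\phi^\alpha_\chi(0,q)$ as $\cA^q_{\gamma,P}(\alpha-\chi)$ times an explicit function of $q$.

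It then remains to verify that this explicit function equals $W^-_\chi(\alpha,\gamma)\,q^{\frac{P^2}{2}-\frac{l_\chi(l_\chi+1)}{6\chi^2}-\frac{l_\chi}{6}-\frac16}\Theta'_\tau(0)^{\frac43\frac{l_\chi(l_\chi+1)}{\chi^2}+\frac23 l_\chi+\frac23}$. This is a bookkeeping computation: substitute the formula for $\cW(q)$ from \eqref{eq:q-block}, collect the $q$-powers, and use the relation between $\Theta'_\tau(0)$ and $\eta(q)$ from Appendix~\ref{sec:theta} to trade all remaining $\eta(q)$-powers for $\Theta'_\tau(0)$-powers, at which point comparison with the definition \eqref{eq:w1} of $W^-_\chi$ closes the identity --- the accumulated phases from the branch choices above being precisely the $(2\pi e^{\ii\pi})^{\cdots}$ factors in \eqref{eq:w1}. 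Finally, both sides of \eqref{eq:ope-0a}, divided by $q^{\frac{P^2}{2}+\frac{l_\chi(l_\chi+1)}{6\chi^2}}$, are analytic in $q$ near $0$ (the left side by Proposition~\ref{prop:u-block} and the analytic extension built in Section~\ref{sec:ode}, the right side by Lemma~\ref{lem:a-props}(a) and analyticity of $\Theta'_\tau(0)$ in $q$) and agree on the interval $(0,q_0\wedge r_{\alpha-\chi})$, hence agree identically. The main obstacle is the prefactor bookkeeping, especially pinning down every branch-induced phase and converting cleanly between $\eta(q)$ and $\Theta'_\tau(0)$; the measure-theoretic limit interchange is comparatively routine.
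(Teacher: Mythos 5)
Your proposal is correct and follows essentially the same route as the paper: the paper's proof is exactly "direct substitution" at $u=0$, where the $\sin(\pi u)^{l_\chi}$ normalization cancels the $\Theta_\tau(u)^{-l_\chi}$ singularity (coming from $\Theta_\tau(u)^{-\frac{\gamma\chi}{2}}$ raised to the power $-\frac{\alpha}{\gamma}+\frac{\chi}{\gamma}$) to leave $\pi^{l_\chi}\Theta'_\tau(0)^{-l_\chi}$, the remaining GMC expectation is identified with the one in $\cA^q_{\gamma,P}(\alpha-\chi)$ via \eqref{eq:ca-def}, and the prefactors are matched using $\Theta'_\tau(0)=-2\pi\eta(q)^3$. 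Your added justifications (dominated convergence for the limit interchange, and the final analytic-continuation step) are fine and, if anything, more careful than the paper's one-line computation.
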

\begin{proof}
By direct substitution and using \eqref{eq:Theta'0} we have
\begin{align*}
&\xin{\lim_{u \to 0} \phi_\chi^\alpha(u, q)} = q^{\frac{P^2}{2} + \frac{\gamma l_\chi}{12 \chi} - \frac{1}{6} \frac{l_\chi^2}{\chi^2} }\pi^{l_{\chi}} \Theta'_{\tau}(0)^{- \frac{2 l_\chi^2}{3 \chi^2} - \frac{2 l_\chi}{3}  + \frac{4 l_{\chi}}{3 \gamma \chi}}\\
&\phantom{======} \times \EE\left[\Big(\int_0^1 e^{\frac{\gamma}{2} Y_\tau(x)} \Theta_\tau(x)^{-\frac{\alpha \gamma}{2} + \frac{\gamma \chi}{2}} e^{\pi \gamma Px} dx\Big)^{-\frac{\alpha}{\gamma} + \frac{\chi}{\gamma}}\right]\\
& = W^-_\chi(\alpha, \gamma) q^{\frac{P^2}{2} - \frac{1}{6} \frac{l_\chi(l_\chi + 1)}{\chi^2} - \frac{1}{6} l_\chi - \frac{1}{6}} \Theta_{\tau}'(0)^{\frac{4}{3} \frac{l_\chi(l_\chi + 1)}{\chi^2} + \frac{2}{3} l_\chi + \frac{2}{3}} \cA^{q}_{\gamma, P}(\alpha - \chi). \qedhere
\end{align*}
\end{proof}
\xin{Our second OPE result concerns the next order expansion when $\chi =\frac{\gamma}{2}$ and $\alpha \in (\frac{\gamma}{2}, \frac{2}{\gamma})$. We gives its proof in Appendix \ref{sec:ope-proof}. }
 
\begin{lemma} \label{thm:opes}
Consider $u = \ii t$ with  $t \in (0, \frac{1}{2} \Im(\tau))$. Let $\chi = \frac{\gamma}{2}$ and $\alpha \in (\frac{\gamma}{2}, \frac{2}{\gamma})$. 
We have
\begin{multline} \label{eq:ope-0b}
\lim_{u \to 0} \sin(\pi u)^{- 2\xin{l_{\chi}} - 1} \Big(\phi^\alpha_{\frac{\gamma}{2}}(u, q) - \phi^\alpha_{\frac{\gamma}{2}}(0, q)\Big)\\
= W^+_\chi(\alpha, \gamma) q^{\frac{P^2}{2} + \frac{\xin{l_{\chi}}}{6} - \frac{2}{3}\frac{\xin{l_{\chi}}(1+ \xin{l_{\chi}})}{\gamma^2}}  \Theta_\tau'(0)^{\frac{16}{3} \frac{\xin{l_{\chi}}(\xin{l_{\chi}} + 1)}{\gamma^2} - \frac{2}{3}\xin{l_{\chi}}} \cA^{q}_{\gamma, P}(\alpha + \frac{\gamma}{2}).
\end{multline}
\end{lemma}

\xin{Our third and most intricate OPE result is the counterpart of Lemma~\ref{thm:opes} with $\chi = \frac{\gamma}{2}$ or $\frac{2}{\gamma}$ and $\alpha$ close to $Q$.
To state the result we need the \emph{reflection coefficient} of boundary Liouville CFT. We will recall its probabilistic meaning in Appendix~\ref{sec:ope-proof}. However, for the rest of this section we just define  it  as the explicit formula obtained in
\cite[Theorem~1.8]{RZ20}. Namely define}
\begin{multline}\label{eq:exp-R}
\xin{R(\alpha,\chi,P)\defeq} \frac{ (2 \pi)^{ \frac{2}{\gamma}(Q -\alpha ) -\frac{1}{2}} (\frac{2}{\gamma})^{ \frac{\gamma}{2}(Q - \alpha ) -\frac{1}{2} }  }{(Q-\alpha) \Gamma(1 -\frac{\gamma^2}{4}  )^{ \frac{2}{\gamma}(Q -\alpha ) } } \\
\times \frac{ \Gamma_{\frac{\gamma}{2}}(\alpha - \frac{\gamma}{2}  ) e^{- \ii \pi(\frac{\chi}{2} + \ii P)(Q-\alpha)}}{\Gamma_{\frac{\gamma}{2}}(Q- \alpha ) S_{\frac{\gamma}{2}}(\frac{\alpha}{2} + \frac{\chi}{2} + \ii P)S_{\frac{\gamma}{2}}(\frac{\alpha}{2} - \frac{\chi}{2} - \ii P) },
\end{multline}
where $\Gamma_{\frac{\gamma}{2}}(x)$ and $S_{\frac{\gamma}{2}}(x)$ are special functions  introduced in Appendix~\ref{subsec:gamma}. 

For $\alpha \in (Q, 2Q + \frac{4}{\gamma})$, we use the reflection coefficient to  define
\begin{align} \label{eq:a-extend}
&\mathcal{R}^{q}_{\gamma, P}(\alpha)\defeq  -q^{\frac{1}{6}(1 - \frac{\alpha}{\gamma} - Q(Q  +\frac{\gamma}{2} - \alpha))} \eta(q)^{ \frac{3 \alpha \gamma}{2}  +\frac{2\alpha}{\gamma} -2 -\frac{3 \alpha^2}{2} + (Q + \frac{\gamma}{2} -\alpha)(3 \alpha - 4Q)} \Theta'_{\tau}(0)^{(Q-\alpha)(\gamma - \alpha)}  \\ \nonumber
&\phantom{=}\times \frac{e^{\ii \pi (\frac{\alpha \gamma}{2} - (\alpha - \frac{\gamma}{2} -Q)(\alpha - 2Q))} (2 \pi)^{(\alpha - \frac{\gamma}{2} - Q)(Q -\alpha)}}{  (-\frac{\alpha}{\gamma} + 1)(1-e^{\pi \gamma P - \ii \pi \frac{\gamma^2}{2} +
\ii \pi \frac{\alpha \gamma}{2}}) }  \frac{\Gamma(-\frac{\gamma^2}{4}) \Gamma(\frac{2 \alpha}{\gamma} -1 - \frac{4}{\gamma^2} ) \Gamma(1 + \frac{4}{\gamma^2} - \frac{\alpha}{\gamma})}{\Gamma(\frac{\alpha \gamma}{2} -1 - \frac{\gamma^2}{2})\Gamma(1 + \frac{\gamma^2}{4} - \frac{\alpha \gamma}{2} ) \Gamma(\frac{\alpha}{\gamma} - 1)} \\ \nonumber
&\phantom{=} \times  \xin{R(\alpha - \frac{\gamma}{2},\chi,P)}  \mathbb{E} \left[  \left( \int_0^1 e^{\frac{\gamma}{2} Y_{\tau}(x)} \Theta_{\tau}( x)^{-\frac{ \gamma}{2}(2Q -\alpha )} e^{\pi  \gamma P x} dx \right)^{ \frac{\alpha}{\gamma} - \frac{4}{\gamma^2} - 1} \right].
\end{align}
\begin{lemma}\label{lem:R-analytic}
\xin{The function $\alpha\mapsto \mathcal{R}^{q}_{\gamma, P}(\alpha)$ admits an analytic continuation
in a complex neighborhood of $(Q, 2Q)$.}
\end{lemma}
\begin{proof}
By the moment bounds given by Lemma \ref{lem:GMC-moment} and the analyticity
provided by Lemma \ref{lem:psi-props}, the GMC expectation in (\ref{eq:a-extend}) is well-defined and analytic in $\alpha$ in a complex neighborhood of
$(Q, 2Q +\frac{4}{\gamma})$. The prefactor in front of the GMC expectation is an explicit meromorphic function
of $\alpha$ with known poles; the exact formula \eqref{eq:exp-R} shows that it is analytic in $\alpha$ in a complex
neighborhood of $\alpha \in (Q, 2Q)$, making the entire expression of $\mathcal{R}^{q}_{\gamma, P}(\alpha)$ analytic
in a complex neighborhood of $(Q, 2Q)$.
\end{proof}
\xin{We are now ready to state the last OPE result. The proof is also given in Appendix~\ref{sec:ope-proof}.}
\begin{lemma} \label{thm:opes2}
Consider $u = \ii t$ with  $t \in (0, \frac{1}{2} \Im(\tau))$. Let $\chi = \frac{\gamma}{2}$ or $\frac{2}{\gamma}$.  There exists
a small $\alpha_0 > 0$ such that for $\alpha \in (Q - \alpha_0, Q)$ we have
\begin{multline} \label{eq:ope-0b2}
\lim_{u \to 0} \sin(\pi u)^{- 2l_\chi - 1} \Big(\phi^\alpha_{\chi}(u, q) - \phi^\alpha_{\chi}(0, q)\Big)\\
= W^+_\chi(\alpha, \gamma) q^{\frac{P^2}{2} + \frac{l_{\chi}}{6} - \frac{1}{6}\frac{l_{\chi}(1+ \l_{\chi})}{\chi^2}}  \Theta_\tau'(0)^{\frac{4}{3} \frac{l_\chi(l_\chi + 1)}{\chi^2} - \frac{2}{3}l_\chi} \mathcal{R}^{q}_{\gamma, P}(\alpha + \chi).
\end{multline}
\end{lemma}

\xin{Note that the $\chi =\frac{\gamma}{2}$ case of Lemma~\ref{thm:opes2}}
takes the same form as Lemma \ref{thm:opes}, except with $\mathcal{R}^{q}_{\gamma, P}(\alpha + \frac{\gamma}{2})$
in place of $\mathcal{A}^{q}_{\gamma, P}(\alpha + \frac{\gamma}{2})$. This suggests that
$\mathcal{R}^{q}_{\gamma, P}(\alpha )$ gives the analytic extension of
$\mathcal{A}^{q}_{\gamma, P}(\alpha)$ beyond $\alpha = Q$.  We will prove this at
the level of $q$-series in the proof of Theorem~\ref{thm:series-opes} below.

\subsection{Proof of Theorem~\ref{thm:series-opes}}\label{subsec:proof-ope}
We \xin{first prove the easiest part of Theorem~\ref{thm:series-opes}, which does not require analytic continuation of $\cA^{q}_{\gamma, P}(\alpha)$.
\begin{lemma}\label{lem:easy-shift}
The equation \eqref{eq:opeq-0a} holds for $\chi\in \{\frac{\gamma}{2},\frac{2}{\gamma} \}$ and $\alpha\in (\chi,Q)$.
\end{lemma}
\begin{proof}

 Recall $\psi^\alpha_\chi(u, \tau) = q^{\frac{P^2}{2} + \frac{1}{6 \chi^2}l_\chi(l_\chi+1)} \Sigma^\alpha_\chi(u,q)$ and 
\(\Sigma^\alpha_{\chi}(u, q)= 
\sum_{n = 0}^\infty \sigma^\alpha_{\chi, n}(u) q^n\) from~\eqref{eq:u-block} and~\eqref{eq:psin}.
Recall $\phi^\alpha_{\chi,n}(w)= \sin(\pi u)^{l_\chi}  \sigma^\alpha_{\chi, n}(u)$ with $w=\sin^2(\pi u)$
from~\eqref{eq:phin-def}. 
Since
\(\phi^\alpha_{\chi}(u, q)=\sin(\pi u)^{l_\chi} \psi_{\chi}^\alpha(u, \tau)\), we have
\[
\phi^\alpha_{\chi}(u, q)= q^{\frac{P^2}{2} + \frac{1}{6 \chi^2}l_\chi(l_\chi+1)}\sum_{n = 0}^\infty \phi^\alpha_{\chi, n}(w) q^n \quad \textrm{for }w=\sin^2(\pi u).
\]
Recall Proposition~\ref{prop:bpz-to-hgf} and Corollary~\ref{cor:decompose} that  for $\alpha \in (-\frac{4}{\gamma} + \chi, Q)$ and $C_\chi\notin \ZZ$, we have \(\phi^\alpha_{\chi, n}(w) = \phi^{\alpha, 1}_{\chi, n, i}(w)+ w^{1 - C_\chi} \phi^{\alpha, 2}_{\chi, n, i}(w)\)  for $w\in \D_i$.
This gives \( \lim_{u \to 0} \phi_\chi^\alpha(u, q) = q^{\frac{P^2}{2} + \frac{1}{6 \chi^2}l_\chi(l_\chi+1)}\sum_{n = 0}^\infty \phi^{\alpha, 1}_{\chi, n, 1}(0) q^n\).
For $\alpha \in (-\frac{4}{\gamma} + \chi, Q)$, by Lemma~\ref{lem:ope-0} $q^{\frac{P^2}{2} + \frac{1}{6 \chi^2}l_\chi(l_\chi+1)}\sum_{n = 0}^\infty \phi^{\alpha, 1}_{\chi, n, 1}(0) q^n$ equals
\begin{equation}\label{eq:eta-series}
  W^-_\chi(\alpha, \gamma) q^{\frac{P^2}{2} - \frac{1}{6} \frac{l_\chi(l_\chi + 1)}{\chi^2} - \frac{1}{6} l_\chi - \frac{1}{6}} \Theta_{\tau}'(0)^{\frac{4}{3} \frac{l_\chi(l_\chi + 1)}{\chi^2} + \frac{2}{3} l_\chi + \frac{2}{3}} \cA^{q}_{\gamma, P}(\alpha - \chi).
\end{equation}
Since $C_\chi=1-\frac{(Q-\alpha)\chi}{2}$, for $\alpha\in (\chi,Q)$, we have $C_\chi\in (\frac12,1)$ hence $C_\chi\notin\ZZ$.  Expanding~\eqref{eq:eta-series} into a $q$ power series using the definition of $\eta^-_{\chi,n}$ from~\eqref{eq:eta-m}, we get  \eqref{eq:opeq-0a} for $\alpha\in (\chi,Q)$ as desired.
\end{proof}

We have seen that Lemma~\ref{lem:easy-shift} follows from comparing series coefficients based on the OPE result Lemma~\ref{lem:ope-0}. Using the same argument with the OPE results Lemmas~\ref{thm:opes} and~\ref{thm:opes2} instead, we get the following.
\begin{lemma}\label{lem:hard-shift}
Equation  \eqref{eq:wope-0b} holds for $\chi=\frac{\gamma}{2}$, $\alpha\in (\frac{\gamma}{2},\frac2\gamma)$.
Define $\mathcal{R}_{\gamma, P, n}(\alpha)$ by the series expansion  $\mathcal{R}^q_{\gamma, P}(\alpha)=\sum_{n=0}^\infty \mathcal{R}_{\gamma, P, n}(\alpha)q^n$. 
Then equation  \eqref{eq:wope-0b} holds with $\mathcal{R}_{\gamma, P, n}(\alpha+\chi)$ in place of $\cA_{\gamma, P, n}(\alpha + \chi)$ for $\chi\in \{\frac{\gamma}{2},\frac{2}{\gamma}\}$ and $\alpha\in (Q - \alpha_0, Q)$ for some small $\alpha_0 > 0$.
\end{lemma}
\begin{proof}
By Corollary~\ref{cor:decompose}, for $\alpha \in (-\frac{4}{\gamma} + \chi, Q)$ and $C_\chi\notin \ZZ$, we have
\begin{equation}\label{eq:lim-phi}
\phi^{\alpha, 2}_{\chi, n, 1}(0)
= \lim_{t \to 0^{+}} \sin(\pi \ii t)^{-2l_\chi - 1} \Big(\phi_{\chi, n}^\alpha(\ii t) - \phi_{\chi, n}^\alpha(0)\Big).
\end{equation}
Similarly as in Lemma~\ref{lem:easy-shift},
by Lemma~\ref{thm:opes}, for $\chi = \frac{\gamma}{2}$ and $\alpha \in (\frac{\gamma}{2}, \frac{2}{\gamma})$ we have that $q^{\frac{P^2}{2} + \frac{1}{6 \chi^2}l_\chi(l_\chi+1)}\sum_{n = 0}^\infty \phi^{\alpha, 1}_{\chi, n, 1}(0) q^n$  equals
\begin{equation}\label{eq:eta-series2}
    W^+_\chi(\alpha, \gamma) q^{\frac{P^2}{2} + \frac{\xin{l_{\chi}}}{6} - \frac{2}{3}\frac{\xin{l_{\chi}}(1+ \xin{l_{\chi}})}{\gamma^2}}  \Theta_\tau'(0)^{\frac{16}{3} \frac{\xin{l_{\chi}}(\xin{l_{\chi}} + 1)}{\gamma^2} - \frac{2}{3}\xin{l_{\chi}}} \cA^{q}_{\gamma, P}(\alpha + \frac{\gamma}{2}).
\end{equation}
Since  $C_{\frac{\gamma}{2}}=1-\frac{(Q-\alpha)\gamma}{4}\in (\frac1{2},1-\frac{\gamma^2}{8})$ for $\alpha \in (\frac{\gamma}{2}, \frac{2}{\gamma})$, we have $C_\chi\notin\ZZ$ for $\chi = \frac{\gamma}{2}$ and $\alpha \in (\frac{\gamma}{2}, \frac{2}{\gamma})$.
Expanding~\eqref{eq:eta-series2} into a $q$ power series using the definition of $\eta^+_{\chi,n}$ from~\eqref{eq:eta-p}, we get  \eqref{eq:wope-0b}.

Using Lemma~\ref{thm:opes2} instead of Lemma~\ref{thm:opes}, the identical argument
gives \eqref{eq:wope-0b} with $\mathcal{R}_{\gamma, P, n}(\alpha+\chi)$ in place of $\cA_{\gamma, P, n}(\alpha + \chi)$ for $\chi\in \{\frac{\gamma}{2},\frac{2}{\gamma}\}$ and $\alpha\in (Q - \alpha_0, Q)$ for some small $\alpha_0 > 0$.  (Note that by choosing $\alpha_0$ small enough $C_\chi=1-\frac{(Q-\alpha)\chi}{2}\notin\ZZ$   as before.) 
\end{proof} 
\begin{proof}[Proof of Theorem~\ref{thm:series-opes}]
We prove the following claim by induction:
\begin{align}\label{eq:claim-ope}
&\cA_{\gamma, P, n}(\alpha) \textrm{ admits an analytic
extension to a complex neighborhood of}\\ 
& (-\frac{4}{\gamma}, 2Q), \textrm{under which } \cA_{\gamma, P, n}(\alpha)=\mathcal{R}_{\gamma, P,n}(\alpha)
\textrm{ for }\alpha \in (Q, 2Q).\nonumber
\end{align}
 Given~\eqref{eq:claim-ope}, by Lemma~\ref{lem:hard-shift} and the analyticity of  $\phi^{\alpha, 2}_{\chi, n, 1}(0)$ from Lemma~\ref{lem:phi-alpha-analytic}, we immediately complete the proof of Theorem~\ref{thm:series-opes}.

We  first prove~\eqref{eq:claim-ope} for $n=0$. By Lemma~\ref{lem:hard-shift}, setting $n=0$ and $\chi=\frac{\gamma}{2}$ in~\eqref{eq:wope-0b} we see that for $\alpha\in (\frac{\gamma}{2},\frac2\gamma)$
\begin{equation}\label{eq:A0-ext1}
    \cA_{\gamma, P, 0}(\alpha + \frac{\gamma}{2})= [W^+_{\frac{\gamma}{2}}(\alpha, \gamma) \eta^+_{\frac{\gamma}{2}, 0}(\alpha)]^{-1} \phi^{\alpha, 2}_{\frac{\gamma}{2}, 0, 1}(0).
\end{equation}
By Lemma \ref{lem:a-props}(b), we know $\cA_{\gamma, P, 0}(\alpha)$ is analytic in a complex neighborhood of $(-\frac{4}{\gamma},Q)$. 
Since $C_{\frac{\gamma}2}=1-\frac{(Q-\alpha)\gamma}{4}\in (\frac{1}{2},1)$ for $\alpha\in (\frac{\gamma}{2},Q)$. By Lemma~\ref{lem:phi-alpha-analytic}, $\phi^{\alpha, 2}_{\frac{\gamma}{2}, n, 1}(0)$ is analytic  in $\alpha$ in a complex neighborhood of  $(\frac{\gamma}{2},Q)$. By the explicit expression for $W^+_\chi(\alpha, \gamma)$ from~\eqref{eq:w2} and 
the fact that $\eta_{\chi, 0}^+(\alpha)= (2 \pi e^{\ii \pi})^{\frac{4}{3} \frac{l_\chi(l_\chi + 1)}{\chi^2} - \frac{2}{3} l_\chi}$, the right hand side of~\eqref{eq:A0-ext1} is analytic  in $\alpha$ in a complex neighborhood of  $(\frac{\gamma}{2},Q)$.
We now define $\cA_{\gamma, P, 0}(\alpha + \frac{\gamma}{2})$
as the right hand side of~\eqref{eq:A0-ext1} on this neighborhood.
This gives a definition of $\cA_{\gamma, P, 0}(\alpha)$ on a complex neighborhood of $(\gamma,Q+\frac{\gamma}{2})$, which is consistent with its original definition on $(-\frac{4}{\gamma},Q)$. 
Therefore we have  analytically extended $\cA_{\gamma, P, 0}(\alpha)$ in $\alpha$ to a complex neighborhood $(-\frac{4}{\gamma},Q+\frac{\gamma}{2})$, under which~\eqref{eq:A0-ext1} holds
for $\alpha\in (\frac{\gamma}2, Q)$. 

On the other hand,  by Lemma~\ref{lem:hard-shift}  for some small $\alpha_0 > 0$ we have
\begin{equation}\label{eq:A0-ext}
    \cR_{\gamma, P, 0}(\alpha + \frac{\gamma}{2})= [W^+_{\frac{\gamma}{2}}(\alpha, \gamma) \eta^+_{\frac{\gamma}{2}, 0}(\alpha)]^{-1} \phi^{\alpha, 2}_{\frac{\gamma}{2}, 0, 1}(0)\quad \textrm{for } \alpha\in (Q - \alpha_0, Q).
\end{equation}
By Lemma~\ref{lem:R-analytic}, $\mathcal{R}_{\gamma, P,0}(\alpha)$ is analytic in a complex neighborhood of $(Q,2Q)$. Set $\cA_{\gamma, P, 0}(\alpha)=\mathcal{R}_{\gamma, P,0}(\alpha)$ in this neighborhood. Since 
$(-\frac{4}{\gamma},Q+\frac{\gamma}{2})\cap(Q,2Q)\neq \emptyset$, this defines an analytic extension of $\cA_{\gamma, P, 0}(\alpha)$ on a complex neighborhood of $(-\frac{4}{\gamma},2Q)$. This proves~\eqref{eq:claim-ope} for $n=0$.

For $n\ge 1$, suppose~\eqref{eq:claim-ope} holds for $m < n$. By~\eqref{eq:wope-0b} and Lemma~\ref{lem:hard-shift},  
\begin{equation}  \nonumber
\cA_{\gamma, P, n}(\alpha + \frac{\gamma}{2}) = [W^+_{\frac{\gamma}{2}}(\alpha, \gamma) \eta^+_{\frac{\gamma}{2}, 0}(\alpha)]^{-1} \phi^{\alpha, 2}_{\frac{\gamma}{2}, n, 1}(0)
- \sum_{m = 0}^{n - 1} \frac{\eta^+_{\frac{\gamma}{2}, n - m}(\alpha)}{\eta^+_{\frac{\gamma}{2}, 0}(\alpha)} \cA_{\gamma, P, m}(\alpha + \frac{\gamma}{2});
\end{equation}
\begin{equation}  \nonumber
\mathcal{R}_{\gamma, P, n}(\alpha + \frac{\gamma}{2}) = [W^+_{\frac{\gamma}{2}}(\alpha, \gamma) \eta^+_{\frac{\gamma}{2}, 0}(\alpha)]^{-1} \phi^{\alpha, 2}_{\frac{\gamma}{2}, n, 1}(0)
- \sum_{m = 0}^{n - 1} \frac{\eta^+_{\frac{\gamma}{2}, n - m}(\alpha)}{\eta^+_{\frac{\gamma}{2}, 0}(\alpha)} \mathcal{R}_{\gamma, P, m}(\alpha + \frac{\gamma}{2}).
\end{equation} 
As in the $n=0$ case, the first equation together with the induction hypothesis for~\eqref{eq:claim-ope} gives a definition of $\cA_{\gamma, P, n}(\alpha + \frac{\gamma}{2})$ for $\alpha$ in a complex neighborhood of $(\frac{\gamma}{2},Q)$. This provides an analytic extension of $\cA_{\gamma, P, n}(\alpha)$ on a complex neighborhood of $(\gamma,Q+\frac{\gamma}{2})$. Setting $\cA_{\gamma, P, n}(\alpha)=\mathcal{R}_{\gamma, P,n}(\alpha)$ on a complex neighborhood of $(Q,2Q)$, we arrive at the desired analytic extension of $\cA_{\gamma, P, n}(\alpha)$ claimed in~\eqref{eq:claim-ope}.
 \end{proof}
 }

\section{Equivalence of the probabilistic conformal block and Nekrasov partition function} \label{sec:block-proof}
Recall \xin{$\wcA^q_{\gamma, P}(\alpha)$ from~\eqref{eq:Atitlde} and $\cZ^\alpha_{\gamma, P}(q)$ from~\eqref{eq:nek-def}.  Theorem \ref{thm:nek-block} asserts that they agree as $q$-power series.
In this section, we prove Theorem \ref{thm:nek-block} by showing  that their $q$-series
coefficients satisfy the same characterizing shift equations. 
The proof is divided into six steps.  We now present these steps,  deferring the proof of a few statements to later subsections.}

Our first step is to establish the shift equations for $\cA_{\gamma, P, 0}(\alpha)$
and $\{\wcA_{\gamma, P, n}(\alpha)\}_{n \in \NN}$
defined in \eqref{eq:a-exp} and \eqref{eq:Atitlde}.
The shift equations uses  $A_{\chi,n}$, $B_{\chi,n}$, $C_\chi$ from~\eqref{eq:hgf-params} and $G_{\chi, n, i}^{\alpha, j}$ from Definition~\ref{def:particular}, which are related to the hypergeometric differential equations from
Proposition \ref{prop:bpz-to-hgf}. In particular, we need
\[
V_{\chi, n}^{\alpha, 1} \defeq G_{\chi, n, 1}^{\alpha, 1}(0) \qquad \text{and} \quad V_{\chi, n}^{\alpha, 2}\defeq G_{\chi, n, 1}^{\alpha, 2}(0)\qquad \textrm{for }n\in \NN_0.
\]
Moreover, we need the
\emph{connection coefficients} (see~\eqref{eq:app-ccoeff})
\begin{equation} \label{eq:ccoef}
\Gamma_{n, 1} := \frac{\Gamma(C_\chi) \Gamma(C_\chi - A_{\chi, n} - B_{\chi, n})}{\Gamma(C_\chi - A_{\chi, n}) \Gamma(C_\chi - B_{\chi, n})} \textrm{ and } \Gamma_{n, 2} := \frac{\Gamma(2 - C_\chi) \Gamma(C_\chi - A_{\chi, n} - B_{\chi, n})}{\Gamma(1 - A_{\chi, n}) \Gamma(1 - B_{\chi, n})}.
\end{equation}

\xin{Recall that 
$\cA_{\gamma, P, n}(\alpha)$ and $G_{\chi, n, i}^{\alpha, j}$ are analytically extended in  $\alpha$ via Theorem~\ref{thm:series-opes},  and  Corollary \ref{corr:gap-extend}, respectively. This allows us to view $\wcA^q_{\gamma, P,n}= \frac{\cA_{\gamma, P,n}}{\cA_{\gamma, P, 0}}$ from ~\eqref{eq:Atitlde} as an analytic function in $\alpha$ on a complex neighborhood of $(-\frac{4}{\gamma},2Q)$, and to view  $V_{\chi, n}^{\alpha, 1}$  and $V_{\chi, n}^{\alpha, 2}$ as analytic functions in $\alpha$ on a complex neighborhood of $(-\frac{4}{\gamma} + \chi, 2Q  - \chi)$.}
We are now ready to state the shift equations for $\cA_{\gamma, P, 0}(\alpha)$ and $\{\wcA_{\gamma, P, n}(\alpha)\}_{n \in \NN}$, which we prove in Section~\ref{sec:shift} by combining the hypergeometric differential equations and the operator product expansions from Theorem \ref{thm:series-opes}.
\begin{theorem} \label{thm:shift}
Recall explicit functions $W_\chi^\pm(\alpha, \gamma)$ from (\ref{eq:w1}) and (\ref{eq:w2})  and $\eta_{\chi, m}^\pm(\alpha)$ from
(\ref{eq:eta-m}) and (\ref{eq:eta-p}). Fix $\gamma\in (0,2)$ and $\chi \in \{\frac{\gamma}{2}, \frac{2}{\gamma}\}$. For $\alpha$
in a complex neighborhood of $(-\frac{4}{\gamma}+\chi, 2Q - \chi)$, we have
\begin{equation} \label{eq:shift-0}
\cA_{\gamma, P, 0}(\alpha - \chi) = - \frac{W^+_\chi(\alpha, \gamma)}{W^-_\chi(\alpha, \gamma)} \frac{\Gamma_{0, 2}}{\Gamma_{0, 1}} \frac{1 + e^{\pi \chi P + \ii \pi l_\chi}}{1 - e^{\pi \chi P - \ii \pi l_\chi}} \frac{\eta_{\chi, 0}^+(\alpha)}{\eta_{\chi, 0}^-(\alpha)} \cA_{\gamma, P, 0}(\alpha + \chi).
\end{equation}
Setting $\wV_{\chi, n}^{\alpha, j} =  V_{\chi, n}^{\alpha, j}W^-_\chi(\alpha, \gamma)^{-1} \eta_{\chi, 0}^-(\alpha)^{-1} \cA_{\gamma, P, 0}(\alpha - \chi)^{-1}$, we have 
\begin{multline} \label{eq:shift-n}
\wcA_{\gamma, P, n}(\alpha - \chi) + \sum_{m = 0}^{n - 1} \frac{\eta_{\chi, n - m}^-(\alpha)}{\eta_{\chi, 0}^-(\alpha)} \wcA_{\gamma, P, m}(\alpha - \chi)\\
= \frac{\Gamma_{n, 2}}{\Gamma_{n, 1}} \frac{\Gamma_{0, 1}}{\Gamma_{0, 2}} \wcA_{\gamma, P, n}(\alpha + \chi) + \frac{\Gamma_{n, 2}}{\Gamma_{n, 1}} \frac{\Gamma_{0, 1}}{\Gamma_{0, 2}} \sum_{m = 0}^{n - 1} \frac{\eta_{\chi, n - m}^+(\alpha)}{\eta_{\chi, 0}^+(\alpha)} \wcA_{\gamma, P, m}(\alpha + \chi)\\
+ \frac{\Gamma_{n, 2}}{\Gamma_{n, 1}} \frac{1 + e^{\pi \chi P + \ii \pi l_\chi}}{1 - e^{\pi \chi P - \ii \pi l_\chi}} \wV_{\chi, n}^{\alpha, 2} + \wV_{\chi, n}^{\alpha, 1}.
\end{multline}
\end{theorem}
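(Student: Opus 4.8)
## Proof proposal for Theorem \ref{thm:shift}

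\textbf{Overall strategy.} The plan is to combine the three main inputs assembled in Sections \ref{sec:ode}, \ref{sec:opes}, and \ref{sec:block-proof}: (i) the decomposition \eqref{eq:phi-decomp} of the deformed hypergeometric solution $\phi^\alpha_{\chi, n, i}(w)$ into a particular solution $G^{\alpha}_{\chi, n, i}(w)$ plus a combination of Gauss hypergeometric functions $v^\alpha_{1,\chi,n}$ and $w^{1-C_\chi}v^\alpha_{2,\chi,n}$; (ii) the OPE formulas \eqref{eq:opeq-0a}--\eqref{eq:wope-0b} of Theorem \ref{thm:series-opes}, which express the two "boundary values" $\phi^{\alpha,1}_{\chi,n,1}(0)$ and $\phi^{\alpha,2}_{\chi,n,1}(0)$ in terms of $\cA_{\gamma,P,m}(\alpha\mp\chi)$; and (iii) the connection formula for the hypergeometric equation (Appendix \ref{sec:hgf}, equation \eqref{eq:app-ccoeff}), which relates the expansion of $\phi^\alpha_{\chi,n}$ around $w=0$ to its expansion around $w=1$, the latter being governed by the regularity of $\phi^\alpha_{\chi,n}$ at $u=\frac12$ (equivalently $w=1$). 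Matching the $w=1$ data computed two ways yields linear relations among the $\phi^{\alpha,j}_{\chi,n,i}(0)$, and feeding in the OPE produces the shift equations.

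\textbf{Step 1: linear relations at $w=1$ from the connection formula.} First I would use Corollary \ref{cor:decompose} and Lemmas \ref{lem:ext-phi}, \ref{lem:phase-phi} to record that $\phi^\alpha_{\chi,n}$ extends to both $\phi^\alpha_{\chi,n,1}$ and $\phi^\alpha_{\chi,n,2}$, with the monodromy relations \eqref{eq:phase-phi}, \eqref{eq:phi-prop} linking $\phi^{\alpha,j}_{\chi,n,2}$ to $\phi^{\alpha,j}_{\chi,n,1}$ by the explicit phases $e^{\pi\chi P-\ii\pi l_\chi}$ and $-e^{\pi\chi P+\ii\pi l_\chi}$. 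Then I apply the hypergeometric connection formula to the homogeneous pieces $v^\alpha_{1,\chi,n}(w)$ and $w^{1-C_\chi}v^\alpha_{2,\chi,n}(w)$ to express the coefficient of the dominant behavior at $w=1$ in terms of the connection coefficients $\Gamma_{n,1},\Gamma_{n,2}$ from \eqref{eq:ccoef}. Since $C_\chi - A_{\chi,n}-B_{\chi,n}=\tfrac12\in(0,1)$, both local solutions at $w=1$ are bounded, and $\phi^\alpha_{\chi,n}(1)$ is well-defined (Lemma \ref{lem:ext-phi}); this is the value that must be computed consistently from the $\Dw_1$ side and the $\Dw_2$ side. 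Equating the two computations, using the phases of Lemma \ref{lem:phase-phi} and the vanishing $G^\alpha_{\chi,n,i}(1)=0$ from Proposition \ref{prop:phi-G}, gives a linear relation of the schematic form
\[
\Gamma_{n,1}X^1_{\chi,n,1}(\alpha) + \Gamma_{n,2}X^2_{\chi,n,1}(\alpha) = (\text{phase})\big[\Gamma_{n,1}X^1_{\chi,n,1}(\alpha) - (\text{phase})\,\Gamma_{n,2}X^2_{\chi,n,1}(\alpha)\big],
\]
which after simplification expresses a fixed ratio between $X^1_{\chi,n,1}(\alpha)=\phi^{\alpha,1}_{\chi,n,1}(0)-G^{\alpha,1}_{\chi,n,1}(0)$ and $X^2_{\chi,n,1}(\alpha)=\phi^{\alpha,2}_{\chi,n,1}(0)-G^{\alpha,2}_{\chi,n,1}(0)$ involving $\Gamma_{n,1}/\Gamma_{n,2}$ and $\tfrac{1+e^{\pi\chi P+\ii\pi l_\chi}}{1-e^{\pi\chi P-\ii\pi l_\chi}}$.

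\textbf{Step 2: insert the OPE and isolate $n=0$.} Next, substitute the OPE identities \eqref{eq:opeq-0a}--\eqref{eq:wope-0b}: $\phi^{\alpha,1}_{\chi,n,1}(0)$ is $W^-_\chi(\alpha,\gamma)$ times a convolution of $\eta^-_{\chi,\cdot}(\alpha)$ against $\cA_{\gamma,P,\cdot}(\alpha-\chi)$, and $\phi^{\alpha,2}_{\chi,n,1}(0)$ is $W^+_\chi(\alpha,\gamma)$ times a convolution of $\eta^+_{\chi,\cdot}(\alpha)$ against $\cA_{\gamma,P,\cdot}(\alpha+\chi)$. For $n=0$ the particular solutions vanish ($G^{\alpha,j}_{\chi,0,1}=0$, $V^{\alpha,j}_{\chi,0}=0$) and the convolutions collapse to single terms $\eta^{\pm}_{\chi,0}(\alpha)\cA_{\gamma,P,0}(\alpha\mp\chi)$; the Step 1 relation then reads precisely \eqref{eq:shift-0}. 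For general $n\ge 1$, divide through by $W^-_\chi(\alpha,\gamma)\eta^-_{\chi,0}(\alpha)\cA_{\gamma,P,0}(\alpha-\chi)$ — which is exactly the normalization defining $\wcA_{\gamma,P,m}$ (via \eqref{eq:Atitlde} and the link \eqref{eq:ratioA}) and $\wV^{\alpha,j}_{\chi,n}$ — and rearrange. The $X^1$ part contributes $\wcA_{\gamma,P,n}(\alpha-\chi)+\sum_{m<n}\tfrac{\eta^-_{\chi,n-m}(\alpha)}{\eta^-_{\chi,0}(\alpha)}\wcA_{\gamma,P,m}(\alpha-\chi)$ minus $\wV^{\alpha,1}_{\chi,n}$; the $X^2$ part contributes the $\wcA_{\gamma,P,n}(\alpha+\chi)$ terms weighted by $\tfrac{\Gamma_{n,2}}{\Gamma_{n,1}}\tfrac{\Gamma_{0,1}}{\Gamma_{0,2}}$ (the extra $\Gamma_0$ ratios come from the normalization factor $W^-_\chi\eta^-_{\chi,0}\cA_{\gamma,P,0}$ being rewritten using the $n=0$ relation \eqref{eq:shift-0}), minus the $\wV^{\alpha,2}_{\chi,n}$ term with its phase prefactor. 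Careful bookkeeping of the phases and connection coefficients then reproduces \eqref{eq:shift-n} exactly.

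\textbf{Step 3: domain of validity and analyticity.} Finally, all identities above are first derived for $\alpha$ in a real interval where the GMC expressions and hypergeometric quantities are manifestly valid (for $\phi^{\alpha,2}_{\chi,n,1}(0)$ this is roughly $\alpha\in(\chi,Q)$ with $C_\chi\notin\ZZ$ per Theorem \ref{thm:series-opes}, using also $\alpha\in(\frac{\gamma}{2},\frac{2}{\gamma})\cup(Q-\alpha_0,Q)$ in the OPE with reflection), and then promoted to a complex neighborhood of $(-\frac{4}{\gamma}+\chi,2Q-\chi)$ by the analytic-continuation statements: $\cA_{\gamma,P,n}(\alpha)$ via Theorem \ref{thm:series-opes}, $G^{\alpha,j}_{\chi,n,i}$ and $\phi^{\alpha,j}_{\chi,n,i}$ via Corollary \ref{corr:gap-extend}, and $\wcA_{\gamma,P,n}$ via \eqref{eq:Atitlde}. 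Since both sides of \eqref{eq:shift-0} and \eqref{eq:shift-n} are then meromorphic in $\alpha$ on this neighborhood and agree on a real interval with nonempty interior, they agree everywhere on the neighborhood, possibly after removing the (explicit, discrete) zeros of $\cA_{\gamma,P,0}(\alpha-\chi)$ and poles of the $\Gamma$-factors.

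\textbf{Main obstacle.} I expect the main difficulty to be Step 1 and the phase bookkeeping in Step 2: correctly tracking the branch of $w^{1-C_\chi}$ across the cut $(-\infty,0]$, the two different analytic extensions $\phi^\alpha_{\chi,n,1}$ versus $\phi^\alpha_{\chi,n,2}$ and their monodromy phases from Lemma \ref{lem:phase-phi}, and the precise form of the hypergeometric connection coefficients (including the role of $C_\chi-A_{\chi,n}-B_{\chi,n}=\frac12$) so that the combination collapses to exactly the coefficients $\frac{\Gamma_{n,2}}{\Gamma_{n,1}}\frac{\Gamma_{0,1}}{\Gamma_{0,2}}$ and $\frac{1+e^{\pi\chi P+\ii\pi l_\chi}}{1-e^{\pi\chi P-\ii\pi l_\chi}}$ appearing in \eqref{eq:shift-n}. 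Everything else is a matter of substituting known formulas and rearranging.
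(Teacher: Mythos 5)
Your proposal is correct and follows essentially the same route as the paper's proof: decompose $\phi^\alpha_{\chi,n,i}$ into $G^\alpha_{\chi,n,i}$ plus the hypergeometric basis, use the connection coefficients $\Gamma_{n,1},\Gamma_{n,2}$ together with $\phi^\alpha_{\chi,n,1}(1)=\phi^\alpha_{\chi,n,2}(1)$, $G^\alpha_{\chi,n,i}(1)=0$, and the monodromy phases from Lemma~\ref{lem:phase-phi} (via Proposition~\ref{prop:phi-G}) to get $(1-e^{\pi\chi P-\ii\pi l_\chi})X^1_{\chi,n,1}=-\frac{\Gamma_{n,2}}{\Gamma_{n,1}}(1+e^{\pi\chi P+\ii\pi l_\chi})X^2_{\chi,n,1}$, then substitute the OPE of Theorem~\ref{thm:series-opes} and normalize by the $n=0$ equation. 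Your identification of where the extra $\frac{\Gamma_{0,1}}{\Gamma_{0,2}}$ ratio enters (from re-expressing $\cA_{\gamma,P,0}(\alpha-\chi)$ through \eqref{eq:shift-0}) and your treatment of the domain of validity both match the paper.
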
 

\xin{Our second step is to get the explicit expression~\eqref{eq:norm-answer} below for $\cA_{\gamma, P, 0}(\alpha)$. We prove it in Section~\ref{sec:denom-comp} by checking that the right side of~\eqref{eq:norm-answer} satisfies the shift equation~\eqref{eq:shift-0} as well. Proposition~\ref{prop:DenominatorBlocks} allows us to explicitly compute the normalization $Z_{\gamma, P}^{\alpha}(q)$ in Definition \ref{def:conf-block} thanks to \eqref{eq:link-Z-A}.}
\begin{prop} \label{prop:DenominatorBlocks}
For $\gamma \in (0,2)$, $\alpha \in (-\frac{4}{\gamma}, Q)$, and $P \in \RR$, we have
\begin{multline} \label{eq:norm-answer}
\cA_{\gamma, P, 0}(\alpha) = e^{\frac{\ii \pi \alpha^2}{2}} \left(\frac{\gamma}{2}\right)^{\frac{\gamma \alpha}{4}} e^{-\frac{\pi \alpha P}{2}}
\Gamma(1 - \frac{\gamma^2}{4})^{\frac{\alpha}{\gamma}}\\
\frac{\Gamma_{\frac{\gamma}{2}}(Q - \frac{\alpha}{2}) \Gamma_{\frac{\gamma}{2}}(\frac{2}{\gamma} + \frac{\alpha}{2}) \Gamma_{\frac{\gamma}{2}}(Q - \frac{\alpha}{2} - \ii P) \Gamma_{\frac{\gamma}{2}}(Q - \frac{\alpha}{2} + \ii P)}
     {\Gamma_{\frac{\gamma}{2}}(\frac{2}{\gamma})\Gamma_{\frac{\gamma}{2}}(Q - \ii P)\Gamma_{\frac{\gamma}{2}}(Q + \ii P)\Gamma_{\frac{\gamma}{2}}(Q - \alpha)}.
\end{multline}
\end{prop}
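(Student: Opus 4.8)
The plan is to determine $\cA_{\gamma,P,0}(\alpha)$ from the pair of shift equations~\eqref{eq:shift-0} (one for $\chi=\frac{\gamma}{2}$, one for $\chi=\frac{2}{\gamma}$) together with a single evaluation, and to check that the candidate formula on the right-hand side of~\eqref{eq:norm-answer}, which I will call $I(\alpha)$, solves the same constraints; note that $I(\alpha)$ is meromorphic and not identically zero on all of $\CC$.

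First I would record a boundary value. Setting $\alpha=0$ in~\eqref{eq:ca-def} gives $\cA^q_{\gamma,P}(0)=q^{1/6}\eta(q)^{-2}=\prod_{n\ge1}(1-q^{2n})^{-2}$ by Lemma~\ref{lem:eta}, so its $q^0$-coefficient satisfies $\cA_{\gamma,P,0}(0)=1$; on the other hand $I(0)=1$, since at $\alpha=0$ all the $\alpha$-dependent elementary prefactors are trivial and the quotient of $\Gamma_{\gamma/2}$-factors telescopes to $1$. This identifies the two functions at one point.

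Next I would verify that $I(\alpha)$ satisfies~\eqref{eq:shift-0} for both values of $\chi$. Replacing $\alpha$ by $\alpha\mp\chi$ shifts the arguments of the $\alpha$-dependent $\Gamma_{\gamma/2}$-factors in $I$ --- namely $\Gamma_{\gamma/2}(Q-\frac{\alpha}{2})$, $\Gamma_{\gamma/2}(\frac{2}{\gamma}+\frac{\alpha}{2})$, $\Gamma_{\gamma/2}(Q-\frac{\alpha}{2}\pm\ii P)$ and $\Gamma_{\gamma/2}(Q-\alpha)$ --- by exactly $\chi\in\{\frac{\gamma}{2},\frac{2}{\gamma}\}$, so the ratio $I(\alpha-\chi)/I(\alpha+\chi)$ collapses, via the two functional equations for $\Gamma_{\gamma/2}$ recorded in Appendix~\ref{subsec:gamma}, into an explicit product of ordinary $\Gamma$-functions and elementary factors; the prefactors $e^{\ii\pi\alpha^2/2}$, $(\gamma/2)^{\gamma\alpha/4}$, $e^{-\pi\alpha P/2}$, $\Gamma(1-\frac{\gamma^2}{4})^{\alpha/\gamma}$ contribute further elementary ratios. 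It then remains to substitute the explicit expressions for $W^\pm_\chi(\alpha,\gamma)$ from~\eqref{eq:w1}--\eqref{eq:w2}, for $\eta^\pm_{\chi,0}(\alpha)$ from~\eqref{eq:eta-m}--\eqref{eq:eta-p}, and for the connection coefficients $\Gamma_{0,1},\Gamma_{0,2}$ from~\eqref{eq:ccoef} into~\eqref{eq:shift-0}, and to match the two sides. I expect this to be the main obstacle: it is a bookkeeping exercise tracking all powers of $2$, $\pi$, $\gamma$, $e^{\ii\pi}$ and all $\Gamma$-factors and checking their cancellation, and although I anticipate no conceptual difficulty, it is where essentially all the work lies.

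Finally I would conclude by a uniqueness argument of the type used in the proof of Proposition~\ref{prop:shift-unique}. Assume first that $\gamma^2$ is irrational. By Theorem~\ref{thm:series-opes}, $\cA_{\gamma,P,0}$ is analytic on a complex neighborhood of $(-\frac{4}{\gamma},2Q)$, and iterating the two relations~\eqref{eq:shift-0} --- whose coefficients are meromorphic in $\alpha$ and not identically zero --- yields a meromorphic extension of $\cA_{\gamma,P,0}$ to a complex neighborhood of $\RR$ on which both relations continue to hold. The ratio $g:=\cA_{\gamma,P,0}/I$ is then meromorphic near $\RR$ and satisfies $g(\alpha-\chi)=g(\alpha+\chi)$ for $\chi\in\{\frac{\gamma}{2},\frac{2}{\gamma}\}$, hence is periodic with periods $\gamma$ and $\frac{4}{\gamma}$; since $\gamma^2\notin\QQ$ these generate a dense subgroup of $\RR$, forcing $g$ to be constant, and $g(0)=1$ gives $g\equiv1$, proving~\eqref{eq:norm-answer} for such $\gamma$. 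The general case follows because both sides of~\eqref{eq:norm-answer} are continuous in $\gamma\in(0,2)$ for fixed $\alpha,P$ --- the left side since the GMC moments in~\eqref{eq:ca-def} and their $q^0$-Taylor coefficients depend continuously on $\gamma$, with the uniform integrability supplied by the moment bounds of Lemma~\ref{lem:GMC-moment}, and the right side by inspection --- and $\{\gamma\in(0,2):\gamma^2\notin\QQ\}$ is dense. As a sanity check on the form of~\eqref{eq:norm-answer}, one can note that, up to the explicit prefactors, $\cA_{\gamma,P,0}(\alpha)$ is a moment of a one-dimensional GMC on $\partial\D$ with a boundary insertion of weight $\alpha$ and a bulk contribution of momentum $Q\pm\ii P$, i.e.\ a Liouville bulk--boundary correlator on the disk, which is why Barnes $\Gamma_{\gamma/2}$-functions with precisely these arguments appear.
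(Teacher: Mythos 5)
Your proposal is correct and follows essentially the same route as the paper: rewrite~\eqref{eq:shift-0} as a $2\chi$-shift relation with an explicit meromorphic coefficient, verify via~\eqref{eq:sh_double_gamma} that the claimed formula satisfies the same relation for both $\chi\in\{\frac{\gamma}{2},\frac{2}{\gamma}\}$, invoke the density/uniqueness argument of Proposition~\ref{prop:shift-unique} for $\gamma^2\notin\QQ$ and continuity in $\gamma$ for the rest, and finally pin the overall constant by evaluating at $\alpha=0$. The only cosmetic difference is that you normalize at $\alpha=0$ from the outset, while the paper first picks a generic $\alpha_0$ (avoiding zeros and poles of $Y_0$) and only deduces $c=1$ at the end.
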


\xin{Our third step is to prove a uniqueness result for a shift equation closely related to~\eqref{eq:shift-n}.}  For $n\in\NN$, consider the shift equation
\begin{equation} \label{eq:abs-shift}
X_n(\alpha - \chi) = Y_n(\chi, \alpha) X_n(\alpha + \chi) + Z_n(\chi, \alpha)
\end{equation}
on unknown function $X_n(\alpha)$, where we set
$Y_n(\chi, \alpha)\defeq \frac{\Gamma_{n, 2} \Gamma_{0, 1}}{\Gamma_{n, 1} \Gamma_{0, 2}}$ and 
\begin{multline} \label{eq:zn-def}
Z_n(\chi, \alpha) \defeq - \sum_{m = 0}^{n - 1} \frac{\eta_{\chi, n - m}^-(\alpha)}{\eta_{\chi, 0}^-(\alpha)} \wcA_{\gamma, P, m}(\alpha - \chi)\\
+ \frac{\Gamma_{n, 2} \Gamma_{0, 1}}{\Gamma_{n, 1} \Gamma_{0, 2}} \sum_{m = 0}^{n - 1} \frac{\eta_{\chi, n - m}^+(\alpha)}{\eta_{\chi, 0}^+(\alpha)} \wcA_{\gamma, P, m}(\alpha + \chi)
+ \frac{\Gamma_{n, 2}}{\Gamma_{n, 1}} \frac{1 + e^{\pi \chi P + \ii \pi l_\chi}}{1 - e^{\pi \chi P - \ii \pi l_\chi}} \wV_{\chi, n}^{\alpha, 2} +  \wV_{\chi, n}^{\alpha, 1}.
\end{multline}
By~\eqref{eq:shift-n},  we see that \eqref{eq:abs-shift} holds
with $\wcA_{\gamma, P, n}$ in place of $X_n$ for each $n\in\NN$. \xin{However,
it is crucial that in~\eqref{eq:abs-shift} we view $X_n$ as the only unknown function while $Y_n$ and $Z_n$ are viewed as given. It is the uniqueness of this shift equation that is most convenient  for  the proof of Theorem~\ref{thm:nek-block}. We state this uniqueness as Proposition~\ref{prop:shift-unique} and prove it in Section~\ref{sec:denom-comp} along with Proposition~\ref{prop:DenominatorBlocks}.}
\begin{prop} \label{prop:shift-unique}
Fix $\gamma\in(0,2)$ with $\gamma^2$ irrational and $P\in \RR$. For $n\in\NN$, let
$X^1_n(\alpha)$ and $X^2_n(\alpha)$ be meromorphic functions on a complex neighborhood $V$ of
$(-\frac{4}{\gamma}, 2Q)$.  Suppose \eqref{eq:abs-shift} for $\chi \in \{\frac{\gamma}{2}, \frac{2}{\gamma}\}$
and $\alpha\in(-\frac{4}{\gamma}+\chi, 2Q - \chi)$ holds with $X^i_n$ in place of $X_n$ for $i=1,2$, and moreover,  $X^1_n(\alpha_0) = X^2_n(\alpha_0)$ for some $\alpha_0 \in (-\frac{4}{\gamma}+\chi, 2Q - \chi)$.  Then
$X^1_n(\alpha) = X^2_n(\alpha)$ for all $\alpha\in V$.
\end{prop}

Our fourth step is to establish Zamolodchikov's recursion when $-\frac{\alpha}{\gamma}\in \NN$: 
\begin{theorem}\label{thm:int-RecursionThm} 
Fix $\gamma \in (0,2)$, $\alpha \in (-\frac{4}{\gamma}, Q)$, and $q\in (0,1)$. If $-\frac{\alpha}{\gamma}\in \NN$, then $P\mapsto \wcA^q_{\gamma, P}(\alpha)$ admits a meromorphic extension to  all of $\CC$ under which
\begin{equation} \label{eq:int-ZRecur}
\wcA^q_{\gamma, P}(\alpha) =
\sum_{n,m = 1}^\infty  q^{2nm}\frac{R_{\gamma, m, n}(\alpha)}{P^{2} - P^2_{m,n}}\wcA^q_{\gamma, P_{-m, n}}(\alpha)
+ [q^{-\frac{1}{12}} \eta(q)]^{\alpha(Q - \frac{\alpha}{2}) - 2},
\end{equation}
where $R_{\gamma, m, n}(\alpha)$ and $P_{m, n}$ are explicitly defined in  (\ref{eq:rmn-def}) and (\ref{eq:pmn-def}).
\end{theorem}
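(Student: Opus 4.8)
\textbf{Proof proposal for Theorem~\ref{thm:int-RecursionThm}.}

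The plan is to prove the recursion \eqref{eq:int-ZRecur} for $\wcA^q_{\gamma,P}(\alpha)$ when $N := -\frac{\alpha}{\gamma}\in\NN$ by exploiting the Dotsenko--Fateev integral representation of $\cG^\alpha_{\gamma,P}(q)$ that holds in this case, and then matching it against the known Zamolodchikov recursion for the conformal block. First I would record that, by Lemma~\ref{lem:DF} (the direct Gaussian computation sketched around \eqref{eq:intro-DF}), when $N=-\frac{\alpha}{\gamma}\in\NN$ the GMC moment in \eqref{eq:def-G-block} becomes, up to an explicit prefactor, the genuine $N$-fold contour integral
\[
\mathcal I_N(q,P) := \Big(\int_0^1\Big)^N \prod_{1\le i<j\le N}|\Theta_\tau(x_i-x_j)|^{-\frac{\gamma^2}{4}}\prod_{i=1}^N\Theta_\tau(x_i)^{-\frac{\alpha\gamma}{2}}e^{\pi\gamma P x_i}\prod_{i=1}^N dx_i,
\]
so that $\wcA^q_{\gamma,P}(\alpha)$ equals $\mathcal I_N$ times an explicit $q$- and $P$-dependent factor coming from \eqref{eq:ca-def}, \eqref{eq:Atitlde}, \eqref{eq:ratioA}. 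The meromorphic continuation in $P$ to all of $\CC$ is clear from this integral: the integrand is entire in $P$, and the only obstruction to convergence is the diagonal/boundary singularities, whose analytic continuation in $P$ is standard for Dotsenko--Fateev integrals (one deforms the contours of integration off $[0,1]$, picking up poles exactly at $P=\pm P_{m,n}$ — this is the source of the poles in \eqref{eq:int-ZRecur}).

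The core of the argument is then to show that the $q$-expansion of $\wcA^q_{\gamma,P}(\alpha)$ satisfies the recursion. I would do this by verifying that $[q^{-1/12}\eta(q)]^{2-\alpha(Q-\frac{\alpha}{2})}\wcA^q_{\gamma,P}(\alpha)$, which by \eqref{eq:block-def-nek} and \eqref{eq:block-def} is a candidate for $\cF^\alpha_{\gamma,P}(q)$ when $N\in\NN$, solves \eqref{eq:z-recur}. The key inputs are: (i) the residue of $\mathcal I_N$ (equivalently of the conformal block candidate) at $P=P_{m,n}$, as a function of $q$, must be proportional to the same integral with $\alpha$ unchanged but $P$ replaced by $P_{-m,n}$ and an overall factor $q^{2mn}$ — this is where the contour-pinching analysis of the Dotsenko--Fateev integral enters, and the combinatorial factor $R_{\gamma,m,n}(\alpha)$ emerges from counting which clusters of the $N$ variables collide at $0$ and at $1$ (or wrap around the torus) and evaluating the resulting Selberg-type integral; and (ii) the behavior as $P\to\infty$, where all but the leading term die and one is left with the Dedekind eta contribution $q^{1/12}\eta(q)^{-1}$, matching the inhomogeneous term in \eqref{eq:z-recur} after accounting for the normalization $Z$ via \eqref{eq:link-Z-A} and \eqref{eq:block-def}. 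Concretely, I would show the function $F(q,P):=[q^{-1/12}\eta(q)]^{2-\alpha(Q-\frac{\alpha}{2})}\wcA^q_{\gamma,P}(\alpha)$ is meromorphic in $P^2$ with simple poles only at $P^2=P_{m,n}^2$, has the correct residues, and tends to $q^{1/12}\eta(q)^{-1}$ as $P\to\infty$ with all $q$-powers controlled; since a meromorphic function of $P^2$ with prescribed simple poles, prescribed residues, and prescribed limit at infinity is unique, $F$ must equal the right-hand side of \eqref{eq:z-recur} evaluated with $F$ itself in the sum — i.e.\ $F$ satisfies Zamolodchikov's recursion, which after dividing by the eta factor gives \eqref{eq:int-ZRecur}.

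An alternative (and perhaps cleaner) route, which I would pursue in parallel, is to cite that the Dotsenko--Fateev integral $\mathcal I_N$ is already known in the physics literature \cite{FLNO09} to represent the conformal block $\cF^\alpha_{\gamma,P}(q)$ when $N\in\NN$, and that \cite[Section~2]{FL10} proves Zamolodchikov's recursion \eqref{eq:z-recur} rigorously for $\cF^\alpha_{\gamma,P}(q)$ as a formal $q$-series (Proposition~\ref{prop:torus-z-recursion}); one then only needs to (a) verify rigorously the identification $\mathcal I_N \leftrightarrow \cF^\alpha_{\gamma,P}(q)$ by checking both satisfy the recursion and coincide at leading order, and (b) translate the recursion for $\cF$ into the recursion \eqref{eq:int-ZRecur} for $\wcA^q$ via the explicit prefactor in \eqref{eq:block-def}, \eqref{eq:block-def-nek}. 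The main obstacle, in either approach, is step (i): rigorously computing the residue at $P=P_{m,n}$ of the Dotsenko--Fateev integral on the torus, i.e.\ establishing that the contour-pinching produces exactly the factor $R_{\gamma,m,n}(\alpha)q^{2mn}$ times the shifted integral. This requires careful bookkeeping of the theta-function OPE $\Theta_\tau(x_i-x_j)\sim \Theta'_\tau(0)(x_i-x_j)$ as variables collide, a Selberg integral evaluation for each colliding cluster, and verifying that the $q$-dependence assembles into the single power $q^{2mn}$ — this is the genuinely technical heart of the proof and is what Section~\ref{sec:int-Z} will presumably carry out in detail.
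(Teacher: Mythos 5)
Your global skeleton is right — reduce to the Dotsenko--Fateev integral via Lemma~\ref{lem:DF}, characterize $\wcA^q_{\gamma,P}(\alpha)$ as a meromorphic function of $P$ by its poles, residues, and limit at infinity, and conclude by partial fractions on each $q$-coefficient — but the mechanism you propose for producing the poles and residues is not the one that works, and as described it would fail. The $N$-fold integral $\mathcal I_N(q,P)$ is an integral over the compact set $[0,1]^N$ of an integrand that is \emph{entire} in $P$ (the $P$-dependence enters only through the bounded factor $e^{\pi\gamma P\sum x_i}$), so there is no contour pinching in $P$ and $\cA^q_{\gamma,P}(\alpha)$ has no poles at all: it is entire. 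The poles of $\wcA^q_{\gamma,P}(\alpha)=\cA^q_{\gamma,P}(\alpha)/\cA_{\gamma,P,0}(\alpha)$ come entirely from the zeros of the explicit normalization $\cA_{\gamma,P,0}(\alpha)$, which by Proposition~\ref{prop:DenominatorBlocks} (specialized in \eqref{corr:int-denom}) is a finite product of Gamma functions with simple zeros exactly at $P=\pm P_{m,n}$, $1\le m\le N$, $n\in\NN$; the residue of its reciprocal is a pure Gamma-function computation, with no Selberg evaluation of colliding clusters and no theta-function OPE.

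The second ingredient you are missing is how the factor $q^{2mn}$ and the shift $P_{m,n}\mapsto P_{-m,n}$ enter. In the paper this is Proposition~\ref{corr:many-shifts}, proved by a rotation-of-contour argument on a \emph{single} integration variable: one integrates $g_{m,n}(u)f(P,u)$ around the boundary of the fundamental parallelogram and uses the quasi-periodicity \eqref{eq:theta-shift0}--\eqref{eq:theta-shift1} of $\Theta_\tau$ under $u\mapsto u+1$ and $u\mapsto u+\tau$ to get the exact identity $\cA^q_{\gamma,P_{m,n}}(\alpha)=q^{2nm}e^{-\ii\pi\alpha\gamma m/2}\cA^q_{\gamma,P_{-m,n}}(\alpha)$, valid at the special points $P_{m,n}$ where $e^{\pi\gamma P_{m,n}}$ is compatible with the periodicity. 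Combining this identity with the Gamma-function residue of $\cA_{\gamma,P,0}(\alpha)^{-1}$ yields \eqref{eq:resPmn} with the constant $R_{\gamma,m,n}(\alpha)$. Finally, to run your uniqueness argument you also need that each $q$-coefficient $\wcA_{\gamma,P,k}(\alpha)$ is a \emph{rational} function of $P$ (finitely many poles plus polynomial growth, the latter obtained from Stirling estimates on the Gamma factors and a uniform bound on the GMC moment for $\Re P\le 0$); "meromorphic with prescribed poles, residues and limit at infinity" is not by itself enough without this growth control.
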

\xin{The key to the proof of Theorem~\ref{thm:int-RecursionThm} is that when $-\frac{\alpha}{\gamma}\in \NN$ we have a  Dotsenko-Fateev integral representation of   $\cA^q_{\gamma, P}(\alpha)$, which we analyze in detail in Section~\ref{sec:pre-Z}. From this representation and Proposition~\ref{prop:DenominatorBlocks},  we will show  in Section~\ref{sec:int-Z} that for $-\frac{\alpha}{\gamma}\in \NN$ all poles of $\wcA^q_{\gamma, P}(\alpha)$ are simple, lie in $\{\pm P_{m, n}: n\in \NN \textrm{ and }1 \leq m \leq N\}$, and have residues given by $R_{\gamma,m,n}(\alpha)$ in~\eqref{eq:int-ZRecur}. To conclude the proof, we then show in Section~\ref{sec:int-Z}  that $\lim_{P\to\infty} \wcA^q_{\gamma, P}(\alpha)= [q^{-\frac{1}{12}} \eta(q)]^{\alpha(Q - \frac{\alpha}{2}) - 2}$, giving the final term in~\eqref{eq:int-ZRecur}. }
 
Our fifth step is to show Theorem \ref{thm:nek-block} for $-\frac{\alpha}{\gamma}\in \NN$
using Theorem \ref{thm:int-RecursionThm}. \xin{This argument  is known in physics, but we provide it below for completeness.}
\begin{theorem} \label{thm:int-nek-block}
Suppose $-\frac{\alpha}{\gamma}\in \NN$ for $\gamma \in (0,2)$ and $\alpha \in (-\frac{4}{\gamma}, Q)$, and $q \in (0, 1)$.
Let $\wcA^q_{\gamma, P}(\alpha)$ be defined under the meromorphic extension to $P \in \CC$ from Theorem~\ref{thm:int-RecursionThm}.
Then  $\cZ^\alpha_{\gamma, P}(q) = \wcA^q_{\gamma, P}(\alpha)$ as formal $q$-series.
\end{theorem}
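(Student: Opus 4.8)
\textbf{Proof plan for Theorem~\ref{thm:int-nek-block}.}

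The plan is to show that both $\cZ^\alpha_{\gamma, P}(q)$ and $\wcA^q_{\gamma, P}(\alpha)$ satisfy Zamolodchikov's recursion and then invoke the uniqueness of solutions to that recursion. By Proposition~\ref{prop:torus-z-recursion}, the formal $q$-series $\cF^\alpha_{\gamma, P}(q) = (q^{-1/12}\eta(q))^{1 - \alpha(Q - \frac{\alpha}{2})} \cZ^\alpha_{\gamma, P}(q)$ satisfies~\eqref{eq:z-recur}. Dividing~\eqref{eq:z-recur} by $(q^{-1/12}\eta(q))^{1-\alpha(Q-\frac{\alpha}{2})}$ and using that the intermediate momentum $P_{-m,n}$ gives the \emph{same} conformal dimension $\Delta_\alpha$ (only the dimension $\Delta$ changes, so the $\eta$-prefactor is unchanged), one obtains that $\cZ^\alpha_{\gamma, P}(q)$ itself satisfies
\[
\cZ^\alpha_{\gamma, P}(q) = \sum_{n,m=1}^\infty q^{2nm} \frac{R_{\gamma,m,n}(\alpha)}{P^2 - P_{m,n}^2} \cZ^\alpha_{\gamma, P_{-m,n}}(q) + [q^{-1/12}\eta(q)]^{\alpha(Q - \frac{\alpha}{2}) - 2},
\]
which is exactly~\eqref{eq:int-ZRecur} with $\cZ$ in place of $\wcA$. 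Meanwhile Theorem~\ref{thm:int-RecursionThm} gives that $\wcA^q_{\gamma, P}(\alpha)$ satisfies the identical recursion (after its meromorphic continuation in $P$). So the first step is just to rewrite Zamolodchikov's recursion in the $\cZ$-normalization and observe the $\eta$-prefactor matches the inhomogeneous term of~\eqref{eq:int-ZRecur}.

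Second, I would set up the uniqueness argument at the level of $q$-series coefficients. Write $\cZ^\alpha_{\gamma, P}(q) = 1 + \sum_{k\ge 1}\cZ_{\gamma, P, k}(\alpha) q^{2k}$ and $\wcA^q_{\gamma, P}(\alpha) = \sum_{n\ge 0}\wcA_{\gamma, P, n}(\alpha) q^n$, and let $D_n(P) := \wcA_{\gamma,P,n}(\alpha) - [q^n](\cZ^\alpha_{\gamma,P}(q))$ be the difference of the $n$-th coefficients (rational functions of $P$). Extracting the coefficient of $q^n$ from the common recursion, the inhomogeneous term $[q^{-1/12}\eta(q)]^{\alpha(Q-\frac{\alpha}{2})-2}$ cancels entirely, and on the right side the double sum $\sum_{n,m} q^{2nm}(\cdots)$ only involves coefficients of $\wcA^q_{\gamma, P_{-m,n}}(\alpha)$ of order $< n$ (since $2nm \ge 2$). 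Hence $D_n(P)$ satisfies a \emph{homogeneous, lower-triangular} recursion: $D_n(P) = \sum_{m, k\,:\, 2mk \le n} \frac{R_{\gamma,m,k}(\alpha)}{P^2 - P_{m,k}^2} D_{n - 2mk}(P_{-m,k})$, with $D_0 \equiv 0$ (both series start at $1$; note $\wcA_{\gamma,P,0}=1$ after the normalization~\eqref{eq:Atitlde}). Induction on $n$ then gives $D_n \equiv 0$ for all $n$, i.e.\ $\cZ^\alpha_{\gamma,P}(q) = \wcA^q_{\gamma,P}(\alpha)$ as formal $q$-series. A subtlety to address: one must check that the $q$-expansions make sense with the same ``$q^{2k}$ only'' parity — this follows because $\wcA^q_{\gamma,P}(\alpha)$ is even in $q$ when $-\alpha/\gamma\in\NN$, which can be read off from the Dotsenko--Fateev integral representation (Lemma~\ref{lem:DF}) or directly from~\eqref{eq:int-ZRecur} since $R_{\gamma,m,n}$ appears with $q^{2nm}$ and the base case $[q^{-1/12}\eta(q)]^{\ast}$ is even.

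I expect the main obstacle to be \emph{not} the uniqueness induction, which is routine, but rather the bookkeeping around the meromorphic continuation in $P$ and the normalization. Specifically, $\wcA^q_{\gamma,P}(\alpha)$ is a priori defined for $P\in\RR$ via a GMC expectation, and Theorem~\ref{thm:int-RecursionThm} asserts it extends meromorphically in $P$; one must make sure that the equality $\cZ = \wcA$ is being claimed for the \emph{rational functions} $\cZ_{\gamma,P,k}(\alpha)$ (which are manifestly meromorphic in $P$) matched against these continued coefficients, and that the poles at $\pm P_{m,n}$ predicted on both sides are consistent. I would handle this by noting that each coefficient $\wcA_{\gamma,P,n}(\alpha)$ produced by iterating~\eqref{eq:int-ZRecur} is, by construction, a finite sum of products of the explicit rational factors $R_{\gamma,m,k}(\alpha)/(P^2 - P_{m,k}^2)$ with lower-order coefficients, hence automatically a rational function of $P$; the induction then takes place entirely in the field of rational functions in $P$, sidestepping any analytic continuation concerns. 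Finally I would remark that combining Theorem~\ref{thm:int-nek-block} with the analyticity results of Section~\ref{sec:block-def} is what later lets one transfer the shift equations from $\cG$ to $\cF$ in the full range of parameters, as outlined in Section~\ref{sec:method-sum}.
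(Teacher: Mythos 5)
Your proposal is correct and follows essentially the same route as the paper: both arguments observe that $\cZ^\alpha_{\gamma,P}(q)$ (via Proposition~\ref{prop:torus-z-recursion} after dividing out the $\eta$-prefactor) and the meromorphic continuation of $\wcA^q_{\gamma,P}(\alpha)$ (via Theorem~\ref{thm:int-RecursionThm}) solve the same recursion~\eqref{eq:int-ZRecur}, then subtract and induct on the $q$-series coefficients, using that the double sum only involves strictly lower-order coefficients. Your extra remarks on parity and on working in the field of rational functions of $P$ are harmless refinements of the same argument.
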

\begin{proof}
By Theorem \ref{thm:int-RecursionThm}, (\ref{eq:block-def-nek}),
and (\ref{eq:z-recur}), when $N \in \NN$, the formal $q$-series expansions
for both $\cZ^\alpha_{\gamma, P}(q)$ and the meromorphic continuation of $\wcA^q_{\gamma, P}(\alpha)$ solve
the recursion (\ref{eq:int-ZRecur}).  Denoting their difference by
$\Delta^q_{\gamma, P}(\alpha) = \sum_{n = 0}^\infty \Delta_{\gamma, P, n}(\alpha) q^n$,
we find by subtraction that
\[
\sum_{n = 0}^\infty \Delta_{\gamma, P, n}(\alpha) q^n =\sum_{n,m = 1}^\infty  q^{2nm} \frac{R_{\gamma, m, n}(\alpha)}{P^2 - P_{m, n}^2}  \sum_{k = 0}^\infty \Delta_{\gamma, P_{-m,n}, k}(\alpha) q^k.
\] 
Equating $q$-series coefficients of both sides expresses $\Delta_{\gamma, P, n}(\alpha)$ as a linear
combination of $\Delta_{\gamma, P, m}(\alpha)$ with $m < n$.  By the form of the right hand side,
we find $\Delta_{\gamma, P, 0}(\alpha) = 0$, and an induction shows $\Delta_{\gamma, P, n}(\alpha) = 0$
as needed.
\end{proof}

Our final step is to put everything together and prove Theorem \ref{thm:nek-block}. 
\xin{This is done by  combining Theorem~\ref{thm:int-nek-block} with a detailed
analysis of the shift equation from Theorem~\ref{thm:shift} and
Proposition~\ref{prop:shift-unique}; see Section~\ref{sec:nek-block-proof}.} In the rest of the section we give detailed proofs for Steps 1,2,3,4,6 above.


\subsection{Proof of Theorem~\ref{thm:shift}} \label{sec:shift}
For $i,j\in\{1,2\}$ and $n\in \NN_0$, recall $\phi_{\chi, n, i}^{\alpha, j}(w)$  and $\phi^\alpha_{\chi, n, i}(w) $ from Corollary~\ref{cor:decompose}, where
we use the analytic extension of $\phi_{\chi, n, i}^{\alpha, j}(w)$ in $\alpha$ from Corollary \ref{corr:gap-extend}.
Since $\phi^\alpha_{\chi, n, i}(w) - G_{\chi, n, i}^\alpha(w)$ is a special solution to the homogeneous variant of~\eqref{eq:hyper-op}, 
the discussion of the linear solution space around $0$ and $1$ of such differential equations in Appendix~\ref{subsec:homo}
implies that for some $X_{\chi, n, i}^j(\alpha), Y_{\chi, n, i}^j(\alpha)$ we have
\begin{align*}
&\phi^\alpha_{\chi, n, i}(w) =  G_{\chi, n, i}^\alpha(w) + X_{\chi, n, i}^1(\alpha)\, {_2F_1}(A_{\chi, n}, B_{\chi, n}, C_\chi; w) \\
&+ X_{\chi, n, i}^2(\alpha)\, w^{1 - C_\chi} {_2F_1}(1 + A_{\chi, n} - C_\chi, 1 + B_{\chi, n} - C_\chi, 2 - C_\chi; w); \\
&\phi^\alpha_{\chi, n, i}(w) = G_{\chi, n, i}^\alpha(w) + Y_{\chi, n, i}^1(\alpha)\, {_2F_1}(A_{\chi, n}, B_{\chi, n}, 1 + A_{\chi, n} + B_{\chi, n} - C_\chi; 1 - w)\\
&+ Y_{\chi, n, i}^2(\alpha)\, (1 - w)^{C_\chi - A_{\chi, n} - B_{\chi, n}} {_2F_1}(C_\chi - A_{\chi, n}, C_\chi - B_{\chi, n}, 1 + C_\chi - A_{\chi, n} - B_{\chi, n}; 1 - w).
\end{align*}
Together, these equations imply for $i \in \{1, 2\}$ that
\begin{align*}
\phi_{\chi, n, i}^{\alpha, 1}(w) &= G_{\chi, n, i}^{\alpha, 1}(w) + X_{\chi, n, i}^1(\alpha) \, {_2F_1}(A_{\chi, n}, B_{\chi, n}, C_{\chi}; w);\\
\phi_{\chi, n, i}^{\alpha, 2}(w) &= G_{\chi, n, i}^{\alpha, 2}(w)
+ X_{\chi, n, i}^2(\alpha)\, {_2F_1}(1 + A_{\chi, n} - C_\chi, 1 + B_{\chi, n} - C_\chi, 2 - C_\chi; w).
\end{align*}
 By the connection equation (\ref{eq:app-ccoeff}) \xin{with the coefficients $\Gamma_{n,i}$ in~\eqref{eq:ccoef},} we have
\[
Y_{\chi, n, i}^1(\alpha) = \Gamma_{n, 1} X_{\chi, n, i}^1(\alpha) + \Gamma_{n, 2} X_{\chi, n, i}^2(\alpha) \quad \textrm{for } i \in \{1, 2\}.
\]
Because $\phi^\alpha_{\chi, n, 1}(1) = \phi^\alpha_{\chi, n, 2}(1)$,
$G_{\chi, n, 1}^\alpha(1) = G_{\chi, n, 2}^\alpha(1) = 0$,
and $C_{\chi} - A_{\chi, n} - B_{\chi, n} = \frac{1}{2}$, this implies that
\begin{equation} \label{eq:connect-eq}
X_{\chi, n, 1}^1(\alpha) - X_{\chi, n, 2}^1(\alpha) = - \frac{\Gamma_{n, 2}}{\Gamma_{n, 1}} (X_{\chi, n, 1}^2(\alpha) - X_{\chi, n, 2}^2(\alpha)).
\end{equation}
\xin{By $\phi^{\alpha, 1}_{\chi, n, 2}(0) = e^{\pi \chi P - \ii \pi l_\chi} \phi^{\alpha, 1}_{\chi, n, 1}(0) $ and $ 
\phi^{\alpha, 2}_{\chi, n, 2}(0) = -e^{\pi \chi P + \ii \pi l_\chi} \phi^{\alpha, 2}_{\chi, n, 1}(0)$ from Lemma~\ref{lem:phase-phi}, we have}
\begin{align*}
X_{\chi, n, 2}^1&(\alpha) + G_{\chi, n, 2}^{\alpha, 1}(0) = \phi_{\chi, n, 2}^{\alpha, 1}(0) = e^{\pi \chi P - \ii \pi l_\chi} (X_{\chi, n, 1}^1(\alpha) + G_{\chi, n, 1}^{\alpha, 1}(0)) \\
X_{\chi, n, 2}^2&(\alpha) + G_{\chi, n, 2}^{\alpha, 2}(0) = \phi_{\chi, n, 2}^{\alpha, 2}(0) = -e^{\pi \chi P + \ii \pi l_\chi} (X_{\chi, n, 1}^2(\alpha) + G_{\chi, n, 1}^{\alpha, 2}(0))
\end{align*}
\xin{Combined with (\ref{eq:connect-eq})  and Proposition \ref{prop:phi-G} on $G_{\chi, n, i}^{\alpha, j}$, this gives}
\begin{equation}\label{eq:rel}
  (1 - e^{\pi \chi P - \ii \pi l_\chi}) X_{\chi, n, 1}^1(\alpha) = - \frac{\Gamma_{n, 2}}{\Gamma_{n, 1}} (1 + e^{\pi \chi P + \ii \pi l_\chi}) X_{\chi, n, 1}^2(\alpha).  
\end{equation} 
\xin{Recall $W^\pm_\chi(\alpha, \gamma)$ defined in (\ref{eq:w1}) and (\ref{eq:w2}). By Theorem~\ref{thm:series-opes} we have}
\begin{align*}
X_{\chi, n, 1}^1&(\alpha) + G_{\chi, n, 1}^{\alpha, 1}(0) = \phi_{\chi, n, 1}^{\alpha, 1}(0)\\
&= W_\chi^-(\alpha, \gamma) \Big[\eta_{\chi, 0}^-(\alpha) \cA_{\gamma, P, n}(\alpha - \chi) + \sum_{m = 0}^{n - 1} \eta_{\chi, n - m}^-(\alpha) \cA_{\gamma, P, m}(\alpha - \chi)\Big]\\
X_{\chi, n, 1}^2&(\alpha) + G_{\chi, n, 1}^{\alpha, 2}(0) = \phi_{\chi, n, 1}^{\alpha, 2}(0)\\
&= W_\chi^+(\alpha, \gamma)
\Big[\eta_{\chi, 0}^+(\alpha) \cA_{\gamma, P, n}(\alpha + \chi) + \sum_{m = 0}^{n - 1} \eta_{\chi, n - m}^+(\alpha) \cA_{\gamma, P, m}(\alpha + \chi)\Big].
\end{align*}
\xin{Recall our definition $V_{\chi, n}^{\alpha, j} =G_{\chi, n, 1}^{\alpha, j}(0)$.  By ~\eqref{eq:rel} we have} 
\begin{align*}
&\eta^-_{\chi, 0}(\alpha) \cA_{\gamma, P, n}(\alpha - \chi)\\
&= W^-_\chi(\alpha, \gamma)^{-1} (X_{\chi, n, 1}^1(\alpha) + G_{\chi, n, 1}^{\alpha, 1}(0)) - \sum_{m = 0}^{n - 1} \eta_{\chi, n - m}^-(\alpha) \cA_{\gamma, P, m}(\alpha - \chi)\nonumber\\
&= - W^+_\chi(\alpha, \gamma) W^-_\chi(\alpha, \gamma)^{-1} \frac{\Gamma_{n, 2}}{\Gamma_{n, 1}} \frac{1 + e^{\pi \chi P + \ii \pi l_\chi}}{1 - e^{\pi \chi P - \ii \pi l_\chi}} \eta_{\chi, 0}^+(\alpha) \cA_{\gamma, P, n}(\alpha + \chi)\nonumber\\
&\phantom{=} - W^+_\chi(\alpha, \gamma) W^-_\chi(\alpha, \gamma)^{-1} \frac{\Gamma_{n, 2}}{\Gamma_{n, 1}} \frac{1 + e^{\pi \chi P + \ii \pi l_\chi}}{1 - e^{\pi \chi P - \ii \pi l_\chi}} \sum_{m = 0}^{n - 1}\eta_{\chi, n - m}^+(\alpha) \cA_{\gamma, P, m}(\alpha + \chi)\nonumber\\
&\phantom{=} + W^-_\chi(\alpha, \gamma)^{-1} \frac{\Gamma_{n, 2}}{\Gamma_{n, 1}} \frac{1 + e^{\pi \chi P + \ii \pi l_\chi}}{1 - e^{\pi \chi P - \ii \pi l_\chi}} V_{\chi, n}^{\alpha, 2} + W^-_\chi(\alpha, \gamma)^{-1} V_{\chi, n}^{\alpha, 1} \\
&\phantom{=}- \sum_{m = 0}^{n - 1} \eta^-_{\chi, n - m}(\alpha) \cA_{\gamma, P, m}(\alpha - \chi).\nonumber
\end{align*}
Specializing the above equation to $n = 0$ yields~\eqref{eq:shift-0}. For $n \geq 1$, dividing both sides of 
the equation by $W^-_\chi(\alpha, \gamma)\eta_{\chi, 0}^-(\alpha) \cA_{\gamma, P, 0}(\alpha - \chi)$ and
applying (\ref{eq:shift-0}), we get ~\eqref{eq:shift-n}.  This concludes  the proof. \qed

\subsection{Proof of \xin{Propositions \ref{prop:DenominatorBlocks} and~\ref{prop:shift-unique}}} \label{sec:denom-comp}Let $A(\alpha)$ be the claimed expression for $\cA_{\gamma, P, 0}(\alpha)$ given by the right-hand side of \eqref{eq:norm-answer}. \xin{We first show that $A(\alpha)$  and $\cA_{\gamma, P, 0}(\alpha)$ satisfy the same shift equation.}
\begin{lemma}\label{lem:YW}
 \xin{We have
$A(\alpha - \chi) = Y_0(\alpha,\chi) A(\alpha +\chi)$}
where $Y_0(\alpha,\chi)$ equals 
\begin{equation*} 
e^{4\ii \pi l_\chi - 2\ii \pi \chi^2} e^{\pi \chi P} \Gamma(1 -\frac{\gamma^2}{4})^{-\frac{2 \chi}{\gamma}} 
\frac{\Gamma(\frac{2 \chi}{\gamma} - l_\chi)\Gamma(1 + 2 l_\chi - \chi^2)\Gamma(1 + 2 l_\chi)}{\Gamma(1 + l_\chi)\Gamma(1 + l_\chi - \ii \chi P) \Gamma(1 + l_\chi + \ii \chi P)} (\frac{4}{\gamma^2})^{ \mathbf{1}_{\chi = \frac{2}{\gamma}}}.    
\end{equation*} 
Moreover, \xin{\(\cA_{\gamma, P, 0}(\alpha - \chi)=Y_0(\alpha,\chi)  \cA_{\gamma, P, 0}(\alpha +\chi),\)
where $\cA_{\gamma, P, 0}(\alpha)$ is   extended to a complex neighborhood of $(-\frac{4}{\gamma},2Q)$  as in Theorem~\ref{thm:series-opes}.}
\end{lemma}
\begin{proof}
By  \eqref{eq:sh_double_gamma}, we have $A(\alpha - \chi) = Y_0(\alpha,\chi) A(\alpha +\chi)$.
 In light of \eqref{eq:shift-0}, to prove \(\cA_{\gamma, P, 0}(\alpha - \chi)=Y_0(\alpha,\chi)  \cA_{\gamma, P, 0}(\alpha +\chi),\) it suffices to show
\begin{equation}\label{eq:desired}
Y_0(\alpha,\chi) = - \frac{W^+_\chi(\alpha, \gamma)}{W^-_\chi(\alpha, \gamma)} \frac{\Gamma_{0, 2}}{\Gamma_{0, 1}}
\frac{1 + e^{\pi \chi P + \ii \pi l_\chi}}{1 - e^{\pi\chi P - \ii \pi l_\chi}}
\frac{\eta_{\chi, 0}^+(\alpha)}{\eta_{\chi, 0}^-(\alpha)}.
\end{equation}
where $W_\chi^\pm(\alpha, \gamma)$ and $\eta_{\chi, m}^\pm(\alpha)$ are as in (\ref{eq:w1})---(\ref{eq:eta-p}).
By (\ref{eq:w1}) and (\ref{eq:w2})   
\begin{multline*}
-\frac{W^+_\chi(\alpha, \gamma)}{W^-_\chi(\alpha, \gamma)}
= \frac{e^{2\ii \pi l_{\chi} - 2\ii \pi \chi^2} (2\pi e^{\ii \pi})^{-\frac{1}{3}( \frac{\gamma l_{\chi}}{\chi} + \frac{2 l_\chi}{\chi^2} - 8 l_{\chi} + \frac{6 l_{\chi}^2}{\chi^2} )}}
{ (2\pi e^{\ii \pi})^{-\frac{1}{3}(2 + \frac{2 \gamma l_\chi }{\chi}+ \frac{4 l_{\chi}}{\chi \gamma} + \frac{ 6 l_\chi^2}{\chi^2})}} \pi^{-2l_\chi - 1}\\
\revise{ \frac{1 - e^{2\pi \chi P - 2\ii \pi l_\chi}}{\chi(Q - \alpha)}\frac{\Gamma(\frac{2 \chi}{\gamma} - l_\chi)\Gamma(1 + 2 l_\chi - \chi^2) \Gamma(-2l_\chi)}{\Gamma(-l_\chi)\Gamma(1 -\frac{\gamma^2}{4})^{\frac{2 \chi}{\gamma}}}
	(\frac{4}{\gamma^2})^{\mathbf{1}_{\chi = \frac{2}{\gamma}}}.}
\end{multline*}
By the reflection and duplication formulas for the gamma function (see~\eqref{eq:reflection-sin} and~\eqref{eq:duplication}), we have
\begin{multline*}
\frac{\Gamma_{0, 2}}{\Gamma_{0, 1}}
= \frac{\Gamma(2 - C_\chi) \Gamma(C_\chi - A_{\chi, 0}) \Gamma(C_\chi - B_{\chi, 0})}{\Gamma(C_\chi) \Gamma(1 - A_{\chi, 0}) \Gamma(1 - B_{\chi, 0})}\\
\revise{=  \frac{2^{2l_\chi} \pi \Gamma(\frac{3}{2} + l_\chi) }
	{\Gamma(\frac{1}{2} - l_\chi) \Gamma(1 + l_\chi - \ii \chi P) \Gamma(1 + l_\chi + \ii \chi P) \cos(\frac{\pi}{2} l_\chi - \ii \pi \frac{\chi P}{2}) \cos(\frac{\pi}{2} l_\chi + \ii \pi \frac{\chi P}{2})}.}
\end{multline*}
By (\ref{eq:eta-m}) and (\ref{eq:eta-p}) we have
$\frac{\eta^+_{\chi, 0}(\alpha)}{\eta^-_{\chi, 0}(\alpha)} = (2\pi e^{\ii \pi})^{- \frac{4}{3}l_\chi - \frac{2}{3}}$.
The desired identity~\eqref{eq:desired} follows by combining these identities and simplifying.
\end{proof}
\begin{proof}[Proof of Proposition~\ref{prop:DenominatorBlocks}]

Recalling that $l_\chi= \frac{\chi^2}{2} - \frac{\alpha \chi}{2}$ from (\ref{eq:lchi}), we observe
that $Y_0(\chi, \alpha)$ is meromorphic in $\alpha\in\CC$ with countably many zeros and poles, so
we can find $\alpha_0\in (-\frac{4}{\gamma}+\chi, 2Q-\chi)$ such that $A(\alpha_0) \neq 0$ and $Y_0(\chi, \alpha)$ has no zeros
or poles in the set $\alpha_0 + \ZZ \gamma + \ZZ \frac{4}{\gamma}$.
Let $\Delta_0(\alpha)\defeq \cA_{\gamma, P, 0}(\alpha)-cA(\alpha)$ where $c$ is such that $\Delta_0(\alpha_0)=0$. \xin{By Lemma~\ref{lem:YW}, $\Delta_0(\alpha - \chi) = Y_0(\chi, \alpha) \Delta_0(\alpha + \chi)$ for $\chi\in \{\frac{\gamma}{2},\frac{2}{\gamma}\}$ and  $\alpha$ in a complex neighborhood of $(-\frac{4}{\gamma},2Q)$.}
Because the interval $(-\frac{4}{\gamma}, 2Q)$ has length bigger than $\gamma$, the function $\Delta_0(\alpha)$ admits a meromorphic extension to a complex neighborhood $U$ of $\RR$, which we still denote by $\Delta_0(\alpha)$, such that $\Delta_0(\alpha - \frac{\gamma}{2}) = \xin{Y_0(\frac{\gamma}{2}, \alpha)} \Delta_0(\alpha + \frac{\gamma}{2})$ for each $\alpha\in U$. \xin{By the meromorphicity of $\Delta_0(\alpha)$, we also have $\Delta_0(\alpha - \frac{2}{\gamma}) = Y_0(\frac{2}{\gamma}, \alpha) \Delta_0(\alpha + \frac{2}{\gamma})$ for each $\alpha\in U$. For  $\alpha\in \alpha_0 + \ZZ \gamma + \ZZ \frac{4}{\gamma}$, since $\Delta_0(\alpha_0)=0$ and $Y_0(\chi, \alpha)\neq 0$,  we have  $\Delta_0(\alpha) = 0$.} Note that $\alpha_0 + \ZZ \gamma + \ZZ \frac{4}{\gamma}$ is dense in $\RR$ when $\gamma^2 \notin \QQ$.  Since $\Delta_0$ is meromorphic on $U$, we must have $\Delta_0(\alpha) = 0$ for all $\alpha\in U$.
Now the continuity in $\gamma$ implies that $\cA_{\gamma, P, 0}(\alpha)-cA(\alpha)=\Delta_0(\alpha) = 0$ for all $\gamma \in (0, 2)$.
Since $\cA_{\gamma, P, 0}(0)=A(0)$ by direct computation, we must have $c=1$, meaning that  $\cA_{\gamma, P, 0}(\alpha) = A(\alpha)$
for all $\gamma\in(0,2)$  as desired. 
\end{proof}

\begin{proof}[Proof of Proposition~\ref{prop:shift-unique}]
Define $\Delta_n(\alpha) := X^1_n(\alpha) - X^2_n(\alpha)$.
Subtracting the given equations for $i = 1, 2$, we obtain for
$\chi \in \{\frac{\gamma}{2}, \frac{2}{\gamma}\}$ and $\alpha\in(-\frac{4}{\gamma}+\chi, 2Q - \chi)$
that $\Delta_n(\alpha - \chi) = Y_n(\chi, \alpha) \Delta_n(\alpha + \chi)$. 
Since $\Gamma$ has poles at $\{0,-1,-2,...\}$ and no zeros, for $P\in\RR$ and $n\in\NN$ the explicit expression
\begin{multline*}
Y_n(\chi, \alpha) = \frac{\Gamma(\frac{1}{2} - \frac{1}{2} l_\chi + \ii \frac{\chi}{2} \sqrt{P^2 + 2n}) \Gamma(\frac{1}{2} - \frac{1}{2} l_\chi - \ii \frac{\chi}{2} \sqrt{P^2 + 2n})}{\Gamma(1 + \frac{1}{2} l_\chi + \ii \frac{\chi}{2} \sqrt{P^2 + 2n}) \Gamma(1 + \frac{1}{2} l_\chi - \ii \frac{\chi}{2} \sqrt{P^2 + 2n})}\\ \frac{\Gamma(1 + \frac{1}{2} l_\chi + \ii \frac{\chi}{2} P) \Gamma(1 + \frac{1}{2} l_\chi - \ii \frac{\chi}{2} P)}{\Gamma(\frac{1}{2} - \frac{1}{2} l_\chi + \ii \frac{\chi}{2} P) \Gamma(\frac{1}{2} - \frac{1}{2} l_\chi - \ii \frac{\chi}{2} P)}
\end{multline*}
yields that $Y_n(\chi, \alpha)$ is meromorphic in $\alpha\in\CC$ without real zeros or poles. \xin{Since $\Delta_n(\alpha_0)=0$ for some $\alpha_0$,  the same argument as in the proof of  Proposition~\ref{prop:DenominatorBlocks} gives that $\Delta_n=0$ for irrational $\gamma^2$  as desired.}
\end{proof}

\subsection{The Dotsenko-Fateev integral}\label{sec:pre-Z}
The starting point for proving  Theorem \ref{thm:int-RecursionThm} is the following  integral representation of $\cA^q_{\gamma, P}(\alpha)$ when  $-\frac{\alpha}{\gamma}\in \NN$. 
\begin{lemma}\label{lem:DF}
	If $N = -\frac{\alpha}{\gamma}\in \NN$ for  $\gamma\in(0,2)$, $\alpha \in (-\frac{4}{\gamma}, Q)$,and
	$q \in (0, 1)$, the function $\cA^q_{\gamma, P}(\alpha)$ is given by
	\begin{multline} \label{eq:df-int}
	\cA^q_{\gamma, P}(\alpha) := q^{\frac{\alpha^2}{24} - \frac{\alpha}{12} Q + \frac{1}{6}}
	\eta(q)^{\frac{5}{4} \alpha \gamma + \frac{2\alpha}{\gamma} - \frac{5}{4} \alpha^2 - 2}\\
	\left(\int_0^1\right)^N \prod_{1 \leq i < j \leq N} |\Theta_\tau(x_i - x_j)|^{-\frac{\gamma^2}{2}}
	\prod_{i = 1}^N \Theta_\tau(x_i)^{-\frac{\alpha\gamma}{2}} e^{\pi \gamma P x_i} \prod_{i = 1}^N dx_i.
	\end{multline}
\end{lemma}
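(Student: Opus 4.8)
The plan is to exploit that when $N=-\frac{\alpha}{\gamma}\in\NN$ the exponent in~\eqref{eq:ca-def} is a positive integer, so the GMC moment can be unfolded into an honest $N$-fold integral and evaluated by a Wick computation on the log-correlated field $Y_\tau$. First I would record the effect of the phase convention~\eqref{eq:phase1}: taking $\beta=-\frac{\alpha}{\gamma}=N$ there, and using $-\frac{\alpha\gamma N}{2}=\frac{\alpha^2}{2}$, gives
\[
\left(\int_0^1 \Theta_\tau(x)^{-\frac{\alpha\gamma}{2}}e^{\pi\gamma Px}e^{\frac{\gamma}{2}Y_\tau(x)}dx\right)^{N}=e^{\ii\pi\alpha^2/2}\left(\int_0^1 |\Theta_\tau(x)|^{-\frac{\alpha\gamma}{2}}e^{\pi\gamma Px}e^{\frac{\gamma}{2}Y_\tau(x)}dx\right)^{N}.
\]
Since $N\ge 1$ forces $\alpha\le-\gamma<0$, the exponent $-\frac{\alpha\gamma}{2}$ is positive, so the weight $|\Theta_\tau(x)|^{-\frac{\alpha\gamma}{2}}e^{\pi\gamma Px}$ is bounded and continuous on $[0,1]$ with no boundary singularity; by Lemma~\ref{lem:GMC-moment} the $N$-th moment is finite and Fubini lets me write it as
\[
\int_{[0,1]^N}\Big(\prod_{i=1}^N|\Theta_\tau(x_i)|^{-\frac{\alpha\gamma}{2}}e^{\pi\gamma Px_i}\Big)\,\EE\Big[\prod_{i=1}^N e^{\frac{\gamma}{2}Y_\tau(x_i)}\,dx_i\Big].
\]

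Next I would apply the standard Gaussian multiplicative chaos moment identity (collected in Appendix~\ref{sec:gmc-app}): for Lebesgue-a.e.\ distinct $x_1,\dots,x_N$, the diagonal renormalization in Definition~\ref{def:gmc} cancels the self-energies and
\[
\EE\Big[\prod_{i=1}^N e^{\frac{\gamma}{2}Y_\tau(x_i)}\Big]=\prod_{1\le i<j\le N}e^{\frac{\gamma^2}{4}\EE[Y_\tau(x_i)Y_\tau(x_j)]}.
\]
Substituting $\EE[Y_\tau(x)Y_\tau(y)]=-2\log|\Theta_\tau(x-y)|+2\log|q^{1/6}\eta(q)|$ from Lemma~\ref{lem:field-covar} turns each factor into $|\Theta_\tau(x_i-x_j)|^{-\gamma^2/2}\,(q^{1/6}\eta(q))^{\gamma^2/2}$, and there are $\binom N2$ pairs; since $q^{1/6}\eta(q)>0$ for $q\in(0,1)$, the moment equals $(q^{1/6}\eta(q))^{\frac{\gamma^2 N(N-1)}{4}}$ times the Dotsenko--Fateev integral $\int_{[0,1]^N}\prod_{i<j}|\Theta_\tau(x_i-x_j)|^{-\gamma^2/2}\prod_i|\Theta_\tau(x_i)|^{-\frac{\alpha\gamma}{2}}e^{\pi\gamma Px_i}\prod_i dx_i$.

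It then remains to match prefactors, which is elementary. The phase $e^{\ii\pi\alpha^2/2}$ above coincides with the phase obtained by rewriting $\prod_i\Theta_\tau(x_i)^{-\frac{\alpha\gamma}{2}}=e^{\ii\pi\alpha^2/2}\prod_i|\Theta_\tau(x_i)|^{-\frac{\alpha\gamma}{2}}$ in the integrand of~\eqref{eq:df-int} (by Definition~\ref{def:fractional-power}), so the two cancel against each other. And with $N=-\frac{\alpha}{\gamma}$ one has $\frac{\gamma^2 N(N-1)}{4}=\frac{\alpha^2+\alpha\gamma}{4}$, so multiplying the $q$- and $\eta(q)$-prefactor of $\cA^q_{\gamma,P}(\alpha)$ in~\eqref{eq:ca-def} by $(q^{1/6}\eta(q))^{\frac{\alpha^2+\alpha\gamma}{4}}$ produces exactly $q^{\frac{\alpha^2}{24}-\frac{\alpha}{12}Q+\frac16}\eta(q)^{\frac54\alpha\gamma+\frac{2\alpha}{\gamma}-\frac54\alpha^2-2}$ after substituting $Q=\frac{\gamma}{2}+\frac{2}{\gamma}$, which is the prefactor in~\eqref{eq:df-int}. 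I would carry out these two short computations to conclude.

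The one genuinely nontrivial step is the Gaussian moment identity, specifically the exchange of the limit in Definition~\ref{def:gmc}, the $N$-th power, the expectation, and the $[0,1]^N$-integration. This is exactly where the hypothesis $\alpha>-\frac{4}{\gamma}$ enters: it is equivalent to $N<\frac{4}{\gamma^2}$, which is the precise threshold for $\prod_{i<j}|\Theta_\tau(x_i-x_j)|^{-\gamma^2/2}$ to be integrable on $[0,1]^N$ against the bounded weight --- the constraining case being the simultaneous collision of all $N$ points, where the local condition reads $\binom N2\cdot\frac{\gamma^2}{2}<N-1$, i.e.\ $\frac{N\gamma^2}{4}<1$. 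Below this threshold the interchange is standard (indeed the finiteness and the moment formula are already packaged in Lemma~\ref{lem:GMC-moment} and Appendix~\ref{sec:gmc-app}), so I expect the bulk of the work to be the routine prefactor bookkeeping rather than analysis.
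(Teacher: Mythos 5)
Your proof is correct and follows the same route as the paper's: unfold the $N$-th moment into an $N$-fold integral via the GMC moment identity, substitute the covariance from Lemma~\ref{lem:field-covar}, and match prefactors. The paper's proof is a terse two lines but identical in substance; your phase bookkeeping via~\eqref{eq:phase1}, the prefactor identity $(q^{1/6}\eta(q))^{\gamma^2 N(N-1)/4}=(q^{1/6}\eta(q))^{(\alpha^2+\alpha\gamma)/4}$, and the integrability threshold $N<\tfrac{4}{\gamma^2}$ correctly fill in what the paper leaves implicit.
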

\begin{proof}
	Since $-\frac{\alpha}{\gamma}=N\in\NN$, \revise{by Theorem \ref{thm:Girsanov}} we can write
	\begin{multline*}
	\EE\Big[\Big(\int_0^1 e^{\frac{\gamma}{2} Y_\tau(x)} \Theta_\tau(x)^{-\frac{\alpha\gamma}{2}} e^{\pi \gamma P x} dx \Big)^{-\frac{\alpha}{\gamma}} \Big]\\
	= \left(\int_0^1\right)^N  \prod_{i = 1}^N \Theta_\tau(x_i)^{-\frac{\alpha\gamma}{2}} e^{\pi \gamma P x_i} \prod_{1\le i< j\le N} e^{\frac{\gamma^2}{4} \mathbb{E}[Y_\tau(x_i) Y_\tau(x_j)]} \prod_{i = 1}^N dx_i.
	\end{multline*}
	The explicit formula for $\EE[Y_\tau(x_i)Y_\tau(x_j)]$ in~\eqref{eq:cov2} now gives Lemma~\ref{lem:DF}.
\end{proof}

\xin{We now apply a \emph{rotation-of-contour} trick  to the integral~\eqref{eq:df-int} and use the quasi-periodicity	of the theta function to relate the values of $\cA^q_{\gamma, P}(\alpha)$ at $P_{m, n}$ and $P_{m - 2, n}$. This is crucial to the proof of Theorem \ref{thm:int-RecursionThm}.}
\begin{prop} \label{prop:ZRecId}
\xin{Recall  $P_{m, n}= \frac{2 \ii n}{\gamma} + \frac{\ii \gamma m}{2}$ from~\eqref{eq:pmn-def}.}
If $N = -\frac{\alpha}{\gamma} \in \NN$, for $\gamma\in(0,2)$ and $\alpha \in (-\frac{4}{\gamma}, Q)$,
viewing $P\mapsto\cA^q_{\gamma, P}(\alpha)$ as an entire function,  
\begin{equation}\label{eq:Id2}
\cA^q_{\gamma, P_{m, n}}(\alpha)
= q^{2n + (m - 1) \frac{\gamma^2}{2}} e^{-\frac{\ii \pi \alpha \gamma}{2}} \cA^q_{\gamma, P_{m - 2, n}}(\alpha) \qquad \textrm{for } (m,n) \in \ZZ^2.
\end{equation}
\end{prop}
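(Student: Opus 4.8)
The plan is to pass to the Dotsenko--Fateev integral representation of Lemma~\ref{lem:DF} and prove \eqref{eq:Id2} by a rotation-of-contour argument based on the quasi-periodicity of the theta function. By Lemma~\ref{lem:DF}, when $N=-\tfrac{\alpha}{\gamma}\in\NN$ we may write $\cA^q_{\gamma,P}(\alpha)=c_{\gamma,\alpha}(q)\,J(P)$, where $c_{\gamma,\alpha}(q)$ does not depend on $P$ and
\[
J(P):=\Big(\int_0^1\Big)^N\prod_{1\le i<j\le N}|\Theta_\tau(x_i-x_j)|^{-\frac{\gamma^2}{2}}\prod_{i=1}^N\Theta_\tau(x_i)^{-\frac{\alpha\gamma}{2}}e^{\pi\gamma P x_i}\prod_{i=1}^N dx_i.
\]
The constraint $\alpha\in(-\tfrac4\gamma,Q)$ is exactly $N<\tfrac{4}{\gamma^2}$, so each pairwise singularity $|x_i-x_j|^{-\gamma^2/2}$ has exponent $<1$ and the integrand is absolutely integrable on $[0,1]^N$; moreover $|e^{\pi\gamma P x_i}|$ is bounded locally uniformly in $P\in\CC$, so $J(P)$, hence $P\mapsto\cA^q_{\gamma,P}(\alpha)$, extends to an entire function. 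This justifies the statement and reduces \eqref{eq:Id2} to the integral identity $J(P_{m,n})=q^{2n+(m-1)\frac{\gamma^2}{2}}\,e^{-\frac{\ii\pi\alpha\gamma}{2}}\,J(P_{m-2,n})$.

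To prove the latter I would restrict to the ordered sector $\{0<x_1<\dots<x_N<1\}$ (multiplying by $N!$), where with the branch conventions of Section~\ref{subsec:GMC-block} the integrand is a single-valued holomorphic function of each $x_k$ on a strip around the sector that vanishes as $x_k\to 0,1$ because $-\tfrac{\alpha\gamma}{2}=\tfrac{N\gamma^2}{2}>0$. One then deforms integration contours off $[0,1]$ and rewrites the integrand via $\Theta_\tau(z+\tau)=-q^{-1}e^{-2\pi\ii z}\Theta_\tau(z)$ from Appendix~\ref{sec:theta}. Under such a shift, each diagonal factor $|\Theta_\tau(x_i-x_j)|^{-\gamma^2/2}$ that is touched contributes a power of $|q|$; the factor $\Theta_\tau(x_k)^{-\alpha\gamma/2}$ contributes the constant $(-q^{-1})^{-\alpha\gamma/2}=e^{-\frac{\ii\pi\alpha\gamma}{2}}q^{\frac{\alpha\gamma}{2}}$ together with a character $e^{\pi\ii\alpha\gamma x_k}$; and the Gaussian weight gives $e^{\pi\gamma P\tau}$. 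Specializing to $P=P_{m,n}=\tfrac{2\ii n}{\gamma}+\tfrac{\ii\gamma m}{2}$ one has $e^{\pi\gamma P_{m,n}\tau}=e^{2\ii\pi n\tau}e^{\ii\pi\gamma^2 m\tau/2}=q^{2n}q^{\gamma^2 m/2}$, while the character $e^{\pi\ii\alpha\gamma x_k}=e^{-\ii\pi N\gamma^2 x_k}$ merges with $e^{\pi\gamma P_{m,n}x_k}$ to shift $P_{m,n}$ to $P_{m,n}+\ii\alpha$; collecting the $q^{2n}$ from the $n$-part of $P_{m,n}$, the compensating $q$-powers from the $m$-part and the $N-1$ diagonals, and the phase $e^{-\frac{\ii\pi\alpha\gamma}{2}}$ from $(-1)^{-\alpha\gamma/2}$, each such step produces exactly the factor $q^{2n+(m-1)\frac{\gamma^2}{2}}e^{-\frac{\ii\pi\alpha\gamma}{2}}$ and lowers the index from $m$ to $m-2$; iterating or re-symmetrizing over the $N$ variables and using the permutation symmetry of $J$ then yields the stated relation for all $(m,n)\in\ZZ^2$.

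The hard part will be the rigorous justification of the contour deformation: one must check that, with the branch conventions in force, the integrand really is holomorphic on the relevant cut domain, that the boundary and vertical-segment contributions cancel (using the endpoint vanishing of $\Theta_\tau(x_k)^{-\alpha\gamma/2}$ and, at the resonant momenta $P_{m,n}$, the precise pseudo-periodicity under $x_k\mapsto x_k+1$), and that no residue terms arise when the shifted contour passes the loci $x_i\in x_j+\tau\ZZ$. The remaining work is careful but routine bookkeeping of the phases $(-1)^{\bullet}$, $q^{\bullet}$, $e^{\ii\pi\bullet}$ so as to match the conventions of Section~\ref{subsec:GMC-block} and Appendix~\ref{sec:theta}, and to confirm that the prefactor $c_{\gamma,\alpha}(q)$ drops out of the ratio.
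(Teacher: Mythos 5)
Your overall strategy is the same as the paper's: pass to the Dotsenko--Fateev integral of Lemma~\ref{lem:DF} (which is visibly entire in $P$ since the pairwise exponent $\gamma^2/2$ is $<1$ whenever $N<\frac{4}{\gamma^2}$ and $N\ge 2$), then move an integration contour by $\tau$ and use the quasi-periodicity of $\Theta_\tau$. One simplification you are missing: you never need the ordered sector or to move more than one contour, so the worry about residues at $x_i\in x_j+\tau\ZZ$ does not arise. The paper singles out one variable $u$, rewrites each $|\Theta_\tau(x_i-u)|^{-\gamma^2/2}$ as $e^{\ii\pi\gamma^2/4}\,\Theta_\tau(x_i-u)^{-\gamma^2/4}\Theta_\tau(u-x_i)^{-\gamma^2/4}$ so the integrand is holomorphic in $u$ on the open parallelogram with vertices $0,1,\tau,1+\tau$, and a single circuit around its boundary already lowers $m$ by exactly $2$, which is all \eqref{eq:Id2} asks; the iteration over circuits is only needed afterwards, for Proposition~\ref{corr:many-shifts}.

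The genuine gap is the cancellation of the two vertical sides, which you assert will follow from ``the precise pseudo-periodicity under $x_k\mapsto x_k+1$ at the resonant momenta.'' It does not, with the branch conventions you invoke. Applying \eqref{eq:theta-shift0} directly to the integrand of \eqref{eq:df-int}, the shifted variable's own factors transform under $u\mapsto u+1$ by
\begin{equation*}
\Theta_\tau(u+1)^{-\frac{\alpha\gamma}{2}}\,e^{\pi\gamma P_{m,n}(u+1)}
= e^{\frac{\ii\pi\alpha\gamma}{2}}\,e^{\frac{\ii\pi m\gamma^2}{2}}\,\Theta_\tau(u)^{-\frac{\alpha\gamma}{2}}\,e^{\pi\gamma P_{m,n}u},
\end{equation*}
while the cross factors contribute net phase $1$; the residual phase $e^{\ii\pi(\alpha\gamma+m\gamma^2)/2}$ is not $1$ in general, so the vertical contributions do not cancel and the identity $\int_0^1=\int_\tau^{1+\tau}$ fails as stated. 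The one non-routine device in the paper's proof is the choice of a \emph{different} single-valued holomorphic extension of the integrand off $[0,1]$: it replaces $\Theta_\tau(u)^{-\frac{\alpha\gamma}{2}}$ by $\Theta_\tau(u)^{-\frac{\alpha\gamma}{2}+\frac{m\gamma^2}{4}+\frac{\alpha\gamma}{4}}\Theta_\tau(1-u)^{-\frac{m\gamma^2}{4}-\frac{\alpha\gamma}{4}}$, which coincides with the original on $(0,1)$ (by \eqref{eq:phase}, since $|\Theta_\tau(1-u)|=|\Theta_\tau(u)|$ there) but whose $u\mapsto u+1$ phase is $e^{-\ii\pi m\gamma^2/2}$, exactly offsetting $e^{\pi\gamma P_{m,n}}=e^{\ii\pi m\gamma^2/2}$ and making the extended integrand genuinely $1$-periodic. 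Without this redistribution of exponents (or an equivalent re-branching), the contour argument does not close; with it, your computation of the $u\mapsto u+\tau$ factors via \eqref{eq:theta-shift1} goes through essentially as you describe.
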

\begin{proof} 
Consider the domain \(D=\{x+\tau y: x\in(0,1) \textrm{ or } y\in (0,1), \revise{x, y \in \RR} \}\)
from Definition~\ref{def:log} and define on $D$ the functions
\begin{align*}
g_{m, n}(u) &:= q^{\frac{\alpha^2}{24} - \frac{\alpha}{12} Q + \frac{1}{6}}
\eta(q)^{\frac{5}{4} \alpha \gamma + \frac{2\alpha}{\gamma} - \frac{5}{4} \alpha^2 - 2}\\
&\phantom{====}
\Theta_\tau(u)^{-\frac{\alpha \gamma}{2} + \frac{m\gamma^2}{4} + \frac{\alpha \gamma}{4}}
\Theta_\tau(1 - u)^{-\frac{m\gamma^2}{4} - \frac{\alpha\gamma}{4}} e^{\pi \gamma P_{m, n} u};\\
f(P, u) &:= \left(\int_0^1\right)^{N - 1} \prod_{1 \leq i < j \leq N - 1} |\Theta_\tau(x_i - x_j)|^{-\frac{\gamma^2}{2}}\\
&\phantom{====}
\prod_{i = 1}^{N - 1}  \Theta_\tau(x_i - u)^{-\frac{\gamma^2}{4}} \Theta_\tau(u - x_i)^{-\frac{\gamma^2}{4}} \Theta_\tau(x_i)^{-\frac{\alpha\gamma}{2}} e^{\pi \gamma P x_i} \prod_{i = 1}^{N - 1} dx_i,
\end{align*}
where we interpret fractional powers of $\Theta_{\tau}$ via Appendix~\ref{subsec:fractional}.

By the  integral expression (\ref{eq:df-int}) \revise{and (\ref{eq:phase})}, for $N\in \NN$ 
\begin{equation} \label{eq:ca-exp}
\cA^q_{\gamma, P_{m, n}}(\alpha) = e^{\ii \pi(N - 1) \frac{\gamma^2}{4}} \int_0^1 g_{m, n}(u) f(P_{m, n}, u) du.
\end{equation}
Let the fundamental domain $\TT_0$ be the parallelogram bounded by $0, 1, \tau, 1 + \tau$.
We see that $f(P, u)$ and $g_{m, n}(u)$ are holomorphic in $u$ on the interior of $\TT_0$, so integrating along
a contour limiting to the boundary of $\TT_0$, we find 
\begin{multline} \label{eq:int-identity}
\int_0^1 g_{m, n}(u) f(P, u) du + \int_1^{1 + \tau} g_{m, n}(u) f(P, u) du\\
- \int_0^\tau g_{m, n}(u) f(P, u) du - \int_\tau^{1 + \tau} g_{m, n}(u) f(P, u) = 0.
\end{multline}
By~\eqref{eq:theta-shift0},  we have 
$f(P,u+1)=f(P,u)$ if $u\in \{ x+y\tau: x \in \RR, y\in (0,1)\}$.
Moreover, since $\pi \gamma P_{m, n} - \frac{\ii \pi m \gamma^2}{2} = 2 \pi \ii n$, we find
\[
g_{m, n}(u + 1)
= e^{\pi \gamma P_{m, n} + \ii \pi(\frac{\alpha \gamma}{2} - \frac{m\gamma^2}{2} - \frac{\alpha \gamma}{2})} g_{m, n}(u)= g_{m, n}(u),   \textrm{ if } u\in \{ x+y\tau: y\in (0,1)\}.
\]
Thus $\int_0^\tau g_{m, n}(u) f(P, u) du = \int_1^{1 + \tau} g_{m, n}(u) f(P, u) du$,
and (\ref{eq:int-identity}) implies
\[
\int_0^1 g_{m, n}(u) f(P, u) du
= \int_\tau^{1 + \tau} g_{m, n}(u) f(P, u) du
= \int_0^1 g_{m, n}(u + \tau) f(P, u + \tau) du.
\]
By computing with~\eqref{eq:theta-shift1}, we find for $u\in \{ x+y\tau: x\in (0,1), y \in \RR\}$ that
\begin{align*}
g_{m, n}(u + \tau)
&= e^{\pi P_{m, n} \gamma \tau} e^{\ii \pi \alpha \gamma (u - \frac{1}{2} + \frac{\tau}{2})} g_{m, n}(u),\\
f(P, u + \tau) &= e^{(N - 1)(\ii \pi \gamma^2 u + \frac{\ii \pi \gamma^2 \tau}{2})} f(P - \ii \gamma, u).
\end{align*}
Combining these, we find that if  $u\in \{ x+y\tau: x\in (0,1), y \in \RR\}$, then
\begin{align*}
g_{m, n}&(u + \tau) f(P_{m, n}, u + \tau)\\
&= e^{\pi P_{m, n} \gamma \tau} e^{\ii \pi \alpha \gamma (u - \frac{1}{2} + \frac{\tau}{2})} 
e^{(N - 1)(\ii \pi \gamma^2 u + \frac{\ii \pi \gamma^2 \tau}{2})} g_{m, n}(u) f(P_{m, n} - \ii \gamma, u)\\
&= q^{2 n + (m - 1) \frac{\gamma^2}{2}} e^{- \frac{\ii \pi \alpha \gamma}{2}} g_{m - 2, n}(u) f(P_{m - 2, n}, u).
\end{align*}
\revise{In the last line, we have used the fact that $\Theta_{\tau}(u)= \Theta_{\tau}(1-u)$ for any $u\in \{ x+y\tau: x\in (0,1), y \in \RR\}$.} Integrating both sides over $[0, 1]$  and recalling (\ref{eq:ca-exp}), we obtain~\eqref{eq:Id2}.
\end{proof}
\noindent We will use the following corollary of Proposition~\ref{prop:ZRecId} to prove Theorem \ref{thm:int-RecursionThm}. 

\begin{prop}\label{corr:many-shifts}
	\xin{If $\gamma\in(0,2)$, $\alpha \in (-\frac{4}{\gamma}, Q)$,   $q\in (0,1)$ and $-\frac{\alpha}{\gamma}\in \NN$, then}
	\begin{equation}\label{eq:Id3}
	\cA^q_{\gamma, P_{m, n}}(\alpha) = q^{2nm} e^{-\frac{\ii \pi \alpha \gamma m}{2}} \cA^q_{\gamma, P_{-m, n}}(\alpha) \quad\textrm{for } (m, n)\in  \ZZ^2. 
	\end{equation}
\end{prop}  
\begin{proof} 
\xin{This follows from Proposition~\ref{prop:ZRecId}  and the fact that
\begin{equation*}
\cA^q_{\gamma, P_{m, n}}(\alpha) =\prod_{k=1}^{m} \frac{\cA^q_{\gamma, P_{m-2k+2, n}}(\alpha)}{\cA^q_{\gamma, P_{m - 2k, n}}(\alpha)}
\cA^q_{\gamma, P_{-m, n}}(\alpha). \qedhere
\end{equation*}}
\end{proof}

\subsection{Proof of Theorem \ref{thm:int-RecursionThm}}\label{sec:int-Z}
\xin{By Lemma~\ref{lem:DF},  when $-\frac{\alpha}{\gamma}\in\NN$, the function $P\mapsto\cA^q_{\gamma, P}(\alpha)$ can be analytically extended to $P \in \CC$ via the  
integral (\ref{eq:df-int}). Therefore
$P\to \wcA^q_{\gamma, P}(\alpha)=\frac{\cA^q_{\gamma, P}(\alpha)}{\cA_{\gamma, P, 0}(\alpha)}$ meromorphically extends to $P\in\CC$ as well. To establish Theorem~\ref{thm:int-RecursionThm}, it remains to prove~\eqref{eq:int-ZRecur}.}
We now state four lemmas on the pole structure of $\wcA^q_{\gamma, P}(\alpha)$
and prove ~\eqref{eq:int-ZRecur} with these lemmas.  
We then devote most of the subsection
to the proof of  these lemmas. \xin{In all four lemmas we assume
$\gamma\in(0,2)$, $\alpha\in (-\frac{4}{\gamma},0)$, $q\in (0,1)$, and $N= -\frac{\alpha}{\gamma}\in \NN$. Moreover, we recall $P_{m,n}=\frac{2\ii n }{\gamma}+\frac{\ii m\gamma}{2}$ from~\eqref{eq:pmn-def} and $R_{m,n}$ from~\eqref{eq:rmn-def}.}
\begin{lemma}\label{lem:even}
We have $\wcA^q_{\gamma, P}(\alpha)=\wcA^q_{\gamma, -P}(\alpha)$
for all $P\in \CC$.   
\end{lemma}
\begin{lemma}\label{lem:residue}
The function $P\mapsto \cA_{\gamma, P, 0}(\alpha)^{-1}$ is meromorphic
with poles only at $P = \pm P_{m, n}$ for $n \in \NN$ and $1 \leq m \leq N$.  Moreover, the poles are simple.
Finally, the residues   $\Res_{P = P_{m,n}}\wcA^q_{\gamma,P}(\alpha)$ \xin{of $\wcA^q_{\gamma,P}(\alpha)$ satisfy}
\begin{equation}\label{eq:resPmn}
\Res_{P = P_{m,n}}\wcA^q_{\gamma,P}(\alpha)
= q^{2nm} \frac{1}{2 P_{m, n}} R_{\gamma, m, n}(\alpha) \wcA^q_{\gamma,P_{-m,n}}(\alpha).
\end{equation}
\end{lemma}
\begin{lemma}\label{lem:ration}
The function $\wcA_{\gamma, P, n}(\alpha)$ is rational in $P$ for each $n \in \NN$.
\end{lemma}
\begin{lemma} \label{lem:block-p-lim}
\xin{Define $a_k$ by $[q^{-\frac{1}{12}} \eta(q)]^{\alpha (Q - \frac{\alpha}{2}) - 2}=\sum_{k=0}^{\infty} a_kq^k$ near $q=0$.
Then for $k\ge 0$ the limit $\lim\limits_{\RR \ni P\to -\infty} \wcA_{\gamma, P,k}(\alpha)$ exists and equals $a_k$}.
\end{lemma}
\begin{proof}[Proof of  Theorem \ref{thm:int-RecursionThm} based on Lemmas~\ref{lem:even}--\ref{lem:block-p-lim}]
By Lemma~\ref{lem:ration},
$\wcA^q_{\gamma, P, k}(\alpha)$ is a rational function in $P$.  By Lemma \ref{lem:residue},
its poles are located at $P = \pm P_{m, n}$. By Lemmas~\ref{lem:even} and~\ref{lem:block-p-lim}, \xin{with $a_k$ defined in Lemma~\ref{lem:block-p-lim}} we have 
\begin{align*} 
\wcA_{\gamma,P,k}(\alpha) = \sum_{n,m = 1}^{\infty} \frac{2P_{m,n} \Res_{P=P_{m,n}}\wcA_{\gamma,P,k}(\alpha)}{P^2-P^2_{m,n}} + a_k,
\end{align*}
where the series has finitely many non-zero summands.

By Cauchy's theorem around $P=P_{m,n}$, we have
$\Res_{P = P_{m,n}}\wcA^q_{\gamma,P}(\alpha)=\sum_{k=0}^{\infty}\Res_{P=P_{m,n}}\wcA_{\gamma,P,k}(\alpha)q^k$ as
convergent series in $q$. \xin{By~\eqref{eq:resPmn} we have $\Res_{P=P_{m,n}}\wcA_{\gamma,P,k}(\alpha)=0$ for  $k<2mn$, and moreover,}
\begin{equation*} 
\wcA_{\gamma,P,k}(\alpha) =  \sum_{n,m \in\NN, 2mn\le k} \frac{R_{\gamma, m, n}(\alpha)}{P^{2} - P^2_{m,n}}\wcA_{\gamma, P_{-m, n}, k-2mn}(\alpha)+a_k.
\end{equation*}
Therefore  the $q^k$-coefficients of both sides of \eqref{eq:int-ZRecur} are equal, namely 
\eqref{eq:int-ZRecur} holds as an equality of formal $q$-power series, yielding Theorem~\ref{thm:int-RecursionThm}.
\end{proof}

In the rest of this subsection, we prove Lemmas~\ref{lem:even}--\ref{lem:block-p-lim} in order.

\begin{proof}[Proof of Lemma~\ref{lem:even}]
For $P\in \RR$, we have
\begin{multline*}
\EE\Big[\Big(\int_0^1 e^{\frac{\gamma}{2} Y_\tau(x)} \Theta_\tau(x)^{-\frac{\alpha\gamma}{2}} e^{\pi \gamma P x} dx \Big)^{-\frac{\alpha}{\gamma}} \Big] \\
= \EE\Big[\Big(\int_0^1 e^{\frac{\gamma}{2} Y_\tau(1-x)} \Theta_\tau(1-x)^{-\frac{\alpha\gamma}{2}} e^{\pi \gamma P(1- x)} dx \Big)^{-\frac{\alpha}{\gamma}}\Big].
\end{multline*}
Since $\{Y_\tau(1-x)\}_{0\le x\le 1}$ and $\{Y_\tau(x)\}_{0\le x\le 1}$ are equal in law and $\Theta_\tau(1-x)=\Theta_\tau(x)$, for $P\in\RR$ we have
$\cA^q_{\gamma, P}(\alpha)= e^{-\pi P \alpha} \cA^q_{\gamma, -P}(\alpha)$.  
\xin{Sending $q\to 0$} we have $\cA_{\gamma, P,0}(\alpha)= e^{-\pi P \alpha} \cA_{\gamma, -P,0}(\alpha)$. Therefore $\wcA^q_{\gamma, P}(\alpha)=\wcA^q_{\gamma, -P}(\alpha)$ for $P\in\RR$.
By meromorphicity in $P$ of $\cA^q_{\gamma, P}(\alpha)$  for $-\frac{\alpha}{\gamma}=N\in\NN$, the same holds for all $P \in \CC$.
\end{proof}
\xin{Lemma~\ref{lem:residue} follows from a straightforward (but lengthy) calculation based on the explicit formula of $\cA_{\gamma, P, 0}(\alpha)$ from Proposition \ref{prop:DenominatorBlocks}.}
\begin{proof}[Proof of Lemma~\ref{lem:residue}]
Specializing Proposition \ref{prop:DenominatorBlocks} to $N= -\frac{\alpha}{\gamma}\in \NN$ and using \xin{the shift equation \eqref{eq:sh_double_gamma} for $\Gamma_{\frac{\gamma}{2}}$,} we get
\begin{equation}\label{corr:int-denom}
\cA_{\gamma, P, 0}(\alpha) = e^{\frac{\ii \pi \gamma^2 N^2}{2}} \frac{e^{\frac{\pi \gamma P N}{2}}}{\Gamma(1 - \frac{\gamma^2}{4})^N} \prod_{j = 1}^N \frac{\Gamma(1 - \frac{j \gamma^2}{4})\Gamma(1 + \frac{(2N - j +1)\gamma^2}{4})}{\Gamma(1 + \frac{j \gamma^2}{4} + \frac{\ii \gamma P}{2})\Gamma(1 + \frac{j \gamma^2}{4} - \frac{\ii \gamma P}{2})}.
\end{equation}
Recall $\Gamma(z)$ has simple poles at $\{0,-1,-2,\ldots\}$ and no zeros.
Thus \eqref{corr:int-denom} implies $\cA_{\gamma, P, 0}(\alpha)$ has
simple zeros at $P = \pm P_{m, n}$ for $n \in \NN$ and $1 \leq m \leq N$.  
\xin{This yields the pole structure of $\cA_{\gamma, P, 0}(\alpha)^{-1}$ asserted in Lemma~\ref{lem:residue}.}
We now compute the residues of $\cA_{\gamma, P, 0}(\alpha)^{-1}$.  Define 
\begin{equation}\label{eq:def-fP}
f(P) := \prod_{j = 1}^N \Gamma(1 +\frac{j\gamma^2}{4}  + \frac{\ii \gamma P}{2})\Gamma(1 +\frac{j \gamma^2}{4}  - \frac{\ii \gamma P}{2}) 
\end{equation}
so that $\cA_{\gamma, P, 0}(\alpha)^{-1}=C e^{-\frac{\pi \gamma P N}{2}}f(P)$
for some $C := C(\alpha, \gamma)$ independent of $P$.  \xin{Recall $R_{\gamma, m, n}(\alpha)$ from~\eqref{eq:rmn-def}. We claim that}
\begin{equation}\label{eq:f-res}
    \xin{\frac{\Res_{P = P_{m, n}} f(P)}{f(P_{-m, n})} = \frac{1}{2 P_{m, n}} R_{\gamma, m, n}(\alpha).}
\end{equation}By (\ref{eq:gamma-poles}), we have $\Res_{P= P_{m,n}} \Gamma(1 + \frac{m\gamma^2}{4} + \frac{\ii \gamma P}{2}) = \frac{2}{\ii \gamma} \Res_{x = 1 - n} \Gamma(x) = \frac{2}{\ii \gamma} \frac{(-1)^{n-1}}{(n-1)!}$ 
for $n \in \NN$ and $1 \leq m \leq N$. Therefore
\begin{align*}
\Res_{P = P_{m,n}} f(P)
&= \frac{2}{\ii \gamma} \frac{(-1)^{n - 1}}{(n - 1)!} \prod_{j = 1}^N \Gamma(1 +\frac{j\gamma^2}{4}  + m\frac{\gamma^2}{4}+ n)\!\!\!\!
 \prod_{j = 1, j \neq m}^N \Gamma(1 +\frac{j\gamma^2}{4}  - m\frac{\gamma^2}{4}- n).
\end{align*}By $\Gamma(x+n)=\Gamma(x-n)\prod_{l=-n}^{n-1}(x+l)$, we have
\begin{align}
\frac{\Res_{P = P_{m, n}} f(P)}{f(P_{-m, n})} 
&= \frac{2}{\ii \gamma} \frac{(-1)^{n - 1}}{(n - 1)!}
\frac{\prod_{j= - m}^{ m-1} \prod_{l =-n  }^{n-1} ( 1 + \frac{\gamma^2}{4} + \frac{N \gamma^2}{4} +  \frac{j \gamma^2}{4}  +l ) }{n! \prod_{j=1-m, j \neq 0}^{m} \prod_{l =-n}^{n-1} ( 1+\frac{j \gamma^2}{4}  +l ) },\label{eq:Rf}
\end{align}
where we used $\frac{\prod_{j=1}^N  (1 + \frac{j \gamma^2}{4} + \frac{m \gamma^2}{4} +l ) }{\prod_{j=1,j\neq m}^N 
( 1+\frac{j \gamma^2}{4} - \frac{m \gamma^2}{4} +l) }=\frac{\prod_{j= - m}^{m-1}  ( 1 + \frac{\gamma^2}{4} + \frac{N \gamma^2}{4} +  \frac{j \gamma^2}{4}  +l ) }{\prod_{j=1-m,j\neq 0}^{m} ( 1+\frac{j \gamma^2}{4}  +l ) }$.
Note that 
\begin{align}\label{eq:Rnum}
\prod_{j= - m}^{ m-1} \prod_{l =-n  }^{n-1} ( 1 + \frac{\gamma^2}{4} + \frac{N \gamma^2}{4} +  \frac{j \gamma^2}{4}  +l )=(\frac{\gamma}{2})^{4mn}  \prod_{j = -m}^{m - 1} \prod\limits_{l = -n}^{n - 1} (Q - \frac{\alpha}{2} + \frac{j \gamma}{2} + \frac{2l}{\gamma}).
\end{align} 
Since $(-1)^{n-1}(n-1)!n! = \prod_{l =-n,l\neq -1}^{n-1} ( 1+l)$, we have
\begin{equation}\label{eq:fdom}
(-1)^{n-1}(n-1)!n! \prod_{j=1-m, j \neq 0}^{m} \prod_{l =-n}^{n-1} ( 1+\frac{j \gamma^2}{4}  +l)=(\frac{m \gamma^2}{4}+n) \prod\limits_{(j, l) \in S_{m, n}}\revise{ (\frac{j \gamma^2}{4} + l)},
\end{equation}
where we recall $S_{m, n}$ defined below~\eqref{eq:rmn-def}. 
Combining~\eqref{eq:rmn-def},~\eqref{eq:Rf},~\eqref{eq:Rnum}, and~\eqref{eq:fdom} yields~\eqref{eq:f-res}.
Since $e^{-\frac{\pi \gamma (P_{m,n}-P_{-m,n} ) N}{2}} = e^{\frac{\ii \pi \alpha \gamma m}{2}}$ for $N=-\frac{\alpha}{\gamma}$,
\xin{and $\cA_{\gamma, P, 0}(\alpha)^{-1}=C e^{-\frac{\pi \gamma P N}{2}}f(P)$,} for $n \in \NN$ and $1 \leq m \leq N$ we have
\[
\Res_{P = P_{m, n}} \cA_{\gamma, P, 0}(\alpha)^{-1}
= e^{\frac{\ii \pi \alpha \gamma m}{2}} \frac{1}{2 P_{m, n}} R_{\gamma, m, n}(\alpha) \cA_{\gamma, P_{-m, n}, 0}(\alpha)^{-1}.
\]
Combining this with  $\cA^q_{\gamma, P_{m, n}}(\alpha) = q^{2nm} e^{-\frac{\ii \pi \alpha \gamma m}{2}} \cA^q_{\gamma, P_{-m, n}}(\alpha) $
from Proposition~\ref{corr:many-shifts}, we obtain~\eqref{eq:resPmn}.
\end{proof}

\xin{For Lemma~\ref{lem:ration}, it is easy to see that $P\mapsto\wcA_{\gamma,P,k}(\alpha)$ has finitely many poles, the main effort is to obtain the polynomial growth of $\wcA_{\gamma,P,k}(\alpha)$ at $\infty$ which  yields its rationality. This requires a fine  analysis of $f(P)$ in~\eqref{eq:def-fP}.}
\begin{proof}[Proof of Lemma~\ref{lem:ration}]
\xin{By Cauchy's theorem around $P=P_{m,n}$, we have
$\Res_{P = P_{m,n}}\wcA^q_{\gamma,P}(\alpha)=\sum_{k=0}^{\infty}\Res_{P=P_{m,n}}\wcA_{\gamma,P,k}(\alpha)q^k$ as
convergent series in $q$. By~\eqref{eq:resPmn} we have $\Res_{P=P_{m,n}}\wcA_{\gamma,P,k}(\alpha)=0$ for  $k<2mn$.}
By Lemma~\ref{lem:even}, $\Res_{P = -P_{m,n}}\wcA_{\gamma,P,k}(\alpha)=0$ as well.  
Because all poles are located at $P = \pm P_{m, n}$ for some $m, n$ by Lemma~\ref{lem:residue}, \xin{for each $k$} the meromorphic function $P\mapsto \wcA_{\gamma,P,k}$ has finitely many poles.  
Let 
\[
r\defeq \min\{|P_{m,n}-P_{m',n'}| :1\le  m,m' \le N, \quad n,n'\in\NN, \quad \textrm{ and  }(m,n)\neq (m',n')\},
\] 
which is positive. For $1\leq m\leq N$ and $n\in \NN$, let $\mathcal{B^+}_{m,n}$ (resp. $\mathcal B^{-}_{m,n}$) be the ball around $P_{m,n}$ (resp. $-P_{m,n}$) with radius
$\frac{r}{3}$, so that $\mathcal{B}_{m,n}^\pm\cap \mathcal{B}_{m',n'}^\pm= \emptyset$ for $(m,n)\neq (m',n')$.
Define $\CC^{\circ} := \CC\setminus \cup_{1\leq m\leq N,n\geq 1} (\mathcal{B^+}_{m,n}\cup \mathcal B^-_{m,n})$.  
Recall that \(\mathcal{A}_{\gamma, P,0}(\alpha)^{-1}=C(\alpha,\gamma)e^{-\pi N\gamma P/2}f(P)\)  
for $f(P)$ in~\eqref{eq:def-fP} and some explicit $C(\alpha,\gamma)$ not depending on $P$.  We claim that there exists $K\in \NN$ with
\begin{align}\label{eq:GammaBd}
M\defeq \sup_{P \in \CC^{\circ}}|P|^{-K}e^{\pi N\gamma |\Re(P)|/2} |f(P)|<\infty.
\end{align}
We first prove Lemma~\ref{lem:ration} given~\eqref{eq:GammaBd}. For $\Re P\le 0$ and $P\in \CC^\circ$, we have
\[|P|^{-K}|\mathcal{A}_{\gamma, P,0}(\alpha)^{-1}|=C(\alpha,\gamma)|P|^{-K}e^{-\pi N\gamma \mathrm{Re}(P)/2}|f(P)|\le MC(\alpha,\gamma). \]
On the other hand, since $|e^{\pi \gamma Px}|\le 1$ for $\Re P\le 0$, we have $|\mathcal{A}_{\gamma,P}^q(\alpha)|<\infty$, 
where the bound only depends on $\alpha,\gamma, |q|$ and is uniform in $\Re P\le 0$. Applying Cauchy's theorem in
$q$ to extract $q$-series coefficients, for $k\in \NN$,  we get $C_k(\alpha,\gamma)\defeq \sup_{\Re P\le 0}|\mathcal{A}_{\gamma,P,k}(\alpha)|<\infty$.
Since $\wcA_{\gamma,P,k}(\alpha)=\wcA_{\gamma,-P,k}(\alpha)$, we get $ \sup_{P\in \CC^{\circ}} |P|^{-K}|\wcA_{\gamma,P,k}(\alpha)|<C_k$ with $C_k=MC(\alpha,\gamma)C_k(\alpha,\gamma)$.
Since it has finitely many poles, $P^{-K}\wcA_{\gamma,P,k}(\alpha)$ is analytic for large enough $|P|$. By the
maximal modulus theorem, $|P^{-K}\wcA_{\gamma,P,k}(\alpha)|\le C_k$ for large enough $|P|$. 
Thus $P^{-K}\wcA_{\gamma,P,k}(\alpha)$ is a rational function and so is $\wcA_{\gamma,P,k}(\alpha)$.

It remains to prove~\eqref{eq:GammaBd}.  Note that $\Gamma(1 +\frac{j\gamma^2}{4}  + \frac{\ii \gamma P}{2})=\pi \sin(\pi (1 +\frac{j\gamma^2}{4}  + \frac{\ii \gamma P}{2}))^{-1} \Gamma(-\frac{j\gamma^2}{4} - \frac{\ii \gamma P}{2})^{-1}$ by~\eqref{eq:reflection-sin}. 
Moreover, by our choice of $\CC^\circ$ we have $\max_{P\in \CC^\circ}e^{\pi \gamma |\Re( P)|/2} |\sin(\pi (1 +\frac{j\gamma^2}{4}  + \frac{\ii \gamma P}{2}))^{-1}|<\infty$.
Since $f(P)=f(-P)$, 
it suffices to show for $1\le j\le N$ there exist some $C_j>0$ and $K_j\in \NN$ with
\begin{equation}\label{eq:ratio-Gamma}
\left|\Gamma(1 +\frac{j \gamma^2}{4}  - \frac{\ii \gamma P}{2}) \Gamma(-\frac{j\gamma^2}{4} -\frac{\ii \gamma P}{2})^{-1}\right| \le C_j \xin{(|P|^{K_j} \vee 1)}  \quad \textrm{ for }\Im P\ge 0.
\end{equation}
Because $\frac{\Gamma(1 +\frac{j \gamma^2}{4}  - \frac{\ii \gamma P}{2}) }{\Gamma(-\frac{j\gamma^2}{4} -\frac{\ii \gamma P}{2})}$ is analytic
in $P$ for $\Im P \geq 0$, it suffices to check that it is polynomially bounded for $|P|$ large.
By Stirling's approximation~\eqref{eq:stiring}, $\Gamma(z) \sim \sqrt{\frac{2\pi}{z}}e^{-z}z^z(1+O(|z|^{-1}))$ as $|z| \to \infty$ with $\Re z \ge  0$. 
For $\Im P\ge 0$, this applies to $\Gamma(1 +\frac{j \gamma^2}{4}  - \frac{\ii \gamma P}{2})$ and $\Gamma(-\frac{j\gamma^2}{4} -\frac{\ii \gamma P}{2})$ as $|P|$ grows large. Therefore~\eqref{eq:ratio-Gamma} holds if $K_j>1 +\frac{j \gamma^2}{4}-(-\frac{j \gamma^2}{4})=  \frac{j\gamma^2}{2}+1$, which implies (\ref{eq:GammaBd}).
\end{proof}
\xin{We now prove Lemma~\ref{lem:block-p-lim} using the expression of $\cA^q_{\gamma, P}(\alpha)$ from Lemma~\ref{lem:reduce}.}
\begin{proof}[Proof of Lemma~\ref{lem:block-p-lim}]
We assume that $P<0$ and $N=\frac{-\alpha}{\gamma}\in\NN$. \xin{Recall  that
$\cA^q_{\gamma, P}(\alpha)=[q^{-\frac1{12}}\eta(q)]^{\alpha (Q-\frac{\alpha}{2}) -2}e^{\ii\alpha^2\pi/2} \hat\cA^q_{\gamma, P}(\alpha)$ from Lemma~\ref{lem:reduce} where}
\begin{equation}\label{eq:hatA-int}
\hat\cA^q_{\gamma, P}(\alpha)= \mathbb{E} \Big[\Big( \int_0^1 e^{\frac{\gamma}{2}(F_\tau(x) -F_\tau(0)) } (2\sin(\pi x))^{-\alpha\gamma/2}  e^{ \pi \gamma P x } e^{\frac{\gamma}{2} Y_{\infty}(x)}dx  \Big)^{- \frac{\alpha}{\gamma}} \Big].
\end{equation}
Under our assumption that $-\frac{\alpha}{\gamma}\in\NN$, the right side of~\eqref{eq:hatA-int} is analytic in $q$ for \xin{$|q|<1$}. 
We claim that \xin{for a fixed  $r\in(0,1)$},
\begin{equation}\label{eq:P-lim}
\lim_{\RR \ni P\to-\infty}   \frac{\hat\cA^q_{\gamma,P}(\alpha)}{\hat\cA^0_{\gamma,P}(\alpha)}=1 \textrm{ uniformly for } q\in \CC \textrm{ with }|q|=r.  
\end{equation}
\xin{Since \(\wcA^q_{\gamma,P}(\alpha)=A^q_{\gamma,P}(\alpha)A^0_{\gamma,P}(\alpha)^{-1}=(q^{-\frac1{12}}\eta(q))^{\alpha (Q-\frac{\alpha}{2}) - 2} \hat\cA^q_{\gamma,P}(\alpha) \hat\cA^0_{\gamma,P}(\alpha)^{-1}\),  we get Lemma~\ref{lem:block-p-lim} from~\eqref{eq:P-lim}.}

To prove (\ref{eq:P-lim}), we introduce the (complex) measure
\[
\mu_q(dx):=e^{\frac{\gamma}{2}(F_\tau(x) -F_\tau(0)) } (2\sin(\pi x))^{-\alpha\gamma/2}  e^{ \pi \gamma P x } e^{\frac{\gamma}{2} Y_{\infty}(x)}dx 
\]
so that  $\hat\cA^q_{\gamma, P}(\alpha)=\E[\mu_q([0,1])^N]$ for \xin{$|q|<1$}. 
For $\eps \in (0,1)$, we have
\begin{equation} \label{eq:binomial-exp}
\E[\mu_q([0,1])^N]=\sum_{i=0}^{N}\binom{N}{i} \EE[\mu_q([0,\eps])^i\mu_q([\eps,1 ])^{N-i}].
\end{equation}
For $I\subset  [0,1]$, let $M_r(I):=\sup_{x\in I, |q|=r} e^{\frac{\gamma}{2}|F_\tau(x) -F_\tau(0)| }$. Then \(|\mu_q(I)|\le M_r(I) \mu_0(I).\)
Holder's inequality and the independence of $F_\tau$ and $Y_\infty$ imply
\begin{multline} \label{eq:holder-ratio}
\left|\EE[\mu_q([0,\eps])^i \mu_q([\eps,1 ])^{N-i}]\right| \\ 
\le \EE[M_r([0,1])^N]\EE[\mu_0([0,\eps])^{N}] \cdot \Big( \frac{\EE[\mu_0([\eps,1])^N]}{\EE[\mu_0([0,\eps])^N]} \Big)^{1-\frac{i}{N}}.
\end{multline}
For $P<0$, note that
\[
\EE[\mu_0([\eps,1])^N]\le e^{ -\pi \gamma N |P| \eps } \EE\Big[\Big(\int_{\eps}^{1}(2\sin(\pi x))^{-\alpha\gamma/2}  e^{\frac{\gamma}{2} Y_{\infty}(x)}dx\Big)^N\Big]
\]
and 
\[
\EE[\mu_0([0,\eps])^N]\ge  e^{ -\pi \gamma N |P| \eps/2 }
\EE\Big[\Big(\int_{0}^{\eps/2}(2\sin(\pi x))^{-\alpha\gamma/2}  e^{\frac{\gamma}{2} Y_{\infty}(x)}dx\Big)^N\Big].
\]
Since $\EE[M_r([0,1])^N]<\infty$ and $\EE[\mu_0([0,1])^{N}]<\infty$, by~\eqref{eq:holder-ratio}
there exists a constant $C=C(\eps,r)$ such that for $i<N$, $|q|=r$, and $P < 0$, we have
\begin{equation} \label{eq:ratio-bound}
\left|\EE[\mu_q([0,\eps])^i\mu_q([\eps,1 ])^{N-i}]\right|\le \E[\mu_0([0,1])^N]C(\eps,r) e^{-\frac12\pi \eps \gamma|P|}.
\end{equation}
Applying (\ref{eq:binomial-exp}) and (\ref{eq:ratio-bound}), we have for a possibly enlarged $C(\eps, r)$ that
\begin{equation}\label{eq:eps-app}
\E[\mu_q([0,1])^N] = \E[\mu_q([0,\eps])^N]+\E[\mu_0([0,1])^N]C(\eps,r) e^{-\frac12\pi \eps \gamma|P|}
\end{equation}
 for $|q|=r$ and $P<0$. A similar argument shows that for some $C(\eps)>0$ we have $\E[\mu_0([0,1])^N] = \E[\mu_0([0,\eps])^N] (1+C(\eps) e^{-\frac12\pi \eps \gamma|P|})$.

Set $m_\eps:=\sup_{x\in I, |q|=r} |e^{\frac{\gamma}{2}(F_\tau(x) -F_\tau(0)) }-1 |$ so that
\(\left|\mu_q([0,\eps]) - \mu_0([0,\eps]) \right| \le m_\eps \mu_0([0,\eps])\). Then
\begin{align*}
\left|\mu_q([0,\eps])^N - \mu_0([0,\eps])^N \right| &\le \left|\mu_q([0,\eps]) - \mu_0([0,\eps]) \right| \times N M_r([0,1])^N \mu_0([0,\eps])^{N-1}\\
&\le m_\eps  \times N M_r([0,1])^N \mu_0([0,\eps])^{N}.
\end{align*} 
By the dominated convergence and continuity of $F_\tau(x)$ at $x=0$, we have $\lim_{\eps\to 0}\EE[m_\eps  M_r([0,1])^N]=0$,
hence $\lim_{\eps \to 0}\frac{\EE[\mu_q([0,\eps])^N]}{\EE[\mu_0([0,\eps])^N]}=1$ uniformly in $|q|=r$ and $P<0$.
Combined with~\eqref{eq:eps-app}, we get~\eqref{eq:P-lim}.
\end{proof}

\subsection{Proof of Theorem \ref{thm:nek-block}} \label{sec:nek-block-proof}
We \xin{need to prove $\widetilde \cA_{\gamma, P, n}(\alpha)=\cZ_{\gamma, P, n}(\alpha)$ for $\cZ_{\gamma, P, n}$ in \eqref{eq:nek-exp-def}. 
Let us outline the proof. 
First,   from explicit expression  we see that as a meromorphic function in $\gamma$ 
\begin{equation}\label{eq:dual}
    \cZ_{\gamma, P, n}(\alpha)=\cZ_{4/\gamma, P, n}(\alpha).
\end{equation}
By  Theorems~\ref{thm:shift} and~\ref{thm:int-nek-block}, for $\chi=\frac{\gamma}{2}$ and $\frac{-\alpha}{\gamma}\in \NN$ we have
\begin{equation} \label{eq:shift-Z}
\xin{\cZ_{2\chi, P, n}(\alpha - \chi)} = Y_n(\chi, \alpha) \xin{\cZ_{2\chi, P, n}(\alpha + \chi)} + Z_n(\chi, \alpha).
\end{equation}
We will show that both sides of~\eqref{eq:shift-Z}  admit meromorphic extension in $\chi$ around $[0,\infty)$ and the validity of~\eqref{eq:shift-Z}  can extend to $\chi\in (0,\infty)$. Therefore 
$\cZ_{\gamma, P, n}(\alpha)$ satisfies the shift equation~\eqref{eq:abs-shift}   for $\chi=\frac{\gamma}{2}$, namely
\begin{equation} \label{eq:shift-Z-1}
\xin{\cZ_{\gamma, P, n}(\alpha -\chi)} = Y_n(\chi, \alpha) \xin{\cZ_{\gamma, P, n}(\alpha +\chi)} + Z_n(\chi, \alpha)
\end{equation}with $\chi=\frac{\gamma}{2}$.
Since $\cZ_{\gamma, P, n} =\cZ_{4/\gamma, P, n}$ by~\eqref{eq:dual},  we see from~\eqref{eq:shift-Z} that
$\cZ_{\gamma, P, n}(\alpha)$ satisfies  \eqref{eq:shift-Z-1} with $\chi=\frac2{\gamma}$ as well.
Now the uniqueness in Proposition~\ref{prop:shift-unique} implies 
	$\cZ_{\gamma, P, n}(\alpha)=\wcA_{\gamma,P,n}(\alpha)$.}

\xin{We use an inductive argument to implement this outline.} We say a function $f(\chi)$
is $\chi$-\emph{good} if it admits meromorphic extension to a complex neighborhood of $[0,\infty)$.
We say a function $f(w,\chi)$ is $(w,\chi)$-\emph{good} if there exist a complex neighborhood $U$ of $[0,\infty)$ and 
\revise{countable subset $V \subset U$} such that 
\xin{\begin{itemize}
\item for each $w\in\overline\D$,
$f(w,\chi)$ is meromorphic in $\chi$ on $U$ with poles in $V$;
\item the 
multiplicity  of the poles are  uniformly bounded in $w\in \overline \D$;
\item  $f$ admits an extension \revise{ to $\overline \D \times \left(U\setminus V\right)$} where $f$ is $(w,\chi)$-regular in the sense of Definition~\ref{def:walpha} with $\chi$ in place of $\alpha$.
\end{itemize}}

For $\chi \in \{\frac{\gamma}{2}, \frac{2}{\gamma}\}$ and $j\in \{1,2\}$ define the normalized expressions
\begin{align}\label{eq:norm-Gphi}
\wG^{\alpha, j}_{\chi, n, 1}(w) &:= G^{\alpha, j}_{\chi, n, \revise{1}}(w) W^-_\chi(\alpha, \gamma)^{-1}\eta_{\chi, 0}^-(\alpha)^{-1}\cA_{\gamma, P, 0}(\alpha - \chi)^{-1}\\
\wphi^{\alpha, j}_{\chi, n, 1}(w) &:= \phi^{\alpha, j}_{\chi, n, 1}(w) W^-_\chi(\alpha, \gamma)^{-1}\eta_{\chi, 0}^-(\alpha)^{-1}\cA_{\gamma, P, 0}(\alpha - \chi)^{-1}
\end{align}\xin{Normalizing~\eqref{eq:phi-decomp},} 
we see that for some $\widetilde{X}^j_{\chi, n}(\alpha)$ not depending on $w$
\begin{equation} \label{eq:phi-decomp1}
\wphi^{\alpha, j}_{\chi, n, 1}(w) = \wG^{\alpha, j}_{\chi, n, 1}(w) + \widetilde{X}^j_{\chi, n}(\alpha) v^\alpha_{j, \chi, n}(w) \textrm{ for } n\in \NN_0 \textrm{ and }j=1,2,
\end{equation}where $v^\alpha_{j, \chi, n}(w)$ are as in~\eqref{eq:defv1} and~\eqref{eq:defv2}.
\xin{Note that $\wV^{\alpha, j}_{\chi, n}= \wG^{\alpha, j}_{\chi, n, 1}(0)$
for  $\wV^{\alpha, j}_{\chi, n}$ defined in Theorem~\ref{thm:shift} and used in the expression~\eqref{eq:zn-def} of $Z_n(\chi,\alpha)$.}

We now inductively prove the following statements (a)$_n$ and (b)$_n$ indexed by $n=0,1,2,\cdots$.
Theorem~\ref{thm:nek-block} then follows from (a)$_n$.
\begin{itemize}
\item [(a)$_n$]:  $\cZ_{\gamma, P, n}(\alpha) = \wcA_{\gamma, P, n}(\alpha)$ for $\alpha\in(-\frac{4}{\gamma}, 2Q)$, $\gamma\in(0,2)$ and $\chi\in \{ \frac{\gamma}{2},\frac{2}{\gamma} \}$. 
\item[(b)$_n$]: $(w,\chi)\mapsto \wphi^{\alpha, j}_{\chi, n, 1}(w)$ is $(w,\chi)$-good for each $\alpha\in \RR$ and $j\in \{1,2\}$.
\end{itemize}
For $n = 0$, since $\cZ_{\gamma, P, 0}(\alpha) = \wcA_{\gamma, P, 0}(\alpha)=1$, (a)$_0$ holds. 
Since $\wG^{\alpha, j}_{\chi, 0, i}(w)=0$, we have
$\widetilde{X}^1_{\chi, n}(\alpha)=\wphi^{\alpha, 1}_{\chi, n, 1}(0)=1$ and 
$\widetilde{X}^2_{\chi, n}(\alpha)=\wphi^{\alpha, 1}_{\chi, n, 2}(0)=-1$. By the expression for $v^\alpha_{j, \chi, n}(w)$ in~\eqref{eq:defv1} and~\eqref{eq:defv2},
we get (b)$_0$. 

For $n\in \NN$, assume (a)$_m$ and (b)$_m$ hold for $0 \leq m \leq n-1$.
 \xin{We first show that $\wG_{\chi, n, 1}^{\alpha, 1}(w)$ and $\wG_{\chi, n,1}^{\alpha, 2}(w)$ are $(w,\chi)$-good.} By Definition~\ref{def:particular} and~\eqref{eq:norm-Gphi},
\[
(\hpg_\chi  - ( l_\chi^2 + \chi^2 (P^2 + 2n))/4 )\, \wG_{\chi, n, 1}^{\alpha, 1}(w)
= \widetilde{g}^{\alpha, 1}_{\chi, n, 1}(w)
\]
where $\widetilde{g}^{\alpha, 1}_{\chi, n, 1}(w)$ is defined as $g^{\alpha, 1}_{\chi, n, 1}(w)$
in~\eqref{eq:defg} with $\wphi_{\chi, n - l, 1}^{\alpha, j}(w)$ in place of $\phi_{\chi, n - l, 1}^{\alpha, j}(w)$. 
Since (b)$_m$ holds for $m<n$, $\widetilde{g}^{\alpha, 1}_{\chi, n, 1}(w)$ is $(w,\chi)$-good.  
Applying Lemma~\ref{lem:alpha-ext2} to $\prod_{z \in V} (\chi - z)^{M} \widetilde{g}^{\alpha, 1}_{\chi, n, 1}(w)$,
where $M$ is a uniform bound on the multiplicity of poles of \xin{$\chi\mapsto \widetilde{g}^{\alpha, 1}_{\chi, n, 1}(w)$,} we see that
$\wG_{\chi, n, 1}^{\alpha, 1}(w)$ is $(w,\chi)$-good.
The same argument shows that $\wG_{\chi, n,1}^{\alpha, 2}(w)$ is $(w,\chi)$-good as well.

We now  prove (a)$_n$. First fix $\alpha < 0$. By our induction hypothesis, for $m<n$,  $\wcA_{\gamma, P, m}(\alpha)=\cZ_{\gamma, P, m}(\alpha)$.
Let $\chi_k=\frac{-\alpha}{2k}$. 
For $k\in\NN$ large enough we have  $ \alpha>-\frac{4}{\gamma_k} + \frac{\gamma_k}{2}$ with $\gamma_k=2\chi_k=-\frac{\alpha}{k}$. 
\xin{For such $k$, Theorems~\ref{thm:shift} and~\ref{thm:int-nek-block} yield that \eqref{eq:shift-Z} holds for $\chi=\chi_k$. Namely for $n\in\NN$ and $\chi=\chi_k$ we have 
\[
\cZ_{2\chi, P, n}(\alpha - \chi) = Y_n(\chi, \alpha) \cZ_{2\chi, P, n}(\alpha + \chi) + Z_n(\chi, \alpha).
\]}

\xin{We now show that both sides of~\eqref{eq:shift-Z} are $\chi$-good for a fixed $\alpha<0$.}
By their explicit expressions, $l_\chi$, $\Gamma_{n, 1}$, $\Gamma_{n, 2}$ \xin{and $Y_n(\chi, \alpha)$}
are meromorphic in $\chi\in \CC$.  Moreover,~\eqref{eq:Theta'0} and the definition
of $\eta_{\chi, n }^{\pm}(\alpha)$ \revise{in (\ref{eq:eta-m})} yield 
\begin{multline*}
\prod_{k = 1}^\infty (1 - q^{2k})^{4\frac{l_\chi(l_\chi + 1)}{\chi^2} + 2l_\chi +2} =\sum_{n=1}^{\infty}\frac{\eta_{\chi, n }^-(\alpha)}{\eta_{\chi, 0}^-(\alpha)} q^n\\ \textrm{ and } 
\prod_{k = 1}^\infty (1 - q^{2k})^{4 \frac{l_\chi(l_\chi + 1)}{\chi^2} - 2 l_\chi} =\sum_{n=1}^{\infty}\frac{\eta_{\chi, n }^+(\alpha)}{\eta_{\chi, 0}^+(\alpha)} q^n ,
\end{multline*}
so $\frac{\eta_{\chi, n }^{\pm}(\alpha)}{\eta_{\chi, 0}^{\pm}(\alpha)} $ are rational functions in $\chi$ for $n\in \NN_0$.
\xin{For $j=1,2$, since $\wV^{\alpha, j}_{\chi, n}=\wG_{\chi, n,1}^{\alpha, j}(0)$ and $\wG_{\chi, n,1}^{\alpha, 2}(w)$ is $(w,\chi)$-good, $\wV^{\alpha, j}_{\chi, n}$ is $\chi$-good.}
By expression~\eqref{eq:zn-def} for $Z_n(\chi, \alpha)$, we see that $\chi\mapsto Z_n(\chi, \alpha)$ is $\chi$-good.

\xin{Recall the following standard fact on meromorphic functions.}
\begin{lemma}\label{lem:mero}
	If $f$ and $g$ are meromorphic functions on an open set $U\subset \CC$ with $f(z_k) = g(z_k)$ for some $z_k \in U$
	with an accumulation point in  $U$, then $f = g$ on all of $U$.
\end{lemma}
Since $\lim_{k\to\infty}\chi_k=0$ and both sides of~\eqref{eq:shift-Z} are $\chi$-good, \xin{Lemma~\ref{lem:mero} implies that \eqref{eq:shift-Z} holds for $\chi\in[0,\infty)$. As explained at the beginning of this subsection, by~\eqref{eq:dual} and~\eqref{eq:shift-Z}, for a fixed $\gamma\in(0,2)$ and $\chi\in\{\frac{\gamma}{2}, \frac{2}{\gamma}\} $ the shift equation~\eqref{eq:shift-Z-1} holds} for $\alpha\in (-\frac{4}{\gamma}+\chi,0)$. 
By Theorem~\ref{thm:series-opes} and Corollary~\ref{corr:gap-extend},  both sides
of~\eqref{eq:shift-Z} are meromorphic in $\alpha$ in a complex neighborhood
of  $(-\frac{4}{\gamma}+\chi,2Q-\chi)$. Thus~\eqref{eq:shift-Z} holds for
$\gamma\in(0,2)$, $\chi\in\{\frac{\gamma}{2}, \frac{2}{\gamma}\}$, and 
$(-\frac{4}{\gamma}+\chi,2Q-\chi)$. 
Now Theorem~\ref{thm:shift} and \xin{the uniqueness in Proposition~\ref{prop:shift-unique} imply (a)$_n$.}

It remains to prove (b)$_n$.  The argument  is parallel to the proof of Corollary~\ref{corr:gap-extend}
with $\chi$ in place of $\alpha$. By Theorem~\ref{thm:series-opes} and~\eqref{eq:shift-0}, we have 
\begin{align*}
\wphi_{\chi, n, 1}^{\alpha, 1}(0)  &=   \Big[\wcA_{\gamma, P, n}(\alpha - \chi)
+ \sum_{m = 0}^{n - 1} \frac{\eta_{\chi, n - m}^-(\alpha)}{\eta_{\chi, 0}^-(\alpha) } \wcA_{\gamma, P, m}(\alpha - \chi)\Big]; \\
\wphi^{\alpha, 2}_{\chi, n, 1}(0)&=- \Big[\wcA_{\gamma, P, n}(\alpha + \chi)
+ \sum_{m = 0}^{n - 1} \frac{\eta_{\chi, n - m}^+(\alpha)}{\eta_{\chi, 0}^+(\alpha)} \wcA_{\gamma, P, m}(\alpha + \chi)\Big] \frac{\Gamma_{0, 1}}{\Gamma_{0, 2}}\frac{1 - e^{\pi \chi P - \ii \pi l_\chi}}{1 + e^{\pi \chi P + \ii \pi l_\chi}}. 
\end{align*}
Since (a)$_m$ holds for $m\le n$, we see that
$\wphi^{\alpha, j}_{\chi, n, 1}(0)$ is $\chi$-good for $j=1,2$.  \revise{Since
$\wV^{\alpha, j}_{\chi, n}= \wG_{\chi, n, 1}^{\alpha, j}(0)$ is $\chi$-good for $j=1,2$,
by~\eqref{eq:phi-decomp1}} we have \(\widetilde{X}^j_{\chi, n}(\alpha) = \wphi^{\alpha, j}_{\chi, n, 1}(0) - \wV^{\alpha, j}_{\chi, n}\), which is $\chi$-good.
Therefore $\widetilde{X}^j_{\chi, n}(\alpha) v^\alpha_{j, \chi, n}(w)$ is $(w,\chi)$-good. Again by~\eqref{eq:phi-decomp1} we get~(b)$_n$. \qed

\appendix

\section{Conventions and facts on special functions} \label{sec:theta}

This appendix collects facts on the special functions we use in the main text.
See \cite[Chapters 20 and 23]{NIST:DLMF} and \cite{Bar04} for more details.

\subsection{Theta and Weierstrass elliptic functions}\label{subsec:special}
For \xin{$q\in (0,1)$}, the \notion{Dedekind eta function} is 
\(\eta(q)= q^{\frac{1}{12}} \prod_{k = 1}^\infty (1 - q^{2k}).\)
For  $\tau\in \bbH=\{\mathrm{Im}\,z>0\}$,  set $q = e^{\ii \pi \tau}$. \xin{Although $q^{\frac{1}{12}} \prod_{k = 1}^\infty (1 - q^{2k})$ is multi-valued for complex $q$, we interpret it as single-valued in $\tau\in \bbH$ via}  
\begin{equation}\label{eq:eta}
\eta(e^{\ii \pi \tau})= e^{\frac{\ii \pi \tau}{12}} \prod_{k = 1}^\infty (1 - e^{2k\ii \pi \tau}).
\end{equation}
For $u \in \CC$ \xin{and $\tau\in \bbH$},
the \notion{Jacobi theta function} is given by 
\begin{align}\label{eq:def-theta}
\Theta_\tau(u)= 
- 2 \xin{e^{\frac{\ii \pi \tau}4}} \sin(\pi u) \prod_{k = 1}^\infty (1 - \xin{e^{\ii 2k\pi \tau}}) (1 - 2 \cos(2\pi u) \xin{e^{\ii 2k\pi \tau}} + \xin{e^{\ii 4k\pi \tau}}).
\end{align}
Let $\Theta_\tau'(u)$ denote the $u$-derivative of $\Theta_\tau(u)$.
In these terms, we have 
\begin{equation}\label{eq:Theta'0}
\Theta_\tau'(0) = -2\pi \xin{e^{\frac{\ii \pi \tau}4}} \prod_{k = 1}^\infty (1 - e^{2k\ii \pi \tau})^3 = -2\pi\eta(\xin{e^{\ii \pi \tau}})^3.
\end{equation}
\xin{For a fixed $\tau$,} \notion{Weierstrass's elliptic function} $\wp$ is given by
\begin{equation} \label{eq:wp-theta}
\wp(u) \defeq \frac{\Theta_\tau'(u)^2}{\Theta_\tau(u)^2} - \frac{\Theta_\tau''(u)}{\Theta_\tau(u)} + \frac{1}{3} \frac{\Theta_\tau'''(0)}{\Theta_\tau'(0)}.
\end{equation}
It admits the following expansion  (see e.g.\ \cite[Equation (23.8.1)]{NIST:DLMF})
\begin{equation} \label{eq:wp-expand}
\wp(u) = \frac{\pi^2}{\sin^2(\pi u)} -  \sum_{n = 1}^\infty
\frac{8 \pi^2nq^{2n}}{1 - q^{2n}} \cos(2\pi nu) - \frac{\pi^2}{3} +  \sum_{n = 1}^\infty \frac{8\pi^2q^{2n}}{(1 - q^{2n})^2},
\end{equation}
\revise{where $\wp(u) = 1/u^2 + O(u^2)$ due to the identity
$\sum_{n = 1}^\infty \frac{n q^{2n}}{1 - q^{2n}} = \sum_{n = 1}^\infty \frac{q^{2n}}{(1 - q^{2n})^2}$.}
This expansion implies that $\wp(u)$ further admits a $q$-expansion 
\begin{equation}\label{eq:wpn}
\wp(u) = \sum_{n = 0}^\infty \wp_{n}(u) q^{n}, \quad \textrm{where } \wp_n(u) \equiv 0 \textrm{ for odd } n.
\end{equation}
By \eqref{eq:wp-expand}, there are polynomials $\wwp_n(w)$ with
$\wwp_{n}(w) = \wp_n(u)$ for $w = \sin^2(\pi u)$. 

\subsection{Gamma function and double gamma function}\label{subsec:gamma}
The gamma function is $\Gamma(z)\defeq\int_{0}^{\infty} t^{z-1}e^{-t} dt$ for $\Re z>0$. In particular, $\Gamma(n)=(n-1)!$ for $n\in\NN$.
It has meromorphic extension to $\CC$ with simple poles at $\{0,-1,-2,\cdots\}$
and satisfies $\Gamma(z+1)=z\Gamma(z)$, the reflection formula
\begin{equation}\label{eq:reflection-sin}
\Gamma(z)\Gamma(1-z)=\frac{\pi}{\sin(\pi z)} \qquad \textrm{for }z\notin \ZZ,
\end{equation}
and the Legendre duplication formula
\begin{equation}\label{eq:duplication}
\Gamma(2z) = 2^{2z-1} \pi^{-1/2} \Gamma(z)\Gamma(z+\frac12).
\end{equation}
By~\eqref{eq:reflection-sin}, $\Gamma(z)$ has no zeros and $\Gamma(z)^{-1}$ is entire with simple zeros at $\{0, -1,-2,\cdots\}$
 \revise{so that
\begin{equation} \label{eq:gamma-poles}
\Res_{x = - n} \Gamma(x) = \frac{(-1)^n}{n!}.
\end{equation}}
The $z\to\infty$ asymptotics of $\Gamma(z)$ are governed by Stirling's approximation; for each $\delta\in(0,\pi)$, with an error term $O(|z|^{-1})$ depending on $\delta$, we have
\begin{equation}\label{eq:stiring}
\Gamma(z) \sim \sqrt{\frac{2\pi}{z}}e^{-z}z^z(1+O(|z|^{-1})) \quad  \textrm{for }|z|>1 \textrm{ and } \arg z\in (\delta-\pi, \pi -\delta).
\end{equation}

Finally, for $\mathrm{Re}(z) >0$ the \notion{double gamma function} $\Gamma_{\frac{\gamma}{2}}(z)$ is
\begin{equation} \label{eq:dgamma-def}
\log \Gamma_{\frac{\gamma}{2}}(z) := \int_0^\infty \frac{dt}{t}
\Big[\frac{e^{-zt} - e^{-\frac{Qt}{2}}}{(1 - e^{-\frac{\gamma t}{2}})(1 - e^{-\frac{2t}{\gamma}})}
- \frac{(\frac{Q}{2} - z)^2}{2} e^{-t} + \frac{z - \frac{Q}{2}}{t}\Big].
\end{equation}
The function $\Gamma_{\frac{\gamma}{2}}(z)$ admits meromorphic extension to $\CC$ which
has no zeros and has simple poles at $\{ -\frac{\gamma n}{2} - \frac{2m}{\gamma} \mid n,m \in \mathbb{N} \}$. Moreover  
\begin{equation}\label{eq:sh_double_gamma}
\Gamma_{\frac{\gamma}{2}}(z + \chi)
= \sqrt{2 \pi} \chi^{\chi z - \frac{1}{2}} \Gamma(\chi z)^{-1} \Gamma_{\frac{\gamma}{2}}(z)\quad\textrm{for }\chi \in \{\frac{\gamma}{2}, \frac{2}{\gamma}\}.
\end{equation}
For $\gamma^2 \notin \mathbb{Q}$, $\Gamma_{\frac{\gamma}{2}}(z)$ is completely specified by (\ref{eq:sh_double_gamma})
and $\Gamma_{\frac{\gamma}{2}}(\frac{Q}{2}) = 1$. Other values of $\gamma$ can be recovered
by continuity. We also use the function 
\begin{align}
S_{\frac{\gamma}{2}}(z) := \Gamma_{\frac{\gamma}{2}}(z) \Gamma_{\frac{\gamma}{2}}(Q - z)^{-1}.
\end{align}

\subsection{Identities on $\Theta_\tau(u)$}\label{subsec:identities}
In Section~\ref{sec:bpz} we use the following identities on the theta function $\Theta_\tau(u)$.
Here we use the notations $\Theta'_{\tau}(u) : = \partial_u \Theta_{\tau}(u)$
and $\Theta''_{\tau}(u) : = \partial_{uu} \Theta_{\tau}(u)$, where $\partial_{\tau}$ and $\partial_u$
are holomorphic derivatives.  
\begin{equation} \label{eq:theta-heat}
\ii \pi \partial_\tau \Theta_\tau(u) = \frac{1}{4} \Theta_\tau''(u).
\end{equation}
\begin{multline} \label{eq:theta-iden}
\frac{\Theta_\tau''(a - b)}{\Theta_\tau(a - b)} + \frac{\Theta_\tau''(a)}{\Theta_\tau(a)} + \frac{\Theta_\tau''(b)}{\Theta_\tau(b)} - 2 \frac{\Theta_\tau'(a - b)}{\Theta_\tau(a - b)} \Big(\frac{\Theta_\tau'(a)}{\Theta_\tau(a)} - \frac{\Theta_\tau'(b)}{\Theta_\tau(b)}\Big)\\ - 2\frac{\Theta_\tau'(a)}{\Theta_\tau(a)} \frac{\Theta_\tau'(b)}{\Theta_\tau(b)} - \frac{\Theta_\tau'''(0)}{\Theta_\tau'(0)} = 0.
\end{multline}
\begin{equation}\label{eq:half-up}
\Theta_{\tau}(u + \frac{\tau}2) = - \ii e^{-\ii \pi u-\frac{\ii \pi\tau}{3}} \eta(\xin{e^{\ii \pi \tau}})
\prod_{n=1}^{\infty}  (1 -   e^{\xin{(2n-1)\ii\pi \tau} + 2 \pi i u} )(1 -  e^{\xin{(2n-1)\ii\pi \tau}-2 \pi i u} ).
\end{equation}
(\ref{eq:theta-heat}) comes \revise{from
\cite[Section 21.4]{WW02}}, (\ref{eq:theta-iden}) is stated in \cite[Equation (A.10)]{FLNO09} and may
be derived by applying the operator $\partial_y \partial_u - \frac{2 \ii}{\pi} \partial_\tau$ to
\cite[Exercise 21.13]{WW02}, and (\ref{eq:half-up}) comes from direct substitution.

\subsection{Fractional powers of $\Theta_\tau(u)$}\label{subsec:fractional}
We first recall the following fact.
\begin{lemma}\label{lem:log}
\xin{Fix a positive integer $k$.}
Suppose $f$ is analytic on a simply connected domain \xin{$U\subset \mathbb C^k$} such that $f(z)\neq 0$ for each $z\in U$. Then
there exists an analytic function $g$ on $U$ such that $f=e^g$. \xin{Moreover, if $\tilde g$ is an analytic function on $U$ such that $f = e^{\tilde{g}}$, then $\tilde g =  g + 2 k \pi \ii$ for a $k \in \mathbb{Z}$.} 
\end{lemma}

\xin{The function $g$ can be viewed as choice of branch for $\log f$, which satisfies $\mathrm{Re}(g)=\log |f|$ and  
 $\mathrm{Im}(g)=\arg f$ modulo $2\pi $. For $c\in \RR$, we define the fractional power $f^c$ by $f^c=e^{c\log f}$ for a specific choice of branch for $\log f$. For $f(q)=q^{-\frac{1}{12}} \eta(q)=\prod_{k=1}^\infty(1-q^{2k})$, which is  analytic on $\D$, we specify $\log f$ by requiring $\log f(q) = \sum_{k=1}^\infty \log(1-q^{2k})$ for $q\in (0,1)$. By Lemma~\ref{lem:log}, we have:}
\begin{lemma}\label{lem:eta}
\xin{For each $c\in\RR$, the function $[q^{-\frac{1}{12}} \eta(q)]^c$ is analytic  in $\mathbb D$}
\end{lemma}
\noindent \xin{We now specify $\Theta_\tau(u)^c$ as an analytic function of $(u,\tau)$ in a certain domain. Note that $\Theta_{\tau}(u)<0$ for $u\in(0,1)$ and $\tau\in \ii\RR$ by~\eqref{eq:def-theta}. Moreover, $\Theta_\tau(u)=0$ if and only if $u=m+n\tau$ for some  $m,n\in \ZZ$; see \cite[Section 20.2]{NIST:DLMF}.}
\begin{define}\label{def:log}
\xin{Let $D\defeq\{x+\tau y: x\in(0,1) \textrm{ or } y\in (0,1)\revise{, x, y \in \RR} \}$. Define $\Theta_\tau(u)^c=e^{c\log\Theta_\tau(u)}$ for  $c\in\RR$ and $(u,\tau)\in D\times \bbH$,  where $\log\Theta_\tau(u)$  is specified by requiring  $\Im \log\Theta_\tau(u)=\pi$ for $u\in (0,1)$ and $\tau\in\ii\RR$. For $u\in \RR\setminus \ZZ $ so that $\Theta_\tau(u)\neq 0$, we define $\log\Theta_\tau(u)$ and $\Theta_\tau(u)^c$ by continuous extension.
}
\end{define}
By Definition~\ref{def:log}, we have 
\begin{equation}\label{eq:phase}
\Theta_{\tau}(x)^{c} = \revise{e^{\ii c\pi }} |\Theta_{\tau}(x)|^{c}\qquad \textrm{for }x\in(0,1) \textrm{ and }\tau \in \ii\RR.
\end{equation}
Moreover, for $c\in \RR$, we have by \cite[Chapter 20.2]{NIST:DLMF} that
\begin{align}
\Theta_\tau(u+ 1)^c &= e^{-\ii \pi c} \Theta_\tau(u)^c   &&\textrm{if } u\in \{ x+y\tau: y\in (0,1)\}; \label{eq:theta-shift0}\\
\Theta_\tau(u + \tau)^c &= e^{-2 \pi \ii c(u - \frac{1}{2} + \frac{\tau}{2})} \Theta_\tau(u)^c   &&\textrm{if } u\in \{ x+y\tau: x\in (0,1)\}\label{eq:theta-shift1}.
\end{align}
\xin{The following lemma is needed in the next subsection.}
\begin{lemma}\label{lem:frac-bound}
\xin{Fix  $c>0$ and $\tau\in \bbH$.  Let $\overline{\band} = \{   u: 0\le \Im u \le \frac34\Im \tau \}$. Then there exists a constant $C$  such that  $|\Theta_\tau (u)^c|\le C|\sin (\pi u )|^c$ for $u\in \overline{\band}$.}
\end{lemma}
\begin{proof}
\xin{Define $\phi_\tau(u)$ by $\Theta_\tau(u)=\sin(\pi u)\phi_\tau(u)$. 
By~\eqref{eq:def-theta}, we have  $\phi_\tau(u)=- 2q^{1/4}\prod_{k = 1}^\infty (1 - q^{2k}) (1 - 2 \cos(2\pi u) q^{2k} + q^{4k})$, which is continuous on $\overline{\band}$.
Moreover $|\phi_\tau(u+1)|=|\phi_\tau(u)|$ by~\eqref{eq:theta-shift0}. Therefore $|\phi_\tau|$ is bounded on 
$\overline{\band} $ hence $|\Theta_\tau (u)^c|\le C|\sin (\pi u )|^c$ with $C=\max_{u\in \overline{\band}}{|\phi_\tau|^c}$.}
\end{proof}

\subsection{On the definition of the $u$-deformed block}\label{subsec:u-theta}
We \xin{first prove  Lemma~\ref{lem:g} and then use it to make sense of fractional powers of $f(u,q)$  needed for the definition of  the $u$-deformed block in Section~\ref{sec:bpz}; see Definition~\ref{def:fractional-power}.}
\begin{proof}[\xin{Proof of Lemma~\ref{lem:g}}]
 \xin{By \cite[Equation (20.5.10)]{NIST:DLMF}, we have}
\begin{equation}\label{eq:log-Theta}
\frac{\Theta'_{\tau}(u)}{\Theta_{\tau}(u)} = \pi \frac{\cos( \pi u )}{\sin( \pi u )}  + 4 \pi \sum_{n=1}^{\infty} \frac{q^{2n}}{1 - q^{2n}} \sin( 2 \pi n u).
\end{equation}	
Since \revise{ $\mathrm{Re}(\cot(z)) = \frac{e^{-4\pi \mathrm{Im}(z)}-1}{|e^{2\pi \ii z}-1|^2}$} and $\mathrm{Im} (\sin z)= \cos(\mathrm{Re}(z))(e^{\mathrm{Im}(z) } -  e^{-\mathrm{Im}(z) })$,  
\begin{align*}
\Im \left( \frac{\Theta'_{\tau}(z)}{\Theta_{\tau}(z)} \right) 
&=  -\pi \frac{1 - e^{-4 \pi \Im(z)}}{|e^{2 \ii \pi z }-1|^2}\\
&\phantom{==} + 2 \pi  \sum_{n=1}^{\infty} \frac{q^{2n}}{1 - q^{2n}} (e^{2 \pi n \Im(z)} - e^{-2 \pi n \Im(z)} ) \cos(2 \pi n \Re(z)).
\end{align*}

Since $\mathrm{Im}(z)>0$,  we have \(\pi \frac{1 - e^{-4 \pi \Im(z)}}{|e^{2 \ii \pi z }-1|^2} > \frac{\pi}{4}( 1 - e^{-4 \pi \Im(z)})\). 
Note that 
\begin{multline*}
2 \pi  \sum_{n=1}^{\infty} \frac{q^{2n}}{1 - q^{2n}} (e^{2 \pi n \Im(z)} - e^{-2 \pi n \Im(z)} ) \cos(2 \pi n \Re(z))\\
< \frac{2 \pi}{1 - q^2} \Big(\frac{q^2 e^{2 \pi  \Im(z)}}{1 - q^2 e^{2 \pi  \Im(z)}} - \frac{q^2 e^{- 2 \pi  \Im(z)}}{1 - q^2 e^{- 2 \pi  \Im(z)}} \Big).
\end{multline*}
For $h(x)=\frac{q^2 x}{1 - q^2 x }$, since $h'(x) = \frac{q^2}{(1-q^2x)^2}\le  \frac{q^2}{(1-q^2e^{2 \pi  \Im(z)})^2}$ for $x\in [e^{-2 \pi  \Im(z)},e^{2 \pi  \Im(z)}]$, we have 
\begin{align*}
\frac{q^2 e^{2 \pi  \Im(z)}}{1 - q^2 e^{2 \pi  \Im(z)}} - \frac{q^2 e^{- 2 \pi  \Im(z)}}{1 - q^2 e^{- 2 \pi  \Im(z)}} 
&< \frac{q^2}{(1-q^2e^{2 \pi  \Im(z)})^2}\Big(e^{2 \pi \Im(z)} - e^{-2 \pi  \Im(z)}\Big)  \\
&=\frac{q^2e^{2 \pi  \Im(z)}}{(1-q^2e^{2 \pi  \Im(z)})^2} \Big(1 - e^{-4 \pi  \Im(z)}  \Big).
\end{align*}
For $\Im z<\frac34\Im \tau $, we have $q^2e^ {2\pi \Im z}<q^{\frac12}$.
Since $\frac{x}{(1-x)^2}$ is monotone on $(0,1)$,
\[
\frac{2 \pi}{1 - q^2}\frac{q^2e^{2 \pi  \Im(z)}}{(1-q^2e^{2 \pi  \Im(z)})^2}  \Big(1 - e^{-4 \pi  \Im(z)}  \Big)<\frac{2 \pi}{1 - q^2} \frac{q^{\frac12}}{(1-q^{\frac12})^2} \Big(1 - e^{-4 \pi  \Im(z)}  \Big).
\]
Since $\lim_{q\to0 }\frac{2 \pi}{1 - q^2} \frac{q^{\frac12}}{(1-q^{\frac12})^2}=0$, $q_0$ with the desired property exists.
\end{proof}
\xin{From Lemma~\ref{lem:g} we immediately have the following.}
\begin{lemma}\label{arg-theta}
Fix $q\in (0,q_0)$ and $u \in\band$.  Let the straight line between $\Theta_\tau(u)$ and $\Theta_\tau(u + 1) = - \Theta_\tau(u)$
divide the complex plane into two open half planes $\mathbb H^-_u $ and $\mathbb H^+_u$, where $\mathbb{H}^-_u$ contains
a small clockwise rotation of $\Theta_{\tau}(u)$ viewed as a vector.  For $x \in (0, 1)$, we have
$\Theta_{\tau}(u+x)\in \mathbb H^-_u$.
\end{lemma}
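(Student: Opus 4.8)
The plan is to track the argument of the curve $x \mapsto \Theta_\tau(u+x)$ along the real segment $x\in[0,1]$ and show it decreases monotonically by exactly $\pi$. First I would note that for $u\in\band$ and any real $x$ one has $u+x\in\band$, since $\band$ depends only on the imaginary part; in particular $\Theta_\tau(u+x)\neq 0$ throughout, so there is a continuous branch $\theta(x)$ of $\arg\Theta_\tau(u+x)$ on $[0,1]$ with $\theta'(x)=\Im\!\big(\Theta_\tau'(u+x)/\Theta_\tau(u+x)\big)$. By Lemma~\ref{lem:g} (applicable since $q<q_0$ and $u+x\in\band$) we get $\theta'(x)<0$ for all $x\in[0,1]$, so $\theta$ is strictly decreasing. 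Since $\Theta_\tau(u+1)=-\Theta_\tau(u)$ by \eqref{eq:theta-shift0}, we must have $\theta(1)=\theta(0)-(2k+1)\pi$ for some integer $k\ge 0$.

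The crux is to show $k=0$. Here I would use the factorization $\Theta_\tau(u+x)=\sin(\pi(u+x))\,\phi_\tau(u+x)$ from Appendix~\ref{subsec:fractional}, which gives
\[
\theta(0)-\theta(1)=-\int_0^1 \pi\,\Im\cot(\pi(u+x))\,dx \;-\;\int_0^1 \Im\frac{\phi_\tau'(u+x)}{\phi_\tau(u+x)}\,dx \;=\;\pi \;-\;\int_0^1 \Im\frac{\phi_\tau'(u+x)}{\phi_\tau(u+x)}\,dx .
\]
For the first integral, writing $\pi(u+x)=\xi+\ii\eta$ with $\eta=\pi\Im u>0$ fixed, one has $\sin(\xi+\ii\eta)=\cosh\eta\sin\xi+\ii\sinh\eta\cos\xi$, which parametrizes half of the ellipse centered at the origin with semi-axes $\cosh\eta,\sinh\eta$ as $\xi$ runs over an interval of length $\pi$; this ellipse does not meet the origin and the argument is strictly monotone along it, so the change of $\arg\sin(\pi(u+x))$ over $[0,1]$ is exactly $-\pi$, i.e. that integral equals $\pi$. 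For the second integral, a direct estimate on the product expansion of $\phi_\tau$ (using $|q^{2n}e^{\pm 2\pi\ii z}|\le q^{2n-3/2}$ for $z\in\band$) shows $|\phi_\tau'(z)/\phi_\tau(z)|=O(q^{1/2})$ uniformly on $\band$, so after possibly shrinking $q_0$ this integral has absolute value $<\pi$. Hence $(2k+1)\pi=\theta(0)-\theta(1)\in(0,2\pi)$, forcing $k=0$.

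Finally, with $k=0$, strict monotonicity gives for $x\in(0,1)$ that $\theta(0)-\pi=\theta(1)<\theta(x)<\theta(0)$, i.e. $\Theta_\tau(u+x)$ is a positive multiple of $e^{\ii\phi}$ with $\phi\in(\theta(0)-\pi,\theta(0))$ — a nontrivial clockwise rotation of $\Theta_\tau(u)$ by less than $\pi$. Since $\mathbb{H}^-_u$ is by definition the open half-plane bounded by the line through $\pm\Theta_\tau(u)$ that contains such small clockwise rotations of $\Theta_\tau(u)$, this yields $\Theta_\tau(u+x)\in\mathbb{H}^-_u$. I expect the winding-number step ($k=0$) to be the main obstacle: Lemma~\ref{lem:g} by itself pins the total rotation only modulo $2\pi$, and one genuinely needs the quantitative smallness of the $\phi_\tau$-contribution — possibly requiring $q_0$ smaller than in Lemma~\ref{lem:g} — together with the exact evaluation of the $\sin$-contribution.
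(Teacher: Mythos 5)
Your proof is correct but takes a more computational route than the paper's. Both arguments use Lemma~\ref{lem:g} to get strict monotonicity of $\theta(x)=\arg\Theta_\tau(u+x)$; the difference lies in establishing the total decrease $\theta(0)-\theta(1)=\pi$. The paper's (very terse) proof reads this off directly from the branch structure set up in Appendix~\ref{subsec:fractional}: $\log\Theta_\tau=\log\sin(\pi\,\cdot)+\log\phi_\tau(\cdot)$, where $\log\phi_\tau$ is $1$-periodic by construction (it is the single-valued branch on the simply connected $\cD$ fixed by $\Im\log\phi_\tau=\pi$ on $\RR$), so that $\Im\log\Theta_\tau(u+1)-\Im\log\Theta_\tau(u)=-\pi$ is just~\eqref{eq:theta-shift0} with $c=1$ — no estimate on $\phi_\tau'/\phi_\tau$ is needed. (The paper attributes ``$f(1)-f(0)=-\pi$'' to Lemma~\ref{lem:g}, which is slightly imprecise; that part is really definitional.) You instead prove the winding-number claim $k=0$ by combining the exact ellipse computation for the $\sin$-contribution (which is correct: $\Im(\bar z z')$ is a nonzero constant on the ellipse, so the argument is strictly monotone and drops by $\pi$) with an $O(q^{1/2})$ bound on $|\phi_\tau'/\phi_\tau|$. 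That works, but as you correctly flag, it only gives $\bigl|\int_0^1\Im(\phi_\tau'/\phi_\tau)\,dx\bigr|<\pi$ after possibly shrinking $q_0$, so your proof as written does not obviously cover the full range $q\in(0,q_0)$ appearing in the lemma statement. You can remove this restriction and match the paper's strength by replacing the quantitative bound with a soft argument: $\int_0^1\Im\bigl(\phi_\tau'(u+x)/\phi_\tau(u+x)\bigr)\,dx$ is $2\pi m$ for an integer $m$ (since $\phi_\tau$ has period $1$), it is a continuous integer-valued function of $u$ on the connected set $\overline\band\cup\RR$ where $\phi_\tau\neq 0$, and it vanishes for $u\in\RR$ (there $\phi_\tau(u+x)<0$ for all $x$, so $\arg\phi_\tau(u+x)$ is constant); hence $m=0$ for $u\in\band$, for every $q\in(0,1)$.
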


\begin{proof}
Let \revise{$g(x)=\mathrm{Im} \log \Theta_\tau(u+x)$} for $x\in \mathbb R$. 
\xin{Since $(\log \Theta_\tau)'= \frac{\Theta'_{\tau}(u)}{\Theta_{\tau}(u)}$, 
by Lemma~\ref{lem:g},} $g(1)-g(0)=-\pi$ and $g'(x)<0$ for $x\in \mathbb R$. 
By the definition of $\mathbb H^-_u$, for $x \in (0, 1)$ we have $\Theta_{\tau}(u+x)\in \mathbb H^-_u$.
\end{proof} 
\xin{Fix $\gamma\in(0,2)$ and  $\chi \in \{\frac{\gamma}{2}, \frac{2}{\gamma}\}$. 
Recall $f(u,q)$ from~\eqref{eq:fnu-def}. For notational simplicity, we set 
 $c=\frac{\gamma\chi}{2}$ and write
\begin{equation}\label{eq:fnu}
f(u,q)=\int_0^1 \Theta(u+x)^{c} \nu(dx), 
\end{equation}
where  $\nu(dx)$ is the measure on $[0, 1]$ given by}
\begin{equation}\label{eq:mu}
\nu(dx)\defeq |\Theta_{\tau}(x)|^{-\frac{\alpha\gamma}{2}} e^{\pi \gamma P x} e^{\frac{\gamma}{2} Y_\tau(x)} dx.
\end{equation}
\xin{Using Lemma~\ref{lem:f-nu} below we can define  the fractional power of $f(u,q)$.}
\begin{lemma}\label{lem:f-nu}
Fix $q\in (0,q_0)$.  Then $f(u,q)$  is analytic in $u$ on $\band$ and \xin{can be continuously extended to}  $\overline{\band}$. Moreover, $f(u+1,q)=e^{-c\pi \ii} f(u,q)$ and $f(u,q)\neq 0$ for $u\in \band$.  Finally, $f(1,q)>0$.
\end{lemma}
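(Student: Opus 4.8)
\textbf{Proof plan for Lemma~\ref{lem:f-nu}.}

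The plan is to prove each assertion in turn, relying on the geometric picture established in Lemma~\ref{arg-theta}. First I would handle analyticity and continuity: since $\overline{\band}\subset D\cup(\RR\setminus\ZZ)$ and, for fixed $x\in[0,1]$, the map $u\mapsto\Theta_\tau(u+x)$ is analytic on a neighborhood of $\overline{\band}$ with $\Theta_\tau(u+x)\neq 0$ there, Definition~\ref{def:log} makes $u\mapsto\Theta_\tau(u+x)^c$ analytic on $\band$ and continuous on $\overline{\band}$, with a bound $|\Theta_\tau(u+x)^c|\le M$ uniform over $u$ in a compact subset of $\band$ and $x\in[0,1]$ (continuity of $|\Theta_\tau|$ on the compact set, plus the fact that $\Theta_\tau$ is bounded away from $0$ there). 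Since $\nu$ is a finite measure, $f_\nu(u)=\int_0^1\Theta_\tau(u+x)^c\,\nu(dx)$ is then a uniform limit of analytic functions (e.g.\ by Morera's theorem applied to Riemann sums, or by differentiating under the integral sign justified by dominated convergence), hence analytic on $\band$; continuity on $\overline{\band}$ follows similarly from dominated convergence.

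Next, the quasi-periodicity $f_\nu(u+1)=e^{-c\pi\ii}f_\nu(u)$: by~\eqref{eq:theta-shift0} we have $\Theta_\tau(u+x+1)^c=e^{-\ii\pi c}\Theta_\tau(u+x)^c$ whenever $u+x$ lies in the appropriate strip, which holds for $u\in\band$ and $x\in(0,1)$ since then $\Im(u+x)=\Im u\in(0,\tfrac34\Im\tau)$; integrating against $\nu(dx)$ (the endpoints $x\in\{0,1\}$ carry no mass or can be absorbed by continuity) gives the claim. The non-vanishing $f_\nu(u)\neq 0$ for $u\in\band$ is the heart of the argument and uses Lemma~\ref{arg-theta}: for $x\in(0,1)$ the complex numbers $\Theta_\tau(u+x)$ all lie in the open half-plane $\mathbb H^-_u$, and raising to the power $c>0$ via the branch of Definition~\ref{def:log} keeps the arguments within an open arc of length less than $\pi$ (here one must check, from the continuity of $\arg\Theta_\tau(u+x)$ in $x$ on $(0,1)$ and the total variation bound $f(1)-f(0)=-\pi$ from the proof of Lemma~\ref{arg-theta}, that $c\cdot\arg\Theta_\tau(u+x)$ stays in an interval of length $c\pi$; if $c\ge 1$ one localizes near $u$ more carefully, but in our applications $c=\tfrac{\gamma\chi}{2}<1$, which I would note or handle by the same convexity-of-a-cone argument applied after an appropriate subdivision). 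Consequently all $\Theta_\tau(u+x)^c$ for $x\in(0,1)$ lie in an open convex cone not containing $0$, so their integral against the nonzero positive measure $\nu$ cannot vanish: $f_\nu(u)\neq 0$.

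Finally, $f_\nu(1)>0$: by continuity of $f_\nu$ on $\overline{\band}$ we may take $u\to 1$ along $1+\ii t$ with $t\downarrow 0$, and for such $u$ and $x\in(0,1)$, $\Theta_\tau(u+x)=\Theta_\tau(1+x+\ii t)$ tends to $\Theta_\tau(1+x)=-\Theta_\tau(x)<0$ (real negative, as $q\in(0,1)$, $x\in(0,1)$, using~\eqref{eq:theta-shift0} and the fact that $\Theta_\tau(x)>0$ for $x\in(0,1)$, $q\in(0,1)$). By~\eqref{eq:phase} and our branch convention, $\Theta_\tau(1+x)^c=(-\Theta_\tau(x))^c$ is a positive real number (the phase from $\log\phi_\tau$ being $\pi$ gets shifted by the $+1$ translation so that $\arg$ becomes $0$; concretely $\Theta_\tau(1+x)^c=e^{-\ii\pi c}\Theta_\tau(x)^c=e^{-\ii\pi c}e^{-\ii\pi c}|\Theta_\tau(x)|^c$, and one tracks that the limiting argument is an integer multiple of $2\pi$ — I would verify this sign bookkeeping explicitly). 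Integrating the positive real-valued limit against $\nu$ gives $f_\nu(1)>0$. The main obstacle is the non-vanishing step: one must argue carefully that the branch-cut convention of Definition~\ref{def:log} does not spoil the ``all values in one half-plane/cone'' property after taking the $c$-th power, which requires combining Lemma~\ref{arg-theta} with the explicit monotonicity of $\Im\log\Theta_\tau$ from Lemma~\ref{lem:g} to control the total winding of $\arg\Theta_\tau(u+x)$ as $x$ ranges over $(0,1)$.
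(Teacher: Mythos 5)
Your proposal follows essentially the same route as the paper's proof: continuity and analyticity by dominated convergence, quasi-periodicity from \eqref{eq:theta-shift0}, non-vanishing from Lemma~\ref{arg-theta} (the paper phrases your cone argument by normalizing, writing $\Theta_\tau(u)^{-c}f_\nu(u)=\int_0^1\Theta_\tau(u+x)^c\Theta_\tau(u)^{-c}\,\nu(dx)$ and observing that the integrand has strictly negative imaginary part), and positivity at $u=1$ from $\Theta_\tau(1+x)^c>0$. Two small corrections are in order. First, the cone argument requires $c\le 1$ rather than $c<1$: in the application $c=\frac{\gamma\chi}{2}$ equals $1$ when $\chi=\frac{2}{\gamma}$, but since the phases $c\bigl(\Im\log\Theta_\tau(u+x)-\Im\log\Theta_\tau(u)\bigr)$ fill only the \emph{open} interval $(-c\pi,0)$, the case $c=1$ still goes through; you are right that the statement for arbitrary $c>0$ is not justified by this argument, and the paper's own proof carries the same implicit restriction. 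Second, your sign bookkeeping at $u=1$ is off as written: the branch convention gives $\Im\log\Theta_\tau(1+x)=\Im\log\sin(\pi(1+x))+\Im\log\phi_\tau(1+x)=-\pi+\pi=0$, so $\Theta_\tau(1+x)^c=|\Theta_\tau(1+x)|^c>0$ directly, whereas your displayed product $e^{-2\ii\pi c}|\Theta_\tau(x)|^c$ takes the same sign for both phase factors and would not be positive for general $c$; the two factors in fact cancel rather than compound.
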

\begin{proof}
\xin{By~Lemma~\ref{lem:frac-bound},}  $\Theta_\tau^c$ is  bounded on $\overline\band$ and continuous except at integers. Therefore $f(u,q)$ in~\eqref{eq:fnu} can be continuously extended to $\overline{\band}$. 
By~\eqref{eq:theta-shift0}, $f(u+1,q)=e^{-c\pi \ii} f(u,q)$ for $u\in \overline{\band}$. By Lemma~\ref{arg-theta}, $\Im (\Theta_\tau(u+x)^c\Theta_\tau(u)^{-c} )<0$ for $u\in \band$, hence $\Im (\Theta_\tau(u)^{-c}  f(u,q))$ $<0$.
Since \(\Theta_\tau(u)^{-c}  f(u,q)=\int_0^1  \Theta_\tau(u+x)^c\Theta_\tau(u)^{-c}  \nu(dx)\),
we get $f(u,q)\neq 0$. Since $\nu$ is supported on $[0,1]$ and $\Theta_\tau(1+x)>0$ for $x\in(0,1)$, we have $f(1,q)>0$.
\end{proof}
 
\begin{define}\label{def:fractional-power}
Fix $q\in(0,q_0)$ as in Lemma~\ref{arg-theta}. We define $u\mapsto \log f(u,q)$ for $u\in\band$ by requiring $\lim_{u\to 1}\Im [\log f(u,q)]=0$.
For each $\beta\in \RR$, we define $f(u,q)^\beta := e^{\beta \log f(u,q)}$.
\end{define}

\section{Background on log-correlated fields and Gaussian multiplicative chaos} \label{sec:gmc-app}
Let us first provide a general definition of log-correlated fields.
\begin{define}\label{def:LGF}
A centered Gaussian process $X$ on a domain $U \subset \mathbb{R}^d $ is called a log-correlated field if it admits a covariance kernel of the form
\begin{equation}\label{eq:def-log-kernel}
\mathbb{E}[X(x) X(y)] = c\log \frac{1}{|x-y|} + g(x,y),
\end{equation}
where $c$ is a positive constant and $g: U\times U \mapsto \mathbb{R}$ is a continuous function.
\end{define}
Due to the singularity of the log kernel, these fields cannot be defined as pointwise functions but only as random generalized functions (distributions). Given a random variable $\cX$, we use $\mathbb{E}[\cX X(x)]$ to denote the distribution such that  $\int dx h(x) \mathbb{E}[\cX X(x)]= \mathbb{E}\left[\cX \left( \int dx h(x) X(x) \right) \right]$ for all suitable test functions $h$. In a similar fashion, the covariance kernel \eqref{eq:def-log-kernel} means that for all test functions $h_1, h_2$, one has 
\[
\mathbb{E}\left[ \left(  \int dx  h_1(x) X(x) \right) \left(\int dy h_2(y) X(y) \right) \right]=\iint dx dy h_1(x) h_2(y)  \mathbb{E}[X(x) X(y)].
\]

Consider a field $X$ as in Definition~\ref{def:LGF} with $d=1, c=2$, and fix $\gamma\in (0,2)$.
For a large class of regularizations $\{X_n\}$ of $X$, $e^{\frac{\gamma}{2}X_n-\frac{\gamma^2}{8}\EE[X_n(x)^2] } dx$ converges
in probability to the unique GMC measure $e^{\frac{\gamma}{2}X}dx$ associated with $X$, see, e.g.\ \cite{NB-GMC}.
Definition~\ref{def:gmc} is a special case of such limiting procedures. 
Consider the log-correlated field $X_{\mathbb{H}}$ on the upper half plane \revise{with its boundary included} with covariance
\begin{equation}\label{eq:cov-X}
\mathbb{E}[X_{\mathbb{H}}(x) X_{\mathbb{H}}(y)] = \log \frac{1}{|x-y| |x- \bar y|} - \log | x +\ii|^2 - \log |y + \ii|^2 + 2\log 2
\end{equation}
for $x,y\in \bbH \cup \mathbb{R}$. The field $X_\bbH$ is an example of a \notion{free boundary Gaussian free field} (GFF) on $\bbH$. For \revise{$x\in \bbH \cup \mathbb{R}$}, let $\overline X(x)$ be the average of $X_\bbH$ over the semi-circle $\{z\in \bbH: |z|=|x|\}$. Let $Z_\bbH\defeq X_\bbH-\overline X$. 
Then $\overline X$ and $Z_\bbH$ are independent and $Z_\bbH$ is a log-correlated field with covariance 
\begin{equation}\label{eq:cov-Z}
\mathbb{E}[Z_\bbH(x) Z_\bbH(y)] = 2 \log \frac{|x| \vee |y|}{|x-y|}.
\end{equation}
We use the field $Z_\bbH$  and $\overline X$ in Appendix \ref{sec:ope-proof}.

We now state a general result of existence of moments of GMC measure covering all situations encountered in the main text.
Concretely, we will use the case when $F(x)$ below equals $\frac{\gamma}{2}F_{\tau}(x) $ or $0$, where $F_\tau$ is as in~\eqref{eq:f-tau-def}.
\begin{lemma}[Moments of GMC]\label{lem:GMC-moment}
Fix $\gamma \in (0, 2)$ and $\alpha < Q$. 
Fix $\chi \in \{\frac{\gamma}{2}, \frac{2}{\gamma}\}$.  
Let $F: [0,1] \mapsto \mathbb{R}$  be a continuous Gaussian field independent of $Y_\infty(x) $, 
and $f:[0,1] \mapsto (0,+ \infty)$ be a continuous bounded function. Then for $p <  \frac{4}{\gamma^2} \wedge \frac{2}{\gamma} (Q -\alpha)$ we have 
\begin{equation}\label{eq:GMC-moment1}
\E\left[\left(\int_0^1 e^{ F(x)} \sin(\pi x)^{-\frac{\alpha\gamma}{2}} f(x) e^{\frac{\gamma}{2} Y_\infty(x) }dx \right)^{p}\right]<\infty.    
\end{equation}
\xin{For $q\in (0,q_0)$ with $q_0$ defined in Lemma~\ref{lem:q-bound},}
let $\band=\{z: 0<\Im(z)< \frac34\Im(\tau) \}$ \xin{and $K\subset \band$ be compact.}
For $p <  \frac{4}{\gamma^2} \wedge \frac{2}{\gamma} (Q -\alpha)$, we have \xin{uniformly in $u\in K$} that 
 \begin{equation}\label{eq:GMC-moment2}
\E\left[ \left| \int_0^1 e^{ F(x)} \sin(\pi x)^{-\frac{\alpha\gamma}{2}}  \Theta_{\tau}( x + u)^{\frac{\chi \gamma}{2}} f(x)e^{\frac{\gamma}{2} Y_{\infty}(x) }dx  \right|^{p}\right] <\infty.
\end{equation} 
For $p <  \frac{4}{\gamma^2} \wedge \frac{2}{\gamma} (Q -\alpha \vee \gamma)$, we have \xin{uniformly in $u\in K$  and 
$y \in [0,1]$} that
\begin{equation}\label{eq:GMC-moment3}
\E\left[ \left| \int_0^1 e^{ F(x) + \frac{\gamma^2}{4} \EE[Y_{\infty}(x) Y_{\infty}(y) ]} \sin(\pi x)^{-\frac{\alpha\gamma}{2}}  \Theta_{\tau}( x + u)^{\frac{\chi \gamma}{2}} f(x)e^{\frac{\gamma}{2} Y_{\infty}(x) }dx  \right|^{p}\right] <\infty.    
\end{equation}
\end{lemma}
\begin{proof} 
For~\eqref{eq:GMC-moment1} we are in the classical
case of the existence of moments of GMC with insertion of weight $\alpha$. The assertion follows from the argument for \cite[Lemma~3.10]{DKRV16},
adapted to the case of one-dimensional GMC.

\xin{For~\eqref{eq:GMC-moment2}, when $p\ge 0$, by Lemma~\ref{lem:frac-bound}  $|\Theta_{\tau}( x + u)^{\frac{\chi \gamma}{2}}|$ is uniformly bounded from above for $u\in K$ and $x\in [0,1]$ therefore~\eqref{eq:GMC-moment2} follows from~\eqref{eq:GMC-moment1}. When $p<0$, let $h_1(x,u):=-\Im (\Theta_\tau(u+x)^\frac{\chi \gamma}{2}\Theta_\tau(u)^{-\frac{\chi \gamma}{2}} )$ and $h_2(x,u):=\Re (\Theta_\tau(u+x)^\frac{\chi \gamma}{2}\Theta_\tau(u)^{-\frac{\chi \gamma}{2}} )$. Since $ \frac{\chi \gamma}{2} \leq 1$,  by Lemma~\ref{arg-theta}, we have $h_1(x,u)>0$ for $u\in \band$ and $x\in (0,1)$. On the other hand, we have $|h_2(x,u)|>0$ for $u\in \band$ and $x\in\{0,1\}$. Therefore  $|h_1(x,u)| \wedge  |h_2(x,u)|$ is uniformly bounded from below for $x\in [0,1]$ and $u\in K$. Now  we again get~\eqref{eq:GMC-moment2} from~\eqref{eq:GMC-moment1}.}


Lastly~\eqref{eq:GMC-moment3} can be treated the same way as~\eqref{eq:GMC-moment2} except the bound on $p$ changes to $p < \frac{4}{\gamma^2} \wedge \frac{2}{\gamma} (Q -\alpha \vee \gamma)$ due to the $\frac{\gamma^2}{4} \EE[Y_{\infty}(x) Y_{\infty}(y) ]$ term. This is equivalent to adding a $\gamma$ insertion which results in a modification on the bound on $p$ as shown in \cite[Lemma~3.10]{DKRV16}.
\end{proof}

Finally, we state a form of Cameron-Martin's theorem used in the main text. See \cite{shamov}, \cite[Section 3.3]{BP-notes} or \cite[Theorem 0.2]{Aru2017} for details.
\begin{theorem}\label{thm:Girsanov}
Let $Y(x)$ be either of the Gaussian fields $Y_\infty(x)$ or $Y_\tau(x)$ on $[0,1]$ defined in Section~\ref{sec:gmc-def}. 
Let $\cX$ be a Gaussian variable such that the joint process $(\cX, Y)$ is Gaussian, and let $F$ be a non-negative measurable function. Then 
\begin{equation}\label{eq:Girsanov}
\E\left[e^{\cX-\frac{1}{2}\EE[\cX^2]} F( (Y(x))_{x \in [0,1]} )\right]  = \E\left[ F( (Y(x) + \EE[\cX Y(x)])_{x \in [0,1]} )\right]. 
\end{equation}
Moreover, non-negative measurable functions  $f$ and $G$, we have
\begin{equation}\label{eq:Girsanov2}
\E\left[e^{\cX-\frac{1}{2}\EE[\cX^2]} G\left(\int_0^1 f(x) e^{\frac{\gamma}{2}Y(x)}dx \right)\right]  = \E\left[ G\left( \int_0^1 f(x) e^{\frac{\gamma}{2}(Y(x) + \EE[\cX Y(x)]) }dx  \right)\right]. 
\end{equation}
\end{theorem}
Theorem~\ref{thm:Girsanov} means that under the reweighing by the Radon-Nikodym derivative $e^{\cX-\frac{1}{2}\EE[\cX^2]}$, the law of $Y$ is that of \revise{$(Y(x) + \EE[\cX Y(x)])_{x \in [0,1]} $} under the original probability. 
We frequently apply \eqref{eq:Girsanov2} to the case where $G(x) = x^p$ where the functional of interest becomes a moment of GMC. 

\section{Hypergeometric differential equations} \label{sec:hgf}
For parameters $A, B, C$ and a function $g$ on $\CC$, the 
hypergeometric
equation (see e.g.\ \cite[Chapter 15]{NIST:DLMF}) with inhomogeneous part $g$ is
\begin{equation} \label{eq:gauss-hgf}
(w(1 - w) \partial_{ww} + (C - (1 + A + B) w)\partial_w - AB)\, f(w) = g(w)
\end{equation}
This appendix reviews the background on the solution theory to~\eqref{eq:gauss-hgf}. \xin{We will omit the proof of  basic facts such as Lemma~\ref{lem:1D-R}.}

\subsection{Homogeneous hypergeometric differential equations}\label{subsec:homo}
If $g(w) = 0$, the homogeneous equation (\ref{eq:gauss-hgf}) can be solved by the \emph{Gauss hypergeometric function}  ${_2F_1}(A, B, C; w) $, whose power series coefficients  are characterized by $a_0={_2F_1}(A, B, C; 0)=1$ and $\frac{a_{n+1}}{a_n}=\frac{(n+A)(n+B)}{(n+1)(n+C)}$ for $n\in \NN$.  It solves~\eqref{eq:gauss-hgf} with $g=0$. Moreover, we have the following.
\begin{lemma}\label{lem:1D-R}
\xin{Set $v_1(w)\defeq {_2F_1}(A, B,C; w)$ and $v_2(w)\defeq{_2F_1}(1 + A - C, 1 + B - C, 2 - C; w)$. 
Then 
both $v_1(w)$ and $w^{1-C}v_2(w)$ solve~\eqref{eq:gauss-hgf} with $g=0$, and 
they form a linear basis of the space of solutions to
\eqref{eq:gauss-hgf}   defined on an open subset of $\CC \setminus ((-\infty,0]\cup [1,\infty))$.}
\end{lemma}


If $\Re(C) > \Re(A + B)$, the series for ${_2F_1}(A, B, C; w) $   converges absolutely on the closed unit disk $\overline\D$. Therefore we have:
\begin{lemma}\label{lem:v-R}
If $\Re(C) > \Re(A + B)$, both $v_1(w)$ and $v_2(w)$ satisfy Property (R) from Definition~\ref{def:property-R}. 
\end{lemma}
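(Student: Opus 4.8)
\textbf{Proof plan for Lemma~\ref{lem:v-R}.}
The statement is a standard fact about Gauss hypergeometric series, but since the paper's notion of ``Property (R)'' (Definition~\ref{def:property-R}) demands not merely convergence on $\overline{\D}$ but \emph{absolute summability of the power series coefficients}, one must be a little careful. The plan is to treat $v_1(w) = {}_2F_1(A,B,C;w)$ and $v_2(w) = {}_2F_1(1+A-C,1+B-C,2-C;w)$ in parallel, noting that $v_2$ is just $v_1$ with the parameter triple $(A,B,C)$ replaced by $(A',B',C') = (1+A-C,1+B-C,2-C)$, and that this substitution preserves the key inequality: $\Re(C') - \Re(A'+B') = \Re(2-C) - \Re(2 + A + B - 2C) = \Re(C) - \Re(A+B) > 0$. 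Hence it suffices to prove the claim for $v_1$ alone.

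Write $v_1(w) = \sum_{n\ge 0} a_n w^n$ with $a_0 = 1$ and, as recorded in the excerpt just before the lemma, $\frac{a_{n+1}}{a_n} = \frac{(n+A)(n+B)}{(n+1)(n+C)}$. First I would observe that for $n$ large, the sign/argument of the ratio stabilizes, so that (possibly after discarding finitely many terms, which does not affect absolute summability) $|a_{n+1}|/|a_n| = \frac{|n+A|\,|n+B|}{(n+1)\,|n+C|}$. Expanding, $\frac{|n+A|^2 |n+B|^2}{(n+1)^2 |n+C|^2} = 1 + \frac{2(\Re A + \Re B - \Re C - 1)}{n} + O(n^{-2})$, so taking square roots gives $\frac{|a_{n+1}|}{|a_n|} = 1 - \frac{1 + \Re(C - A - B)}{n} + O(n^{-2})$. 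Setting $\delta := \Re(C - A - B) > 0$, the exponent is $-(1+\delta)/n + O(n^{-2})$, and by the standard comparison (Raabe's test / comparison with $n^{-1-\delta}$) the product $|a_n| = |a_{n_0}| \prod_{k=n_0}^{n-1} \frac{|a_{k+1}|}{|a_k|}$ decays like $C\, n^{-1-\delta}$. Therefore $\sum_n |a_n| < \infty$, which is exactly Property (R) for $v_1$ on $\overline{\D}$; applying the same argument with $(A',B',C')$ yields it for $v_2$, completing the proof.

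The only mildly delicate point --- and the one I would single out as the ``main obstacle,'' though it is minor --- is making the asymptotic expansion of $|a_{n+1}|/|a_n|$ rigorous enough to conclude $n^{-1-\delta}$ decay rather than just convergence. One clean way to avoid hand-waving is to invoke the classical Gauss summation identity $\sum_{n\ge 0} a_n = {}_2F_1(A,B,C;1) = \frac{\Gamma(C)\Gamma(C-A-B)}{\Gamma(C-A)\Gamma(C-B)}$, valid precisely when $\Re(C-A-B) > 0$ and $C \notin \ZZ_{\le 0}$ --- this is equation~\eqref{eq:gauss} in the paper's Appendix~\ref{sec:hgf} --- but that only gives convergence of $\sum a_n$, not of $\sum |a_n|$. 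So I would instead push through the elementary Raabe-type estimate above: fix $n_0$ large enough that $\frac{|a_{n+1}|}{|a_n|} \le 1 - \frac{1 + \delta/2}{n}$ for all $n \ge n_0$, and then $|a_n| \le |a_{n_0}| \exp\!\big(-(1+\delta/2)\sum_{k=n_0}^{n-1} \tfrac{1}{k}\big) \le C\, n^{-1-\delta/2}$, which is summable. Everything else is bookkeeping, and the lemma follows.
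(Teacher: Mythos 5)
Your proposal is correct and matches the paper's (essentially unstated) argument: the paper simply invokes the standard fact that the ${}_2F_1$ series has absolutely summable coefficients on $\overline{\D}$ when $\Re(C-A-B)>0$, and your Raabe-type estimate $|a_n|\le Cn^{-1-\delta/2}$ together with the observation that $v_2$ is $v_1$ with parameters $(1+A-C,1+B-C,2-C)$ preserving $\Re(C-A-B)$ is precisely the standard verification. The only cosmetic quibble is that no "stabilization of the argument" is needed, since $|a_{n+1}|/|a_n|=|a_{n+1}/a_n|$ identically (and if some $a_n=0$ the series terminates and Property~(R) is trivial).
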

A separate basis of solutions to~\eqref{eq:gauss-hgf} with good behavior at $w = 1$ is 
\[
{_2F_1}(A, B, 1 + A + B - C, 1 - w), \qquad
(1 - w)^{C - A - B}{_2F_1}(C - A, C - B, 1 + C - A - B, 1 - w).
\]
The two bases are related by \emph{connection equations}. In particular, we have
\begin{multline} \label{eq:app-ccoeff}
{_2F_1}(A, B, 1 + A + B - C, 1 - w) = \frac{\Gamma(C) \Gamma(C - A - B)}{\Gamma(C - A) \Gamma(C - B)} v_1(w)\\
+ \frac{\Gamma(2 - C) \Gamma(C - A - B)}{\Gamma(1 - A) \Gamma(1 - B)} w^{1 - C} v_2(w).
\end{multline}
If $\Re(C) > \Re(A + B)$, the coefficients in~\eqref{eq:app-ccoeff} satisfy \emph{Euler's identity}
\begin{equation} \label{eq:gauss}
v_1(1)={_2F_1}(A, B, C, 1) = \frac{\Gamma(C) \Gamma(C - A - B)}{\Gamma(C - A) \Gamma(C - B)}\quad \textrm{and}\quad  v_2(1)= \frac{\Gamma(2 - C) \Gamma(C - A - B)}{\Gamma(1 - A) \Gamma(1 - B)}.
\end{equation}
Moreover, $\frac{{_2F_1}(A, B, C, w)}{\Gamma(C)}$ is holomorphic in $A, B, C$.
Since $\Gamma$ is meromorphic on $\CC$ with poles at $\{0,-1,-2,\cdots\}$ and has no zeros, we have the following. 
\begin{lemma}\label{lem:v-analytic}
Let $V=\{(A,B,C)\in \CC^3:  \Re(C) > \Re(A+B)\textrm{ and }C\notin\ZZ \}$.
Both functions  $(w, A,B,C)\mapsto v_1$ and $(w, A,B,C)\mapsto v_2$ are continuous on $\overline \D\times V$ and analytic on $\D \times V$.  
\revise{Moreover, if $C-A$ and $C-B$ (resp. $1-A, 1-B$) are not in $\{0,-1,-2,\cdots\}$, then 
$v_1(1)\neq 0$ (resp. $v_2(1)\neq 0$).}
\end{lemma}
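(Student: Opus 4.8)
The plan is to argue directly from the power series, writing $v_1(w) = {}_2F_1(A,B,C;w) = \sum_{n\ge 0} a_n(A,B,C)\,w^n$ with $a_0=1$ and $a_n(A,B,C) = \frac{(A)_n(B)_n}{(C)_n\,n!}$, and handling $v_2$ by the substitution $(A,B,C)\mapsto(1+A-C,\,1+B-C,\,2-C)$, which is an affine (hence holomorphic and continuous) map sending $V$ into itself, since its effect on $C$ is $C\mapsto 2-C$ (so $C\notin\ZZ$ is preserved) and $\Re\big((2-C)-(1+A-C)-(1+B-C)\big)=\Re(C-A-B)>0$. Thus it suffices to treat $v_1$ and then pre-compose. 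First I would note that on $V$ each $a_n$ is a rational function whose only possible poles lie in $\{C\in\ZZ_{\le 0}\}\subset\{C\in\ZZ\}$, so $a_n$ is holomorphic on $V$, and on any compact $K\subset V$ one has $\operatorname{dist}(C,\ZZ)\ge\epsilon>0$, whence $|C+k|\ge\epsilon$ for all $k\ge 0$; consequently the partial sums $S_N(w,A,B,C)=\sum_{n=0}^N a_n(A,B,C)w^n$ are jointly holomorphic on $\CC\times V$.

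The heart of the matter is a convergence bound uniform in the parameters. Using $\frac{a_{n+1}}{a_n}=\frac{(n+A)(n+B)}{(n+1)(n+C)}=1-\frac{1+C-A-B}{n}+O(n^{-2})$, I would take logarithms and telescope to obtain $\log|a_n|\le \mathrm{const}_K-\big(1+\Re(C-A-B)-o(1)\big)\log n$. Since $\Re(C-A-B)\ge\delta>0$ uniformly on $K$ and the $O(n^{-2})$ error is uniform on $K$, this gives $|a_n|\le M_K\,n^{-1-\delta/2}$ for $n$ large, hence $\sum_n \sup_{K}|a_n|<\infty$. Therefore on $\overline\D\times K$ the series for $v_1$ converges uniformly, so $v_1$ is continuous on $\overline\D\times V$ (and one recovers Property~(R) from Definition~\ref{def:property-R} with uniform control, reproving Lemma~\ref{lem:v-R}); and on $\D\times V$ it is a locally uniform limit of the jointly holomorphic $S_N$, hence holomorphic there by the standard fact that a locally uniform limit of holomorphic functions of several complex variables is holomorphic. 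Composing with the affine map above gives the same conclusions for $v_2$.

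For the nonvanishing at $w=1$, I would invoke Euler's identity \eqref{eq:gauss}, valid precisely because $\Re(C)>\Re(A+B)$, which expresses $v_1(1)=\frac{\Gamma(C)\Gamma(C-A-B)}{\Gamma(C-A)\Gamma(C-B)}$ and $v_2(1)=\frac{\Gamma(2-C)\Gamma(C-A-B)}{\Gamma(1-A)\Gamma(1-B)}$. Because $C\notin\ZZ$ we have $C,\,2-C\notin\ZZ_{\le0}$, and because $\Re(C-A-B)>0$ we have $C-A-B\notin\ZZ_{\le0}$, so the numerators are finite and nonzero; since $\Gamma$ has no zeros, the ratios do not vanish on $V$. (In every application of this lemma the parameters are moreover such that $C-A,\,C-B\notin\ZZ_{\le0}$, so the denominators are finite as well.)

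I expect the main obstacle to be exactly the convergence estimate near the boundary point $w=1$ made uniform in $(A,B,C)$ over compacts of $V$: this is where the hypothesis $\Re(C)>\Re(A+B)$ enters, through the Gauss–Raabe ratio analysis, and it is what upgrades mere pointwise convergence of the hypergeometric series to the joint continuity on $\overline\D\times V$ claimed in the lemma. Once that is in place, joint analyticity on $\D\times V$ and the nonvanishing of $v_1(1)$ and $v_2(1)$ are formal consequences.
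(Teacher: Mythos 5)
Your proof is correct and follows essentially the same route as the paper, which simply cites absolute convergence of the hypergeometric series on $\overline\D$ when $\Re(C)>\Re(A+B)$, holomorphy of ${_2F_1}(A,B,C,w)/\Gamma(C)$ in the parameters, and Euler's identity \eqref{eq:gauss} together with the absence of zeros of $\Gamma$; you have merely filled in the uniform-on-compacts ratio estimate and the affine substitution handling $v_2$. Your parenthetical caveat is well taken and is in fact sharper than the paper: since $\Gamma$ has poles, the ratio $\Gamma(C)\Gamma(C-A-B)/\big(\Gamma(C-A)\Gamma(C-B)\big)$ \emph{does} vanish when $C-A$ or $C-B\in\ZZ_{\le 0}$ (e.g.\ $(A,B,C)=(\tfrac12,-1,\tfrac12)\in V$ gives $v_1(w)=1-w$), so the nonvanishing claim requires the additional nondegeneracy you note, which holds for the parameters \eqref{eq:hgf-params} actually used in the paper.
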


\subsection{Inhomogeneous hypergeometric
differential equations}\label{subsec:inhomo}
If $g(w)$ is not identically zero, then any solution to~\eqref{eq:gauss-hgf} can be written as
\begin{equation} \label{eq:hgf-decomp}
f(w) = f_{\text{homog}}(w) + f_{\text{part}}(w),
\end{equation}
where $f_{\text{part}}(w)$ is a particular solution to (\ref{eq:gauss-hgf}) and $f_{\text{homog}}(w)$ solves
the homogeneous version of (\ref{eq:gauss-hgf}).  We will give a particular solution to~\eqref{eq:gauss-hgf}
using power series.  
We need the following extended notion of integration.
Suppose $f(w)=\sum_{n = 0}^{\infty} a_nw^n$ for $w\in \D$ such that 
\begin{equation}\label{eq:series}
\sum_{n = 1}^{\infty}\frac{|a_n|}{n}<\infty,
\end{equation} and $\beta\in \CC\setminus \{1,2,\cdots \}$.
We define $\int_0^w t^{-{\beta}}f(t)dt$ by 
\begin{equation} \label{eq:series-notation}
\int_0^w t^{-{\beta}}f(t)dt := w^{1 - {\beta}}\sum_{n = 0}^{\infty} \frac{a_n}{n - {\beta} + 1} w^n \qquad \text{for } w \in {\overline\D}.
\end{equation}
The following lemma, which is left as an exercise to the reader,  justifies the integral notation $\int_0^w t^{-{\beta}}f(t)dt$.
\begin{lemma}\label{lem:integral}
In the setting of (\ref{eq:series-notation}), we have
\[
\frac{\partial}{\partial w}\Big(\int_0^w t^{-{\beta}}f(t)dt\Big) = w^{-{\beta}}f(w) \qquad \text{for } w\in \D.
\]
Moreover, $w^{{\beta}}\int_0^w t^{-{\beta}}f(t)dt=\sum_{n = 0}^{\infty} \frac{a_n}{n-{\beta} + 1} w^{n+1}$ satisfies Property (R).
\end{lemma}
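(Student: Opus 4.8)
\textbf{Proof proposal for Lemma~\ref{lem:integral}.}

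The statement is elementary and follows directly from manipulating the power series definition~\eqref{eq:series-notation}, so the plan is to verify the two assertions by straightforward term-by-term computation, taking care to justify the interchange of differentiation and summation.

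First I would establish the differentiation formula. Starting from the definition
\[
\int_0^w t^{-\beta} f(t)\,dt = w^{1-\beta} \sum_{n=0}^\infty \frac{a_n}{n - \beta + 1} w^n = \sum_{n=0}^\infty \frac{a_n}{n - \beta + 1} w^{n + 1 - \beta},
\]
where the series converges on $\overline{\D}$ because $\frac{|a_n|}{|n - \beta + 1|}$ is summable: indeed $|n - \beta + 1| \geq c\, n$ for large $n$ and some $c > 0$ (since $\beta \notin \{1, 2, \dots\}$ guarantees no vanishing denominator, and the asymptotic growth is linear in $n$), so $\sum_n \frac{|a_n|}{|n - \beta + 1|}$ is controlled by $|a_0|/|1 - \beta| + C\sum_{n \geq 1} \frac{|a_n|}{n} < \infty$ using hypothesis~\eqref{eq:series}. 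The differentiated series $\sum_{n=0}^\infty a_n w^{n - \beta} = w^{-\beta} \sum_{n = 0}^\infty a_n w^n = w^{-\beta} f(w)$ converges absolutely and uniformly on compact subsets of $\D \setminus \{0\}$, which justifies differentiating term by term on $\D$. This gives $\frac{\partial}{\partial w}\left(\int_0^w t^{-\beta} f(t)\,dt\right) = w^{-\beta} f(w)$ on $\D$ as claimed.

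Second, for the Property~(R) assertion, I would observe that
\[
w^{\beta} \int_0^w t^{-\beta} f(t)\,dt = w^{\beta} \cdot w^{1 - \beta} \sum_{n=0}^\infty \frac{a_n}{n - \beta + 1} w^n = \sum_{n=0}^\infty \frac{a_n}{n - \beta + 1} w^{n+1} = \sum_{m=1}^\infty \frac{a_{m-1}}{m - \beta} w^m,
\]
which is a power series with no constant term (equivalently, with coefficient sequence $b_0 = 0$, $b_m = \frac{a_{m-1}}{m - \beta}$ for $m \geq 1$), and $\sum_{m=0}^\infty |b_m| = \sum_{m=1}^\infty \frac{|a_{m-1}|}{|m - \beta|} < \infty$ by the same estimate as above. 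By Definition~\ref{def:property-R} this is exactly the statement that $w^\beta \int_0^w t^{-\beta} f(t)\,dt$ satisfies Property~(R).

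There is no real obstacle here; the only point requiring a moment's care is the convergence estimate $\sum_n \frac{|a_n|}{|n - \beta + 1|} < \infty$, which needs the hypothesis $\beta \notin \{1, 2, \dots\}$ (so that $n - \beta + 1 \neq 0$ for all $n \in \NN_0$) together with~\eqref{eq:series}; combining these, one separates the $n = 0$ term and bounds the tail by a constant multiple of $\sum_{n \geq 1} |a_n|/n$. Everything else is formal power series bookkeeping.
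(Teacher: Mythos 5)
Your proposal is correct; the paper explicitly leaves this proof as "an easy exercise," and your term-by-term computation — with the key summability estimate $|n-\beta+1|\gtrsim n$ (valid because $\beta\notin\{1,2,\dots\}$ rules out a vanishing denominator) combined with hypothesis~\eqref{eq:series} — is exactly the intended argument. The only cosmetic caveat is that the differentiation identity involves the fractional powers $w^{1-\beta}$ and $w^{-\beta}$, which per the paper's convention are defined with $\arg w\in(-\pi,\pi)$, so "for $w\in\D$" should be read away from the branch cut $(-\infty,0]$; this looseness is already present in the paper's own statement and does not affect your argument.
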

\xin{The following lemma is used in the proof of Corollary~\ref{cor:decompose}.}
\begin{lemma}\label{lem:property-R}
Let $U\subset \CC^3$ be such that $(A,B,C)\in U$ if and only if
	\begin{equation*}\label{eq:ABC}
	C\notin \ZZ, \;  \Re(C - A - B)\in (0,1),\textrm{and } C-A, C-B, 1-A,1-B \notin \{0,-1,\cdots \}.
	\end{equation*}
Fix  $X \in \{0, 1 - C\}$. 
Suppose $g(w) = w^X \wg(w)$, and $\wg(w)$ is a function  satisfying Property (R). 
Then for each $a\in \CC$, there exists a unique function  $f_a$ satisfying Property (R) such that $f_a(1)=a$ and
$w^{X} f_a(w)$  solves equation~\eqref{eq:gauss-hgf}.
Moreover, $(w, A,B,C)\mapsto f_a(w)$ is  \revise{continuous on $\overline \D\times U$ and analytic on $\D \times U$.} 
\end{lemma}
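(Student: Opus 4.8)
\textbf{Proof plan for Lemma~\ref{lem:property-R}.}

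The strategy is to construct the particular solution explicitly by reducing the second-order inhomogeneous equation~\eqref{eq:gauss-hgf} to two successive first-order equations, using the factorization of the hypergeometric operator, and then to verify the regularity (Property (R)) and the analyticity in $(A,B,C)$ of the resulting object. I would treat the two cases $X=0$ and $X=1-C$ in parallel, noting that after the substitution $f = w^{1-C}\tilde f$ the operator in~\eqref{eq:gauss-hgf} with parameters $(A,B,C)$ is conjugated into the hypergeometric operator with parameters $(1+A-C,1+B-C,2-C)$, which again satisfies~\eqref{eq:ABC} because $\Re((2-C)-(1+A-C)-(1+B-C))=\Re(C-A-B)\in(0,1)$; so it suffices to handle $X=0$.

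For $X=0$: let $v_1(w)={_2F_1}(A,B,C;w)$ be the Gauss hypergeometric solution to the homogeneous equation satisfying Property (R) with $v_1(0)=1$, which exists and is nonzero at $w=1$ by Lemma~\ref{lem:v-R} and Lemma~\ref{lem:v-analytic} under assumption~\eqref{eq:ABC}. Writing $f = v_1 \cdot h$ and plugging into~\eqref{eq:gauss-hgf}, the zeroth-order terms cancel and one obtains a first-order equation for $h'$, namely $w(1-w)v_1^2 h' = \int_0^w \frac{g(t)v_1(t)}{?}\dots$ — more precisely, $(w(1-w)v_1^2 h')' \cdot (\text{something}) $; the clean way is: the reduction-of-order formula gives
\[
h(w) = \int_0^w \frac{1}{t(1-t)v_1(t)^2}\left(\int_0^t \frac{g(s)v_1(s)}{1}\,ds\right)dt
\]
up to the standard Wronskian weight $W(w) = w^{-C}(1-w)^{C-A-B}$; carrying the exponents carefully, one gets $f_{\mathrm{part}}(w) = v_1(w)\int_0^w t^{-C}(1-t)^{C-A-B-1}\big(\int_0^t s^{C-1}(1-s)^{-(C-A-B)}g(s)\,ds\big)\,t(1-t)\,\cdots$. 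The point is that because $g(w)=\wg(w)$ satisfies Property (R), the inner integrand is $s^{C-1}\times(\text{a function with a convergent power series on }\overline\D)$, and the notation~\eqref{eq:series-notation} with $\beta = 1-C$ (legitimate since $C\notin\{0,-1,-2,\dots\}$ makes $1-C\notin\{1,2,\dots\}$) together with Lemma~\ref{lem:integral} produces a function of the form $s^{C}\times(\text{Property (R)})$; then multiplying by the bounded factor $(1-s)^{-(C-A-B)}$, which has a convergent binomial series on $\overline\D$ since $\Re(C-A-B)\in(0,1)$ so in particular the series coefficients are summable, keeps us in the class $s^{C}\cdot(\text{Property (R)})$. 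The outer integration $\int_0^t t^{-C}(\cdots)$ again uses~\eqref{eq:series-notation} with $\beta=C$ to cancel the $t^C$ and return a Property (R) function; a final multiplication by $v_1$ (Property (R), by Lemma~\ref{lem:v-R}) and by $(1-t)^{C-A-B-1}$ (binomial series, summable coefficients again by $\Re(C-A-B)\in(0,1)$) preserves Property (R). Once I have \emph{one} such particular solution $f_{\mathrm{part}}$ with $f_{\mathrm{part}}(1)$ finite, I set $f_a := f_{\mathrm{part}} + \frac{a - f_{\mathrm{part}}(1)}{v_1(1)}\,v_1$, which satisfies Property (R), solves~\eqref{eq:gauss-hgf}, and has $f_a(1)=a$; uniqueness follows because the difference of two such solutions solves the homogeneous equation, satisfies Property (R), and vanishes at $w=1$, hence is a scalar multiple of $v_1$ — but the only such multiple vanishing at $w=1$ is $0$ since $v_1(1)\neq 0$ (here I also use that the other homogeneous solution $w^{1-C}v_2(w)$ is \emph{not} of Property (R) form when $C\notin\ZZ$, by the structure of the solution space in Appendix~\ref{subsec:homo}, so any Property (R) homogeneous solution is a multiple of $v_1$).

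For the joint continuity/analyticity in $(A,B,C)\in U$: I would track the $(A,B,C)$-dependence through every step above. The hypergeometric coefficients $a_n(A,B,C)$ of $v_1$ are rational in the parameters (by the recursion $a_{n+1}/a_n = (n+A)(n+B)/((n+1)(n+C))$, with $C\notin\ZZ$ keeping denominators nonzero) and the series is locally uniformly convergent on $\overline\D\times U$ by Lemma~\ref{lem:v-analytic}; the binomial coefficients of $(1-s)^{-(C-A-B)}$ and $(1-t)^{C-A-B-1}$ are analytic in $(A,B,C)$ with locally uniformly summable bounds on the strip $\Re(C-A-B)\in(0,1)$; the integration operations~\eqref{eq:series-notation} with $\beta\in\{C,1-C\}$ introduce only the analytic factors $1/(n-\beta+1)$, nonzero since $C\notin\ZZ$. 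Assembling, $f_{\mathrm{part}}$ is a composition of such operations and is $(w,A,B,C)$-jointly continuous on $\overline\D\times U$ and holomorphic on $\D\times U$; dividing by $v_1(1)\neq 0$ (Lemma~\ref{lem:v-analytic}) and adding the correction term preserves this, giving the claim for $f_a$.

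\textbf{Main obstacle.} The delicate point is the bookkeeping of the fractional exponents and the branch cut at $(-\infty,0]$: I need to confirm at each of the two nested integrations that the integrand really has the form $w^{\text{exponent}}\cdot(\text{Property (R)})$ with the exponent in the correct class for~\eqref{eq:series-notation} (i.e. never landing on a positive integer), and that the factors $(1-w)^{C-A-B-1}$ and $(1-w)^{-(C-A-B)}$ — which carry the Wronskian — have summable power-series coefficients precisely because of the hypothesis $\Re(C-A-B)\in(0,1)$; this is exactly where assumption~\eqref{eq:ABC} is used and where a careless estimate would break down (if $\Re(C-A-B)$ were allowed to be $\le 0$ or $\ge 1$ one of these binomial series would fail to be in Property (R), or $f_{\mathrm{part}}(1)$ could be infinite). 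Everything else is routine once the operator is correctly factored and the reduction-of-order integral is written down.
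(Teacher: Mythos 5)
Your structural plan—construct a particular solution, adjust by a scalar multiple of $v_1$ to enforce $f_a(1)=a$, and use $v_1(1)\neq 0$ for uniqueness—matches the paper. Reducing $X=1-C$ to $X=0$ via the parameter change $(A,B,C)\mapsto(1+A-C,1+B-C,2-C)$ is also a reasonable observation, and your uniqueness argument (a Property~(R) homogeneous solution must be a multiple of $v_1$, and the only such multiple vanishing at $1$ is $0$) is correct; the paper cites Lemma~\ref{lem:1D-R} for the same point.

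The genuine gap is in the construction of $f_{\text{part}}$. You set $f=v_1 h$ and reduce order, which requires dividing by $v_1(t)^2$. You never address what happens if ${_2F_1}(A,B,C;\cdot)$ vanishes somewhere in $\overline\D$ (which is not excluded by~\eqref{eq:ABC}), and even when $v_1\neq 0$ on $\overline\D$, the reciprocal of a function satisfying Property~(R) need not satisfy Property~(R)—the $\ell^1$ summability of Taylor coefficients is not stable under division. So the Property~(R) verification cannot simply be pushed through your double integral. (Symptoms of this appear in your formula: the $v_1(t)^{-2}$ silently disappears in the displayed line for $f_{\text{part}}$, the Wronskian is off by a factor of $(1-w)^{-1}$, and the expression ends in ``$\cdots$''.) The paper avoids all of this by using variation of parameters with \emph{both} fundamental solutions $v_1$ and $w^{1-C}v_2$ and the explicit Wronskian $(1-C)w^{-C}(1-w)^{C-A-B-1}$:
\[
f_{\text{part}}(w) = -\frac{v_1(w)}{1-C}\int_0^w \frac{v_2(t)g(t)}{(1-t)^{C-A-B}}\,dt
+ \frac{v_2(w)}{1-C}\,w^{1-C}\!\int_0^w \frac{v_1(t)g(t)}{t^{1-C}(1-t)^{C-A-B}}\,dt,
\]
which involves only multiplication by $v_1$, $v_2$ and by $(1-t)^{A+B-C}$ (summable binomial coefficients precisely because $\Re(C-A-B)\in(0,1)$), together with the integration operation~\eqref{eq:series-notation}—each of which visibly preserves the class ``$w^{X}\times$(Property~(R))'' and depends analytically on $(A,B,C)\in U$. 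Switch to this form and the rest of your argument goes through.
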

\begin{proof}
We check easily that if $\sum_{n =0}^{\infty}a_nt^n$  satisfies Property (R) and $\sum_{n = 0}^{\infty}b_nt^n$
satisfies~\eqref{eq:series}, then $\left(\sum_{0}^{\infty}a_nt^n\right) \left(\sum_{0}^{\infty}b_nt^n\right)$
satisfies~\eqref{eq:series}. By~\eqref{eq:ABC}, the series expansion of
$(1 - t)^{A + B-C}$ around $0$ satisfies~\eqref{eq:series}. Thus $\frac{v_2(t) g(t)}{(1 - t)^{C - A - B}}$ (resp., $\frac{v_1(t) g(t)}{t^{1 - C} (1 - t)^{C - A - B}} $) 
is $t^X$ (resp., $t^{C-1+X}$) times a power series satisfying~\eqref{eq:series}. Let  
\begin{multline}\label{eq:particular}
f_{\text{part}}(w):=-\frac{v_1(w)}{1 - C} \int_0^w \frac{v_2(t) g(t)}{(1 - t)^{C - A - B}} dt\\
+ \frac{v_2(w)}{1 - C} w^{1 - C}\int_0^w \frac{v_1(t) g(t)}{t^{1 - C} (1 - t)^{C - A - B}} dt, \textrm{ for }w\in\overline \D,
\end{multline}
where both integrals in~\eqref{eq:particular} are defined via (\ref{eq:series-notation}).
\revise{By Lemma~\ref{lem:integral},}
\begin{multline}\label{eq:R-part}
\textrm{both $w^{-X}\int_0^w \frac{v_2(t) g(t)}{(1 - t)^{C - A - B}} dt$}\\
\textrm{and $w^{1 - C-X}\int_0^w \frac{v_1(t) g(t)}{t^{1 - C} (1 - t)^{C - A - B}} dt$ satisfy Property (R).}
\end{multline}
Thus $w^{-X}f_{\text{part}}(w)$ satisfies Property (R)
and is analytic in $A,B,C$.

\revise{Since the Wronskian determinant of $v_1$ and $w^{1-C} v_2$ is given by  $(1-C) w^{-C}(1-w)^{C-1-A-B}$,
by the  variation-of-parameters method in differential equations,}
$f_{\text{part}}$ is a particular solution
to~\eqref{eq:gauss-hgf}.
Since $v_1(1)\neq 0$ by~\eqref{eq:gauss}, if $X=0$, then
\begin{equation}\label{eq:particular2}
f_a(w)= f_{\text{part}}(w) + (a-f_{\text{part}}(1)) \frac{v_1(w)}{v_1(1)}
\end{equation}
is the desired  function, which is  unique by Lemma~\ref{lem:1D-R}.
If $X=1-C$, we conclude  similarly with $w^{1-C} v_2(w)$ in place of $v_1(w)$. 
\end{proof}
\xin{We now extend Lemma~\ref{lem:property-R} to incorporate the $(w,\alpha)$-regularity.}
\begin{lemma}\label{lem:alpha-ext2}
Suppose $U\subset\CC$ is an open set and $g(w,\alpha)$ is a  function which is $(w,\alpha)$-regular on $\overline \D \times U$ in the sense of Definition~\ref{def:walpha}.
Suppose we are in the setting of Lemma~\ref{lem:property-R}. 
For \revise{$a\in\CC$}, let $f(w,\alpha)$ be defined as $f_a(w)$ in Lemma~\ref{lem:property-R} with $g=g(w,\alpha)$.  Then $f(w,\alpha)$ is $(w,\alpha)$-regular on $\overline \D \times U$.
\end{lemma}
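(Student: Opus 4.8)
\textbf{Proof proposal for Lemma~\ref{lem:alpha-ext2}.}

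The plan is to show that the explicit formula~\eqref{eq:particular} for $f_{\text{part}}(w)$, together with the closing formula~\eqref{eq:particular2} (or its $w^{1-C}v_2$-analogue when $X = 1-C$), preserves $(w,\alpha)$-regularity when the inhomogeneous term $g(w,\alpha)$ is $(w,\alpha)$-regular. The key point is that each of the two building blocks in Lemma~\ref{lem:property-R}---the Gauss hypergeometric functions $v_1(w),v_2(w)$ on one hand, and the notion of integration~\eqref{eq:series-notation} on the other---depends on $\alpha$ only through the parameters $A = A_{\chi,n}(\alpha)$, $B = B_{\chi,n}(\alpha)$, $C = C_\chi(\alpha)$, which are analytic in $\alpha$; moreover these operations are uniform in $\alpha$ on compact subsets of $U$ in the precise sense required by Definition~\ref{def:walpha}.

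First I would record the basic stability properties of $(w,\alpha)$-regularity: (i) the product of two $(w,\alpha)$-regular functions is $(w,\alpha)$-regular (this follows from the corresponding fact for Property (R), already noted in the proof of Lemma~\ref{lem:property-R}, combined with the locally uniform convergence of the coefficient sums and analyticity of the coefficients); (ii) if $f(\alpha)$ is analytic on $U$ and $h(w)$ satisfies Property (R), then $f(\alpha)h(w)$ is $(w,\alpha)$-regular (this is exactly the first ``key fact'' stated before Lemma~\ref{lem:phi-alpha-analytic}); and (iii) by Lemmas~\ref{lem:v-R} and~\ref{lem:v-analytic}, under the running hypothesis~\eqref{eq:ABC} the functions $v_1(w),v_2(w)$ are $(w,\alpha)$-regular on $\overline{\D}\times U$ and $v_1(1),v_2(1)$ are analytic in $\alpha$ with no zeros on $U$. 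Next I would treat the integration operation: if $h(w,\alpha) = \sum_n a_n(\alpha) w^n$ has $\alpha$-analytic coefficients with $\sum_n |a_n(\alpha)|/n$ convergent locally uniformly in $\alpha$, then the series defining $w^{\beta}\int_0^w t^{-\beta} h(t)\,dt = \sum_n \frac{a_n(\alpha)}{n-\beta+1} w^{n+1}$ again has $\alpha$-analytic coefficients (the denominators $n-\beta(\alpha)+1$ are analytic and nonzero in $\alpha$ for $\beta$ avoiding $\{1,2,\dots\}$, which holds here since $\beta \in \{C-A-B, \text{shift thereof}\}$ has $\Re(C-A-B)\in(0,1)$) and the coefficient sum still converges locally uniformly; this is the $(w,\alpha)$-analogue of Lemma~\ref{lem:integral}. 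Combining (i), the stability of~\eqref{eq:series}-type series under multiplication, and the fact that $(1-t)^{A+B-C}$ has $\alpha$-analytic coefficients satisfying~\eqref{eq:series} locally uniformly, I would conclude that each of $\frac{v_2(t)g(t)}{(1-t)^{C-A-B}}$ and $\frac{v_1(t)g(t)}{t^{1-C}(1-t)^{C-A-B}}$ is $t^X$ (resp.\ $t^{C-1+X}$) times a series of the type~\eqref{eq:series} with $\alpha$-analytic coefficients, locally uniformly in $\alpha$.

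Having established that, I would plug everything into~\eqref{eq:particular}: the two integral terms are, by the above, products of the $(w,\alpha)$-regular functions $v_1(w)$ (resp.\ $w^{1-C}v_2(w)$) with functions of the form $w^{-X}$ times a $(w,\alpha)$-regular function as in~\eqref{eq:R-part}, so $w^{-X} f_{\text{part}}(w)$ is $(w,\alpha)$-regular on $\overline{\D}\times U$. Finally, in~\eqref{eq:particular2} the correction term $(a - f_{\text{part}}(1))\frac{v_1(w)}{v_1(1)}$ is a product of an $\alpha$-analytic function (since $f_{\text{part}}(1)$ is obtained by evaluating a locally uniformly convergent series of $\alpha$-analytic functions at $w=1$, hence analytic, and $v_1(1)\neq 0$ is analytic) with the $(w,\alpha)$-regular function $v_1(w)$; hence it too is $(w,\alpha)$-regular, and the same argument works with $w^{1-C}v_2$ in place of $v_1$ when $X = 1-C$. (Here I would take $a$ to be the value prescribed by the application; in the uses of this lemma elsewhere in the paper $a$ is always $0$ or an $\alpha$-analytic quantity, so $(a - f_{\text{part}}(1))$ is $\alpha$-analytic.) I do not expect a genuine obstacle here---the lemma is a bookkeeping upgrade of Lemma~\ref{lem:property-R} from ``analytic in $(A,B,C)$'' to ``$(w,\alpha)$-regular.'' The only point requiring care is making the ``locally uniform in $\alpha$'' clauses match up at each step, in particular checking that the denominators $n-\beta(\alpha)+1$ stay bounded away from zero on compact subsets of $U$ uniformly in $n$, which follows since $\Re\beta$ is bounded and $\beta$ stays in a compact set avoiding $\NN$; this is the step I would write out most carefully.
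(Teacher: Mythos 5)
Your proposal is correct and follows essentially the same route as the paper: feed the $(w,\alpha)$-regular inhomogeneity into the variation-of-parameters formula~\eqref{eq:particular}, observe that each operation there (multiplication by $v_1,v_2$ and $(1-t)^{A+B-C}$, the formal integration~\eqref{eq:series-notation}) preserves $(w,\alpha)$-regularity as in~\eqref{eq:R-part}, and then close via~\eqref{eq:particular2} using the analyticity and nonvanishing of $v_1(1)$ (resp.\ $v_2(1)$). The paper states this in three sentences; your write-up simply makes explicit the uniformity and denominator checks it leaves implicit.
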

\begin{proof}
Recall that ${_2F_1}(A, B, C, w)$ is holomorphic for $C \notin \{0, -1, -2, \ldots\}$.  
{By the same argument as in~\eqref{eq:R-part}, we see that 
both $w^{-X}\int_0^w \frac{v_2(t) g(t)}{(1 - t)^{C - A - B}} dt$ and $w^{1 - C-X}\int_0^w \frac{v_1(t) g(t)}{t^{1 - C} (1 - t)^{C - A - B}} dt$ are $(w,\alpha)$-regular on $\overline{\D}\times U$.}
Therefore $w^{-X}f_{\textrm{part}}$ is $(w,\alpha)$-regular on $\overline D\times U$ with $f_{\textrm{part}}$  from equation~\eqref{eq:particular}.
If $X=0$, we obtain Lemma~\ref{lem:alpha-ext2} by~\eqref{eq:particular2}. If $X=1-C$, we can use the counterpart of~\eqref{eq:particular2} with $w^{1-C} v_2$ in place of $v_1$.
\end{proof}

We need the following variant of
Lemma~\ref{lem:property-R}, which can be proved by the same argument as Lemma~\ref{lem:property-R}.
\begin{lemma}\label{lem:weak}
Suppose $C$ is not an integer. 	Fix  $X \in \{0, 1 - C\}$. 
Suppose $g(w) = w^X \wg(w)$, and $\wg(w)$ is an analytic function  on $\D$. 
Let $f_{\text{part}}$  be defined as in \eqref{eq:particular}. Then   $f_{\text{part}}$ is a particular solution
to~\eqref{eq:gauss-hgf}. Moreover, $w^{-X}f_{\text{part}}(w)$ is an analytic function on $\D$.
\end{lemma}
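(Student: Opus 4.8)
The plan is to follow the proof of Lemma~\ref{lem:property-R} essentially verbatim, replacing the phrase ``satisfies Property (R)'' by ``is analytic on $\D$'' everywhere. The only place the hypothesis $\Re(C-A-B)\in(0,1)$ was used in the proof of Lemma~\ref{lem:property-R} was to guarantee that the power series of $(1-t)^{A+B-C}$ has coefficients obeying the summability condition~\eqref{eq:series}; here we only need $(1-t)^{A+B-C}$ to be analytic on $\D$, which holds for an arbitrary complex exponent since $1-t\neq 0$ on $\D$, so that hypothesis is simply dropped. The assumption $C\notin\ZZ$ is kept, and is exactly what makes the integration notation~\eqref{eq:series-notation} available for the exponents $\beta\in\{0,\,1-C,\,C-1\}$ that arise, none of which lies in $\{1,2,3,\dots\}$.

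First I would check that the two integrands in~\eqref{eq:particular} are of the form covered by~\eqref{eq:series-notation}. Writing $g(t)=t^X\wg(t)$ with $\wg$ analytic on $\D$, and using that $v_1,v_2$ are analytic on $\D$ (Section~\ref{subsec:homo}) together with the analyticity of $(1-t)^{A+B-C}$, we have $\frac{v_2(t)g(t)}{(1-t)^{C-A-B}}=t^{X}h_1(t)$ and $\frac{v_1(t)g(t)}{t^{1-C}(1-t)^{C-A-B}}=t^{C-1+X}h_2(t)$ for analytic functions $h_1,h_2$ on $\D$. Expanding $h_1,h_2$ as power series convergent on $\D$ and applying~\eqref{eq:series-notation}, the ``integrated'' series have the same radius of convergence (the passage $a_n\mapsto a_n/(n-\beta+1)$ changes $|a_n|^{1/n}$ by a factor tending to $1$), so both $w^{-X}\int_0^w\frac{v_2(t)g(t)}{(1-t)^{C-A-B}}\,dt$ and $w^{1-C-X}\int_0^w\frac{v_1(t)g(t)}{t^{1-C}(1-t)^{C-A-B}}\,dt$ are analytic on $\D$. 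Multiplying by the analytic prefactors $-\frac{v_1(w)}{1-C}$ and $\frac{v_2(w)}{1-C}$ and collecting powers of $w$ then gives that $w^{-X}f_{\text{part}}(w)$ is analytic on $\D$, which is the second assertion.

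That $f_{\text{part}}$ solves~\eqref{eq:gauss-hgf} is the same direct computation invoked in the proof of Lemma~\ref{lem:property-R}: differentiating~\eqref{eq:particular} with the help of $\partial_w\!\big(\int_0^w t^{-\beta}f(t)\,dt\big)=w^{-\beta}f(w)$ from Lemma~\ref{lem:integral}, one recognizes the variation-of-parameters expression for the homogeneous fundamental system $\{v_1(w),\,w^{1-C}v_2(w)\}$, whose Wronskian equals $(1-C)w^{-C}(1-w)^{C-A-B-1}$, and the terms assemble into a particular solution of the inhomogeneous equation. This step uses no summability of coefficients and no constraint on $\Re(C-A-B)$, so it carries over unchanged.

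The main (minor) obstacle is bookkeeping: one must keep track of which value of $\beta$ enters~\eqref{eq:series-notation} for each of the four combinations of $X\in\{0,1-C\}$ with the two integrals, check in each that $\beta\notin\{1,2,3,\dots\}$ (which is precisely $C\notin\ZZ$), and verify that the powers of $w$ match so that the prefactor $w^{-X}$ exactly cancels the branch-point behavior of $f_{\text{part}}$ at $0$. Once this is checked the proof is a routine adaptation of Lemma~\ref{lem:property-R}.
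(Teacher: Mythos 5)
Your proposal is correct and takes essentially the same approach as the paper, which simply asserts that Lemma~\ref{lem:weak} is ``proved by the same argument as Lemma~\ref{lem:property-R}''; you have correctly identified that the only use of the hypothesis $\Re(C-A-B)\in(0,1)$ in that argument was to upgrade ``analytic on $\D$'' to ``satisfies Property (R),'' that the series-integration rule of Lemma~\ref{lem:integral} and the variation-of-parameters computation go through unchanged on the open disk, and that the condition $C\notin\ZZ$ is exactly what keeps all the exponents $\beta\in\{0,\,1-C,\,C-1\}$ out of $\{1,2,3,\dots\}$ so that \eqref{eq:series-notation} applies.
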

\xin{The next lemma is used in the proof of    Lemma~\ref{lem:ext-phi}.}
\begin{lemma}\label{lem:continuity}
Suppose $A,B,C,X,g$ are as in Lemma~\ref{lem:weak} with $\Re(1 - C) > 0$. Given $\theta_0\in [0,2\pi)$, let $D=\{z=re^{\ii \theta}: r\in (0,1), \theta\neq \theta_0\}$. 
Suppose $f$ solves~\eqref{eq:gauss-hgf} on an open set $U\subset D$. 
Then $f$ can be extended to an analytic function on $D$ such that  as $w\in D$ tends to 0, $f(w)$ tends to a finite number.
\end{lemma}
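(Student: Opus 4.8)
\textbf{Proof proposal for Lemma~\ref{lem:continuity}.}

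The plan is to reduce the statement to the two homogeneous fundamental solutions plus a particular solution, and then analyze each piece near $w=0$. First I would write, on the open set $U\subset D$, the general solution of~\eqref{eq:gauss-hgf} in the form $f = f_{\mathrm{homog}} + f_{\mathrm{part}}$ as in~\eqref{eq:hgf-decomp}, where $f_{\mathrm{part}}$ is the particular solution from Lemma~\ref{lem:weak} and $f_{\mathrm{homog}}$ lies in the two-dimensional span of $v_1(w) = {_2F_1}(A,B,C;w)$ and $w^{1-C}v_2(w)$. Since $C$ is not an integer, this basis is valid and each basis element has a natural analytic continuation to $D$: $v_1$ is entire on $\D$ (in fact on $\CC\setminus[1,\infty)$), and $w^{1-C} = e^{(1-C)\log w}$ with the branch cut placed along the ray $\theta=\theta_0$, which is exactly why the punctured, slit disk $D$ is the right domain. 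Thus $f$, originally defined only on $U$, extends to an analytic function on all of $D$ by continuing each summand.

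Next I would check the behavior as $w\to 0$ within $D$. The function $v_1(w)$ is analytic at $0$ with $v_1(0)=1$, so it contributes a finite limit. For the term $w^{1-C}v_2(w)$: here $v_2$ is analytic at $0$ with $v_2(0)=1$, and $w^{1-C}\to 0$ as $w\to 0$ because $\Re(1-C)>0$ — this is where the hypothesis $\Re(1-C)>0$ enters, guaranteeing the singular solution actually decays rather than blows up. For the particular solution, Lemma~\ref{lem:weak} tells us $w^{-X}f_{\mathrm{part}}(w)$ is analytic on $\D$, so $f_{\mathrm{part}}(w) = w^{X}\cdot(\text{analytic at }0)$. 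If $X=0$ this is analytic at $0$ hence has a finite limit; if $X=1-C$, then again $\Re(1-C)>0$ forces $w^{X}\to 0$, so $f_{\mathrm{part}}(w)\to 0$. Combining the three contributions, $f(w)$ tends to a finite number as $w\in D$ approaches $0$.

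The only subtlety — and the part requiring a little care rather than being fully routine — is the interplay between the branch choice for $w^{1-C}$ and the coefficients in the decomposition: one must verify that the coefficients of $v_1$ and $w^{1-C}v_2$ appearing in $f_{\mathrm{homog}}$ are genuine constants (not depending on which sheet of $w^{1-C}$ we are on), which is automatic once the branch cut is fixed along $\theta=\theta_0$ and $D$ is simply connected, so that the solution space of the ODE on $D$ is honestly two-dimensional and spanned by these functions. Everything else is a direct consequence of the power-series behavior of $v_1,v_2$ at $0$, Lemma~\ref{lem:weak}, and the sign condition $\Re(1-C)>0$. I do not anticipate a serious obstacle here; the statement is designed precisely to package the near-$0$ analysis needed in the proof of Lemma~\ref{lem:ext-phi}.
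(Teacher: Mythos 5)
Your proposal is correct and follows essentially the same route as the paper: decompose $f$ via \eqref{eq:hgf-decomp} into the particular solution from Lemma~\ref{lem:weak} plus a homogeneous part spanned by $v_1(w)$ and $w^{1-C}v_2(w)$, observe that all three pieces extend analytically to the slit disk $D$, and use $\Re(1-C)>0$ to get a finite limit at $0$. The paper simply states this in one sentence; you have spelled out the near-zero asymptotics and the branch-choice subtlety, both of which check out.
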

\begin{proof}
Since $w^X$ restricted to $U$ can be analytically extended to $D$, Lemma~\ref{lem:continuity} follows from Lemma~\ref{lem:weak}, \eqref{eq:hgf-decomp}, and the solution structure of the homogeneous hypergeometric equation from Section~\ref{subsec:homo}.  
\end{proof}

\section{Proof of OPE lemmas} \label{sec:ope-proof}
In \xin{this section we prove Lemmas~\ref{thm:opes} and~\ref{thm:opes2}.}
We will be brief when the arguments  are straightforward adaptation of those in \cite{KRV19b,RZ20}.
\begin{proof}[Proof of Lemma~\ref{thm:opes}] 
Set $\lgg = l_{\frac{\gamma}{2}}$. Let $g(u) := \int_0^1 e^{\frac{\gamma}{2} Y_\tau(x)} \Theta_\tau(x)^{-\frac{\alpha\gamma}{2}} \Theta_\tau(u + x)^{\frac{\gamma^2}{4}} e^{\pi \gamma P x} dx$ and $f(u) = \EE[g(u)^{-\frac{\alpha}{\gamma} + \frac{1}{2}}]$. 
Using $\eqref{eq:q-block}$ we can write $\phi^\alpha_{\frac{\gamma}{2}}(u, q)$ as $\Sigma(u)f(u)$ where $\Sigma(u) =  \cW(e^{\ii \pi \tau}) e^{\pi P u \pi} \sin(\pi u)^{\lgg} \Theta_{\tau}(u)^{-\lgg}$ and $\cW(e^{\ii \pi \tau})$ is defined under \eqref{eq:q-block}. $\Sigma$ is thus differentiable at $0$. For $t\in [0,1]$, let $g(t, u) := (1 - t) g(0) + t g(u)$. The claim of interest can be reduced to \revise{determining} the leading order of:
\begin{align*}
f(u) - f(0) = \left(-\frac{\alpha}{\gamma} + \frac{1}{2}\right) \int_0^1 \EE[(g(u) - g(0)) g(t, u)^{-\frac{\alpha}{\gamma} - \frac{1}{2}}] dt.
\end{align*}
By a direct asymptotic analysis as done in \revise{\cite[Equation (3.15)]{RZ20}}, we have
uniformly in $t\in[0,1]$ that
\begin{align}
&\lim_{u \to 0} u^{-2\lgg - 1} \EE[(g(u) - g(0)) g(t, u)^{-\frac{\alpha}{\gamma} - \frac{1}{2}}] \label{eq:ope-easy}\\
=&(1 - e^{\pi \gamma P - 2\ii \pi \lgg})C\EE\left[\left(\int_0^1 e^{\frac{\gamma}{2} Y_\tau(x)} \Theta_\tau(x)^{-\frac{\alpha\gamma}{2} - \frac{\gamma^2}{4}} e^{\pi \gamma P x} dx \right)^{-\frac{\alpha}{\gamma} - \frac{1}{2}} \right], \nonumber
\end{align}
where $ C=e^{2\ii \pi \lgg - \frac{\ii \pi \gamma^2}{2}} q^{-\frac{\alpha \gamma}{12} - \frac{\gamma^2}{24}} \eta(q)^{-\frac{\alpha\gamma}{2} - \frac{\gamma^2}{4}} \Theta'_{\tau}(0)^{2\lgg} \frac{\Gamma(1 - \frac{\alpha \gamma}{2}) \Gamma(-1 + \frac{\alpha \gamma}{2} - \frac{\gamma^2}{4})}{\Gamma(-\frac{\gamma^2}{4})}$. Since  $1+2l_0\in(0,1)$ and $\Sigma$ is differentiable at $0$, \eqref{eq:ope-easy} yields that
\begin{multline*}
\lim_{u\to 0}\sin(\pi u)^{-2\lgg - 1} \Big(\phi^\alpha_{\frac{\gamma}{2}}(u, q) - \phi^\alpha_{\frac{\gamma}{2}}(0, q)\Big)\\
=\pi^{-2\lgg - 1}\Sigma(0)(1 - e^{\pi \gamma P - 2\ii \pi \lgg}) \left(-\frac{\alpha}{\gamma} + \frac{1}{2}\right) C\\
\EE\left[\left(\int_0^1 e^{\frac{\gamma}{2} Y_\tau(x)} \Theta_\tau(x)^{-\frac{\alpha\gamma}{2} - \frac{\gamma^2}{4}} e^{\pi \gamma P x} dx \right)^{-\frac{\alpha}{\gamma} - \frac{1}{2}} \right].
\end{multline*}
Recall $\Theta_\tau'(0) = -2\pi\eta(q)^3$ from  \eqref{eq:Theta'0}.
Plugging in the value of $\Sigma(0)$ and the definitions of $W^+_{\frac{\gamma}{2}}$ and $\mathcal{A}^q_{\gamma,P}$, we get Lemma~\ref{thm:opes}.
\end{proof}

\xin{Before proving Lemma~\ref{thm:opes2} we first recall a probabilistic  interpretation of  the reflection coefficient $R(\alpha,\chi,P)$ introduced in~\eqref{eq:exp-R}.} Consider the Gaussian  field  $Z_{\mathbb{H}}$  in Appendix \ref{sec:gmc-app} defined on  the upper half plane  with covariance 
\begin{equation}\label{cov_Z}
\mathbb{E}[Z_{\mathbb{H}}(x) Z_{\mathbb{H}}(y)] = 2 \log \frac{|x| \vee |y|}{|x-y|}\quad \textrm{for }x,y \in \revise{ \mathbb{H} \cup \mathbb{R} }.
\end{equation}
For $\lambda>0$ consider the process
\begin{equation}\label{mathcal B}
\mathcal{B}^{\lambda}_s := 
\begin{cases}
\hat{B}_s-\lambda s  \quad s\ge 0\\
\bar{B}_{-s}+\lambda s \quad s<0,
\end{cases}
\end{equation}
where $(\hat{B}_s-\lambda s)_{s\ge 0}$ and $(\bar{B}_{s}-\lambda s)_{s\ge 0}$ are two independent Brownian motions with negative drift conditioned to stay negative;  see \cite{DMS_mot} for more details on this process.
Consider an independent coupling of $(\mathcal{B}^{\lambda},Z_{\mathbb{H}})$ with $\lambda=\frac{Q -\alpha}{2}$. For \xin{$\alpha\in (\frac{\gamma}{2} , Q)$} , let:
\begin{multline} \label{eq:def-rho}
\rho(\alpha,1,e^{-\ii \pi\frac{\gamma\chi}{2}  + \pi \gamma P})\\ := \frac{1}{2} \int_{-\infty}^{\infty} e^{ \frac{\gamma}{2} \mathcal{B}_v^{\frac{Q -\alpha}{2}} } \left(e^{\frac{\gamma}{2} Z_{\mathbb{H}}(-e^{-v/2})} +e^{-\ii \pi\frac{\gamma\chi}{2}  + \pi \gamma P} e^{\frac{\gamma}{2} Z_{\mathbb{H}}(e^{-v/2})}  \right) dv.
\end{multline}
Then \xin{the boundary two-point function for boundary Liouville CFT (without bulk potential) is defined by} 
\begin{equation}\label{eq:reflect}
\overline{R}(\alpha, 1, e^{-\ii \pi\frac{\gamma\chi}{2}  + \pi \gamma P} ) := \EE\left[\Big(\rho(\alpha,1 ,e^{-\ii \pi\frac{\gamma\chi}{2}  + \pi \gamma P})\Big)^{\frac{2}{\gamma}(Q-\alpha)}\right].
\end{equation}
\xin{It was introduced in \cite{RZ20}. It was proved as~\cite[Theorem 1.8]{RZ20} that}
\begin{equation}
\xin{\overline{R}(\alpha, 1, e^{-\ii \pi\frac{\gamma\chi}{2}  + \pi \gamma P} )=R(\alpha,\chi,P) \quad \textrm{with }R(\alpha,\chi,P) \textrm{ in }~\eqref{eq:exp-R}.}
\end{equation}
\xin{An analogous two-point function} was first introduced for Liouville CFT on the Riemann sphere in \cite{KRV19b}. A special case of $\overline{R}(\alpha, 1, e^{-\ii \pi\frac{\gamma\chi}{2}  + \pi \gamma P} )$
was computed in \cite{RZ18}. \xin{They appears naturally in the first order asymptotics of the probability for a GMC measure to be large.}

\begin{proof}[Proof of Lemma \ref{thm:opes2}]

We write $u = \ii t$ and work with small $t>0$. For a Borel set $I \subseteq [0,1]$, we introduce the notation
\begin{equation}
K_{I}( \ii t) : =  \int_I e^{\frac{\gamma}{2} Y_{\tau}(x)} \Theta_{\tau}(x)^{-\frac{\alpha \gamma}{2}} \Theta_{\tau}( \ii t +x)^{\frac{\gamma \chi}{2}} e^{\pi  \gamma P x} dx,  \textrm{ and }s =-\frac{\alpha}{\gamma} + \frac{\chi}{\gamma}.
\end{equation}
Let $h $ be a real parameter chosen slightly larger than $\chi(Q -\alpha)$. Recalling the definition of $\overline X$ given in Appendix \ref{sec:gmc-app}, let $g_\tau(t) \defeq e^{\frac{\gamma}{2} F_{\tau}(0)} e^{\frac{\gamma}{2} \overline{X}(4 \pi t^{1+h}) } $ and $ \sigma_t:= \Theta_{\tau}'(0)^{\frac{\gamma \chi}{2} -\frac{\alpha \gamma}{2}}  (2 \pi)^{\frac{\gamma^2}{4}} 
t^{\frac{\gamma \chi}{2} +\frac{\gamma}{2}(1+h)(Q-\alpha)}   e^{- \frac{\gamma^2}{8} \EE[F_{\tau}(0)^2]} g_\tau(t)$. Let $M$ be an exponential random variable with rate $(Q-\alpha)$, and recall $\rho(\alpha,1,e^{-\ii \pi\frac{\gamma\chi}{2}  + \pi \gamma P}) $ from~\eqref{eq:def-rho}. A straightforward adaptation of the OPE method in \cite{KRV19b,RZ20}\footnote{\revise{In \cite{KRV19b}, see the proofs of Lemmas 9.5 and 9.6. In \cite{RZ20}, in the proof of Lemma 5.6, see equations (5.19), (5.25), (5.49).} } gives the following two claims for $\alpha$ sufficiently close to $Q$. First as $t \rightarrow 0$, the difference $\EE[K_{[0,1]}(\ii t)^{s}]-\EE[K_{[0,1]}(0)^{s}]$ is given by
\begin{align}\label{eq:ope_proof0}
\EE[(K_{(t,1-t)}(\ii t) + \ii^{\frac{\gamma \chi}{2}} \sigma_t e^{\frac{\gamma}{2}M} \rho(\alpha,1,e^{-\ii \pi\frac{\gamma\chi}{2}  + \pi \gamma P})  )^s] - \EE[K_{(t,1-t)}(\ii t)^s] + o(t^{\chi (Q-\alpha)}).
\end{align}
Secondly, we have
\begin{multline}\label{eq:ope_proof}
 \lim\limits_{t\to0} t^{\chi (\alpha-Q)} \EE\big[\sigma_t^{\frac{2}{\gamma}(Q -\alpha)} K_{(t,1-t)}(\ii t)^{s-\frac{2}{\gamma}(Q-\alpha)}  \big] \\=
C\,\EE\big[  \big( \int_0^1 e^{\frac{\gamma}{2} Y_{\tau}(x)} \Theta_{\tau}( x)^{-\frac{ \gamma}{2}(2Q -\alpha -\chi)} e^{\pi  \gamma P x} dx \big)^{\frac{\alpha + \chi - 2Q}{\gamma}} \big]
 \end{multline}
for the constant
\begin{multline*}
C \defeq (2\pi)^{(Q -\alpha)(\frac{\gamma}{3} - \frac{\chi}{3} + \frac{2}{3\gamma} )} q^{\frac{1}{6}(Q -\alpha)(\chi + \frac{2}{\gamma} - 2Q)} \\  \Theta'_{\tau}(0)^{(Q -\alpha)(\frac{2 \chi}{3} - \frac{4}{3 \gamma} - \frac{2}{3\chi})} e^{\ii \pi (Q - \alpha) (\frac{4}{3\gamma} - \frac{2 \chi}{3} - \frac{4}{3\chi})}.
\end{multline*}
Since  $e^{\frac{\gamma}{2}M}$ has density $\frac{2}{\gamma}(Q-\alpha)v^{-\frac{2}{\gamma}(Q-\alpha)-1} 1_{v>1}dv$ one has 
\begin{align*}
&\EE[(K_{(t,1-t)}(\ii t) + \ii^{ \frac{\gamma \chi }{2} } \sigma_t e^{\frac{\gamma}{2}M} \rho(\alpha,1,e^{-\ii \pi\frac{\gamma\chi}{2}  + \pi \gamma P}) )^s] - \EE[K_{(t,1-t)}(\ii t)^s]\\
& = \frac{2}{\gamma}(Q-\alpha) \EE\left[\int_1^{\infty} \frac{dv}{v^{\frac{2}{\gamma}(Q-\alpha)+1}} \right.\\
&\phantom{=====} \left.\left(\left(K_{(t,1-t)}(\ii t) + \ii^{ \frac{\gamma \chi }{2} } \sigma_t \rho(\alpha,1,e^{-\ii \pi\frac{\gamma\chi}{2}  + \pi \gamma P}) v\right)^s - K_{(t,1-t)}(\ii t)^s \right) \right]\\
& = \ii^{\chi(Q-\alpha)} \frac{2}{\gamma}(Q-\alpha) \overline{R}(\alpha, 1, e^{\pi \gamma P - \frac{ \ii \pi \gamma \chi }{2}})\\
&\phantom{========} \EE\left[ \revise{ \int_{I_{t}}  du \frac{ (1 +u )^s -1}{u^{\frac{2}{\gamma}(Q-\alpha)+1}} }  \sigma_t ^{\frac{2}{\gamma}(Q -\alpha)} K_{(t,1-t)}(\ii t)^{s-\frac{2}{\gamma}(Q-\alpha)} \right],
\end{align*}
where we have applied the change of variable $u = \frac{\ii^{\frac{\gamma \chi}{2}} \sigma_t \rho(\alpha,1,e^{- \ii \pi \frac{\gamma \chi}{2} + \pi \gamma P})  }{K_{(t,1-t)}(\ii t)} v $ with $u_t := \frac{\ii^{\frac{\gamma \chi}{2}} \sigma_t \rho(\alpha,1,e^{- \ii \pi \frac{\gamma \chi}{2} + \pi \gamma P})  }{K_{(t,1-t)}(\ii t)}$ being random, and where $\int_{I_t}$ is a complex integral over the half line $I_t$ obtained by rotating the real interval $(|u_t|, +\infty)$ by an angle of $\arg u_t$. Notice that $ \lim_{t \rightarrow 0} u_t =0 $ almost surely. By a standard complex analysis argument, as $t \rightarrow 0$ one has that:
$$ \int_{I_{t}} du \frac{ (1 +u )^s -1}{u^{\frac{2}{\gamma}(Q-\alpha)+1}} = \left( \int_{ |u_t| }^{+\infty} du \frac{ (1 +u )^s -1}{u^{\frac{2}{\gamma}(Q-\alpha)+1}} \right) (1 + o(1)). $$
This then implies:
\begin{align*}
&\EE[(K_{(t,1-t)}(\ii t) + \ii^{ \frac{\gamma \chi }{2} } \sigma_t e^{\frac{\gamma}{2}M} \rho(\alpha,1,e^{-\ii \pi\frac{\gamma\chi}{2}  + \pi \gamma P}) )^s] - \EE[K_{(t,1-t)}(\ii t)^s]\\
&= \ii^{\chi(Q-\alpha)} \frac{2}{\gamma}(Q-\alpha) \overline{R}(\alpha, 1, e^{\pi \gamma P - \frac{ \ii \pi \gamma \chi }{2}}) \\
&\phantom{====} \big(\int_{0}^{\infty}\!\!\!\! du \revise{ \frac{(1 +u )^s -1}{u^{\frac{2}{\gamma}(Q-\alpha)+1}} } \big)  \EE\big[ \sigma_t ^{\frac{2}{\gamma}(Q -\alpha)} K_{(t,1-t)}(\ii t)^{s-\frac{2}{\gamma}(Q-\alpha)} \big] + o(t^{\chi(Q-\alpha)})\\
& = -\ii^{\chi(Q-\alpha)}
\frac{\Gamma(\frac{2 \alpha}{\gamma} - \frac{4}{\gamma^2}) \Gamma(\frac{2Q - \alpha - \chi}{\gamma})}{\Gamma(\frac{\alpha}{\gamma} - \frac{\chi}{\gamma})} 
\overline{R}(\alpha, 1, e^{\pi \gamma P - \frac{ \ii \pi \gamma \chi }{2}})\\
&\phantom{========} \EE\left[\sigma_t^{\frac{2}{\gamma}(Q -\alpha)} K_{(t,1-t)}(\ii t)^{s-\frac{2}{\gamma}(Q-\alpha)} \right] + o(t^{\chi(Q-\alpha)}).
\end{align*}
By using the two claims \eqref{eq:ope_proof0} and \eqref{eq:ope_proof} we arrive at:
\begin{align*}
&\EE[K_{[0,1]}(\ii t)^{s}]-\EE[K_{[0,1]}(0)^{s}] \\
& = - t^{\chi(Q-\alpha)} \ii^{\chi(Q-\alpha)}
\frac{\Gamma(\frac{2 \alpha}{\gamma} - \frac{4}{\gamma^2}) \Gamma(\frac{2Q - \alpha - \chi}{\gamma})}{\Gamma(\frac{\alpha}{\gamma} - \frac{\chi}{\gamma})} 
\overline{R}(\alpha, 1, e^{\pi \gamma P - \frac{ \ii \pi \gamma \chi }{2}})\\
&\phantom{====}C\,\EE\big[  \big( \int_0^1 e^{\frac{\gamma}{2} Y_{\tau}(x)} \Theta_{\tau}( x)^{-\frac{ \gamma}{2}(2Q -\alpha -\chi)} e^{\pi  \gamma P x} dx \big)^{\frac{\alpha + \chi - 2Q}{\gamma}} \big]+ o(t^{\chi(Q-\alpha)}).
\end{align*}
Now the asymptotic of the difference in  Lemma~\ref{thm:opes2} can be reduced to that of $\EE[K_{[0,1]}(\ii t)^{s}]-\EE[K_{[0,1]}(0)^{s}]$.
Since $1+2l_\chi=\chi(Q-\alpha)$, by  \eqref{eq:a-extend} \xin{and $\overline{R}(\alpha, 1, e^{-\ii \pi\frac{\gamma\chi}{2}  + \pi \gamma P} )=R(\alpha,\chi,P)$}, we obtain Lemma~\ref{thm:opes2} for $\chi = \frac{\gamma}{2}$. 

\guillaume{We now check the case $\chi = \frac{2}{\gamma}$, this will require a small manipulation on the $\overline{R}$ function. Set $\tilde{l}_0 = l_{\frac{2}{\gamma}}$. The asymptotic expansion of $\EE[K_{[0,1]}(\ii t)^{s}]-\EE[K_{[0,1]}(0)^{s}]$ we have just derived implies this time:}
\begin{align}\label{eq:gluing1}
&\lim_{u \to 0} \sin(\pi u)^{- 2\tilde{l}_0 - 1} \Big(\phi^\alpha_{\frac{2}{\gamma}}(u, q) - \phi^\alpha_{\frac{2}{\gamma}}(0, q)\Big) \\ \nonumber
& = - q^{\frac{P^2}{2} + \frac{\gamma^2}{24} \tilde{l}_0(1- \tilde{l}_0) } \Theta'_{\tau}(0)^{-\frac{\gamma^2}{6} \tilde{l}_0^2}   \pi^{-1 - \tilde{l}_0} (2\pi)^{\frac{\gamma}{3}(Q -\alpha)} q^{-\frac{\gamma}{6}(Q -\alpha)}   \Theta'_{\tau}(0)^{- \frac{\gamma}{3}(Q -\alpha)} e^{-\ii \pi \frac{2 \gamma}{3} (Q - \alpha) }\\ \nonumber
&  \times \frac{\Gamma(\frac{2 \alpha}{\gamma} - \frac{4}{\gamma^2}) \Gamma(\frac{2}{\gamma^2} +1 - \frac{\alpha}{\gamma})}{\Gamma(\frac{\alpha}{\gamma} - \frac{2}{\gamma^2})} \overline{R}(\alpha, 1, e^{-\ii \pi  + \pi\gamma P})\\ \nonumber
&\phantom{==} \EE\left[\Big( \int_0^1 e^{\frac{\gamma}{2} Y_{\tau}(x)} \Theta_{\tau}( x)^{-\frac{ \gamma}{2}(2Q -\alpha -\frac{2}{\gamma})} e^{\pi  \gamma P x} dx \Big)^{\frac{\alpha}{\gamma} - \frac{2}{\gamma^2} -1} \right].
\end{align}
By the definition~\eqref{eq:a-extend} of $ \mathcal{R}^{q}_{\gamma, P}(\alpha + \frac{2}{\gamma})$, for $\alpha > \frac{\gamma}{2}$ we have
\begin{align}\label{eq:gluing2}
&\mathcal{R}^{q}_{\gamma, P}(\alpha + \frac{2}{\gamma})
=  -q^{-\frac{1}{6}( \frac{\alpha}{\gamma} + \frac{2}{\gamma^2} - 1 + Q(\gamma - \alpha))} \eta(q)^{\frac{13 \alpha \gamma}{2} -1 - \frac{2 \alpha}{\gamma} - \frac{2}{\gamma^2} - \frac{9\alpha^2}{2} - 2 \gamma^2 } \Theta'_{\tau}(0)^{(\frac{\gamma}{2} -\alpha)(\gamma - \frac{2}{\gamma} -\alpha)}  \\ \nonumber
& \times \frac{e^{\ii \pi (\frac{\alpha \gamma}{2} +1  - (\alpha - \gamma)(\alpha - \gamma - \frac{2}{\gamma}))} (2 \pi)^{(\alpha - \gamma)(\frac{\gamma}{2} -\alpha)}}{  (  1 -\frac{\alpha}{\gamma} - \frac{2}{\gamma^2})(1+e^{\pi \gamma P - \ii \pi  \frac{\gamma^2}{2} + \ii \pi  \frac{\alpha \gamma}{2}}) }  \frac{\Gamma(-\frac{\gamma^2}{4}) \Gamma(\frac{2 \alpha}{\gamma} -1 ) \Gamma(1 + \frac{2}{\gamma^2} - \frac{\alpha}{\gamma})}{\Gamma(\frac{\alpha \gamma}{2}  - \frac{\gamma^2}{2})\Gamma( \frac{\gamma^2}{4} - \frac{\alpha \gamma}{2} ) \Gamma(\frac{\alpha}{\gamma} + \frac{2}{\gamma^2} - 1)} \\ \nonumber
& \times \overline{R}(\alpha + \frac{2}{\gamma} - \frac{\gamma}{2}, 1, e^{- \ii \pi\frac{\gamma^2}{4} + \pi \gamma P}) 
\mathbb{E} \Big[  \Big( \int_0^1 e^{\frac{\gamma}{2} Y_{\tau}(x)} \Theta_{\tau}( x)^{-\frac{ \gamma}{2}(2Q -\alpha - \frac{2}{\gamma} )} e^{\pi  \gamma P x} dx \Big)^{ \frac{\alpha}{\gamma} - \frac{2}{\gamma^2} - 1} \Big].
\end{align}
\revise{To obtain the desired answer of Lemma \ref{thm:opes2}, using the shift equations satisfied by $\overline{R}$ - see equations (3.22) and (3.74) of \cite{RZ20} - and simply canceling out common factors, we compute a ratio of reflection coefficients as}
\begin{multline*}
\frac{\overline{R}(\alpha , 1,  e^{ - \ii \pi + \pi \gamma P})}{\overline{R}(\alpha + \frac{2}{\gamma} -\frac{\gamma}{2}, 1, e^{-\ii \pi \frac{ \gamma^2}{4} + \pi \gamma P})} = \frac{\overline{R}(\alpha , 1,  e^{ - \ii \pi + \pi \gamma P})}{\overline{R}(\alpha + \frac{2}{\gamma} , 1,  e^{ \pi \gamma P})} \frac{\overline{R}(\alpha + \frac{2}{\gamma} , 1,  e^{ \pi \gamma P})}{\overline{R}(\alpha + \frac{2}{\gamma} -\frac{\gamma}{2}, 1, e^{-\ii \pi \frac{ \gamma^2}{4} + \pi \gamma P})}\\
= \frac{2}{\gamma(Q -\alpha)} (2 \pi)^{\frac{4}{\gamma^2} -1}  \frac{\Gamma(\frac{2\alpha}{\gamma}) \Gamma(1 - \frac{2\alpha}{\gamma})}{\Gamma(1 - \frac{\gamma^2}{4})^{\frac{4}{\gamma^2} -1 } \Gamma(\frac{\gamma \alpha}{2} - \frac{\gamma^2}{2}) \Gamma( 1 - \frac{\gamma \alpha}{2} + \frac{\gamma^2}{4}) } \frac{1 - e^{\frac{4 \pi P}{\gamma} - \frac{4 \ii \pi}{\gamma^2} + \ii \pi \frac{2 \alpha}{\gamma}} }{1 + e^{ \pi \gamma  P - \frac{ \ii \pi \gamma^2}{2} + \ii \pi \frac{\gamma \alpha}{2}} }.
\end{multline*}
Substituting \eqref{eq:gluing2} into \eqref{eq:gluing1} and simplifying yields the
desired claim.
\end{proof}

\bibliographystyle{alpha}
\bibliography{liou-bib}

\begin{thebibliography}{GKRV21}

\bibitem[AGT10]{AGT10}
L.~Alday, D.~Gaiotto, and Y.~Tachikawa.
\newblock Liouville correlation functions from four-dimensional gauge theories.
\newblock {\em Lett. Math. Phys.}, 91(2):167--197, 2010.

\bibitem[Aru17]{Aru2017}
Juhan Aru.
\newblock Gaussian multiplicative chaos through the lens of the 2d gaussian
  free field.
\newblock {\em arXiv: Probability}, 2017.

\bibitem[Bar04]{Bar04}
E~Barnes.
\newblock On the theory of multiple gamma function.
\newblock {\em Trans. Cambridge Phil. Soc.}, 19:374--425, 1904.

\bibitem[Ber17]{NB-GMC}
N.~Berestycki.
\newblock An elementary approach to {G}aussian multiplicative chaos.
\newblock {\em Electron. Commun. Probab.}, 22:Paper No. 27, 12, 2017.

\bibitem[BP21]{BP-notes}
N.~Berestycki and E.~Powell.
\newblock Gaussian free field, {L}iouville quantum gravity and {G}aussian
  multiplicative chaos.
\newblock 2021.
\newblock
  \url{https://homepage.univie.ac.at/nathanael.berestycki/Articles/master.pdf}.

\bibitem[BPZ84]{BPZ84}
A.~Belavin, A.~Polyakov, and A.~Zamolodchikov.
\newblock Infinite conformal symmetry in two-dimensional quantum field theory.
\newblock {\em Nuclear Phys. B}, 241(2):333--380, 1984.

\bibitem[CCY19]{CCY19}
M.~Cho, S.~Collier, and X.~Yin.
\newblock Recursive representations of arbitrary {V}irasoro conformal blocks.
\newblock {\em Journal of High Energy Physics}, 2019(4):18, Apr 2019.

\bibitem[CO12]{CO12}
E.~Carlsson and A.~Okounkov.
\newblock Exts and vertex operators.
\newblock {\em Duke Math. J.}, 161(9):1797--1815, 2012.

\bibitem[DF84]{DF84}
V.~Dotsenko and V.~Fateev.
\newblock Conformal algebra and multipoint correlation functions in {$2$}{D}
  statistical models.
\newblock {\em Nuclear Phys. B}, 240(3):312--348, 1984.

\bibitem[DF85]{DF85}
V.~Dotsenko and V.~Fateev.
\newblock Operator algebra of two-dimensional conformal theories with central
  charge {$C\leq 1$}.
\newblock {\em Phys. Lett. B}, 154(4):291--295, 1985.

\bibitem[DFMS97]{DMS97}
P.~Di~Francesco, P.~Mathieu, and D.~S\'{e}n\'{e}chal.
\newblock {\em Conformal field theory}.
\newblock Graduate Texts in Contemporary Physics. Springer-Verlag, New York,
  1997.

\bibitem[DKRV16]{DKRV16}
F.~David, A.~Kupiainen, R.~Rhodes, and V.~Vargas.
\newblock Liouville quantum gravity on the {R}iemann sphere.
\newblock {\em Comm. Math. Phys.}, 342(3):869--907, 2016.

\bibitem[{\relax DLMF}]{NIST:DLMF}
{\it NIST Digital Library of Mathematical Functions}.
\newblock http://dlmf.nist.gov/, Release 1.0.24 of 2019-09-15.
\newblock F.~W.~J. Olver, A.~B. {Olde Daalhuis}, D.~W. Lozier, B.~I. Schneider,
  R.~F. Boisvert, C.~W. Clark, B.~R. Miller, B.~V. Saunders, H.~S. Cohl, and
  M.~A. McClain, eds.

\bibitem[DMS21]{DMS_mot}
B.~Duplantier, J.~Miller, and S.~Sheffield.
\newblock Liouville quantum gravity as a mating of trees.
\newblock {\em Ast\'{e}risque}, (427):viii+257, 2021.

\bibitem[DO94]{DO94}
H.~Dorn and H.~Otto.
\newblock Two- and three-point functions in {L}iouville theory.
\newblock {\em Nuclear Phys. B}, 429(2):375--388, 1994.

\bibitem[DRV16]{DRV16}
F.~David, R.~Rhodes, and V.~Vargas.
\newblock Liouville quantum gravity on complex tori.
\newblock {\em J. Math. Phys.}, 57(2):022302, 25, 2016.

\bibitem[Dub09]{Dub-JAMS}
J.~Dub\'{e}dat.
\newblock S{LE} and the free field: partition functions and couplings.
\newblock {\em J. Amer. Math. Soc.}, 22(4):995--1054, 2009.

\bibitem[DV09]{dv-block}
R.~Dijkgraaf and C.~Vafa.
\newblock Toda theories, matrix models, topological strings, and {$N = 2$}
  gauge systems.
\newblock {\em arXiv preprint arXiv:0909.2453}, 2009.

\bibitem[FL10]{FL10}
V.~Fateev and A.~Litvinov.
\newblock On {AGT} conjecture.
\newblock {\em Journal of High Energy Physics}, 2010(2):14, 2010.

\bibitem[FLNO09]{FLNO09}
V.~Fateev, A.~Litvinov, A.~Neveu, and E.~Onofri.
\newblock A differential equation for a four-point correlation function in
  {L}iouville field theory and elliptic four-point conformal blocks.
\newblock {\em J. Phys. A}, 42(30):304011, 29, 2009.

\bibitem[FML18]{FM18}
G.~Felder and M.~M\"{u}ller-Lennert.
\newblock Analyticity of {N}ekrasov partition functions.
\newblock {\em Comm. Math. Phys.}, 364(2):683--718, 2018.

\bibitem[GIL12]{GIL12}
O.~Gamayun, N.~Iorgov, and O.~Lisovyy.
\newblock Conformal field theory of {P}ainlev\'{e} {VI}.
\newblock {\em J. High Energy Phys.}, (10):038, front matter + 24, 2012.

\bibitem[GKRV20]{GKRV20}
C.~Guillarmou, A.~Kupiainen, R.~Rhodes, and V.~Vargas.
\newblock Conformal bootstrap in {L}iouville theory.
\newblock {\em arXiv preprint arXiv:2005.11530}, 2020.

\bibitem[GKRV21]{GKRV21}
Colin Guillarmou, Antti Kupiainen, R\'emi Rhodes, and Vincent Vargas.
\newblock {Segal's axioms and bootstrap for Liouville Theory}.
\newblock 12 2021.

\bibitem[GRV19]{GRV19}
C.~Guillarmou, R.~Rhodes, and V.~Vargas.
\newblock Polyakov's formulation of {2D} bosonic string theory.
\newblock {\em Publications math{\'e}matiques de l'IH{\'E}S}, 130(1):111--185,
  2019.

\bibitem[HJS10]{HJS09}
L.~Hadasz, Z.~Jask\'{o}lski, and P.~Suchanek.
\newblock Recursive representation of the torus 1-point conformal block.
\newblock {\em J. High Energy Phys.}, (1):063, 13, 2010.

\bibitem[HRV18]{HRV18}
Y.~Huang, R.~Rhodes, and V.~Vargas.
\newblock Liouville quantum gravity on the unit disk.
\newblock {\em Ann. Inst. H. Poincaré Probab. Statist.}, 54(3):1694--1730, 08
  2018.

\bibitem[KRV20]{KRV19b}
A.~Kupiainen, R.~Rhodes, and V.~Vargas.
\newblock Integrability of {L}iouville theory: proof of the {DOZZ} formula.
\newblock {\em Ann. of Math. (2)}, 191(1):81--166, 2020.

\bibitem[MMS10]{mms-block}
A.~Mironov, A.~Morozov, and S.~Shakirov.
\newblock Conformal blocks as {D}otsenko-{F}ateev integral discriminants.
\newblock {\em Internat. J. Modern Phys. A}, 25(16):3173--3207, 2010.

\bibitem[MS89]{ms-groupoid}
G.~Moore and N.~Seiberg.
\newblock Classical and quantum conformal field theory.
\newblock {\em Comm. Math. Phys.}, 123(2):177--254, 1989.

\bibitem[Neg16]{N16}
A.~Negu\c{t}.
\newblock Exts and the {AGT} relations.
\newblock {\em Lett. Math. Phys.}, 106(9):1265--1316, 2016.

\bibitem[Nek03]{Nek03}
N.~Nekrasov.
\newblock Seiberg-{W}itten prepotential from instanton counting.
\newblock {\em Adv. Theor. Math. Phys.}, 7(5):831--864, 2003.

\bibitem[NO06]{NekOk06}
N.~Nekrasov and A.~Okounkov.
\newblock Seiberg-{W}itten theory and random partitions.
\newblock In {\em The unity of mathematics}, volume 244 of {\em Progr. Math.},
  pages 525--596. Birkh\"{a}user Boston, Boston, MA, 2006.

\bibitem[Pog09]{Pog09}
R.~Poghossian.
\newblock Recursion relations in {CFT} and {$N=2$} {SYM} theory.
\newblock {\em J. High Energy Phys.}, (12):038, 14, 2009.

\bibitem[Pol81]{Pol81}
A.~Polyakov.
\newblock Quantum geometry of bosonic strings.
\newblock {\em Phys. Lett. B}, 103(3):207--210, 1981.

\bibitem[PT99]{PT99}
B.~Ponsot and J.~Teschner.
\newblock Liouville bootstrap via harmonic analysis on a noncompact quantum
  group.
\newblock {\em arXiv preprint hep-th/9911110}, 1999.

\bibitem[PT01]{PT01}
B.~Ponsot and J.~Teschner.
\newblock Clebsch-{G}ordan and {R}acah-{W}igner coefficients for a continuous
  series of representations of {$ U_q(\mathfrak{sl}(2,\mathbb{R}))$}.
\newblock {\em Comm. Math. Phys.}, 224(3):613--655, 2001.

\bibitem[RV14]{RV-GMC}
R.~Rhodes and V.~Vargas.
\newblock Gaussian multiplicative chaos and applications: a review.
\newblock {\em Probab. Surv.}, 11:315--392, 2014.

\bibitem[RZ20a]{RZ18}
G.~Remy and T.~Zhu.
\newblock The distribution of {G}aussian multiplicative chaos on the unit
  interval.
\newblock {\em Ann. Probab.}, 48(2):872--915, 2020.

\bibitem[RZ20b]{RZ20}
G.~Remy and T.~Zhu.
\newblock Integrability of boundary {L}iouville conformal field theory.
\newblock {\em arXiv preprint arXiv:2002.05625}, 2020.

\bibitem[Sha16]{shamov}
A.~Shamov.
\newblock On {G}aussian multiplicative chaos.
\newblock {\em Journal of Functional Analysis}, 270(9):3224--3261, 2016.

\bibitem[Tes95]{Tes95}
Jorg Teschner.
\newblock {On the Liouville three point function}.
\newblock {\em Phys. Lett. B}, 363:65--70, 1995.

\bibitem[TV15]{TV13}
J.~Teschner and G.~Vartanov.
\newblock Supersymmetric gauge theories, quantization of {$\mathcal{M}_{\text{
  flat}}$}, and conformal field theory.
\newblock {\em Adv. Theor. Math. Phys.}, 19(1):1--135, 2015.

\bibitem[Ver88]{verlinde-fusion}
E.~Verlinde.
\newblock Fusion rules and modular transformations in {$2$}{D} conformal field
  theory.
\newblock {\em Nuclear Phys. B}, 300(3):360--376, 1988.

\bibitem[WW21]{WW02}
E.~Whittaker and G.~Watson.
\newblock {\em A course of modern analysis---an introduction to the general
  theory of infinite processes and of analytic functions with an account of the
  principal transcendental functions}.
\newblock Cambridge University Press, Cambridge, fifth edition, 2021.
\newblock Edited by V. H. Moll, With a foreword by S. J. Patterson.

\bibitem[Zam84]{Zam84}
A.~Zamolodchikov.
\newblock Conformal symmetry in two dimensions: an explicit recurrence formula
  for the conformal partial wave amplitude.
\newblock {\em Comm. Math. Phys.}, 96(3):419--422, 1984.

\bibitem[Zam87]{Zam87}
A.~Zamolodchikov.
\newblock Conformal symmetry in two-dimensional space: on a recurrent
  representation of the conformal block.
\newblock {\em Teoret. Mat. Fiz.}, 73(1):103--110, 1987.

\bibitem[ZZ96]{ZZ96}
A.~Zamolodchikov and Al. Zamolodchikov.
\newblock Conformal bootstrap in {L}iouville field theory.
\newblock {\em Nuclear Phys. B}, 477(2):577--605, 1996.

\end{thebibliography}
\end{document}